\documentclass{amsart}
\usepackage{lipsum}
\makeatletter
\g@addto@macro{\endabstract}{\@setabstract}
\newcommand{\authorfootnotes}{\renewcommand\thefootnote{\@fnsymbol\c@footnote}}%
\makeatother

\usepackage[T1]{fontenc}
\usepackage{tipa}
\usepackage{enumitem}
\usepackage[dvipsnames]{xcolor}
\usepackage{setspace}
\usepackage[english]{babel}
\usepackage{amsmath,amsfonts,amssymb,amsthm,epsfig,epstopdf,url,array}
\usepackage[margin=1.0in]{geometry}
\usepackage{tikz-cd}
\usepackage{calc}
\usepackage{color,hyperref}
\usepackage[noabbrev]{cleveref}
\hypersetup{colorlinks}
\hypersetup{colorlinks={true},linkcolor={red},citecolor=green}
\usepackage{graphicx}
\usepackage{mathrsfs}
\graphicspath{{images/}}
\theoremstyle{plain}
\newtheorem{theorem}{Theorem}


\newtheorem{thm}{Theorem}[subsection]
\newtheorem{lem}[thm]{Lemma}
\newtheorem{prop}[thm]{Proposition}

\newcommand\scalemath[2]{\scalebox{#1}{\mbox{\ensuremath{\displaystyle #2}}}}
\theoremstyle{plain}
\newtheorem{defn}[thm]{Definition}

\theoremstyle{remark}
\newtheorem{rem}[thm]{Remark}

\newcommand{\mycomment}[1]{}
\usepackage{tikz}
\usepackage{mathtools}

\makeatletter

\setcounter{tocdepth}{3}

\renewcommand{\tocsection}[3]{%
  \indentlabel{\@ifnotempty{#2}{\bfseries\ignorespaces#1 #2\quad}}\bfseries#3}
\renewcommand{\tocsubsection}[3]{%
  \indentlabel{\@ifnotempty{#2}{\ignorespaces#1 #2\quad}}#3}

\renewcommand{\tocsubsubsection}[3]{%
  \indentlabel{\@ifnotempty{#2}{\ignorespaces#1 #2\quad}}#3}

\newcommand\@dotsep{4.5}
\def\@tocline#1#2#3#4#5#6#7{\relax
  \ifnum #1>\c@tocdepth 
  \else
    \par \addpenalty\@secpenalty\addvspace{#2}%
    \begingroup \hyphenpenalty\@M
    \@ifempty{#4}{%
      \@tempdima\csname r@tocindent\number#1\endcsname\relax
    }{%
      \@tempdima#4\relax
    }%
    \parindent\z@ \leftskip#3\relax \advance\leftskip\@tempdima\relax
    \rightskip\@pnumwidth plus1em \parfillskip-\@pnumwidth
    #5\leavevmode\hskip-\@tempdima{#6}\nobreak
    \leaders\hbox{$\m@th\mkern \@dotsep mu\hbox{.}\mkern \@dotsep mu$}\hfill
    \nobreak
    \hbox to\@pnumwidth{\@tocpagenum{\ifnum#1=1\bfseries\fi#7}}\par
    \nobreak
    \endgroup
  \fi}
\AtBeginDocument{%
\expandafter\renewcommand\csname r@tocindent0\endcsname{0pt}
}
\def\l@subsection{\@tocline{2}{0pt}{2.5pc}{5pc}{}}
\makeatother

\usepackage{xcolor}
\usepackage[new]{old-arrows}
\usepackage{bm} 
\usepackage{multicol}
\usetikzlibrary{decorations.pathmorphing}
\DeclareMathOperator{\GL}{GL}

\DeclareMathOperator{\ch}{ch}

\usepackage{mathtools}

\DeclareMathOperator{\vol}{vol}

\newcommand\blfootnote[1]{%
  \begingroup
  \renewcommand\thefootnote{}\footnote{#1}%
  \addtocounter{footnote}{-1}%
  \endgroup
}

\date{} 

\begin{document}
\hypersetup{citecolor=blue}
\hypersetup{linkcolor=red}

\begin{center}
  \LARGE 
 INTEGRALITY OF $\GL_2\times\GL_2$ RANKIN-SELBERG INTEGRALS\\ FOR RAMIFIED REPRESENTATIONS
  \normalsize
  \\
  \bigskip
  Alexandros Groutides \par 

  Mathematics Institute, University of Warwick\par
\end{center}

\begin{abstract}
    \textcolor{black}{Let $\pi_1,\pi_2$ be irreducible admissible generic tempered representations of $\mathrm{GL}_2(F)$ for some finite extension $F/\mathbf{Q}_p$ of odd residue characteristic. Inspired by work of Loeffler and previous work of the author on unramified zeta-integrals, we introduce a natural general notion of $(\pi_1\times\pi_2)$-\textit{integral} data at which the 
 Rankin-Selberg zeta-integral can be evaluated. We then establish an integral refinement of Jacquet-Langland's GCD-result for this zeta-integral, when evaluated at $(\pi_1\times\pi_2)$-integral data. This is compatible with the notion of integrality coming from the Fourier coefficients of newforms of even integral weights. Our approach relies on a reinterpretation of the Rankin-Selberg zeta-integral, and works of Assing and Saha on values of $p$-adic Whittaker new vectors.}
\end{abstract}
\blfootnote{The author was supported from the following research grant: ERC Grant No. 101001051—Shimura varieties and the Birch–Swinnerton-Dyer conjecture.}
\vspace{-2em}
\section{Introduction}
\subsection{Background}
Let $\pi_1,\pi_2$ be irreducible, admissible, generic representations of $\mathrm{GL}_2(F)$, where $F/\mathbf{Q}_p$ is a finite extension and $p$ is an odd prime. \textcolor{black}{Let $\varpi$ denote a uniformizer of $F$ and $q$ the cardinality of the residue field of $F$. For an integer $r\geq 0$, write $K_r$ for the open compact subgroup given by $\{k\in \GL_2(\mathcal{O}_F)\ |\ k\equiv \left[\begin{smallmatrix}
    * & * \\
    0 & 1
\end{smallmatrix}\right]\mod \varpi^r\}$}. A famous theorem of Casselman \cite{casselman1973some}, tells us that for every such representation $\pi_i$ there's a smallest non-negative integer $c(\pi_i)\in\mathbf{Z}_{\geq 0}$, called the conductor of $\pi_i$, for which $\pi_i^{K_{c(\pi_i)}}\neq 0$. In this case, this space of $K_{c(\pi_i)}$-invariant vectors is one-dimensional and generated by the unique Whittaker new vector $W_{\pi_i}^\mathrm{new}$ normalized to send the identity matrix to $1$, \textcolor{black}{upon identifying $\pi_1,\pi_2$ with appropriate Whittaker models.} Another celebrated theorem, this time due to Jacquet \cite{automorphic_on_GL_II}, tells us that the local Rankin-Selberg integral
\begin{align}\label{eq: intro A}
Z(\phi,g_1W_{\pi_1}^\mathrm{new},g_2W_{\pi_2}^\mathrm{new};s):=\int_{N(F)\backslash \GL_2(F)} W_{\pi_1}^\mathrm{new}(hg_1)W_{\pi_2}^\mathrm{new}(hg_2)\phi((0,1)h)\ |\det(h)|^s\ dh
\end{align}
for $\phi\in\mathcal{S}(F^2)$ and $g_1,g_2\in \GL_2(F)$, converges absolutely for $\Re(s)$ large enough, independently of $(\phi,g_1,g_2)$, and admits \textcolor{black}{a} unique meromorphic continuation as a rational function in $\mathbf{C}(q^s,q^{-s})$. Moreover, there exists a local Euler factor $L(\pi_1\times\pi_2,s)$, only depending on $\pi_1,\pi_2$, and a polynomial $\Phi(\phi,g_1W_{\pi_1}^\mathrm{new},g_2W_{\pi_2}^\mathrm{new};X)\in\mathbf{C}[X,X^{-1}]$, for which 
$$Z(\phi,g_1W_{\pi_1}^\mathrm{new},g_2W_{\pi_2}^\mathrm{new};s)=L(\pi_1\times\pi_2,s)\cdot \Phi(\phi,g_1W_{\pi_1}^\mathrm{new},g_2W_{\pi_2}^\mathrm{new};q^s).$$

If for example $F=\mathbf{Q}_p$ and $\pi_1=\pi_{f_1,p},\pi_2=\pi_{f_2,p}$, come from normalized newforms $f_1,f_2$ of even integral weights, then these local pieces are also of arithmetic interest. It turns out, that there's a number field $K$ containing $\mathbf{Q}(f_1,f_2)$, for which 
$$\Phi(\phi, g_1W_{\pi_1}^\mathrm{new}, g_2W_{\pi_2}^\mathrm{new};X)\in   K[X,X^{-1}]$$
for all primes $p$ and all rational data $(\phi,g_1,g_2)\in\mathcal{S}(\mathbf{Q}_p^2,\mathbf{Q})\times\GL_2(\mathbf{Q}_p)^2$. However, it quickly becomes evident, that the local Rankin-Selberg integral does not satisfy the naive notation of integrality. In other words, if one takes $(\phi,g_1,g_2)\in\mathcal{S}(\mathbf{Q}_p^2,\mathbf{Z})\times \GL_2(\mathbf{Q}_p)^2$, then $\Phi(\phi,g_1W_{\pi_1}^\mathrm{new}, g_2W_{\pi_2}^\mathrm{new};X)$ does \textit{not} necessarily lie in $\mathcal{O}_K[p^{-1}][X,X^{-1}]$, or in $\mathcal{O}_L[p^{-1}][X,X^{-1}]$ for any number field $L$, for that matter. Hence, this is not the right notion of integrality to impose on the data $(\phi,g_1,g_2)$.
\subsection{New results}
Following the background discussion above, we introduce a new notion of integrality inspired by the work of \cite{Loeffler_2021}, \cite{loeffler2021zetaintegralsunramifiedrepresentationsgsp4}, and \cite{groutides2024rankinselbergintegralstructureseuler}, on unramified zeta-integrals. \textcolor{black}{Before doing so, we note that whenever we regard $\GL_2$ as a subgroup of $\GL_2\times \GL_2$ we do so via the diagonal embedding $h\mapsto(h,h)$.} Going back to general $F/\mathbf{Q}_p$ and $\Pi:=\pi_1\times\pi_2$, we say that $(\phi,g_1,g_2)\in\mathcal{S}(F^2)\times \GL_2(F)^2$ is a $\Pi$-\textit{integral datum}, if the Schwartz function $\phi$ is valued in the principal ideal
\begin{align}\label{eq: intro integral}\frac{1}{\vol_{\GL_2(F)}(\mathrm{Stab}_{\GL_2(F)}(\phi)\cap (g_1,g_2)K_\Pi(g_1,g_2)^{-1})}\cdot \mathbf{Z}\subseteq\mathbf{Z}\end{align}
where $\vol_{\GL_2(F)}$ denotes the normalized Haar measure on $\GL_2(F)$ giving $\GL_2(\mathcal{O}_F)$ volume $1$, and the open compact subgroup $K_\Pi\subseteq\GL_2(\mathcal{O}_F)^2$ is given by $K_{c(\pi_1)}\times K_{c(\pi_2)}$. \textcolor{black}{In particular, as a subgroup of $\GL_2(F)$, $\mathrm{Stab}_{\GL_2(F)}(\phi)\cap (g_1,g_2)K_\Pi(g_1,g_2)^{-1}$ coincides with $\mathrm{Stab}_{\GL_2(F)}(\phi)\cap g_1K_{c(\pi_1)}g_1^{-1}\cap g_2 K_{c(\pi_2)}g_2^{-1}.$ } Before stating the main result, we set 
$$\tau=\tau(\pi_1,\pi_2)=\max\{c(\pi_1),c(\pi_2)\},\ \nu=\nu(\pi_1,\pi_2)=\begin{dcases}
    1,\ &\mathrm{if}\ c(\pi_1)=c(\pi_2)=0\\
    q^2-1,\ &\mathrm{otherwise.}
\end{dcases}$$
\begin{theorem}[\Cref{thm: main theorem}]\label{thm: intro A}
    Let $\pi_1,\pi_2$ be irreducible admissible generic tempered representations of $\GL_2(F)$. Set $\Pi:=\pi_1\times\pi_2$ and let $A\subseteq\mathbf{C}$ be a $\mathbf{Z}[q^{-1},\mu_{\nu q^\tau}]$-algebra containing the spherical Hecke eigenvalues of $\pi_1,\pi_2$ \emph{(}if spherical\emph{)} and \textcolor{black}{the values of} certain \emph{(}at most four\emph{)} unramified unitary characters of $F^\times$ determined by $\pi_1,\pi_2$. Let $(\phi,g_1,g_2)$ be any $\Pi$-integral datum. Then,  $\Phi(\phi,g_1W_{\pi_1}^\mathrm{new},g_2W_{\pi_2}^\mathrm{new};X)\in A[X,X^{-1}]$ and
    $$Z(\phi,g_1W_{\pi_1}^\mathrm{new},g_2W_{\pi_2}^\mathrm{new};s)=L(\Pi,s)\cdot \Phi(\phi,g_1W_{\pi_1}^\mathrm{new},g_2W_{\pi_2}^\mathrm{new};q^s)$$
    in $L(\Pi,s)A[q^s,q^{-s}]\subseteq A(q^s,q^{-s}).$
    
\end{theorem}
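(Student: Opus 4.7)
The plan is to reinterpret the Rankin--Selberg integral as a finite-support discrete sum via the Iwasawa decomposition, cancel the volume of the open compact stabilizer by using the integrality of $\phi$, and then invoke the explicit Whittaker formulas of Assing and Saha to conclude.

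Observe first that the integrand of \eqref{eq: intro A} is right-invariant under the open compact subgroup
$$U(\phi, g_1, g_2) := g_1 K_{c(\pi_1)} g_1^{-1} \cap g_2 K_{c(\pi_2)} g_2^{-1} \cap \Stab_{\GL_2(F)}(\phi) \subseteq \GL_2(F),$$
which is precisely the group appearing in \eqref{eq: intro integral}. Writing $\GL_2(F) = N(F) \cdot \{a(y) = \diag(y,1) : y \in F^\times\} \cdot Z(F) \cdot K$ with $K = \GL_2(\mathcal{O}_F)$, one rewrites \eqref{eq: intro A} (absorbing the central character into $A$) as an iterated sum-integral over $F^\times / \mathcal{O}_F^\times$ and $K / (K \cap U)$, with each term weighted by $\vol(U)$. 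The integrality hypothesis on $\phi$ exactly cancels this volume factor, reducing the problem to showing that for each coset representative $\kappa$,
$$\sum_{y \in F^\times / \mathcal{O}_F^\times} W_{\pi_1}^{\mathrm{new}}(a(y) \kappa g_1) \, W_{\pi_2}^{\mathrm{new}}(a(y) \kappa g_2) \, q^{-s \cdot v(y)} \in L(\Pi, s) \cdot A[q^s, q^{-s}].$$

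The verification of this containment rests on the explicit formulas of Assing and Saha for $W_{\pi_i}^{\mathrm{new}}$ on $a(y) \gamma$ with $\gamma \in K$: these express the values as piecewise expressions in $v(y)$ whose coefficients lie in $\mathbf{Z}[q^{-1}, \mu_{\nu q^{c(\pi_i)}}]$, augmented by the spherical Hecke eigenvalues when $c(\pi_i) = 0$, or by at most two unramified unitary character values of $F^\times$ when $c(\pi_i) > 0$. Substituting these formulas into the Mellin series yields geometric (or Gauss-sum-decorated geometric) series whose sum is $L(\Pi, s)$ times a Laurent polynomial in $q^s$ over $A$; the point is that the polynomial factors of $L(\Pi, s)^{-1}$ have roots (the Satake parameters or the relevant unramified character values) already adjoined to $A$ by hypothesis, so nothing extra is introduced by the geometric summation.

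The main obstacle is making this final step uniform across the different local types (principal series, special, supercuspidal) and across all conductor combinations $(c(\pi_1), c(\pi_2))$. In particular, when both $\pi_i$ are ramified, Assing--Saha's formulas involve Gauss-sum-type expressions whose primitive levels account for the appearance of $\nu = q^2 - 1$ and the root-of-unity adjoinment $\mu_{q^\tau}$ in $A$. A secondary technical point is the careful tracking of the Iwasawa measure normalization, so that the coset counting in the first step produces exactly $\vol(U(\phi, g_1, g_2))$ with no additional denominators beyond powers of $q$ (which are absorbed in $\mathbf{Z}[q^{-1}]$).
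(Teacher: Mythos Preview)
Your sketch is essentially the ``standard unfolding'' that the paper explicitly warns is futile to pursue directly. There is a genuine gap, and it sits precisely where you collapse the $y$-integral to a sum over $F^\times/\mathcal{O}_F^\times$.

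The Whittaker new-vectors are \emph{not} constant along $a(\varpi^j\mathcal{O}_F^\times)$ cosets once $\kappa g_i$ lies outside $Z N K_{c(\pi_i)}$ (which is the generic situation when $g_i$ is arbitrary). Writing $y=\varpi^j u$ with $u\in\mathcal{O}_F^\times$, one has $W_{\pi_i}^{\mathrm{new}}(a(\varpi^j u)\,g_{t,k,v})=\omega_{\pi_i}(u)\,W_{\pi_i}^{\mathrm{new}}(g_{t+j,\,k,\,u^{-1}v})$, and the $v$-dependence is genuine. So the Mellin transform is really
\[
\sum_{j\in\mathbf{Z}} q^{-j(s-1)}\int_{\mathcal{O}_F^\times} \omega_\Pi(u)\,\psi(\cdots u)\,W_{\pi_1}^{\mathrm{new}}(g_{t_1+j,\,k_1,\,v_1u^{-1}})\,W_{\pi_2}^{\mathrm{new}}(g_{t_2+j,\,k_2,\,v_2u^{-1}})\,d^\times u,
\]
and the $\mathcal{O}_F^\times$-integral produces (partial) Gauss sums. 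These carry factors of $\tfrac{1}{q-1}$ (see the paper's \S2.2 and the many explicit evaluations in \S4), and $q-1$ is \emph{not} invertible in $A$. The $\phi$-integrality hypothesis only buys you cancellation of $\vol_{H}(U)$; it does not supply an additional $q-1$. So your per-$\kappa$ claim that the Mellin series lies in $L(\Pi,s)A[q^s,q^{-s}]$ is simply false in general.

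What the paper does instead is the content of \S3: it pushes $\phi\otimes\ch(gK_\Pi)$ through the map $\Xi_c$ into $C_c^\infty(P(F)\backslash G(F)/K_\Pi,\mathbf{Z})$ (Lemma~3.1.5), and then shows (Propositions~3.1.1, 3.1.4) that the zeta integral factors through $\Lambda_\Pi$ on this space. The upshot is that one must only check
\[
\vol_{P(F)}\bigl(P(F)\cap\gamma K_\Pi\gamma^{-1}\bigr)^{-1}\cdot\mathcal{I}_\Pi^{\mathrm{new}}(\gamma;s)\in L(\Pi,s)A[q^s,q^{-s}]
\]
for $\gamma$ running over $P(F)\backslash G(F)/K_\Pi$. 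The crucial gain is that the \emph{mirabolic} volume factor $\vol_{P(F)}(\cdots)^{-1}$ --- not the $\GL_2$-volume you use --- is what appears, and the heart of \S4 is a case-by-case verification (via explicit matrix computations with the representatives $g_{t,k,v}$) that this $P$-index is divisible by $q-1$ in precisely those cases where the Gauss sum contributes a $\tfrac{1}{q-1}$. That matching is delicate and does not fall out of the direct unfolding.
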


We remark that \Cref{thm: intro A} also holds for \textcolor{black}{``$A$-valued'' $\Pi$-integral data. These are data that satisfy \eqref{eq: intro integral} with $\mathbf{Z}$ replaced by the algebra $A$.} This is clear from the proof. Moreover, it also holds for $2$-adic fields $F$ under the extra assumption that the $\pi_i$ are dihedral if they are supercuspidal representations. We refer to \Cref{rem p=2} for more details on this. From a purely local point of view, the specific volume factor in our definition of integrality naturally appears in the standard unfolding of the Rankin-Selberg zeta-integral coming from the Iwasawa decomposition of $\GL_2(F)$. However, it quickly becomes evident that for this level of generality on $(\phi,g_1,g_2)$, it is futile to attempt to prove our result using the standard unfolding directly. Our approach crucially relies on a technical reinterpretation of the Rankin-Selberg integral and a reduction step, found in \Cref{sec 3}. In \Cref{sec 4} we combine this with the results of Assing \cite{Assing_2018}, and Saha \cite{Saha_2015} on values of $p$-adic Whittaker new vectors. The explicit integral representations of these values in terms of generalized Kloosterman sums, proved in \cite{Assing_2018}, are used extensively in our work. 

The second main result concerns an application of the above local result to automorphic forms. Let $f_1,f_2$ be normalized cuspidal new eigenforms of levels $N_1,N_2$ and even integral weights $k_1\geq k_2\geq 2$. Let $\varphi_{f_1,},\varphi_{f_2}$ be the associated cuspidal automorphic forms, and $\pi_{f_1},\pi_{f_2}$ the associated unitary cuspidal automorphic representations in the style of \cite{gelbart1975automorphic}. We write $L(\pi_{f_1}\times\pi_{f_2},s)$ for the standard automorphic $L$-function of \cite{jacquet1981euler}. For a Schwartz function $\phi_\mathbf{A}\in\mathcal{S}(\mathbf{A}^2)$ and cusp forms $\varphi_1\in\pi_{f_1}$, $\varphi_2\in\pi_{f_2}$, we write $I(\phi_\mathbf{A},\varphi_1,\varphi_2;s)$ for the global Rankin-Selberg integral of \cite{jacquet1981euler}. Following \cite{chen2020primitive}, we set $\phi_\infty^{(k)}:=2^{-k}(x+\sqrt{-1}y)^ke^{-\pi(x^2+y^2)}\in\mathcal{S}(\mathbf{R}^2)$ for $k\in\mathbf{Z}_{\geq 0}$. We write $L_{f_1,f_2}$ for the Galois closure of the number field $\mathbf{Q}(f_1,f_2)$. Finally, we write $E_{f_1,f_2}$ for the number field obtained by adjoining to $L_{f_1,f_2}$ a finite collection of roots of unity, only depending on $f_1,f_2$ (for more details we refer to \Cref{sec application fo newforms}). From standard facts regarding Hecke eigenvalues, together with \cite{loeffler2012computation}, the local unramified unitary characters and spherical eigenvalues mentioned in \Cref{thm: intro A}, corresponding to $\pi_{f_1,p}\times\pi_{f_2,p}$ at any prime $p$, are always contained in $\mathcal{O}_{E_{f_1,f_2}}[p^{-1}]$. Using this, one can obtain the following integrality result.

\begin{theorem}[\Cref{thm 5.2.2}]\label{thm intro B}
    Let $f_1,f_2$ be normalized cuspidal new eigenforms of even weights $k_1\geq k_2\geq 2$ and levels $N_1,N_2$ such that the $2$-component of $\pi_{f_i}$ is dihedral, if it's a supercuspidal representation. Set $\Pi:=\pi_{f_1}\times\pi_{f_2}$ and let $(\phi_\mathrm{f},g_1,g_2)\in\mathcal{S}(\mathbf{A}_\mathrm{f}^2)\times \GL_2(\mathbf{A}_\mathrm{f})^2$ be an arbitrary locally $\Pi_p$-integral datum for all primes $p$. Let $S$ be the finite set of primes made up of \textcolor{black}{the prime divisors of $N_1N_2$} and all other primes for which $(\phi_{\mathrm{f},p},g_{1,p},g_{2,p})$ is not an unramified datum. Finally, let $\phi_\mathbf{A}:=2^{k_1+1}\phi_\infty^{(k_1-k_2)}\otimes\phi_\mathrm{f}\in\mathcal{S}(\mathbf{A}^2)$. Then,
    \begin{align*}
I(\phi_\mathbf{A},g_1\varphi_{f_1},g_2\varphi_{f_2};s)=L(\pi_{f_1}\times\pi_{f_2},s)\cdot \Phi(p^s : p\in S)
\end{align*}
where $\Phi(X_p :p\in S)$ is a polynomial in $ \mathcal{O}_{E_{f_1,f_2}}[S^{-1}][X_p^{\pm 1} : p\in S]$. 
\end{theorem}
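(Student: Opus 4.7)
The plan is to reduce the global statement to the local one via the standard factorization of the Rankin--Selberg integral, apply \Cref{thm: intro A} at each finite bad prime, and compute the archimedean contribution explicitly so that the claimed equality with $L(\pi_{f_1}\times\pi_{f_2},s)$ appears.

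First I would invoke the Jacquet--Piatetski-Shapiro--Shalika unfolding to write
\[
 I(\phi_\mathbf{A},g_1\varphi_{f_1},g_2\varphi_{f_2};s)=\prod_{v}Z(\phi_v,g_{1,v}W_{\pi_{f_1,v}}^{\mathrm{new}},g_{2,v}W_{\pi_{f_2,v}}^{\mathrm{new}};s),
\]
after choosing the standard decompositions $\varphi_{f_i}=\prod_v W_{\pi_{f_i,v}}^{\mathrm{new}}$ and $\phi_\mathbf{A}=2^{k_1+1}\phi_\infty^{(k_1-k_2)}\otimes\phi_{\mathrm{f}}$. For the normalization constants implicit in this factorization I would use the same global-to-local dictionary as in \cite{jacquet1981euler}; the factor $2^{k_1+1}$ is included precisely to absorb these constants and to produce a clean archimedean output.

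At any finite prime $p\notin S$, the datum $(\phi_{\mathrm{f},p},g_{1,p},g_{2,p})$ is by definition unramified, so one is in the spherical situation; the local integral equals $L(\pi_{f_1,p}\times\pi_{f_2,p},s)$ by the classical unramified computation (e.g.\ via the author's earlier treatment in \cite{groutides2024rankinselbergintegralstructureseuler}). At each $p\in S$, I would apply \Cref{thm: intro A} directly, which yields
\[
 Z(\phi_{\mathrm{f},p},g_{1,p}W_{\pi_{f_1,p}}^{\mathrm{new}},g_{2,p}W_{\pi_{f_2,p}}^{\mathrm{new}};s)=L(\pi_{f_1,p}\times\pi_{f_2,p},s)\cdot\Phi_p(p^s),
\]
with $\Phi_p\in A_p[X,X^{-1}]$ for the coefficient algebra $A_p$ of that theorem. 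The discussion immediately preceding the statement (relying on \cite{loeffler2012computation} and the structure of Hecke eigenvalues) tells us that all relevant unitary character values, spherical eigenvalues, and roots of unity involved lie in $\mathcal{O}_{K_{f_1,f_2}}[p^{-1}]$; hence we may take $A_p=\mathcal{O}_{K_{f_1,f_2}}[p^{-1}]$. The dihedral hypothesis at $2$ is exactly what is required so that \Cref{thm: intro A} applies at $p=2$ (see the remark after the statement).

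For the archimedean place, I would compute $Z(\phi_\infty^{(k_1-k_2)},W_{\pi_{f_1,\infty}}^{\mathrm{new}},W_{\pi_{f_2,\infty}}^{\mathrm{new}};s)$ explicitly using the weight $k_i$ holomorphic discrete series vectors; this is an essentially known calculation, following the recipe in \cite{chen2020primitive}, and the particular scaling $2^{-k}(x+\sqrt{-1}y)^k e^{-\pi(x^2+y^2)}$ together with the overall factor $2^{k_1+1}$ is what makes the archimedean factor come out exactly equal to $L_\infty(\pi_{f_1}\times\pi_{f_2},s)$. This is the step I expect to need the most care, since one has to track both the normalization of the Whittaker new-vectors at infinity and the precise constant in the unfolding, and it is where the mysterious-looking $2^{k_1+1}$ gets justified.

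Finally, I would multiply everything together. The archimedean and unramified local factors supply exactly the full $L(\pi_{f_1}\times\pi_{f_2},s)$, while each $p\in S$ contributes a polynomial $\Phi_p(p^s)\in\mathcal{O}_{K_{f_1,f_2}}[p^{-1}][p^s,p^{-s}]$. The product of these is a polynomial $\Phi(X_p:p\in S)$ over $\mathcal{O}_{K_{f_1,f_2}}[S^{-1}]$ in the multivariate monomials $X_p=p^s$, giving the stated identity.
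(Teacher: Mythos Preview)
Your approach is essentially the same as the paper's: factorize via \cite{jacquet1981euler}, compute the archimedean integral following \cite{chen2020primitive}, use the unramified computation at $p\notin S$, and apply the local main theorem at $p\in S$ with coefficient algebra $\mathcal{O}_{K_{f_1,f_2}}[S^{-1}]$. Two small points: the archimedean integral in the paper's computation actually equals $(\sqrt{-1})^{k_1-k_2}L(\pi_{f_1,\infty}\times\pi_{f_2,\infty},s)$ rather than the $L$-factor on the nose (harmless here since $k_1-k_2$ is even, so this is $\pm 1$ and gets absorbed into $\Phi$); and while you correctly defer to the discussion before the statement, the bulk of the paper's proof is precisely the case-by-case verification (unramified, half-ramified, fully-ramified via the $p$-primitive twist $f_i^{p\text{-}\mathrm{prim}}$, etc.) that $\mathcal{O}_{K_{f_1,f_2}}[p^{-1}]$ really does contain the specific unramified characters demanded by \Cref{thm: main theorem}.
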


It is interesting to note that the notion of integral data introduced in this work, also has a global algebro-geometric interpretation which we briefly describe here. Let $\ell\nmid N_1N_2$ be an auxiliary prime. By work of Loeffler-Skinner-Zerbes \cite{loeffler2021euler} there exists an infinite level $\ell$-adic Lemma-Eisenstein map 
$$\mathcal{LE}:\mathcal{S}_{(0)}(\mathbf{A}_\mathrm{f}^2,\mathbf{Q})\otimes C_c^\infty((\GL_2\times\GL_2)(\mathbf{A}_\mathrm{f}),\mathbf{Q})\rightarrow H^3_\mathrm{\acute{e}t}(Y_{\GL_2}\times Y_{\GL_2},E_{f_1,f_2,\ell}\text{-}\mathrm{coefficieints})$$
where $Y_{\GL_2}$ denotes the infinite level Shimura variety, and $E_{f_1,f_2,\ell}$ denotes the completion of $E_{f_1,f_2}$ at a place above $\ell$. For more details and definitions, we refer the reader to \textit{op.cit.} The important relation to the present work is that everywhere locally $\Pi_p$-\textit{integral} data $(\phi_\mathrm{f},g_1,g_2)$ as in \Cref{thm intro B}, are precisely the ones that give rise to classes in the image of the \textit{integral} \'etale cohomology of the corresponding modular curves $H^3_\mathrm{\acute{e}t}(Y_1(N_1)\times Y_1(N_2),\mathcal{O}_{E_{f_1,f_2,\ell}}\text{-}\mathrm{coefficients})$
under the assignment
$(\phi_\mathrm{f},g_1,g_2)\mapsto\mathcal{LE}\left(\phi_\mathrm{f}\otimes\ch(g_1U_1(N_1)\times g_2U_1(N_2)\right).$ Such classes play a crucial role in the theory of Rankin-Selberg Euler systems and $p$-adic $L$-functions, and \Cref{thm intro B} can be regarded as the analogous integrality result on the representation-theoretic side of things.

\subsection{Future remarks} 
It is an open problem to determine in full generality whether the local $L$-factors $L(\pi_1\times\pi_2)$ can actually be obtained integrally by evaluating the Rankin-Selberg zeta-integral at some $A$-linear combination of $A$-valued $(\pi_1\times\pi_2)$-integral data. Here $A$ denotes a $\mathbf{Z}[q^{-1},\mu_{\nu q^\tau}]$-algebra as in \Cref{thm: intro A}. Our results in this paper give one inclusion of a stronger version of this open problem. More specifically, \Cref{thm: intro A} tells us that the fractional ideal of $A[q^s,q^{-s}]$ given by the $A$-span of the Rankin-Selberg zeta-integral evaluated at $A$-valued $(\pi_1\times\pi_2)$-integral data, is contained in the principal fractional ideal $L(\pi_1\times\pi_2,s)A[q^s,q^{-s}]\subseteq A(q^s,q^{-s})$. However, we do not know if it is as big as possible or even principal. This is
a natural problem of independent interest through the lenses of integral structures in local representation
theory. Additionally, as pointed out in \cite[\S $3.4$]{loeffler2025euleradjointmodularform}, answers to these questions will have immediate applications to the theory of Rankin-Selberg Euler systems with optimal local factors at the bad primes. Finally, we expect the analogous results of this paper to hold for the local Asai zeta-integral associated to representations of $\GL_2(E)$ with $E/F$ a quadratic field extension. The aforementioned problem in the Asai case is also open and is related to Asai-Flach Euler systems.
\subsection{Acknowledgments} I would like to thank my advisor David Loeffler for his constant support
and mathematical guidance. Additionally, I would like to thank Edgar Assing for useful discussions
regarding his work on Whittaker newforms.
{
  \hypersetup{linkcolor=black}
  \setcounter{tocdepth}{2}
  \tableofcontents
}
\section{Preliminaries}\label{sec 2}
Let $p$ be an odd prime and $F/\mathbf{Q}_p$ be a finite extension. We write $\mathcal{O}=\mathcal{O}_F$ for the ring of integers of $F$, $\varpi$ for a fixed choice of uniformizer of $F$, and $q$ for the cardinality of the residue field of $F$. Additionally, we fix once and for all an additive character $\psi:F\rightarrow \mathbf{C}^\times$ of conductor $\mathcal{O}$. \textcolor{black}{In other words, the character $\psi$ is trivial on $\mathcal{O}$ but non-trivial on $\varpi^{-1}\mathcal{O}$.} The normalized valuation on $F$ is given by $v(\varpi)=1$, and for the normalized absolute value we have $|\varpi|=q^{-1}$. We fix once and for all the additive Haar measure $dx$ on $F$ which gives $\mathcal{O}$ volume $1$, and the multiplicative Haar measure $d^\times x$ on $F^\times$ which gives $\mathcal{O}^\times$ volume $1$.
We define the algebraic groups
    $$H:=\GL_2,\ \ G:=\GL_2\times \GL_2.$$
Furthermore, we write $B$ to denote the upper triangular Borel subgroup of $H$, $N$ its unipotent radical and $P$ the  mirabolic subgroup of $H$. Whenever we regard $H$ as a subgroup of $G$, we do so via the diagonal embedding.
\subsection{$\GL(1)$ and $\GL(2)$ local factors}
Let $\chi:F^\times\rightarrow \mathbf{C}^\times$ be a character. The $L$-factor associated to $\chi$ is given by 
\begin{align}\label{eq: dirichlet l-factor}L(\chi,s)=\begin{dcases}
    (1-\chi(\varpi)q^{-s})^{-1},& \ \mathrm{if}\ \chi\ \mathrm{is}\ \mathrm{unramified}\\
    1,& \ \mathrm{otherwise}. 
\end{dcases}\end{align}
Given characters $\chi,\mu:F^\times\rightarrow \mathbf{C}^\times$, with $\chi\mu^{-1}\neq |\cdot|^{\pm 1}$, we write $I(\chi,\mu)$ for the irreducible principal-series representation given by the \textcolor{black}{normalized parabolic induction} $\mathrm{ind}_{B(F)}^{H(F)}\left[\begin{smallmatrix}
    \chi & \\
     & \mu
\end{smallmatrix}\right]$. If $\chi\mu^{-1}=|\cdot|$, we write $\mathrm{St}(\chi,\mu)$ for the associated Steinberg representation given by the irreducible co-dimension $1$ constituent of $\mathrm{ind}_{B(F)}^{H(F)}\left[\begin{smallmatrix}
    \chi & \\
     & \mu
\end{smallmatrix}\right]$. Now let $\pi$ be an arbitrary irreducible admissible generic representation of $H(F)$. Following \cite{automorphic_on_GL}, the $L$-factor associated to $\pi$ is given by 
\begin{align}\label{eq: gl2 l-factor}
    L(\pi,s)=\begin{dcases}
        L(\mu,s)L(\chi,s),&\ \mathrm{if}\ \pi=I(\chi,\mu)\\
        L(\chi,s), &\ \mathrm{if}\ \pi=\mathrm{St}(\chi,\mu)\\
        1,&\ \mathrm{if}\ \pi\ \mathrm{is}\ \mathrm{supercuspidal}
    \end{dcases}
\end{align}
where each individual Dirichlet $L$-factor is given by \eqref{eq: dirichlet l-factor}. There are also $\epsilon$-factors $\epsilon(s,\chi)=\epsilon(s,\chi,\psi)$ and $\epsilon(s,\pi)=\epsilon(s,\pi,\psi)$ attached to each such $\chi$ and $\pi$. For the precise definitions, see \cite{tate1967fourier} for the abelian case, and \cite{automorphic_on_GL}. For a compact summary of all their relevant properties, we refer to \cite{schmidt2002some}. For completeness, we recall the integral representation for ramified $\chi$, given by $$\epsilon(s,\chi)=\int_{\varpi^{-c(\chi)}\mathcal{O}^\times}|x|^{-s}\chi^{-1}(x)\psi(x)\ dx$$
which will frequently allow us to express such $\GL_1$ epsilon-factors as finite sums of certain roots of unity.
\subsection{Gauss sums and partial Gauss sums}\label{sec Gauss sums}
Given a character $\chi:F^\times\rightarrow\mathbf{C}^\times$ with $\chi(\varpi)=1$, we write 
\begin{align*}\mathscr{G}(\chi,x)&:=\int_{\mathcal{O}^\times}\chi(y)\psi(xy)\ d^\times y,\ x\in F^\times\\
\mathscr{G}_{\ell}(\chi,x)&:=\int_{1+\varpi^\ell\mathcal{O}}\chi(y)\psi(xy)\ d^\times y,\ x\in F^\times,\ell\in\mathbf{Z}_{>0}.
\end{align*}
We refer to $\mathscr{G}(\chi,-)$ as a Gauss sum, and to $\mathscr{G}_\ell(\chi,-)$ as a partial Gauss sum, for $\chi$. Later on, we will frequently need to evaluate expressions of this form. To do that we use the following expressions which can be extrapolated from \cite[Lemma $2.3$]{Corbett_2018}, \cite[$1.3.1$]{Assing_2019}  and \cite[Remark $3.3.7$]{Assing_2019}. For $x\in F^\times$ we have 
\begin{align*}
    \mathscr{G}(\chi,x)=\begin{dcases}
        1,\ &\mathrm{if}\ c(\chi)=0\ \&\ v(x)\geq 0\\
        \tfrac{-q}{q-1},\ &\mathrm{if}\ c(\chi)=0\ \&\ v(x)=-1\\
        \tfrac{q}{q-1}q^{-c(\chi)/2}\epsilon(\tfrac{1}{2},\chi^{-1})\chi^{-1}(x),\ &\mathrm{if}\ c(\chi)\geq 1\ \&\ v(x)=-c(\chi)\\
        0,\ &\mathrm{otherwise.}
    \end{dcases}
\end{align*}
For $u\in\mathcal{O}^\times$ and $a\in\mathbf{Z}$ we have
\begin{align*}
    \mathscr{G}_\ell(\chi,u\varpi^{a})=\begin{dcases}
        \tfrac{q}{q-1}\epsilon(\tfrac{1}{2},\chi^{-1})\chi^{-1}(u)q^{-a/2},\ &\mathrm{if}\ \ell\leq \lfloor\tfrac{c(\chi)}{2}\rfloor, a=-c(\chi) \ \&\ u\in-b_\chi+\varpi^\ell\mathcal{O}\\
        \tfrac{q}{q-1}\psi(u\varpi^a),\ &\mathrm{if}\ \lceil\tfrac{c(\chi)}{2}\rceil\leq \ell <c(\chi), a=-c(\chi)\  \&\ u\in-b_\chi+\varpi^{c(\chi)-\ell}\mathcal{O}\\
        \tfrac{q}{q-1}q^{-\ell}\psi(u\varpi^a),\ &\mathrm{if}\ \ell\geq c(\chi)\ \&\ a\geq -\ell\\
        0,\ &\mathrm{otherwise.}
    \end{dcases}
\end{align*}
where $b_\chi\in\mathcal{O}^\times$ is a certain unit associated to $\chi$ as in the third and last part of \cite[Lemma $4.1$]{Assing_2018}. The nature and properties of this unit are not important for our purposes. The interested reader can consult \textit{loc.cit.} and also \cite[Lemma $2.4$]{cesnavicius2022manin}. We also note that whenever $b_\chi$ appears it is implicit from the inequalities on the right that $c(\chi)\geq 2$.
\subsection{New vectors for $\GL(2)$}
    For an integer $r\in\mathbf{Z}_{\geq 0}$, we write $K_r$ for the open compact subgroup of $H(\mathcal{O})$ given by
    $$\textcolor{black}{K_r:=\left[\begin{matrix}
        \mathcal{O} & \mathcal{O}\\
        \varpi^r\mathcal{O} & 1+\varpi^r\mathcal{O}
    \end{matrix}\right]\cap H(\mathcal{O})\subseteq H(\mathcal{O}).}$$
 We have the following celebrated theorem of Casselman on the behavior of $K_r$-fixed vectors in irreducible admissible generic representations of $H(F)$:
    \begin{thm}[\emph{\cite{casselman1973some}}]\label{thm local new vectors}
        Let $\pi$ be an irreducible admissible generic representation of $H(F)$. \textcolor{black}{Denote the space of $K_{r}$-fixed vectors of $\pi$ by $\pi^{K_{r}}.$} Then,
        \begin{enumerate}
            \item There exists a least non-negative integer $c(\pi)\in\mathbf{Z}_{\geq 0}$, the \textit{conductor} of $\pi$, such that $\pi^{K_{c(\pi)}}\neq 0$.
            \item We have $\dim\ \pi^{K_{c(\pi)}}=1.$
        \end{enumerate}
    \end{thm}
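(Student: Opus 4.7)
My plan is to realize $\pi$ in its Whittaker model $\mathcal{W}(\pi,\psi)$, whose existence and uniqueness is guaranteed by the genericity hypothesis, and then analyze $K_r$-invariance function-theoretically on $H(F)$.

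For part (1), the existence of some $K_r$-invariant vector will follow from admissibility plus an averaging argument. I would take any nonzero $W \in \mathcal{W}(\pi,\psi)$ and pick $n$ large enough that $W$ is fixed by the principal congruence subgroup $\Gamma(n) := 1 + \varpi^n M_2(\mathcal{O})$ (this is possible since $\pi$ is smooth). The quotient $K_n/\Gamma(n)$ is finite, so averaging $W$ over this quotient yields a candidate $K_n$-invariant vector. The only subtlety is ensuring the average does not vanish identically; this can be arranged by choosing $W$ supported near a diagonal element $\mathrm{diag}(\varpi^N, 1)$ for $N \gg 0$, where the Whittaker transformation law $W(ng) = \psi(n) W(g)$ prevents cancellation across cosets (equivalently, use the nonvanishing of the Kirillov restriction of $\pi$ to $P(F)$). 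One then defines $c(\pi)$ as the least such $n$.

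For part (2), uniqueness at level $c(\pi)$, the key observation is that the Iwasawa decomposition $H(F) = B(F) H(\mathcal{O})$ together with the Whittaker transformation law reduces any $W \in \pi^{K_{c(\pi)}}$ to its restriction to the diagonal torus. Using the central character of $\pi$ and the standard Kirillov vanishing (restriction of $W$ to $\mathrm{diag}(F^\times, 1)$ has support bounded below), such a $W$ is encoded by a sequence $a_n := W(\mathrm{diag}(\varpi^n, 1))$, $n \in \mathbf{Z}$, that is eventually zero as $n \to -\infty$. Testing $K_{c(\pi)}$-invariance against representatives of $B(F) \backslash H(F) / K_{c(\pi)}$ — in particular against elements that intertwine the upper-left and lower-right corners of $K_{c(\pi)}$ — would then convert invariance into a linear recurrence on $(a_n)_{n \geq 0}$. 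A case-by-case inspection following the trichotomy in \eqref{eq: gl2 l-factor} (principal series, Steinberg, supercuspidal) then shows that this recurrence admits a one-dimensional solution space, with generating function governed by $L(\pi, s)$.

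The hard part is the uniqueness statement (2), and in particular the supercuspidal case, where $L(\pi,s) = 1$ and the recurrence degenerates into the assertion that the sequence $(a_n)$ is supported on a bounded interval; one must rule out additional freedom coming from the finer structure of the Kirillov model. A conceptually cleaner route that sidesteps most of this case analysis is to invoke the local functional equation for the $\GL_1$-twisted zeta integrals of $\pi$: the $\epsilon$-factor formalism forces a rigid symmetry relating the values of $W$ at $\mathrm{diag}(\varpi^n, 1)$ for $n \gg 0$ versus $n \ll 0$, which combined with Kirillov vanishing pins down the solution uniquely up to scalar.
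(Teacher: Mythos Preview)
The paper does not supply its own proof of this statement: it is quoted as a background theorem with a citation to Casselman \cite{casselman1973some}, so there is nothing to compare your argument against on the paper's side.

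On the merits of your sketch: the overall strategy via the Whittaker/Kirillov model is the standard one, and your closing remark that the local functional equation for $\GL_1$-twisted zeta integrals gives the cleanest route to uniqueness is exactly the Jacquet--Piatetski-Shapiro--Shalika approach for general $\GL_n$. However, there is a genuine gap in your argument for part~(1). Averaging a $\Gamma(n)$-fixed vector over $K_n/\Gamma(n)$ can certainly kill it, and your proposed fix --- supporting $W$ near $\mathrm{diag}(\varpi^N,1)$ and invoking the Whittaker transformation law --- only controls the action of the \emph{upper} unipotent part of $K_n$. The groups $K_n$ also contain the full diagonal $\mathrm{diag}(\mathcal{O}^\times,1)$ and lower unipotents $\left[\begin{smallmatrix}1&\\ \varpi^n\mathcal{O}&1\end{smallmatrix}\right]$, whose action on the Kirillov model is not governed by $\psi$ alone. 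The clean way to close this gap is to start directly in the Kirillov model with the specific function $\mathrm{ch}(\mathcal{O}^\times)$ (rather than an arbitrary $W$), observe it is already $P(\mathcal{O})$-invariant, and then check by hand that the lower-unipotent action fixes it once $n$ is large --- this last step is where admissibility, or equivalently the finiteness of the quotient by $\mathcal{S}(F^\times)$, is actually used.

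For part~(2), your recurrence-on-$(a_n)$ idea is essentially Casselman's original argument, but as you yourself note, in the supercuspidal case the ``recurrence'' collapses and you are left needing to know that the Kirillov model equals $\mathcal{S}(F^\times)$ exactly, which is a separate (though standard) input. Your alternative via the functional equation is the right way to avoid the case split; if you pursue that line, the precise mechanism is that $Z(W,s)/L(\pi,s)$ is a polynomial in $q^{\pm s}$ whose degree is bounded in terms of $c(\pi)$, and the functional equation pins down all but one coefficient.
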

We always have $c(\pi)=c(\pi^\vee)$. In light of \Cref{thm local new vectors}, we will identify such $\pi$ with its Whittaker model $\mathcal{W}(\pi,\psi)$ (or $\mathcal{W}(\pi,\psi^{-1})$), \textcolor{black}{where we recall that the additive character $\psi$ has been fixed, and has conductor $\mathcal{O}$. With this in mind, we define the following essential vectors.}
\textcolor{black}{\begin{defn}
    The vector $W_\pi^\mathrm{new}$ is the unique normalized new vector in $\pi$ invariant under $K_{c(\pi)}$ that satisfies $W_\pi^\mathrm{new}(1)=1$.
\end{defn}
\begin{defn}
    The vector $W_\pi^\mathrm{new^*}$ is the unique normalized conjugate-new vector in $\pi$ invariant under $\left[\begin{smallmatrix}
        & \varpi^{-c(\pi)}\\
        1 & 
\end{smallmatrix}\right]K_{c(\pi)}\left[\begin{smallmatrix}
        & 1\\
        \varpi^{c(\pi)} & 
    \end{smallmatrix}\right]$ that satisfies $W_\pi^\mathrm{new^*}(1)=1$.
\end{defn}
\begin{rem}
    We remark that the superscript ``*'' above, is used the other way round in \cite[Definitions $2.1\ \&\ 2.2$]{Saha_2015}. Moreover, the fact that both $W_\pi^\mathrm{new}$ and $W_\pi^\mathrm{new^*}$ are supported at the identity relies on $\psi$ (and hence also $\psi^{-1}$) having conductor $\mathcal{O}$, and follows from \cite[Lemma $2.5$]{Saha_2015} which itself extrapolates from \cite{schmidt2002some}.
\end{rem}}
When $\pi$ has unramified central character these two new vectors coincide. Even when the central character of $\pi$ exhibits ramification, we have the following very useful result.
    \begin{prop}[\cite{Saha_2015} Corollary $2.27$]\label{prop new vs conj. new}
        Let $\pi$ be an irreducible, admissible, unitary, generic $H(F)$-representation with central character  $\omega_\pi$ satisfying $\omega_\pi(\varpi)=1$. Then 
        $$W_\pi^\mathrm{new}(g)=\omega_\pi(\det(g))\cdot W_{\pi^\vee}^{\mathrm{new}^*}(g),\ \ g\in H(F).$$
    \end{prop}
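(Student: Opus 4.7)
The plan is to apply Casselman's theorem (\Cref{thm local new vectors}) to reduce the claim to checking that $W(g) := \omega_\pi(\det g)W_{\pi^\vee}^{\mathrm{new}^*}(g)$ is a $K_{c(\pi)}$-invariant vector of $\mathcal{W}(\pi,\psi)$ with $W(1)=1$; it will then coincide with $W_\pi^{\mathrm{new}}$ by uniqueness.

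The normalization $W(1)=1$ is immediate. For the claim $W\in\mathcal{W}(\pi,\psi)$, the point is that the map $T : W^\vee \mapsto \bigl(g\mapsto \omega_\pi(\det g) W^\vee(g)\bigr)$ realizes the standard isomorphism $\omega_\pi \otimes \pi^\vee \cong \pi$ of $H(F)$-representations, which is the $\GL_2$-specific fact $\pi^\vee\cong\pi\otimes\omega_\pi^{-1}$. A direct computation shows that $T$ preserves the Whittaker character $\psi$ under $N(F)$-translation and satisfies $T(\pi^\vee(h)W^\vee) = \omega_\pi^{-1}(\det h)\,\pi(h)\,T(W^\vee)$, so by uniqueness of Whittaker models it sends $\mathcal{W}(\pi^\vee,\psi)$ into $\mathcal{W}(\pi,\psi)$, whence $W = T(W_{\pi^\vee}^{\mathrm{new}^*})\in \mathcal{W}(\pi,\psi)$.

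The main obstacle, and the substantive content of the result, is the $K_{c(\pi)}$-invariance of $W$. Writing out $W(gk)$ and using that $W_{\pi^\vee}^{\mathrm{new}^*}$ belongs to a space on which $H(F)$ acts by $\pi^\vee$, this reduces to the assertion
\[
\pi^\vee(k)\,W_{\pi^\vee}^{\mathrm{new}^*} = \omega_{\pi^\vee}(\det k)\cdot W_{\pi^\vee}^{\mathrm{new}^*}, \qquad k\in K_{c(\pi)}.
\]
To establish this, set $c:=c(\pi)=c(\pi^\vee)$ and observe that the stabilizer $K_c':= \left[\begin{smallmatrix}& \varpi^{-c}\\ 1 & \end{smallmatrix}\right] K_c \left[\begin{smallmatrix}& 1\\ \varpi^{c} & \end{smallmatrix}\right]$ of $W_{\pi^\vee}^{\mathrm{new}^*}$ is normal in the subgroup
\[
K_0(\varpi^c) := \Bigl\{k\in \GL_2(\mathcal{O}) : k \equiv \left[\begin{smallmatrix} * & * \\ 0 & * \end{smallmatrix}\right] \pmod{\varpi^c}\Bigr\},
\]
with cyclic quotient $(\mathcal{O}/\varpi^c)^\times$ realized by the reduction $k\mapsto \bar{k}_{11}$ of the upper-left entry. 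Since $(\pi^\vee)^{K_c'}$ is one-dimensional by Casselman's theorem applied to $\pi^\vee$, the subgroup $K_0(\varpi^c)$ acts on $W_{\pi^\vee}^{\mathrm{new}^*}$ via a character of this cyclic quotient; testing on the central scalars $\left[\begin{smallmatrix} a & 0 \\ 0 & a \end{smallmatrix}\right]\in K_0(\varpi^c)$, $a\in\mathcal{O}^\times$, identifies the character as $k\mapsto \omega_{\pi^\vee}(k_{11})$. Finally, for any $k\in K_c\subseteq K_0(\varpi^c)$ the defining conditions force $k_{21}\in\varpi^c\mathcal{O}$ and $k_{22}\in 1+\varpi^c\mathcal{O}$, hence $\det k \equiv k_{11}\pmod{\varpi^c}$; combined with $c(\omega_\pi)\leq c$, this gives $\omega_{\pi^\vee}(k_{11}) = \omega_{\pi^\vee}(\det k)$, yielding the required identity and completing the proof.
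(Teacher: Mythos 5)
Your proposal is correct, and the comparison here is a bit unusual: the paper does not prove this proposition at all, but simply quotes it from \cite{Saha_2015} (Corollary $2.27$). So what you have produced is a self-contained proof where the paper relies on a citation. Your mechanism is sound: the twisted function $g\mapsto\omega_\pi(\det g)W_{\pi^\vee}^{\mathrm{new}^*}(g)$ lies in $\mathcal{W}(\pi,\psi)$ by the standard isomorphism $\pi\cong\omega_\pi\otimes\pi^\vee$ together with uniqueness of Whittaker models; the group $K_c':=\left[\begin{smallmatrix}&\varpi^{-c}\\1&\end{smallmatrix}\right]K_c\left[\begin{smallmatrix}&1\\\varpi^c&\end{smallmatrix}\right]$ is exactly the kernel of the homomorphism $K_0(\varpi^c)\to(\mathcal{O}/\varpi^c)^\times$, $k\mapsto \bar k_{11}$, so the one-dimensional space $(\pi^\vee)^{K_c'}$ (one-dimensional by Casselman applied to $\pi^\vee$, since $c(\pi^\vee)=c(\pi)$) carries a character of $K_0(\varpi^c)$ that is pinned down by its values on the scalars $\mathrm{diag}(a,a)$, which surject onto the quotient; and for $k\in K_c$ one has $\det k\equiv k_{11}\bmod\varpi^c$ and $c(\omega_\pi)\le c(\pi)$, giving the transformation law needed for $K_{c(\pi)}$-invariance, after which Casselman's multiplicity one and the normalization at the identity finish the argument. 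Two minor remarks. First, $(\mathcal{O}/\varpi^c)^\times$ is not cyclic in general (e.g.\ for $F$ with residue field of non-prime order and $c\ge 2$), but cyclicity is never actually used in your argument — only that the action on the fixed line factors through this quotient and that the central scalars hit all of it — so this is a harmless slip of wording. Second, your proof nowhere uses unitarity or the hypothesis $\omega_\pi(\varpi)=1$; these are inherited from the statement as quoted from \cite{Saha_2015}, and since they only restrict the setting, proving the identity without them is not a defect — if anything it shows the relation between $W_\pi^{\mathrm{new}}$ and $W_{\pi^\vee}^{\mathrm{new}^*}$ is a purely formal consequence of Casselman's theorem and the twisting isomorphism, independent of the analytic hypotheses under which the paper applies it.
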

     \begin{rem}The assumption that $\omega_\pi(\varpi)=1$ is not restrictive at all since every irreducible \textcolor{black}{representation} is of this form up to an unramified twist. 
\end{rem}
\subsection{Local Rankin-Selberg integral and $L$-factors} Given two representations $\pi_1,\pi_2$ as in \Cref{thm local new vectors}, we identify them with the Whittaker models $\mathcal{W}(\pi_1,\psi)$ and $\mathcal{W}(\pi_2,\psi^{-1})$ respectively. Then, for any Schwartz function $\phi\in\mathcal{S}(F^2)$ and $W_1\in\mathcal{W}(\pi_1,\psi), W_2\in \mathcal{W}(\pi_2,\psi^{-1})$, one has the local Rankin-Selberg zeta-integral of Jacquet-Langlands (see also \cite{jacquet1983rankin}):
$$Z(\phi,W_1,W_2;s):=\int_{N(F)\backslash H(F)}W_1(h)W_2(h)\phi((0,1)h)|\det(h)|^s\ dh\in L(\pi_1\times \pi_2,s)\mathbf{C}[q^s,q^{-s}]\subseteq \mathbf{C}(q^s,q^{-s}).$$
Here $L(\pi_1\times \pi_2,s)$ denotes the local Rankin-Selberg convolution $L$-factor. By \cite{automorphic_on_GL_II} and \cite{gelbart1976relation}, it is given explicitly by 
\begin{align}
   \begin{dcases}
       L(\pi_1\otimes\chi,s)L(\pi_1\otimes \mu,s),& \mathrm{if}\ \pi_2=I(\chi,\mu)\\
       L(\pi_1\otimes \chi,s),& \mathrm{if}\ \pi_2=\mathrm{St}(\chi,\mu)\ \mathrm{and}\ \pi_1\ \mathrm{is}\ \mathrm{not}\ \mathrm{a}\ \mathrm{Steinberg}\ \mathrm{representation}\\
       L(\chi_1\chi_2)L(\mu_1\chi_2,s),& \mathrm{if}\ \pi_1=\mathrm{St}(\chi_1,\mu_1)\ \mathrm{and}\ \pi_2=\mathrm{St}(\chi_2,\mu_2)\\
       L(|\cdot|^{-s_0},s),&\mathrm{if}\ \pi_1,\pi_2\ \mathrm{are}\ \mathrm{supercuspidal}, \pi_2^\vee=\pi_1\otimes|\cdot|^{s_0}\ \mathrm{and}\ \pi_1\otimes|\cdot|^{s_0}\neq \pi_1\otimes\eta|\cdot|^{s_0}\\
       L(|\cdot|^{-2s_0},2s),&\mathrm{if}\ \pi_1,\pi_2\ \mathrm{are}\ \mathrm{supercuspidal}, \pi_2^\vee=\pi_1\otimes|\cdot|^{s_0} \ \mathrm{and}\ \pi_1\otimes|\cdot|^{s_0}= \pi_1\otimes\eta|\cdot|^{s_0}\\
       1 ,& \mathrm{otherwise}
    \end{dcases}
\end{align}
where each individual $L$-factor is given as in \eqref{eq: dirichlet l-factor} and \eqref{eq: gl2 l-factor}. The character $\eta$ is the unramified quadratic character of $F^\times$. If $\Pi=\pi_1\times\pi_2$ is an irreducible admissible generic representation of $G(F)$, we write $W_\Pi^\mathrm{new}:=W_{\pi_1}^\mathrm{new}\times W_{\pi_2}^\mathrm{new}$ and $K_\Pi:=K_{c(\pi_1)}\times K_{c(\pi_2)}$. \textcolor{black}{Moreover, $\omega_{\pi_i}$ always denotes the central character of $\pi_i$, $\omega_\Pi(x_1,x_2):=\omega_{\pi_1}(x_1)\omega_{\pi_2}(x_2)$ for $x_i\in F^\times$, and $\omega_\Pi(x):=\omega_\Pi(x,x)$ for $x\in F^\times.$} Finally, if $g=(g_1,g_2)$ is any element of $G(F)$, then we sometimes write $Z(\phi,gW_\Pi^\mathrm{new};s)$ for the zeta-integral $Z(\phi,g_1W_{\pi_1}^\mathrm{new},g_2W_{\pi_2}^\mathrm{new};s)$. We are interested in the integral-away-from-$q$ behavior of $Z(\phi,gW_\Pi^\mathrm{new};s)$ evaluated on certain \textit{integral data} $(\phi,g)$, which we define similarly to \cite[\S $6.4$]{loeffler2021zetaintegralsunramifiedrepresentationsgsp4}.
\begin{defn}\label{def integral datum}
 A datum $ (\phi,g)\in\mathcal{S}(F^2) \times G(F)$ is $\Pi$-\textit{integral} if $\phi$ is valued in the ideal
    $$\frac{1}{\vol(\mathrm{Stab}(\phi)\cap gK_\Pi g^{-1})}\cdot\mathbf{Z}\subseteq\mathbf{Z}.$$
    \noindent Here $\mathrm{vol}(-)$ denotes the normalized Haar measure on $H(F)$ giving $H(\mathcal{O})$ volume $1$, and the action of $H(F)$ on $\mathcal{S}(F^2)$ is via right translation.
\end{defn}
\section{Reinterpreting the local Rankin-Selberg integral}\label{sec 3}
\subsection{The general construction}\label{sec: general construction}
Let $\Pi=\pi_1\times\pi_2$ be an irreducible admissible generic $G(F)$-representation. The local Rankin-Selberg zeta-integral gives rise to a linear map
$$\mathcal{Z}_\Pi(-;s):\mathcal{S}(F^2)\otimes C_c^\infty(G(F)/K_\Pi)\longrightarrow L(\Pi,s)\mathbf{C}[q^s,q^{-s}]$$
characterized by $\mathcal{Z}_\Pi(\phi\otimes \ch(gK_\Pi);s):=Z(\phi,\ch(gK_\Pi)\cdot W_\Pi^\mathrm{new}\textcolor{black}{;s})=\mathrm{Vol}(K_\Pi)\cdot Z(\phi,g_1W_{\pi_1}^\mathrm{new},g_2W_{\pi_2}^\mathrm{new}\textcolor{black}{;s})$. Throughout this section, we will always regard 
 $\mathcal{S}(F^2)$, respectively $C_c^\infty(G(F)/K_\Pi)$, as smooth left $H(F)$-representations via the actions $h\cdot\phi=\phi((-)h)$ for $\phi\in\mathcal{S}(F^2)$, respectively $h\cdot \xi=\xi(h^{-1}(-))$ for $\xi\in C_c^\infty(G(F)/K_\Pi)$. The construction of \cite[\S $3$]{groutides2024rankinselbergintegralstructureseuler} gives us a linear map $$\Xi:\mathcal{S}(F^2)\otimes C_c^\infty(G(F)/K_\Pi)\longrightarrow \left(\mathrm{Ind}_{P(F)}^{G(F)}\mathbf{1}\right)^{K_\Pi}$$
$$\Xi(\phi\otimes \xi)(g):=\int_{H(F)}\textcolor{black}{(h\cdot\xi)}(g)\phi((0,1)h)\ dh$$
where $dh$ denotes the normalized Haar measure on $H(F)$ giving $H(\mathcal{O})$ volume $1$. Convergence of the integral follows from compactness of the open subgroup $H(F)\cap \gamma K_\Pi\gamma^{-1}\subseteq H(F)$ for any $\gamma\in G(F)$. We can then compose $\Xi$ with the linear endomorphism of $\left(\mathrm{Ind}_{P(F)}^{G(F)}\mathbf{1}\right)^{K_\Pi}$ given by $$f(-)\mapsto f(-)- f(\left[\begin{smallmatrix}
    \varpi & \\
    & \varpi
\end{smallmatrix}\right]^{-1}(-)).$$If we call this composition $\Xi_c$, then the linear map $\Xi_c$ lands in the compact induction $\left(\mathrm{c}\text{-}\mathrm{Ind}_{P(F)}^{G(F)}\mathbf{1}\right)^{K_\Pi}$. \textcolor{black}{This is beacause $\Xi_c(\phi\otimes\xi)=\Xi(\phi_c\otimes\xi)$, with $\phi_c(-):=\phi(-)-\phi((-)\left[\begin{smallmatrix}
    \varpi& \\
    & \varpi
\end{smallmatrix}\right]^{-1})$ which is always an element of $\mathcal{S}_0(F^2)$; the space of Schwartz functions on $F^2$ that vanish at $(0,0)$. But as in \cite[Page $397$]{jacquet1983rankin}, we have a natural embedding of $H(F)$-representations
$$\mathcal{S}(F^2)\hookrightarrow \mathrm{Ind}_{P(F)}^{H(F)}\mathbf{1},\ \phi\mapsto(h\mapsto \phi((0,1)h))$$
which identifies $\mathcal{S}_0(F^2)$ with the compact induction $c$-$\mathrm{Ind}_{P(F)}^{H(F)}\mathbf{1}$.} 

A similar construction as in \cite[\S $4$]{groutides2024rankinselbergintegralstructureseuler} allows us to define a linear map 
$$\Lambda_\Pi(-;s): \left(\mathrm{c}\text{-}\mathrm{Ind}_{P(F)}^{G(F)}\mathbf{1}\right)^{K_\Pi}\longrightarrow L(\Pi,s)\mathbf{C}[q^s,q^{-\textcolor{black}{s}}]$$
via means of meromorphic continuation,
which is given (for $\Re(s)$ large enough independently of $f$) by
\begin{align*}
\Lambda_\Pi(f;s)
    &:=\int_{P(F)\backslash G(F)} f(g)|\det(g_2)|^s\int_{F^\times} (gW_\Pi^\mathrm{new})(\left[\begin{smallmatrix}
        y & \\
        & 1
    \end{smallmatrix}\right])|y|^{s-1}\ d^\times y\ dg
\end{align*}
where $d^\times y$ is the normalized Haar measure on $F^\times$ giving $\mathcal{O}^\times$ volume $1$, and $dg$ is the right $G(F)$-invariant normalized quotient Haar measure introduced in \cite[\S $4.1$]{groutides2024rankinselbergintegralstructureseuler} following \cite[\S $2.2$]{kurinczuk2017rankin}.
 Gathering things together, we want to establish a relation between the two different paths provided by the following diagram
\[\begin{tikzcd}[ampersand replacement=\&,cramped]
	{\mathcal{S}(F^2)\otimes C_c^\infty(G(F)/K_\Pi)} \&\& {L(\Pi,s)\mathbf{C}[q^s,q^{-s}]} \\
	{\left(\mathrm{c}\text{-}\mathrm{Ind}_{P(F)}^{G(F)}\mathbf{1}\right)^{K_\Pi}} \&\& {L(\Pi,s)\mathbf{C}[q^s,q^{-s}]}
	\arrow["{{\mathcal{Z}_\Pi(-;s)}}", from=1-1, to=1-3]
	\arrow["{{\Xi_c}}"', from=1-1, to=2-1]
	\arrow["{{\Lambda_\Pi(-;s)}}"', from=2-1, to=2-3]
\end{tikzcd}\]
\begin{prop}\label{prop lambda and zeta}
    Let $\Pi=\pi_1\times\pi_2$ be an irreducible admissible generic $G(F)$-representation. Let $g=(g_1,g_2)\in G(F)$ and $\phi\in\mathcal{S}(F^2)$ be arbitrary. There's an equality of rational functions
    \begin{align}\label{eq: prop equality of rational funcctions}\Lambda_\Pi\left(\Xi_c(\phi\otimes \ch(gK_\Pi));s\right)=\scalemath{1}{\mathrm{Vol}(K_\Pi)\cdot q^{-v(\det(g_2))s}(1-\omega_\Pi(\varpi)q^{-2s})} Z(\phi,gW_\Pi^\mathrm{new};s)\end{align}
    in $L(\Pi,s)\mathbf{C}[q^s,q^{-s}]$,
    where $\mathrm{Vol}$ denotes the product measure on $G(F)$.
    \begin{proof}
        The local Rankin-Selberg zeta-integral $Z(\phi,gW_\Pi^\mathrm{new};s)$ is given in the notation above, by $\mathcal{Z}_\Pi(\phi \otimes \ch(gK_\Pi);s)$. By the $H(F)$-equivariance property of the linear map $\mathcal{Z}_\Pi(-;s)$, and the $H(F)$-invariance of the linear map $\Xi_c$, it suffices to establish \eqref{eq: prop equality of rational funcctions} when $g=(g_1,1)$. To shorten notation, we write $$K_{\Pi,\phi,g}:=\mathrm{Stab}(\phi)\cap gK_\Pi g^{-1}\subseteq H(\mathcal{O}).$$ The standard unfolding of the Rankin-Selberg integral \textcolor{black}{(e.g. \cite[Page $14$]{automorphic_on_GL_II})} gives,
        \begin{align}\label{eq: 5} {\mathcal{Z}_\Pi(\phi\otimes\ch(gK_\Pi);s)=\vol(K_{\Pi,\phi,g})\sum_{\kappa\in H(\mathcal{O})/K_{\Pi,\phi,g}}\int_{F^\times}\int_{F^\times} W_\Pi^\mathrm{new}(\left[\begin{smallmatrix}
            y & \\
            & 1
        \end{smallmatrix}\right]\kappa g) \phi((0,x)\kappa) \omega_\Pi(x)|x|^{2s} |y|^{s-1}\ d^\times x\ d^\times y.}
        \end{align}
        where as usual the measures $d^\times x$ and $d^\times y$ are normalized to give $\mathcal{O}^\times$ volume $1$.
        On the the other hand, if we write $f:=\Xi_c(\phi\otimes \ch(gK_\Pi))$, then by construction, $f$ is also equal to $\Xi(\phi_c \otimes \ch(gK_\Pi))$ where now $\phi_c$ denotes the Schwartz function $\phi(-)-\phi((-)\left[\begin{smallmatrix}
            \varpi & \\
            & \varpi
        \end{smallmatrix}\right]^{-1})\in\mathcal{S}(F^2)$. Applying $\Lambda_\Pi(-;s)$, splitting the measure as in \cite[Lemma $4.1.1$]{groutides2024rankinselbergintegralstructureseuler}, and applying a change of variables on $\gamma$, we have 
        \begin{align}\label{eq: 6}
        \Lambda_\Pi(f;s)&=\int_{F^\times}\int_{H(\mathcal{O})} \int_{H(F)} 
        f(x(k\gamma,k))|\det(xk)|^s\int_{F^\times} (x(k\gamma,k)W_\Pi^\mathrm{new})(\left[\begin{smallmatrix}
        y & \\
        & 1
    \end{smallmatrix}\right])|y|^{s-1}\ 
        \  d\gamma\ dk\ d^\times y\ d^\times x\\
        \nonumber &=\vol(K_{c(\pi_2)})\sum_{k} \int_{F^\times}\int_{H(F)} f(x(\gamma,k))|\det(xk)|^s\int_{F^\times} (x(\gamma,k)W_\Pi^\mathrm{new})(\left[\begin{smallmatrix}
        y & \\
        & 1
    \end{smallmatrix}\right])|y|^{s-1}\  d\gamma\ d^\times y\ d^\times x
        \end{align}
        where $k$ runs through $H(\mathcal{O})/K_{c(\pi_2)}.$
        Unraveling the definitions of $\mathcal{W}_{f,\Pi}$ and $f=\Xi(\phi_c\otimes \ch(gK_\Pi))$, and writing $\xi:=\ch(gK_\Pi)=\ch((g_1,1)K_\Pi)$ for convenience, \eqref{eq: 6} is equal to $\vol(K_{c(\pi_2)})$ times
        \begin{align*}
            {\sum_{k}\int_{F^\times}\int_{F^\times}\int_{H(F)}\int_{H(F)}\xi(h^{-1}(\gamma,k))\phi_c((0,x)h) W_\Pi^\mathrm{new}(\left[\begin{smallmatrix}
                y & \\
                & 1
            \end{smallmatrix}\right](\gamma,k))\omega_\Pi(x)|x|^{2s}|y|^{s-1}\ dh\ d\gamma\ d^\times y\ d^\times x}
        \end{align*}
        where $k$ runs through $ H(\mathcal{O})/K_{c(\pi_2)}$.
        For each $k$, the value of $\xi(h^{-1}(\gamma,k))$ is non-zero if and only if $(\gamma,k)\in H(F)gK_\Pi$. If this is the case then $k=hk_2$ and $\gamma=hg_1k_1$ with $h\in H(F)$ and $k_i\in K_{c(\pi_i)}$. In other words, as a function of $\gamma$, $\xi(h^{-1}(\gamma,k))$ is supported on $kK_{c(\pi_2)}g_1K_{c(\pi_1)}.$ It is clear that we have a decomposition
        $$kK_{c(\pi_2)}g_1K_{c(\pi_1)}=\bigsqcup_{u\in K_{c(\pi_2)}/g_1K_{c(\pi_1)}g_1^{-1}\cap\ K_{c(\pi_2)}} kug_1 K_{c(\pi_1)}.$$
        Combining this with the above expression, and recalling that the functions $\xi$ and $W_\Pi^\mathrm{new}$ are fixed under right translations by $K_\Pi$, we see that \eqref{eq: 6} is given by 
        \begin{align*}
            {\mathrm{Vol}(K_\Pi) \sum_{k} \sum_{u}\int_{F^\times}\int_{F^\times}\int_{H(F)}\xi(h^{-1}ku(g_1,1))W_\Pi^\mathrm{new}(\left[\begin{smallmatrix}
                y & \\
                & 1
            \end{smallmatrix}\right]ku(g_1,1))\phi_c((0,x)h)\omega_\Pi(x)|x^{2s}|y|^{s-1}\ dh\ d^\times y\ d^\times x}
        \end{align*}
        where $k$ runs through $H(\mathcal{O})/K_{c(\pi_2)}$ and $u$ runs through $K_{c(\pi_2)}/g_1K_{c(\pi_1)}g_1^{-1}\cap\ K_{c(\pi_2)}$. Simplifying further, and recalling that $\xi=\ch(gK_\Pi)$, this is nothing more than 
        \begin{align*}
            {\mathrm{Vol}(K_\Pi)\sum_\kappa\int_{F^\times}\int_{F^\times}\int_{\kappa\left(H(F)\cap gK_\Pi g^{-1}\right)}W_\Pi^\mathrm{new}(\left[\begin{smallmatrix}
                y & \\
                & 1
            \end{smallmatrix}\right]\kappa g)\phi_c((0,x)h) \omega_\Pi(x) |x|^{2s} |y|^{s-1}\ dh\ d^\times y\ d^\times x}
        \end{align*}
        where $\kappa$ runs through $ H(\mathcal{O})/(g_1K_{c(\pi_1)}g_1^{-1}\cap\ K_{c(\pi_2)})$.
        Since $W_\Pi^\mathrm{new}(\left[\begin{smallmatrix}
                y & \\
                & 1
            \end{smallmatrix}\right]\kappa h g)=W_\Pi^\mathrm{new}(\left[\begin{smallmatrix}
                y & \\
                & 1
            \end{smallmatrix}\right]\kappa g)$ for all $h\in H(F)\cap gK_\Pi g^{-1}$, we can decompose this open compact $g_1K_{c(\pi_1)}g_1^{-1}\cap\ K_{c(\pi_2)}=H(F)\cap gK_\Pi g^{-1}$ into left $K_{\Pi,\phi,g}=\mathrm{Stab}(\phi)\cap gK_\Pi g^{-1}$ cosets and conclude that \eqref{eq: 6} coincides with 
        \begin{align*}
            \mathrm{Vol}(K_\Pi)\vol(K_{\Pi,\phi,g})\sum_{\kappa\in H(\mathcal{O})/K_{\Pi,\phi,g}} \int_{F^\times}\int_{F^\times} W_\Pi^\mathrm{new}(\left[\begin{smallmatrix}
                y & \\
                & 1
            \end{smallmatrix}\right] \kappa g)\phi_c((0,x)\kappa)\omega_\Pi(x)|x|^{2s}|y|^{s-1}\ d^\times y\ d^\times x.
        \end{align*}
        Finally, using the definition of $\phi_c(-)=\phi(-)-\phi((-)\left[\begin{smallmatrix}
            \varpi & \\
            & \varpi
        \end{smallmatrix}\right]^{-1})$ together with a change of variables on $x$, and comparing with \eqref{eq: 5}, the result follows. 
        \end{proof}
\end{prop}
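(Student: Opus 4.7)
The plan is to reduce to the case $g=(g_1,1)$, unfold both sides, and identify them term by term, with the factor $1-\omega_\Pi(\varpi)q^{-2s}$ arising from the $\Xi\mapsto\Xi_c$ correction and the factor $q^{-v(\det(g_2))s}$ from transporting $g_2$ to the identity.

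First I would invoke the $H(F)$-equivariance of $\mathcal{Z}_\Pi(-;s)$ (with respect to diagonal left translation) together with the $H(F)$-invariance of $\Xi_c$ to reduce to the case $g=(g_1,1)$. A standard change of variables shows that replacing $g=(g_1,g_2)$ by $(g_2^{-1}g_1,1)$ multiplies $Z(\phi,gW_\Pi^\mathrm{new};s)$ and $\Lambda_\Pi(\Xi_c(\phi\otimes\ch(gK_\Pi));s)$ by the same factor of $|\det(g_2)|^{-s}=q^{-v(\det(g_2))s}$, which is what accounts for that term on the right-hand side.

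Next, for the right-hand side, I would carry out the classical unfolding of the Jacquet--Langlands integral by writing $h\in N(F)\backslash H(F)$ in Iwasawa form $h=\left[\begin{smallmatrix}y&\\&1\end{smallmatrix}\right]\kappa$ with $\kappa\in H(\mathcal{O})$, and then splitting the $H(\mathcal{O})$-sum into left cosets modulo $K_{\Pi,\phi,g}:=\mathrm{Stab}(\phi)\cap gK_\Pi g^{-1}$, on which $\phi((0,1)h\cdot)$ becomes a fixed function: this yields a presentation of $\mathcal{Z}_\Pi(\phi\otimes\ch(gK_\Pi);s)$ as a volume factor $\vol(K_{\Pi,\phi,g})$ times a sum of double integrals in $y$ and $x$ (the latter coming from the $\omega_\Pi(x)|x|^{2s}$ weight) against $W_\Pi^\mathrm{new}(\left[\begin{smallmatrix}y&\\&1\end{smallmatrix}\right]\kappa g)\phi((0,x)\kappa)$.

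For the left-hand side, I would substitute the definitions of $\Lambda_\Pi$ and $\Xi_c$ and split the invariant measure on $P(F)\backslash G(F)$ as in Lemma~4.1.1 of \cite{groutides2024rankinselbergintegralstructureseuler}, giving an integral over $F^\times\times H(\mathcal{O})\times H(F)$. Recognising that the inner indicator $\ch(gK_\Pi)$ is supported (as a function of the first $H(F)$-variable) on the double coset $H(F)\cdot(g_1,1)K_\Pi$, I would use the disjoint decomposition
$$kK_{c(\pi_2)}g_1K_{c(\pi_1)}=\bigsqcup_{u\in K_{c(\pi_2)}/(g_1K_{c(\pi_1)}g_1^{-1}\cap K_{c(\pi_2)})} kug_1K_{c(\pi_1)},$$
together with right-$K_\Pi$-invariance of $\xi$ and $W_\Pi^\mathrm{new}$, to collapse the two sums (over $k\in H(\mathcal{O})/K_{c(\pi_2)}$ and over $u$) into a single sum over $H(\mathcal{O})/(g_1K_{c(\pi_1)}g_1^{-1}\cap K_{c(\pi_2)})=H(\mathcal{O})/(H(F)\cap gK_\Pi g^{-1})$. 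The inner integral on $H(F)$ then collapses against $\mathrm{Stab}(\phi)\cap gK_\Pi g^{-1}$, producing the same volume factor $\vol(K_{\Pi,\phi,g})$ as on the RHS times a factor $\mathrm{Vol}(K_\Pi)$ coming from the other invariance directions.

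The only remaining discrepancy between the two sides is that $\Xi_c$ replaces $\phi$ by $\phi_c:=\phi(-)-\phi((-)\left[\begin{smallmatrix}\varpi&\\&\varpi\end{smallmatrix}\right]^{-1})$; applying a change of variables $x\mapsto \varpi x$ in the second piece produces a multiplicative factor of $\omega_\Pi(\varpi)|\varpi|^{2s}=\omega_\Pi(\varpi)q^{-2s}$, yielding the announced Euler-type factor $1-\omega_\Pi(\varpi)q^{-2s}$. I expect the main obstacle to be purely combinatorial bookkeeping, namely verifying that the two coset decompositions (over $H(\mathcal{O})/K_{\Pi,\phi,g}$ on the RHS, and the doubly-indexed one above on the LHS) literally agree after accounting for $K_\Pi$-stabiliser relations, and that all normalisations of Haar measures match cleanly so that no stray volume factors remain.
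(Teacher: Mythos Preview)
Your proposal is correct and follows essentially the same route as the paper: reduce to $g_2=1$ via $H(F)$-equivariance, unfold both sides using Iwasawa decomposition and the measure splitting of \cite[Lemma 4.1.1]{groutides2024rankinselbergintegralstructureseuler}, collapse the $(k,u)$-sum via the same double coset decomposition into a single sum over $H(\mathcal{O})/(H(F)\cap gK_\Pi g^{-1})$, refine further to $K_{\Pi,\phi,g}$-cosets, and read off the Euler factor from $\phi_c$. One small slip: in your reduction step, the left-hand side is genuinely \emph{unchanged} (since $\Xi_c$ is $H(F)$-invariant) while only $Z(\phi,gW_\Pi^\mathrm{new};s)$ picks up the factor $|\det(g_2)|^{-s}=q^{v(\det(g_2))s}$ (note the sign), and it is this asymmetry that produces the $q^{-v(\det(g_2))s}$ on the right of the stated identity.
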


\begin{defn}

    Let $\Pi$ be an irreducible admissible generic $G(F)$-representation. The $K_\Pi$-\textcolor{black}{right-invariant} function $\mathcal{I}_\Pi^\mathrm{new}(-;s)$ is given by
    $$\mathcal{I}_\Pi^\mathrm{new}(g;s):=\int_{F^\times}W_\Pi^\mathrm{new}(\left[\begin{smallmatrix}
        y & \\
        & 1
    \end{smallmatrix}\right]g)|y|^{s-1}\ d^\times y,\ \ g\in G(F).$$
\end{defn}
\begin{rem}
    Using this definition, we can re-write 
    $$\Lambda_\Pi(f;s)=\int_{P(F)\backslash G(F)}f(g)|\det(g_2)|^s\ \mathcal{I}_\Pi^\mathrm{new}(g;s)\ dg,\ \ f\in \left(\mathrm{c}\text{-}\mathrm{Ind}_{P(F)}^{G(F)}\mathbf{1}\right)^{K_\Pi}.$$
\end{rem}
\noindent The following proposition determines the behavior of $\Lambda_\Pi(-;s)$ evaluated on characteristic functions, in terms of $\mathcal{I}_\Pi^\mathrm{new}(-;s)$. In what follows, we write $\vol_{P(F)}$ for the right or left normalized Haar measure on $P(F)$, giving $P(\mathcal{O})$ volume $1$. We will not distinguish between left or right, since the volume factor $\vol_{P(F)}(P(F)\cap gK_\Pi g^{-1})$ is independent of such distinction, for any $g\in G(F)$.
\begin{prop}\label{prop lambda on char functions}
    Let $\Pi=\pi_1\times \pi_2$ be an irreducible admissible generic $G(F)$-representation and $g\in G(F)$. Then, there is an equality of rational functions
    \begin{align}
        \Lambda_\Pi\left(\ch(P(F) g K_\Pi);s\right)=\mathrm{Vol}(K_\Pi)\cdot \vol_{P(F)}(P(F)\cap gK_\Pi g^{-1})^{-1}\cdot q^{-v(\det(g_2))s}\ \mathcal{I}_\Pi^\mathrm{new}(g;s) 
    \end{align}
    in $L(\Pi,s)\mathbf{C}[q^s,q^{-s}].$
    \begin{proof}
       From the double coset structure of  $P(F)\backslash G(F)/K_\Pi$, and the left $P(F)$-equivariance properties of $q^{-v(\det(-))s}\mathcal{I}_\Pi^\mathrm{new}(-;s)$, it is enough to assume that \begin{align}\label{eq: 2.1.4(1)}g=\left[\begin{smallmatrix}
           \varpi^a &\\
           & \varpi^a
       \end{smallmatrix}\right]\cdot(\gamma_0,k_0),\ a\in\mathbf{Z},\ \gamma_0\in H(F),\ k_0\in H(\mathcal{O}).\end{align}
       In this case the volume factor $\vol_{P(F)}(P(F)\cap gK_\Pi g^{-1})^{-1}$ can also be written as $[P(\mathcal{O}):P(\mathcal{O})\cap gK_\Pi g^{-1}]$. 
       For ease of notation, we write $f:=\ch(P(F) g K_\Pi)$. Unfolding $\Lambda_\Pi(f;s)$ as in \eqref{eq: 6}, we have 
       \begin{align}\label{eq: 8}
           {\Lambda_\Pi(f;s)=\vol(K_{c(\pi_2)})\sum_{k\in H(\mathcal{O})/K_{c(\pi_2)}}\int_{H(F)}\int_{F^\times} f(x(\gamma,k)) \mathcal{I}_\Pi^\mathrm{new}(x(\gamma,k);s)|x|^{2s}\ d^\times x\ d\gamma.}
       \end{align}
       Since $H(F)=\sqcup_{i\in\mathbf{Z} }\ P(F)\left[\begin{smallmatrix}
           \varpi^i & \\
           & \varpi^i
       \end{smallmatrix}\right] H(\mathcal{O})$, by looking at the second entry of $f(x(\gamma,k))$, we see that it is supported on $\varpi^a\mathcal{O}^\times$ as a function of $x$. Using this together  with a double change of variables on $\gamma$ and $k$ to eliminate the $\mathcal{O}^\times$ contribution, we see that \eqref{eq: 8} simplifies to
       $${\vol(K_{c(\pi_2)})\sum_{k\in H(\mathcal{O})/K_{c(\pi_2)}}\int_{H(F)} f\left(\left[\begin{smallmatrix}
           \varpi^a & \\
           & \varpi^a
       \end{smallmatrix}\right](\gamma,k)\right) \mathcal{I}_\Pi^\mathrm{new}(\left[\begin{smallmatrix}
               \varpi^a & \\
               &\varpi^a
           \end{smallmatrix}\right](\gamma,k);s)q^{-2as}\  d\gamma.}$$
           It is easy to see that as a function of $\gamma$, $f\left(\left[\begin{smallmatrix}
           \varpi^a & \\
           & \varpi^a
       \end{smallmatrix}\right](\gamma,k)\right)$ is supported on $P(\mathcal{O})\gamma_0K_{c(\pi_1)}$. Decomposing this double coset as 
       $$P(\mathcal{O})\gamma_0K_{c(\pi_1)}=\bigsqcup_{m\in P(\mathcal{O})/ P(\mathcal{O}) \cap \gamma_0K_{c(\pi_1)}\gamma_0^{-1}} m\gamma_0 K_{c(\pi_1)}$$we see that \eqref{eq: 8} is given by
       $$\mathrm{Vol}(K_\Pi)\sum_{k\in H(\mathcal{O})/K_{c(\pi_2)}}\ \ \sum_{m\in P(\mathcal{O})/ P(\mathcal{O}) \cap \gamma_0K_{c(\pi_1)}\gamma_0^{-1} } f(\left[\begin{smallmatrix}
           \varpi^a & \\
           & \varpi^a
       \end{smallmatrix}\right](m\gamma_0,k))\  \mathcal{I}_\Pi^\mathrm{new}(\left[\begin{smallmatrix}
               \varpi^a & \\
               &\varpi^a
           \end{smallmatrix}\right](m\gamma_0,k);s)\ q^{-2as}.$$
           From the $P(F)$-invariance of $f$, the $P(F)$-equivariance property of $\mathcal{I}_\Pi^\mathrm{new}$, and the fact that $m\in P(\mathcal{O})$, one notes that the above expression is independent of each $m\in P(\mathcal{O})/ P(\mathcal{O}) \cap \gamma_0K_{c(\pi_1)}\gamma_0^{-1}$. Thus, \eqref{eq: 8} is in fact equal to
           $$\mathrm{Vol}(K_\Pi)[P(\mathcal{O}): P(\mathcal{O}) \cap \gamma_0K_{c(\pi_1)}\gamma_0^{-1}] \sum_{k\in H(\mathcal{O})/K_{c(\pi_2)}} f(\left[\begin{smallmatrix}
           \varpi^a & \\
           & \varpi^a
       \end{smallmatrix}\right](\gamma_0,k))\  \mathcal{I}_\Pi^\mathrm{new}(\left[\begin{smallmatrix}
               \varpi^a & \\
               &\varpi^a
           \end{smallmatrix}\right](\gamma_0,k);s)\ q^{-2as}.$$ 
           The value of $f(\left[\begin{smallmatrix}
           \varpi^a & \\
           & \varpi^a
       \end{smallmatrix}\right](\gamma_0,k))$ is non-zero if and only if $(\gamma_0,k)$ is contained in $P(F)(\gamma_0,k_0) K_\Pi$. In other words, 
       $$\gamma_0=m\gamma_0k_1,\ k=mk_0k_2,\ m\in P(F),\ k_1\in 
 K_{c(\pi_1)},\ k_2\in K_{c(\pi_2)}.$$
    Hence as a function of $k\in H(\mathcal{O})/K_{c(\pi_2)}$, $f(\left[\begin{smallmatrix}
           \varpi^a & \\
           & \varpi^a
       \end{smallmatrix}\right](\gamma_0,k))$ is equal to $1$ on $$\left(P(\mathcal{O})\cap \gamma_0K_{c(\pi_1)}\gamma_0^{-1}\right)k_0K_{c(\pi_2)}=\bigsqcup_{w\in P(\mathcal{O})\cap \gamma_0K_{c(\pi_1)}\gamma_0^{-1}/P(\mathcal{O})\cap gK_\Pi g^{-1}} wk_0K_{c(\pi_2)}$$
       and $0$ everywhere else. However, for each such $w$, we have  $$f(\left[\begin{smallmatrix}
           \varpi^a & \\
           & \varpi^a
       \end{smallmatrix}\right](\gamma_0,wk_0))\  \mathcal{I}_\Pi^\mathrm{new}(\left[\begin{smallmatrix}
               \varpi^a & \\
               &\varpi^a
           \end{smallmatrix}\right](\gamma_0,wk_0);s)=f(\left[\begin{smallmatrix}
           \varpi^a & \\
           & \varpi^a
       \end{smallmatrix}\right](\gamma_0,k_0))\  \mathcal{I}_\Pi^\mathrm{new}(\left[\begin{smallmatrix}
               \varpi^a & \\
               &\varpi^a
           \end{smallmatrix}\right](\gamma_0,k_0);s).$$ Putting things together and concatenating the two indices, the result now follows at once.
    \end{proof}
\end{prop}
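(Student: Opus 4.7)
The plan is to evaluate $\Lambda_\Pi(f;s)$ with $f=\ch(P(F)gK_\Pi)$ by expanding the defining integral and then performing double-coset bookkeeping compatible with the measure decomposition already used in Proposition \ref{prop lambda and zeta}. As a first reduction, the double coset structure of $P(F)\backslash G(F)/K_\Pi$, combined with the left $P(F)$-equivariance of the integrand $q^{-v(\det(-))s}\mathcal{I}_\Pi^\mathrm{new}(-;s)$, allows me to replace $g$ by a convenient representative of the form $g=\diag(\varpi^a,\varpi^a)\cdot(\gamma_0,k_0)$ with $a\in\mathbf{Z}$, $\gamma_0\in H(F)$, $k_0\in H(\mathcal{O})$, since both sides of the claimed identity transform in the same way under left multiplication by $P(F)$ and right multiplication by $K_\Pi$. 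In particular, in this normalization $q^{-v(\det(g_2))s}=q^{-2as}$.

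Next I would rewrite $\Lambda_\Pi(f;s)$ using the measure-splitting of \cite[Lemma $4.1.1$]{groutides2024rankinselbergintegralstructureseuler}, exactly as in \eqref{eq: 6}. This produces a finite sum over $k\in H(\mathcal{O})/K_{c(\pi_2)}$ of integrals of the form
\[
\int_{H(F)}\int_{F^\times} f(x(\gamma,k))\,\mathcal{I}_\Pi^\mathrm{new}(x(\gamma,k);s)\,|x|^{2s}\,d^\times x\,d\gamma.
\]
Because $H(F)=\bigsqcup_{i\in\mathbf{Z}}P(F)\cdot\diag(\varpi^i,\varpi^i)H(\mathcal{O})$, inspection of the second entry of $f(x(\gamma,k))$ forces $x\in\varpi^a\mathcal{O}^\times$; absorbing the $\mathcal{O}^\times$-factor by a simultaneous change of variables in $\gamma$ and $k$ yields a clean factor $q^{-2as}$ out front and reduces the problem to the evaluation of $f$ and $\mathcal{I}_\Pi^\mathrm{new}$ at $\diag(\varpi^a,\varpi^a)(\gamma,k)$.

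The remaining work is a careful coset count. As a function of $\gamma$ the characteristic function is supported on $P(\mathcal{O})\gamma_0 K_{c(\pi_1)}$; decomposing this double coset into right $K_{c(\pi_1)}$-cosets indexed by $P(\mathcal{O})/P(\mathcal{O})\cap\gamma_0K_{c(\pi_1)}\gamma_0^{-1}$, the $P(F)$-invariance of $f$ together with the $P(F)$-equivariance of $\mathcal{I}_\Pi^\mathrm{new}$ make the integrand constant along each coset, producing the index $[P(\mathcal{O}):P(\mathcal{O})\cap\gamma_0K_{c(\pi_1)}\gamma_0^{-1}]$. Similarly, as a function of $k\in H(\mathcal{O})/K_{c(\pi_2)}$ the characteristic function is supported on $(P(\mathcal{O})\cap\gamma_0K_{c(\pi_1)}\gamma_0^{-1})k_0K_{c(\pi_2)}$, which further decomposes into $K_{c(\pi_2)}$-cosets indexed by $(P(\mathcal{O})\cap\gamma_0K_{c(\pi_1)}\gamma_0^{-1})/(P(\mathcal{O})\cap gK_\Pi g^{-1})$, with the integrand again constant along each coset.

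Concatenating the two index sets telescopes to the single quotient $P(\mathcal{O})/P(\mathcal{O})\cap gK_\Pi g^{-1}$, whose cardinality equals $\vol_{P(F)}(P(F)\cap gK_\Pi g^{-1})^{-1}$ in our normalization of measure on $P(F)$. Together with the surviving factor $\mathrm{Vol}(K_\Pi)=\vol(K_{c(\pi_1)})\vol(K_{c(\pi_2)})$ and the single evaluation $q^{-2as}\mathcal{I}_\Pi^\mathrm{new}(\diag(\varpi^a,\varpi^a)(\gamma_0,k_0);s)=q^{-v(\det(g_2))s}\mathcal{I}_\Pi^\mathrm{new}(g;s)$, this yields exactly the asserted identity. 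The main obstacle is purely combinatorial, namely keeping the various intersections with conjugates of $K_{c(\pi_1)}$, $K_{c(\pi_2)}$ and $K_\Pi$ consistent so that the two coset decompositions telescope cleanly into the single volume factor; all the analytic content has already been packaged into the definitions of $\Lambda_\Pi$ and $\mathcal{I}_\Pi^\mathrm{new}$, so no further meromorphic-continuation issues arise.
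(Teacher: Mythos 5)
Your proposal is correct and follows essentially the same route as the paper's own proof: the reduction to a representative $\diag(\varpi^a,\varpi^a)(\gamma_0,k_0)$, the unfolding via the measure-splitting of \eqref{eq: 6}, the support argument forcing $x\in\varpi^a\mathcal{O}^\times$, and the two-stage coset decomposition (first over $P(\mathcal{O})/P(\mathcal{O})\cap\gamma_0K_{c(\pi_1)}\gamma_0^{-1}$, then over $P(\mathcal{O})\cap\gamma_0K_{c(\pi_1)}\gamma_0^{-1}/P(\mathcal{O})\cap gK_\Pi g^{-1}$) concatenating to the single index $[P(\mathcal{O}):P(\mathcal{O})\cap gK_\Pi g^{-1}]$ are precisely the steps used in the paper. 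No gaps to report.
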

\begin{rem}
    The above proposition essentially recovers expression $(9)$ in \cite[Proposition $4.3.6$]{groutides2024rankinselbergintegralstructureseuler}, in the case where $\Pi$ is spherical. 
\end{rem}

\begin{lem}\label{lem integrality lemma}
    Let $\phi\otimes \ch(g K_\Pi)\in \mathcal{S}(F^2)\otimes C_c^\infty(G(F)/K_\Pi)$ and assume that $(\phi,g)$ is a $\Pi$-integral datum. Then 
    $\Xi_c(\phi\otimes \ch(gK_\Pi))$ takes values in $\mathbf{Z}$, as a function in 
 $C_c^\infty(P(F)\backslash G(F)/K_\Pi)$.
    \begin{proof}
        The proof is identical to \cite[Proposition $3.2$]{groutides2024rankinselbergintegralstructureseuler} upon translating notation.
    \end{proof}
\end{lem}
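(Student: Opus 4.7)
The plan is to reduce to proving integrality of $\Xi(\phi\otimes\ch(gK_\Pi))$, since $\Xi_c$ is defined as the difference
$$\Xi_c(\phi\otimes\ch(gK_\Pi))(-) = \Xi(\phi\otimes\ch(gK_\Pi))(-) - \Xi(\phi\otimes\ch(gK_\Pi))(\left[\begin{smallmatrix} \varpi & \\ & \varpi \end{smallmatrix}\right]^{-1}(-)),$$
and integrality of $\Xi$ implies integrality of $\Xi_c$ pointwise. The proposal is then to unfold the integral defining $\Xi$ at an arbitrary point $g'\in G(F)$ and exploit the $\Pi$-integrality hypothesis directly.

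Concretely, the integrand $\ch(gK_\Pi)(h^{-1}g')\phi((0,1)h)$ is supported, as a function of $h\in H(F)$, on a single coset $h_0(H(F)\cap gK_\Pi g^{-1})$, where $h_0$ is any element with $h_0^{-1}g'\in gK_\Pi$ (if no such $h_0$ exists the value is $0$). This set is open compact, which is precisely what guarantees convergence of the original integral. Since $\mathrm{Stab}(\phi)$ acts by right translation, the function $h\mapsto \phi((0,1)h)$ is constant on right cosets of $K_{\Pi,\phi,g}:=\mathrm{Stab}(\phi)\cap gK_\Pi g^{-1}$, which sits inside $H(F)\cap gK_\Pi g^{-1}$. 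Decomposing the domain of integration into such cosets collapses the integral to the finite sum
$$\Xi(\phi\otimes\ch(gK_\Pi))(g') = \vol(K_{\Pi,\phi,g})\cdot\sum_{h}\phi((0,1)h),$$
where $h$ runs through coset representatives of $h_0(H(F)\cap gK_\Pi g^{-1})/K_{\Pi,\phi,g}$.

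Conclusion follows by invoking the definition of $\Pi$-integral datum, which asserts precisely that $\phi$ is valued in $\frac{1}{\vol(K_{\Pi,\phi,g})}\mathbf{Z}$. Each summand therefore lies in this fractional ideal, so multiplication by $\vol(K_{\Pi,\phi,g})$ places the whole expression in $\mathbf{Z}$. There is no serious obstacle: the proof is essentially a careful unfolding plus a support analysis. The only conceptual point, which also explains why the definition of $\Pi$-integrality is calibrated with that particular volume factor, is to recognize that the subgroup one extracts naturally from the interplay between $\mathrm{Stab}(\phi)$ (controlling where $\phi((0,1)h)$ is constant) and $gK_\Pi g^{-1}$ (controlling the support) is exactly $K_{\Pi,\phi,g}$, so that the denominator prescribed by the integrality condition exactly matches the volume that emerges from the decomposition.
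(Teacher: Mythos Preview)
Your argument is correct and is precisely the direct unfolding that the cited reference carries out: identify the support of $h\mapsto\ch(gK_\Pi)(h^{-1}g')$ as a single coset of $H(F)\cap gK_\Pi g^{-1}$, break it into cosets of $K_{\Pi,\phi,g}$ on which $h\mapsto\phi((0,1)h)$ is constant, and conclude by the $\Pi$-integrality hypothesis. One terminological nitpick: the function is constant on \emph{left} cosets $hK_{\Pi,\phi,g}$ (since the invariance is under right translation by $\mathrm{Stab}(\phi)$), which is indeed what your quotient notation $h_0(H(F)\cap gK_\Pi g^{-1})/K_{\Pi,\phi,g}$ reflects.
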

\subsection{The problem and its reduction}\label{sec reduction}  Let $\Pi=\pi_1\times \pi_2$ be an irreducible admissible generic unitary representation of $G(F)$ and for now let $A=A(\Pi)$ be an avatar for a sufficiently large $\mathbf{Z}[q^{-1}]$-algebra in which $q^{-1}\cdot\#H(\mathcal{O}/\varpi\mathcal{O})=(q+1)(q-1)^2$ is \textit{not} invertible. The ``size'' of $A$ will depend on the class of representations involved but it will always be large enough so that it contains the central character of $\Pi$, and $L(\Pi,s)^{-1}\in A[q^s,q^{-s}]$.\\
\\
\textbf{The Problem:} Given an arbitrary $\Pi$-integral datum $(\phi,g_1,g_2)\in \mathcal{S}(F^2)\times G(F)$ in the sense of \Cref{def integral datum}, do we always have
\begin{align}\label{eq: Problem}\tag{\textbf{P}}
Z(\phi,g_1W_{\pi_1}^\mathrm{new}, g_2W_{\pi_2}^\mathrm{new};s)\overset{?}{\in} L(\Pi,s)A[q^s,q^{-s}]\ ?
\end{align}
In other words, is the entire part of the Rankin-Selberg integral always an $A$-valued Laurent polynomial in the variable $q^{ s}$?
\subsubsection{A commutative diagram}
\noindent Due to \Cref{prop lambda and zeta}, it is natural to also consider the linear map 
\begin{align*}\mathcal{Z}^H_\Pi(-;s):\mathcal{S}(F^2)\otimes C_c^\infty(G(F)/K_\Pi)&\longrightarrow L(\Pi,s)\mathbf{C}[q^s,q^{-s}]\\
\phi\otimes \ch(gK_\Pi)&\mapsto |\det(g_2)|^s \mathcal{Z}_\Pi(\phi\otimes \ch(gK_\Pi);s).\end{align*}
The modified map $\mathcal{Z}_\Pi^H(-;s)$ is $H(F)$-invariant and hence \Cref{prop lambda and zeta} gives a commutative diagram

\[\begin{tikzcd}[ampersand replacement=\&,cramped]
	{\mathcal{S}(F^2)\otimes C_c^\infty(G(F)/K_\Pi)} \&\& {L(\Pi,s)\mathbf{C}[q^s,q^{-s}]} \\
	{\left(\mathrm{c}\text{-}\mathrm{Ind}_{P(F)}^{G(F)}\mathbf{1}\right)^{K_\Pi}} \&\& {L(\Pi,s)\mathbf{C}[q^s,q^{-s}]}
	\arrow["{\mathcal{Z}_\Pi^H(-;s)}", from=1-1, to=1-3]
	\arrow["{\Xi_c}"', from=1-1, to=2-1]
	\arrow["{(1-\omega_\Pi(\varpi)q^{-2s})}", from=1-3, to=2-3]
	\arrow["{\Lambda_\Pi(-;s)}", from=2-1, to=2-3]
\end{tikzcd}\]

\noindent Let $\delta_P$ denote the modular character of the mirabolic of $H$ satisfying $\delta_P d^Lx=d^Rx$ where $d^Lx,d^Rx$ denote the left, respectively right, normalized Haar measure on $P(F)$. We'll use the subscript $(-)_{P(F),\delta_P}$ to denote $\delta_P$-twisted $P(F)$-coinvariants; if $M$ is a $P(F)$-module, we set 
$$M_{P(F),\delta_P}:=M/\langle xm-\delta_P(x)m\ |\ m\in M,\ x\in P(F)\rangle.$$
We have a linear isomorphism $C_c^\infty(P(F)\backslash G(F)/K_\Pi)\simeq C_c^\infty(G(F)/K_\Pi)_{P(F),\delta_P}$ defined on characteristic functions as $\ch(P(F)\gamma K_\Pi)\mapsto \vol_{P(F)}(P(F)\cap \gamma K_\Pi \gamma^{-1})^{-1} \ch(\gamma K_\Pi)$. For more details on why this is well-defined we refer to \cite[Proposition $6.1.1$]{groutides2024rankinselbergintegralstructureseuler}. Moreover, we have the standard Hecke action  $\xi\cdot W_\Pi^\mathrm{new}=\int_{G(F)}\xi(g)gW_\Pi^\mathrm{new}dg$ where the measure $dg$ is normalized to give $G(\mathcal{O})$ volume $1$ as usual. It is clear that $\ch(g K_\Pi)$ acts as $\mathrm{Vol}(K_\Pi)g$. As with $\mathcal{Z}_\Pi$, we also consider a slightly modified version of $\mathcal{I}_\Pi^\mathrm{new}$:
 \begin{align*}
     \mathcal{I}_{\Pi,\delta_P}^\mathrm{new}(-;s)&:C_c^\infty(G(F)/K_\Pi)_{P(F),\delta_P}\longrightarrow L(\Pi,s)\mathbf{C}[q^s,q^{-s}]\\ \mathcal{I}_{\Pi,\delta_P}^\mathrm{new}(\ch(gK_\Pi);s)&:= |\det(g_2)|^s\int_{F^\times} \left(\ch(gK_\Pi)\cdot W_\Pi^\mathrm{new}\right)(\left[\begin{smallmatrix}
         y & \\
         & 1
     \end{smallmatrix}\right])|y|^{s-1}\ d^\times y\\
     &=\mathrm{Vol}(K_\Pi)\cdot |\det(g_2)|^s\mathcal{I}_\Pi^\mathrm{new}(g;s).
 \end{align*}
It is straightforward to verify that $\mathcal{I}_{\Pi,\delta_P}^\mathrm{new}(-;s)$ is well-defined by an unraveling of definitions and change of variables. Combining what's been said so far with \Cref{prop lambda on char functions}, we can expand our commutative diagram to look like
\[\begin{tikzcd}[ampersand replacement=\&,cramped]
	{\mathcal{S}(F^2)\otimes C_c^\infty(G(F)/K_\Pi)} \&\& {L(\Pi,s)\mathbf{C}[q^s,q^{-s}]} \\
	{C_c^\infty(P(F)\backslash G(F)/K_\Pi)} \&\& {L(\Pi,s)\mathbf{C}[q^s,q^{-s}]} \\
	{C_c^\infty(G(F)/K_\Pi)_{P(F),\delta_P}} \&\& {L(\Pi,s)\mathbf{C}[q^s,q^{-s}].}
	\arrow["{{\mathcal{Z}_\Pi^H(-;s)}}", from=1-1, to=1-3]
	\arrow["{{\Xi_c}}"', from=1-1, to=2-1]
	\arrow["{(1-\omega_\Pi(\varpi)q^{-2s})}", from=1-3, to=2-3]
	\arrow["{{\Lambda_\Pi(-;s)}}", from=2-1, to=2-3]
	\arrow["\simeq"', from=2-1, to=3-1]
	\arrow["1", from=2-3, to=3-3]
	\arrow["{\mathcal{I}_{\Pi,\delta_P}^\mathrm{new}(-;s)}", from=3-1, to=3-3]
\end{tikzcd}\]
By \Cref{lem integrality lemma} we know that an element $\phi\otimes \ch(gK_\Pi)\in \mathcal{S}(F^2)\otimes C_c^\infty(G(F)/K_\Pi)$ with $(\phi,g)$ a $\Pi$-integral datum, gets mapped under $\Xi_c$ to an element of $C_c^\infty(P(F)\backslash G(F)/K_\Pi)$ that is integral in the naive sense; i.e. it is a $\mathbf{Z}$-valued function on $P(F)\backslash G(F)/K_\Pi.$
\begin{lem}
    If ${(1-\omega_\Pi(\varpi)q^{-2s})}\tfrac{Z(\phi,g_1W_{\pi_1}^\mathrm{new},g_2 W_{\pi_2}^\mathrm{new};s)}{L(\pi_1\times \pi_2,s)}\in A[q^s,q^{-s}]$, then $\tfrac{Z(\phi,g_1W_{\pi_1}^\mathrm{new},g_2 W_{\pi_2}^\mathrm{new};s)}{L(\pi_1\times \pi_2,s)}\in A[q^s,q^{-s}]$.
    \begin{proof}
        This follows by comparing coefficients since we have, a priori, that $$Z(\phi,g_1W_{\pi_1}^\mathrm{new},g_2 W_{\pi_2}^\mathrm{new};s)L(\pi_1\times \pi_2,s)^{-1}=\Phi(\phi,g_1W_{\pi_1}^\mathrm{new},g_2 W_{\pi_2}^\mathrm{new};q^s)\in \mathbf{C}[q^s,q^{-s}]$$ and $\omega_\Pi\in A^\times$.
    \end{proof}
\end{lem}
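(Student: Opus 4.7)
The plan is to use the fact that, by the Jacquet--Langlands theorem recalled at the start of the paper, the quotient $\Phi(q^s):=Z(\phi,g_1W_{\pi_1}^\mathrm{new},g_2W_{\pi_2}^\mathrm{new};s)/L(\pi_1\times\pi_2,s)$ is already a priori a finite Laurent polynomial in $X=q^s$ with complex coefficients. So the problem reduces to a purely algebraic statement: if $\Phi(X)\in\mathbf{C}[X,X^{-1}]$ and $(1-\alpha X^{-2})\Phi(X)\in A[X,X^{-1}]$ for $\alpha:=\omega_\Pi(\varpi)\in A^\times$, then $\Phi(X)\in A[X,X^{-1}]$.

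The key step is a descending induction on the degree. I would write $\Phi(X)=\sum_{n}a_n X^n$ with $a_n\in\mathbf{C}$ all but finitely many zero, and expand
\[
(1-\alpha X^{-2})\Phi(X)=\sum_n (a_n-\alpha a_{n+2})X^n.
\]
The hypothesis tells us that each coefficient $a_n-\alpha a_{n+2}$ lies in $A$. Let $N$ be an upper bound such that $a_n=0$ for $n>N$. Then comparing coefficients in degrees $N$ and $N-1$ gives $a_N,a_{N-1}\in A$ directly. For $n\leq N-2$, we use the recursion $a_n=\alpha a_{n+2}+(a_n-\alpha a_{n+2})$, which expresses $a_n$ as a sum of an element of $A$ and $\alpha$ times $a_{n+2}$; since $\alpha\in A^\times\subseteq A$ and $a_{n+2}\in A$ by the inductive hypothesis, we conclude $a_n\in A$.

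There is no hard part here; the only things to double-check are that $\omega_\Pi(\varpi)$ is genuinely a unit of $A$ (which is built into the hypothesis that $A$ contains the central character of $\Pi$, since $\omega_\Pi$ takes values in roots of unity or in $\mathbf{Z}[q^{-1},\mu_\bullet]^\times$ anyway) and that $\Phi$ is indeed a Laurent polynomial, which is precisely the content of the Jacquet--Langlands rationality statement recalled in the introduction. Thus the lemma follows by a single coefficient comparison combined with downward induction.
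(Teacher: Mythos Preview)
Your proof is correct and is exactly the argument the paper has in mind when it says ``comparing coefficients'': you use that $\Phi\in\mathbf{C}[q^s,q^{-s}]$ a priori, expand $(1-\alpha X^{-2})\Phi(X)$, and induct downward from the top coefficient. The only minor remark is that your argument actually only requires $\alpha=\omega_\Pi(\varpi)\in A$ (not $A^\times$), so the parenthetical about why it is a unit is unnecessary.
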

\noindent Thus, using the above lemma, the commutativity of the diagram preceding it, and \Cref{lem integrality lemma}, we deduce that the sought for $A$-integrality follows from the corresponding $A$-integrality of $\mathcal{I}_{\Pi,\delta_P}^\mathrm{new}(-;s)$ on the lattice of $C_c^\infty(G(F)/K_\Pi)_{P(F),\delta_P}$ traced out by the image of $C_c^\infty(P(F)\backslash G(F)/K_\Pi,\mathbf{Z})$. More precisely, combining everything so far, we conclude that the following implications hold:
\begin{align}\label{eq: reduction step}
    \begin{tikzcd}[ampersand replacement=\&,sep=tiny]
	\begin{array}{c} \left\{ \substack{ \mathrm{Vol}(K_\Pi)^{-1}\cdot\mathcal{I}_{\Pi,\delta_P}^\mathrm{new}(-;s)\ \text{is in } L(\Pi,s)A[q^s,q^{-s}]\ \text{when}\\ \text{evaluated on the lattice inside }C_c^\infty(G(F)/K_\Pi)_{P(F),\delta_P} \\ \text{corresponding to the image of }C_c^\infty(P(F)\backslash G(F)/K_\Pi,\mathbf{Z})} \right\}  \end{array} \\
	\& \begin{array}{c} \left\{  \substack{  \text{The Rankin-Selberg integral }Z(\phi,g_1W_{\pi_1}^\mathrm{new}, g_2W_{\pi_2}^\mathrm{new};s) \\ \text{is in }L(\Pi,s)A[q^s,q^{-s}]\ \text{for any }\Pi\text{-integral datum }(\phi,g_1,g_2)   } \right\}  \end{array} \\
	\begin{array}{c} \left\{ \substack{ \mathcal{I}_{\Pi}^\mathrm{new}(-;s)\ \text{is in } L(\Pi,s)A[q^s,q^{-s}]\ \text{when evaluated}\\ \text{on elements }\mathrm{vol}_{P(F)}(P(F)\cap\gamma K_\Pi \gamma^{-1})^{-1}\cdot \gamma \\ \text{for any }\gamma\ \text{in}\ P(F)\backslash G(F)/K_\Pi     } \right\}  \end{array}
	\arrow[Rightarrow, from=1-1, to=2-2]
	\arrow[shift right=5, Rightarrow, from=1-1, to=3-1]
	\arrow[shift right=5, Rightarrow, from=3-1, to=1-1]
	\arrow[Rightarrow, from=3-1, to=2-2]
\end{tikzcd}
\end{align}

\section{Proving the reduced problem}\label{sec 4}

The goal of this section is to show that for irreducible admissible generic unitary representations $\pi_1,\pi_2$ of $\GL_2(F)$, the left-hand side of \eqref{eq: reduction step} always holds with $\Pi:=\pi_1\times \pi_2$. In other words that
$$\frac{1}{\vol_{P(F)}(P(F)\cap \gamma K_\Pi\gamma^{-1})}\cdot\mathcal{I}_{\Pi}^\mathrm{new}(\gamma ;s)\in L(\Pi,s)A[q^s,q^{-s}]\subseteq A(q^s,q^{-s})$$
for any $\gamma$ in $P(F)\backslash G(F)/K_\Pi$. \textcolor{black}{Here $A(q^s,q^{-s})$ denotes the fraction field of $A[q^s,q^{-s}]$}. From this, using the reduction steps of \Cref{sec reduction}, we will obtain a positive answer to \eqref{eq: Problem}. Our approach relies on a case-by-case analysis, using the classification of irreducible admissible generic representations of $\GL_2(F)$, and results of \cite{Assing_2018}, \cite{Assing_2019} and \cite{Saha_2015}, on the explicit behavior of the new vector, for any class of such representations.
\begin{defn}[\cite{Saha_2015} Definition $2.12$]
    For $t,k\in\mathbf{Z}$ and $v\in\mathcal{O}^\times$, we write $$g_{t,k,v}:=\left[\begin{smallmatrix}
        \varpi^t & \\
        & 1
    \end{smallmatrix}\right]\left[\begin{smallmatrix}
        & 1 \\
        -1 &
    \end{smallmatrix}\right]\left[\begin{smallmatrix} 
    1 & v\varpi^{-k}\\
    & 1
    \end{smallmatrix}\right].$$
\end{defn}
\begin{lem}[\cite{Saha_2015} Lemma $2.13$]\label{lem general coset decomp}
\textcolor{black}{For any integer $n\geq 0$,} there is a decomposition into pairwise disjoint cosets
$$H(F)=\bigsqcup_{t\in\mathbf{Z}}\ \bigsqcup_{0\leq k\leq \textcolor{black}{n}}\ \bigsqcup_{v\in\mathcal{O}^\times/1+\varpi^{\mathrm{min}\{k,\textcolor{black}{n}-k\}}\mathcal{O}} Z(F)N(F)g_{t,k,v} K_{\textcolor{black}{n}}.$$
\end{lem}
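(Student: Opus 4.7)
The plan is to establish this by a two-step Iwasawa-type decomposition. First, I combine the Iwasawa decomposition $H(F)=B(F)H(\mathcal{O})$ with the factorization $B(F)=Z(F)N(F)A(F)$, where $A(F)=\{\diag(a,1):a\in F^\times\}$. Absorbing the unit part $A(\mathcal{O})\subseteq H(\mathcal{O})$ into the right $H(\mathcal{O})$-factor yields a coarse disjoint decomposition
$$H(F)=\bigsqcup_{t\in\mathbf{Z}}Z(F)N(F)\diag(\varpi^t,1)H(\mathcal{O}),$$
with disjointness coming from $Z(F)N(F)\backslash B(F)/B(\mathcal{O})\cong F^\times/\mathcal{O}^\times\cong\mathbf{Z}$.

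Next, I would refine each Iwasawa cell by the right $K_{c(\pi)}$-action. Rewriting each $g_{t,k,v}$ via the Bruhat-type identity
$$\left[\begin{smallmatrix}0&1\\-1&0\end{smallmatrix}\right]\left[\begin{smallmatrix}1&v\varpi^{-k}\\0&1\end{smallmatrix}\right]=\left[\begin{smallmatrix}-\varpi^k/v&1\\0&-v\varpi^{-k}\end{smallmatrix}\right]\left[\begin{smallmatrix}1&0\\\varpi^k/v&1\end{smallmatrix}\right]$$
puts $g_{t,k,v}$ into the Iwasawa form (element of $B(F)$)$\cdot$(element of $H(\mathcal{O})$), whose $H(\mathcal{O})$-factor has bottom row $(\varpi^k/v,1)$ modulo $\varpi^{c(\pi)}$. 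This also reads off the Iwasawa exponent of $g_{t,k,v}$ explicitly, which one can verify to be $t+2k$.

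To show that the $\{g_{t,k,v}\}$ exhaust all double cosets and give a disjoint union, I would reverse the argument: starting from an arbitrary $g\in H(F)$, apply Iwasawa to place it in a cell indexed by $t'\in\mathbf{Z}$, then apply the Bruhat-type move above to bring the $H(\mathcal{O})$-factor into a normal form with bottom row of the shape $(\varpi^k/v,1)$ for unique $k\in\{0,\ldots,c(\pi)\}$ (controlling the $\varpi$-valuation of the first entry) and $v\in\mathcal{O}^\times$. The key input is that once $t$ and $k$ are fixed, two choices of $v$ give the same $Z(F)N(F)$-left, $K_{c(\pi)}$-right double coset precisely when they agree modulo $1+\varpi^{\min\{k,c(\pi)-k\}}\mathcal{O}$; this is a direct matrix computation combining the residual $Z(F)N(F)$-action (which effectively rescales by $\mathcal{O}^\times$ on the first bottom-row entry and shifts the second) with the right $K_{c(\pi)}$-action (which can reabsorb powers of $\varpi$ into the top row differently depending on the depth of the bottom row).

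The main obstacle is correctly pinning down the modulus $1+\varpi^{\min\{k,c(\pi)-k\}}\mathcal{O}$. The appearance of the minimum reflects an Atkin-Lehner-type symmetry $k\leftrightarrow c(\pi)-k$: when $k<c(\pi)/2$ the $K_{c(\pi)}$-action on the right provides more room to identify cosets, whereas when $k>c(\pi)/2$ the residual $Z(F)N(F)$-action on the left takes over. Carrying out the case analysis $k<c(\pi)/2$, $k=c(\pi)/2$, and $k>c(\pi)/2$, and showing that the two contributions balance out to give exactly the modulus $\varpi^{\min\{k,c(\pi)-k\}}$, is the most delicate part of the argument. Disjointness for distinct $(t,k,v)$ then follows by first comparing Iwasawa exponents (to separate $t$) and then comparing the normalized bottom-row data modulo this precise ideal.
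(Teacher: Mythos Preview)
The paper does not give its own argument for this lemma: it simply records that the proof can be found in \cite[\S 3.3]{cesnavicius2022maninconstantmodulardegree}, and the statement itself is quoted from \cite[Lemma~2.13]{Saha_2015}. So there is no in-paper proof to compare against.

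Your sketch is a reasonable outline of the standard two-step argument (Iwasawa to isolate a $\mathbf{Z}$-index, then a Bruhat-type normalization of the $H(\mathcal{O})$-factor modulo $K_{c(\pi)}$), and it is in the same spirit as the arguments in the cited references. One small imprecision: the Iwasawa exponent you compute for $g_{t,k,v}$ is $t+2k$, not $t$, so ``comparing Iwasawa exponents to separate $t$'' really separates $t+2k$; once the bottom-row valuation pins down $k$, the parameter $t$ is then recovered as $(t+2k)-2k$. With that adjustment, your disjointness strategy goes through as described.
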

\begin{proof}
    The proof can be found in \cite[\S $3.3$]{cesnavicius2022manin}.
\end{proof}
Using \cite[Proposition $2.23$]{Saha_2015}  as a key tool, Assing (\cite{Assing_2018}, \cite{Assing_2019}) completely characterizes the support and behavior of the Whittaker new vector on each coset representative $g_{t,k,v}$, in the non-spherical case (the spherical case is already well-known). The results of \textit{op.cit.} are once again phrased for the conjugate Whittaker new vector which we denote here by $W_\pi^{\mathrm{new}^*}$, hence we will always ``dualize'' them before applying them, and then implicitly apply \Cref{prop new vs conj. new}. 
\subsection{Case-by-case analysis}
We will now start a case-by-case analysis, distinguishing between various classes of irreducible admissible generic representations of $\GL_2(F)$. The distinction comes from the different behavior of the Whittaker new vector as determined by \cite{Assing_2018}. Most of the time we will assume that the central characters of our representations are trivial on the uniformizer $\varpi$, which is always the case up to an unramified twist. Under this convention, we will consider the following classes of representations:
\begin{enumerate}
    \item Unramified principal-series representation 
    \item Half-ramified principal-series representation 
    \item Fully-ramified principal-series representation (there is a further distinction in this case depending on whether the ramified parts of the characters inducing the representation, are distinct or not)
    \item Unramified Steinberg representation
    \item Ramified Steinberg representation
    \item Supercuspidal representation
\end{enumerate}
and all combinations $\pi_1\times\pi_2$ that can be formed with $\pi_1,\pi_2$ as above. Of course, we do not distinguish between $\pi_1\times \pi_2$ and $\pi_2\times \pi_1$.
\subsubsection{Two unramified principal-series representations}
We start with the case of two unramified principal-series representations. In this case the new vector is the spherical vector. In the unramified case, we have already completely determined the integral behavior of $\Lambda_\Pi$ in \cite[\S $3$]{groutides2024rankinselbergintegralstructureseuler}. For the purposes of \textit{op.cit.}, we work at the level of unramified Hecke modules and invariant periods. As such, we don't establish a direct relation at the level zeta-integrals themselves. Using the results of \Cref{sec: general construction} which establish this relation, we have the following proposition. We write $\mathrm{Sch}_i(x,y):=\frac{x^{i+1}-y^{i+1}}{x-y}$, for the Schur polynomial of homogeneous degree $i$. 
\begin{prop}
    Let $\Pi:=\pi_1\times \pi_2$ where $\pi_1,\pi_2$ are irreducible unramified principal-series representations. Let $A\subseteq\mathbf{C}$ be a $\mathbf{Z}[q^{-1}]$-algebra containing the spherical Hecke eigenvalues of $\Pi$, and its central character. Let $(\phi,g_1,g_2)\in\mathcal{S}(F^2)\times G(F)$ be a $\Pi$-integral datum. Then, \eqref{eq: Problem} holds.
    \begin{proof}
        This follows from \Cref{prop lambda and zeta}, \Cref{prop lambda on char functions}, \Cref{lem integrality lemma}, together with \cite[Lemma $4.3.6$ \& Lemma $4.3.7$]{groutides2024rankinselbergintegralstructureseuler} and their proofs. For a more detailed account on how to link the results of \Cref{sec: general construction} together, we refer to the next case we consider below. Additionally, we implicitly use the fact that the reciprocal of the convolution $L$-factor $L(\Pi,s)^{-1}$ is an element of $A[q^s,q^{-s}]$. This follows with some rearranging from the explicit description given in \eqref{eq: gl2 l-factor} together with the well-known expressions for the spherical Hecke eigenvalues of the $\pi_i$ in terms of their Satake parameters. Additionally, a subtle fact used throughout the proof, is that for any $i\in\mathbf{Z}$, the quantity  $q^{-\tfrac{i}{2}}\mathrm{Sch}_i(\alpha_\pi,\beta_\pi)$ (where $\alpha_\pi,\beta_\pi$ are the Satake parameters) is contained in any $\mathbf{Z}[q^{-1}]$-algebra $A\subseteq \mathbf{C}$ which contains the spherical Hecke eigenvalues of $\pi$. This follows from a simple parity consideration after expanding the Schur polynomial as in \cite[\S $6.3.1$]{groutides2024integral} and using the well-known expressions for these eigenvalues mentioned before.
    \end{proof}
\end{prop}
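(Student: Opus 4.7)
My plan is to apply the reduction framework of \Cref{sec reduction}, specifically implication \eqref{eq: reduction step}. Since $\pi_1,\pi_2$ are both spherical we have $c(\pi_1)=c(\pi_2)=0$, so $K_\Pi=G(\mathcal{O})$ and the new vectors are the usual spherical vectors. It therefore suffices to verify the ``coset-representative'' version of the left-hand side of \eqref{eq: reduction step}: for every representative $\gamma$ of $P(F)\backslash G(F)/G(\mathcal{O})$, the quantity
$$\vol_{P(F)}\bigl(P(F)\cap \gamma G(\mathcal{O})\gamma^{-1}\bigr)^{-1}\cdot \mathcal{I}_{\Pi,\delta_P}^\mathrm{new}(\gamma W_\Pi^\mathrm{new};s)$$
lies in $L(\Pi,s)A[q^s,q^{-s}]$. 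Once this is established, combining with \Cref{lem integrality lemma} and the commutative diagram preceding it transfers the integrality across to $Z(\phi,g_1W_{\pi_1}^\mathrm{new},g_2W_{\pi_2}^\mathrm{new};s)$ for any $\Pi$-integral datum.

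For the actual computation, I would choose a convenient set of coset representatives coming from the Iwasawa decomposition, essentially of the form $\left[\begin{smallmatrix}\varpi^a & \\ & \varpi^a\end{smallmatrix}\right]\cdot (\gamma_0,k_0)$ as in the proof of \Cref{prop lambda on char functions}, then use the $P(F)$-equivariance of $\mathcal{I}_\Pi^\mathrm{new}$ to reduce to evaluating $W_{\pi_i}^\mathrm{new}$ on diagonal matrices $\left[\begin{smallmatrix} y & \\ & 1 \end{smallmatrix}\right]$. The latter is given by the Shintani/Casselman--Shalika formula in terms of Schur polynomials $\mathrm{Sch}_i(\alpha_{\pi_i},\beta_{\pi_i})$ in the Satake parameters. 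The two key integrality observations I would invoke are that $q^{-i/2}\mathrm{Sch}_i(\alpha_\pi,\beta_\pi)\in A$ (via a parity argument on the Schur polynomial expansion combined with standard expressions for the spherical Hecke eigenvalues in terms of Satake parameters) and that $L(\Pi,s)^{-1}\in A[q^s,q^{-s}]$ (directly from \eqref{eq: gl2 l-factor} upon rearranging).

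The bulk of the analytic work --- evaluating $\Lambda_\Pi(\ch(P(F)\gamma K_\Pi);s)$ explicitly and verifying that it lies in $L(\Pi,s)A[q^s,q^{-s}]$ --- has essentially already been carried out in \cite[Lemma $4.3.6$ \& Lemma $4.3.7$]{groutides2024rankinselbergintegralstructureseuler}. The main obstacle, which is really one of assembly rather than computation, is to confirm that the diagram of \Cref{sec reduction} converts those prior period-theoretic statements into an integrality statement for the zeta integral itself. Here \Cref{prop lambda and zeta} plays the central role: the multiplier $\mathrm{Vol}(K_\Pi)(1-\omega_\Pi(\varpi)q^{-2s})$ has $\mathrm{Vol}(K_\Pi)=1$ in the unramified case and $\omega_\Pi(\varpi)\in A^\times$ by hypothesis, so its reciprocal lies in $A[q^s,q^{-s}]$. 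Hence integrality on the $\Lambda_\Pi$ side transfers to integrality on the $\mathcal{Z}_\Pi^H(-;s)$ side, and then to the original zeta integral $\mathcal{Z}_\Pi(-;s)$, which closes out the proof.
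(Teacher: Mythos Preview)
Your approach is essentially the same as the paper's: invoke the reduction framework of \Cref{sec reduction}, feed in the prior computations \cite[Lemma~4.3.6 \& Lemma~4.3.7]{groutides2024rankinselbergintegralstructureseuler}, and use the two integrality facts about $q^{-i/2}\mathrm{Sch}_i(\alpha_\pi,\beta_\pi)$ and $L(\Pi,s)^{-1}$. All the key ingredients are correctly identified.

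There is, however, one genuine error in your final paragraph. You claim that the reciprocal of $(1-\omega_\Pi(\varpi)q^{-2s})$ lies in $A[q^s,q^{-s}]$, but this is false: $(1-\omega_\Pi(\varpi)q^{-2s})^{-1}=\sum_{n\geq 0}\omega_\Pi(\varpi)^n q^{-2ns}$ is a power series, not a Laurent polynomial. The correct argument, which the paper records as a separate lemma in \Cref{sec reduction}, is a coefficient comparison: one already knows a priori that $\Phi:=Z/L\in\mathbf{C}[q^s,q^{-s}]$ is a Laurent polynomial, so if $(1-\omega_\Pi(\varpi)q^{-2s})\Phi\in A[q^s,q^{-s}]$ and $\omega_\Pi(\varpi)\in A^\times$, then inductively the coefficients of $\Phi$ themselves lie in $A$. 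This is what actually lets you strip off the factor introduced by $\Xi_c$; simply inverting it does not work.
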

\begin{rem}
    The exact same result holds for reducible Whittaker-type principal-series representations and the proofs involved all remain the same, as is the case in \cite{groutides2024rankinselbergintegralstructureseuler}.
\end{rem}
\subsubsection{Two Steinberg representations and their unramified twists}\label{unr StxSt}
For ease of notation, we simply write $\mathrm{St}$ for the Steinberg representation $\mathrm{St}(|\cdot|^{1/2},|\cdot|^{-1/2})$. Then every Steinberg representation with unramified central character, is given by a twist $\mathrm{St}_\chi:=\mathrm{St}\otimes \chi$ for some unramified character $\chi$ of $F^\times$ and they all have conductor $1$ by \cite{schmidt2002some}. It is clear that the new vector of $\mathrm{St}_\chi$ is given by $1\otimes W_\mathrm{St}^\mathrm{new}$, or equivalently, in the Whittaker model of $\mathrm{St}_\chi$, by the function $W_{\mathrm{St}_\chi}^\mathrm{new}(g)=\chi(\det(g))W_\mathrm{St}^\mathrm{new}(g)$.

\begin{lem}[\cite{Assing_2018} Lemma $3.2$]\label{lem Assing for steinberg} For $0\leq 
 k\leq 1$ and $v\in \mathcal{O}^\times$, we have
$$W_\mathrm{St}^\mathrm{new}(g_{t,k,v})=\begin{dcases}
    -q^{-t-1}\ &\mathrm{if}\ k=0\ \& \ t\geq -1\\
    q^{-t-2}\psi(-\varpi^{t+1}v^{-1})\ &\mathrm{if}\ k= 1\ \& \ t\geq -2k\\
    0\ &\mathrm{otherwise}.
\end{dcases}$$
\end{lem}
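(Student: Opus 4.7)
The plan is to reduce $g_{t,k,v}$ to a diagonal matrix against which $W_\mathrm{St}^\mathrm{new}$ has explicit values, by using right $K_1$-invariance to kill parts on the right and $\psi$-equivariance to peel off a unipotent on the left. Two inputs are needed up front: (i) the Kirillov-model formula $W_\mathrm{St}^\mathrm{new}(\left[\begin{smallmatrix}\varpi^n & \\ & 1\end{smallmatrix}\right]) = q^{-n}$ for $n\geq 0$ and $0$ otherwise, standard for Steinberg with trivial central character; and (ii) the Atkin-Lehner relation $W_\mathrm{St}^\mathrm{new}\!\bigl(g\cdot \left[\begin{smallmatrix}& 1\\ -\varpi &\end{smallmatrix}\right]\bigr) = -W_\mathrm{St}^\mathrm{new}(g)$, encoding the fact that the $\epsilon$-factor sign for $\mathrm{St}$ is $-1$.

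For $k=0$, since $v\in\mathcal{O}^\times\subseteq\mathcal{O}$ we have $\left[\begin{smallmatrix}1 & v\\ & 1\end{smallmatrix}\right]\in K_1$, and right-$K_1$-invariance reduces the problem to evaluating $W_\mathrm{St}^\mathrm{new}\!\bigl(\left[\begin{smallmatrix}\varpi^t & \\ & 1\end{smallmatrix}\right]\left[\begin{smallmatrix}& 1\\ -1 &\end{smallmatrix}\right]\bigr)$. A one-line matrix check yields
\[
\left[\begin{smallmatrix}\varpi^t & \\ & 1\end{smallmatrix}\right]\left[\begin{smallmatrix}& 1 \\ -1 &\end{smallmatrix}\right] = \left[\begin{smallmatrix}\varpi^t & \\ & \varpi^{-1}\end{smallmatrix}\right]\left[\begin{smallmatrix}& 1 \\ -\varpi &\end{smallmatrix}\right],
\]
so the Atkin-Lehner relation together with triviality of the central character turns the value into $-W_\mathrm{St}^\mathrm{new}(\left[\begin{smallmatrix}\varpi^{t+1}&\\ & 1\end{smallmatrix}\right])$, which equals $-q^{-t-1}$ for $t\geq -1$ and $0$ otherwise.

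For $k=1$, the top-right factor $\left[\begin{smallmatrix}1 & v\varpi^{-1}\\ & 1\end{smallmatrix}\right]$ no longer lies in $K_1$, so instead I perform a Bruhat-type factorization designed to extract a left upper-unipotent (to which $\psi$-equivariance applies) and leave a diagonal and a $K_1$ factor. One checks directly that
\[
g_{t,1,v} = \left[\begin{smallmatrix}1 & -\varpi^{t+1}v^{-1}\\ & 1\end{smallmatrix}\right]\left[\begin{smallmatrix}-\varpi^{t+1}v^{-1} & \\ & -v\varpi^{-1}\end{smallmatrix}\right]\left[\begin{smallmatrix}1 & \\ \varpi v^{-1} & 1\end{smallmatrix}\right],
\]
where the rightmost factor belongs to $K_1$ (its $(2,1)$-entry lies in $\varpi\mathcal{O}$). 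Applying $\psi$-equivariance on the left, $K_1$-invariance on the right, and trivial central character on the diagonal reduces the value to $\psi(-\varpi^{t+1}v^{-1})\cdot W_\mathrm{St}^\mathrm{new}(\left[\begin{smallmatrix}\varpi^{t+2}v^{-2} & \\ & 1\end{smallmatrix}\right])$. Since $v^{-2}\in\mathcal{O}^\times$, this equals $\psi(-\varpi^{t+1}v^{-1})q^{-t-2}$ for $t\geq -2$ and $0$ otherwise.

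The main obstacle is identifying and verifying the Bruhat factorization for $k=1$; once written down it collapses to routine matrix multiplication. Everything else is standard input: the torus values come from the one-dimensional Jacquet module of $\mathrm{St}$, the Atkin-Lehner sign is documented in \cite{schmidt2002some}, and the vanishing outside the stated ranges of $t$ is forced by the support condition on the diagonal part of the new vector.
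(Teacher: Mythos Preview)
Your argument is correct. The paper does not prove this lemma at all; it is simply quoted from \cite{Assing_2018} as an input. What you have written is therefore not an alternative to the paper's proof but a self-contained derivation where the paper relies on a citation.

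Your route is the standard one: reduce to torus values via right $K_1$-invariance and left $\psi$-equivariance. The $k=0$ case uses the Atkin--Lehner involution with eigenvalue $-1$ (which is legitimate here because, with trivial central character, the new vector is in fact fixed by the full Iwahori $\Gamma_0(\varpi)$, which the Atkin--Lehner element normalizes). The $k=1$ factorization
\[
g_{t,1,v}=\left[\begin{smallmatrix}1 & -\varpi^{t+1}v^{-1}\\ & 1\end{smallmatrix}\right]\left[\begin{smallmatrix}-\varpi^{t+1}v^{-1} & \\ & -v\varpi^{-1}\end{smallmatrix}\right]\left[\begin{smallmatrix}1 & \\ \varpi v^{-1} & 1\end{smallmatrix}\right]
\]
checks out as an instance of the Iwasawa-type identity $\left[\begin{smallmatrix}a & b\\ c & d\end{smallmatrix}\right]=\left[\begin{smallmatrix}1 & b/d\\ & 1\end{smallmatrix}\right]\left[\begin{smallmatrix}(ad-bc)/d & \\ & d\end{smallmatrix}\right]\left[\begin{smallmatrix}1 & \\ c/d & 1\end{smallmatrix}\right]$ for $d\neq 0$, and the rightmost factor lies in $K_1$ since $\varpi v^{-1}\in\varpi\mathcal{O}$. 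The torus formula $W_\mathrm{St}^\mathrm{new}(\left[\begin{smallmatrix}\varpi^n & \\ & 1\end{smallmatrix}\right])=q^{-n}\ch_{\mathbf{Z}_{\geq 0}}(n)$ is exactly the statement that the Mellin transform of the new vector recovers $L(\mathrm{St},s)=(1-q^{-1/2-s})^{-1}$, and the passage from $v^{-2}\varpi^{t+2}$ to $\varpi^{t+2}$ uses that $\left[\begin{smallmatrix}u & \\ & 1\end{smallmatrix}\right]\in K_1$ for $u\in\mathcal{O}^\times$. Everything is in order.
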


\begin{prop}\label{thm steinberg}
    Let $\chi_1,\chi_2$ be unramified characters of $F^\times$ taking values in some $\mathbf{Z}[q^{-1}]$-algebra $A\subseteq\mathbf{C}$ and let $\Pi:=\mathrm{St}_{\chi_1}\times \mathrm{St}_{\chi_2}$. Let $(\phi,g_1,g_2)\in\mathcal{S}(F^2)\times G(F)$ be a $\Pi$-integral datum. Then, \eqref{eq: Problem} holds.
    \begin{proof}
        For ease of notation we will write $\Pi:=\mathrm{St}_{\chi_1}\times \mathrm{St}_{\chi_2}$ and $g=(g_1,g_2)\in G(F)$. Implicitly in the proof we'll use the fact that $L(\Pi,s)^{-1}\in A[q^s,q^{-s}]$ which is immediate by the explicit expression for the $L$-factor given in \eqref{eq: gl2 l-factor}. Additionally, we will always canonically identify an element of $N(F)$ with the corresponding element of $F$. Given the reduction in \Cref{sec reduction}, we now wish to show that
        \begin{align}
            \vol_{P(F)}(P(F)\cap \gamma K_\Pi \gamma^{-1})^{-1}\cdot \mathcal{I}_{\Pi}^\mathrm{new}(\gamma ;s),\ \ \gamma\in P(F)\backslash G(F)/K_\Pi
        \end{align}
        always lies in $L(\Pi,s)A[q^s,q^{-s}]$.
        We proceed by choosing nice representatives in the sense of \Cref{lem general coset decomp} and using them to study these terms. We distinguish between two cases since the computation in one case is more involved. Before making that distinction, we write $\gamma=(\gamma_1,\gamma_2)$, and we note that for each $i\in\{1,2\}$, we can always write 
        \begin{align}\label{eq: gamma_i}\gamma_i=z_i\cdot n_i\cdot g_{t_i,k_i,1}\cdot u_i\end{align}
        where $z_i\in Z(F),n_i\in N(F), u_i\in K_1$ and $g_{t_i,k_i,1}$ is as in \Cref{lem general coset decomp}. 

\noindent \textit{Case} $1$: In the first case, we assume that one or both of the $k_i\in\{0,1\}$ are $0$. \textcolor{black}{The cases $(k_1,k_2)=(1,0)$ and $(k_1,k_2)=(0,1)$ are symmetric, and the case $(k_1,k_2)=(0,0)$ is exactly the same with only minor notational differences which actually make it simpler. Thus, we} provide the details for when $k_1=1$ and $k_2=0$. If this is the case, then we re-write the double coset $P(F)\gamma K_\Pi$ as 
$$P(F)\gamma K_\Pi=P(F)\left( z_1ng_{t,1,1},z_2g_{0,0,1}\right) K_\Pi$$
for some $n\in N(F)$ and $t\in\mathbf{Z}$. Using this coset representative to evaluate $\mathcal{I}_\Pi^\mathrm{new}(-;s)$, we get 
\begin{align}\label{eq: case 1}
    \mathcal{I}_{\Pi}^\mathrm{new}(\left( z_1ng_{t,1,1},z_2g_{0,0,1}\right);s)&=\int_{F^\times} W_{\mathrm{St}_{\chi_1}}^\mathrm{new}(\left[\begin{smallmatrix}
        y & \\
         & 1
    \end{smallmatrix}\right]z_1ng_{t,1,1})W_{\mathrm{St}_{\chi_2}}^\mathrm{new}(\left[\begin{smallmatrix}
        y & \\
         & 1
    \end{smallmatrix}\right]z_2g_{0,0,1})|y|^{s-1}\ d^\times y\\
    \nonumber &= C\cdot \int_{F^\times}\psi(ny)\chi_1(y)W_{\mathrm{St}}^\mathrm{new}(\left[\begin{smallmatrix}
        y & \\
        & 1
    \end{smallmatrix}\right]g_{t,1,1}) \chi_2(y) W_{\mathrm{St}}^\mathrm{new}(\left[\begin{smallmatrix}
        y & \\
        & 1
    \end{smallmatrix}\right]g_{0,0,1})|y|^{s-1}\ d^\times y\\
   \nonumber &={C\cdot \sum_{i\in \mathbf{Z}} (\chi_1\chi_2)(\varpi)^i q^{i(1-s)} \int_{\mathcal{O}^\times} \psi(n\varpi^i y) W_\mathrm{St}^\mathrm{new}(\left[\begin{smallmatrix}
        y & \\
        & 1
    \end{smallmatrix}\right]g_{t+i,1,1})W_\mathrm{St}^\mathrm{new}(\left[\begin{smallmatrix}
        y & \\
        & 1
    \end{smallmatrix}\right]g_{i,0,1})\ d^\times y}\\
    \nonumber&={C\cdot \sum_{i\in \mathbf{Z}} (\chi_1\chi_2)(\varpi)^i q^{i(1-s)} \int_{\mathcal{O}^\times} \psi(n\varpi^i y) W_\mathrm{St}^\mathrm{new}(g_{t+i,1,y^{-1}})W_\mathrm{St}^\mathrm{new}(g_{i,0,y^{-1}})\ d^\times y}
\end{align}
where the last equality follows from the easy to check identity $$\left[\begin{smallmatrix}
            y & \\
            &1
        \end{smallmatrix}\right]\left[\begin{smallmatrix}
             & 1\\
             -1 & 
        \end{smallmatrix}\right]\left[\begin{smallmatrix}
            1 & \varpi^{-k}\\
            & 1
        \end{smallmatrix}\right]=\left[\begin{smallmatrix}
            y & \\
            &y
        \end{smallmatrix}\right]\left[\begin{smallmatrix}
             & 1\\
             -1 & 
        \end{smallmatrix}\right]\left[\begin{smallmatrix}
            1 & y^{-1}\varpi^{-k}\\
            & 1
        \end{smallmatrix}\right]\left[\begin{smallmatrix}
            y^{-1} & \\
            &1
        \end{smallmatrix}\right]$$ 
        and
        $C:=\chi_1(\det(z_1g_{t,1,1}))\chi_2(\det(z_2))\in A$. Now, we are in a position to apply \Cref{lem Assing for steinberg}. We see that the value of the right-most Whittaker function is identically zero if $i\leq -2$ and the left-most one is zero if $i\leq -3-t$. Thus, if we let $i_0:=\mathrm{max}\{-1,-t-2\}$, we have
        \begin{align}
          \nonumber  \mathcal{I}_{\Pi}^\mathrm{new}(\left( z_1ng_{t,1,1},z_2g_{0,0,1}\right);s)&=-Cq^{-t-3} \cdot \sum_{i\geq i_0} (\chi_1\chi_2)(\varpi)^i q^{-i(1+s)}  \int_{\mathcal{O}^\times} \psi\left(\left(n-\varpi^{t+1}\right)\varpi^iy\right)\ d^\times y\\
            &={-Cq^{-t-3} \cdot\left\{\left(\sum_{i=i_0}^{-v_0-1}(\chi_1\chi_2)(\varpi)^iq^{-i(1+s)}\epsilon_i(v_0)\right) + \left(\sum_{i\geq i_0}(\chi_1\chi_2)(\varpi)^i q^{-i(1+s)}\right)    \right\}}\label{eq: 14}
        \end{align}
        where \begin{align}\label{eq: epsilon}v_0:=v(n-\varpi^{t+1})\ \ \mathrm{and}\ \ \epsilon_i(v_0):=\begin{dcases}
            -1, &\ \mathrm{if}\ i_0\leq i < -v_0-1\\
            \tfrac{-q}{q-1}, &\ \mathrm{if}\ i= -v_0-1 
        \end{dcases}\end{align}
        and the finite sum in \eqref{eq: 14} is zero by convention if $i_0>-v_0-1$. This last step follows from \Cref{sec Gauss sums} and the same splitting of the infinite sum as in \cite[\S $6.3$]{groutides2024integral}; that is, a finite ``error'' term and an $L$-fctor contribution. The last observation we make is that the right-most sum in \eqref{eq: 14} is nothing more than $(\chi_1\chi_2)(\varpi)^{i_0}q^{-i_0(1+s)} L(\mathrm{St}_{\chi_1}\times \mathrm{St}_{\chi_2},s)L(\chi_1\chi_2,s)^{-1}\in L(\mathrm{St}_{\chi_1}\times \mathrm{St}_{\chi_2},s)A[q^s,q^{-s}]$. To conclude the first case it suffices to show that the volume factor
        $$\vol_{P(F)}\left(P(F)\cap \left( z_1ng_{t,1,1},z_2g_{0,0,1}\right) K_\Pi \left( z_1ng_{t,1,1},z_2g_{0,0,1}\right)^{-1}\right)^{-1}$$
        is an integral multiple of $q-1$. Noting that this subgroup is given by $\mathcal{U}_0:=P(F)\cap ng_{t,1,1} K_1 g_{t,1,1}^{-1}n^{-1}\cap g_{0,0,1}K_1g_{0,0,1}^{-1}$, which is trivially contained in $P(F)\cap g_{0,0,1}K_1g_{0,0,1}^{-1}=P(F)\cap \left[\begin{smallmatrix}
            & 1\\
            -1 & 
        \end{smallmatrix}\right]K_1 \left[\begin{smallmatrix}
            & -1\\
            1 & 
        \end{smallmatrix}\right]$. Thus, an element $x=\left[\begin{smallmatrix}
            a & b \\
            & 1
        \end{smallmatrix}\right]\in\mathcal{U}_0$ always satisfies 
$$\left[\begin{smallmatrix}
            & -1\\
            1 & 
        \end{smallmatrix}\right]x\left[\begin{smallmatrix}
            & 1\\
            -1 & 
        \end{smallmatrix}\right]=\left[\begin{smallmatrix}
            1 & \\
            -b & a
        \end{smallmatrix}\right]\in K_1.$$
        Hence $\mathcal{U}_0$ is a subgroup of $\left[\begin{smallmatrix}
            1+\varpi\mathcal{O} & \varpi \mathcal{O}\\
            & 1
        \end{smallmatrix}\right]\subseteq P(\mathcal{O})$ which has index in $P(\mathcal{O})$ equal to $(q-1)q$ as required.\\
        
        \noindent \textit{Case} $2$: In the second and last case we assume that $k_1,k_2$ are both equal to $1$. Starting again with \eqref{eq: gamma_i}, we assume without loss of generality that $t_1\geq t_2$. We then re-write the double coset $P(F)\gamma K_\Pi$ as 
        $$P(F)\gamma K_\Pi= P(F) (z_1ng_{t,1,1},z_2g_{0,1,1}) K_\Pi$$
        with $n\in N(F)$ and $t\in\mathbf{Z}_{\geq 0}$. The integral manipulations in \eqref{eq: case 1} work in the same way, and we get 
        \begin{align}\label{eq: expression case 2}\mathcal{I}_\Pi^\mathrm{new}((z_1ng_{t,1,1},z_2g_{0,1,1});s)=C\cdot \sum_{i\in\mathbf{Z}}(\chi_1\chi_2)(\varpi)^iq^{i(1-s)}\int_{\mathcal{O}^\times} \psi(n\varpi^iy) W_\mathrm{St}^\mathrm{new}(g_{t+i,1,y^{-1}})W_\mathrm{St}^\mathrm{new}(g_{i,1,y^{-1}})\ d^\times y.\end{align}
        Since in this case $t\geq 0$, it follows from \Cref{lem Assing for steinberg}, that both Whittaker functions are non-zero for $i\geq -2$, and \eqref{eq: expression case 2} is equal to
        \begin{align*}
            Cq ^{-t-4}\cdot \sum_{i\geq -2}  (\chi_1\chi_2)(\varpi)^iq^{-i(1+s)}\int_{\mathcal{O}^\times} \psi((n+\varpi-\varpi^{t+1})\varpi^iy)\ d^\times y.
        \end{align*}
        The same splitting as in \eqref{eq: 14}, this time gives
        \begin{align}
            Cq^{-t-4}\left\{\left(\sum_{i=-2}^{-v_1-1} (\chi_1\chi_2)(\varpi)^iq^{-i(1+s)}\epsilon_i(v_1)\right)+\left(\sum_{i\geq -2} (\chi_1\chi_2)(\varpi)^iq^{-i(1+s)}\right)\right\}
        \end{align}
        where now
        $v_1:=v(n+\varpi-\varpi^{t+1})$, $\epsilon_i(v_1)$ is given as before, and once again the finite sum above is taken to be zero by convention if $-2>-v_1-1$. The $L$-function contribution from the right-most sum is the same as in the first case and requires no further treatment. \textcolor{black}{The finite sum on the left is zero by convention if $v_1>1$, and it is an element of $\tfrac{1}{q-1}A[q^s,q^{-s}]$ if $v_1\leq 1$.} In the latter case we again need to show that \textcolor{black}{the factor of $\tfrac{1}{q-1}$} is always killed by the volume factor
        $$\vol_{P(F)}\left(P(F)\cap (z_1ng_{t,1,1},z_2g_{0,1,1})K_\Pi (z_1ng_{t,1,1},z_2g_{0,1,1})^{-1}\right)^{-1}.$$
        Let's denote this subgroup by $\mathcal{U}_1$. We'll once again show that $\mathcal{U}_1\subseteq \left[\begin{smallmatrix}
            1+\varpi\mathcal{O} & \varpi \mathcal{O}\\
            & 1
        \end{smallmatrix}\right]\subseteq P(\mathcal{O})$ whenever $v_1\leq 1$. Of course, $\mathcal{U}_1$ is given by 
        $\mathcal{U}_1=P(F)\cap ng_{t,1,1} K_1 g_{t,1,1}^{-1}n^{-1}\cap g_{0,1,1} K_1 g_{0,1,1}^{-1}.$ 
        Let $x=\left[\begin{smallmatrix}
            a & b \\
            & 1
        \end{smallmatrix}\right]\in\mathcal{U}_1$. Recall that $g_{0,1,1}=\left[\begin{smallmatrix}
            & 1\\
            -1& 
        \end{smallmatrix}\right]\left[\begin{smallmatrix}
            1 & \varpi^{-1}\\
            & 1
        \end{smallmatrix}\right]$. Thus, we must have
        $$g_{0,1,1}^{-1}xg_{0,1,1}=\left[\begin{smallmatrix}
            1+b\varpi^{-1} & \varpi^{-1}(1+b\varpi^{-1}-a)\\
            -b & a-b\varpi^{-1}
        \end{smallmatrix}\right]\in K_1.$$
        This forces $b\in\varpi\mathcal{O}$ and $a-b\varpi^{-1}\in 1+\varpi\mathcal{O}$. We now look at the second inclusion. Recall that $ng_{t,1,1}=\left[\begin{smallmatrix}
            1 & n\\
            & 1
        \end{smallmatrix}\right]\left[\begin{smallmatrix}
            \varpi^t & \\
            & 1
        \end{smallmatrix}\right]\left[\begin{smallmatrix}
            & 1\\
            -1& 
        \end{smallmatrix}\right]\left[\begin{smallmatrix}
            1 & \varpi^{-1}\\
            & 1
        \end{smallmatrix}\right]$. A lengthy calculation shows that $x$ must satisfy
        \begin{align}\label{eq: matrix}g_{t,1,1}^{-1}n^{-1}xng_{t,1,1}=\left[\begin{smallmatrix}
            * & *\\
            (-an-b+n)\varpi^{-t} & a-(an+b-n)\varpi^{-t-1}
        \end{smallmatrix}\right]\in K_1.
        \end{align}
        Looking at the bottom left entry,  we see that $n(a-1)+b\in \varpi\mathcal{O}$ and hence $n(a-1)\in\varpi\mathcal{O}$. If $v_1\leq 0$, which holds if and only if $v(n)\leq 0$, then we are done as $a-1\in\varpi\mathcal{O}$ (we already know, a priori, that $a\in\mathcal{O}^\times$). Hence, we assume that $v_1=1$. In this case, $\varpi^{t}-n\varpi^{-1}-1\in\mathcal{O}^\times$. Looking at the bottom right entry of \eqref{eq: matrix}, we see that
    \begin{align*}
        a\varpi^t-an\varpi^{-1}&\equiv \varpi^t+b\varpi^{-1}-n\varpi^{-1}\mod\varpi\\
        &\equiv \varpi^t+a-1-n\varpi^{-1}\mod\varpi.
    \end{align*}
    Finally, this gives $a(\varpi^t -n\varpi^{-1}-1)\equiv \varpi^t-n\varpi^{-1}-1\mod\varpi$. The result follows since $\varpi^{t}-n\varpi^{-1}-1$ is a unit.
    \end{proof}
\end{prop}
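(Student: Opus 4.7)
The plan is to apply the reduction of \Cref{sec reduction}, so that it suffices to show
$$\vol_{P(F)}(P(F)\cap \gamma K_\Pi\gamma^{-1})^{-1}\cdot \mathcal{I}_{\Pi,\delta_P}^\mathrm{new}(\gamma W_\Pi^\mathrm{new};s)\in L(\Pi,s)A[q^s,q^{-s}]$$
for every $\gamma=(\gamma_1,\gamma_2)$ running through $P(F)\backslash G(F)/K_\Pi$. Since $c(\mathrm{St}_{\chi_i})=1$, \Cref{lem general coset decomp} lets me write each $\gamma_i$, up to $P(F)$ on the left and $K_1$ on the right, in the form $z_in_ig_{t_i,k_i,1}$ with $k_i\in\{0,1\}$, so the proof splits into the cases $(k_1,k_2)\in\{(0,0),(0,1),(1,1)\}$, after possibly swapping factors.

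In each case I would carry out the defining $F^\times$-integral for $\mathcal{I}_\Pi^\mathrm{new}$ by breaking the domain into orbits $\varpi^i\mathcal{O}^\times$, using the identity $\left[\begin{smallmatrix} y & \\ & 1\end{smallmatrix}\right] g_{t,k,v}=\left[\begin{smallmatrix} y & \\ & y\end{smallmatrix}\right] g_{t,k,y^{-1}v}\left[\begin{smallmatrix} y^{-1} & \\ & 1\end{smallmatrix}\right]$ to absorb $y$ into the third coordinate, and factoring out the central character contribution $\chi_1\chi_2(\det(\cdot))$, which lands in $A$ by hypothesis. Invoking \Cref{lem Assing for steinberg} the product $W_\mathrm{St}^\mathrm{new}(g_{t_1+i,k_1,y^{-1}})\,W_\mathrm{St}^\mathrm{new}(g_{t_2+i,k_2,y^{-1}})$ becomes, for $i$ past some explicit threshold $i_0$, a fixed power of $q$ times $\psi(\alpha y)$ for a constant $\alpha\in F$ depending on the case (roughly $\alpha=n-\varpi^{t+1}$ in the mixed case and $n+\varpi-\varpi^{t+1}$ in the $(1,1)$ case). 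The outer sum then has the shape $\sum_{i\geq i_0}(\chi_1\chi_2)(\varpi)^iq^{-i(1+s)}\int_{\mathcal{O}^\times}\psi(\alpha\varpi^iy)\,d^\times y$. Splitting this at the cut-off $i=-v(\alpha)-1$ and using the Gauss-sum evaluations of \Cref{sec Gauss sums} produces a finite ``error sum'' plus a geometric tail; the tail collapses to a scalar multiple of $L(\Pi,s)L(\chi_1\chi_2|\cdot|,s)^{-1}$, which is manifestly in $L(\Pi,s)A[q^s,q^{-s}]$.

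The main obstacle is that the Gauss-sum evaluation introduces an unwanted factor of $\tfrac{1}{q-1}$ in the error term whenever the cut-off is triggered, so a priori this piece only lies in $(q-1)^{-1}A[q^s,q^{-s}]$. The remedy, case by case, is to show that the volume factor $\vol_{P(F)}(P(F)\cap \gamma K_\Pi\gamma^{-1})^{-1}$ is an integer multiple of $q-1$ in precisely those configurations where the error term appears. Concretely, I would pin down the subgroup $\mathcal{U}:=P(F)\cap\left(g_{t_1,k_1,1},g_{t_2,k_2,1}\right)K_\Pi\left(g_{t_1,k_1,1},g_{t_2,k_2,1}\right)^{-1}$ by solving the two matrix inclusions $g_{t_i,k_i,1}^{-1}xg_{t_i,k_i,1}\in K_1$ for $x=\left[\begin{smallmatrix} a & b \\ & 1\end{smallmatrix}\right]\in P(\mathcal{O})$, and verify $\mathcal{U}\subseteq \left[\begin{smallmatrix} 1+\varpi\mathcal{O} & \varpi\mathcal{O}\\ & 1\end{smallmatrix}\right]$, which forces $(q-1)q\mid [P(\mathcal{O}):\mathcal{U}]$. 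For $(k_1,k_2)=(0,0)$ or $(0,1)$ the presence of a $g_{0,0,1}=\left[\begin{smallmatrix} & 1\\ -1 & \end{smallmatrix}\right]$ factor makes this immediate. The genuinely hard sub-case is $k_1=k_2=1$: there one must use the extra information that the error term only arises when $v(n+\varpi-\varpi^{t+1})\leq 1$, and extract the congruence $a\equiv 1\pmod\varpi$ from the bottom row of $g_{t,1,1}^{-1}n^{-1}xng_{t,1,1}$, splitting further on whether $v(n)\leq 0$ or one is in the boundary case $v(n+\varpi-\varpi^{t+1})=1$.
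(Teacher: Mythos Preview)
Your proposal is correct and follows essentially the same strategy as the paper's proof: the same reduction via \Cref{sec reduction}, the same case split on $(k_1,k_2)\in\{0,1\}^2$ using \Cref{lem general coset decomp}, the same evaluation of the $\mathcal{O}^\times$-integrals via \Cref{lem Assing for steinberg} and the Gauss-sum formulas of \Cref{sec Gauss sums}, the same splitting into an $L$-factor tail plus a finite error term carrying a $\tfrac{1}{q-1}$, and the same volume-factor analysis to kill it, including the delicate $(1,1)$ sub-case handled by reading off congruences from the bottom row of $g_{t,1,1}^{-1}n^{-1}xng_{t,1,1}$ under the constraint $v(n+\varpi-\varpi^{t+1})\leq 1$. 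The only cosmetic slip is that your displayed definition of $\mathcal{U}$ omits the unipotent $n$ on the first factor, but you correctly reinstate it in the actual matrix computation, so the argument goes through.
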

\subsubsection{One unramified twist of a Steinberg representation and one unramified principal-series representation}

 In this section, we treat the case $\Pi=\mathrm{St}_\chi\times \pi$ where $\chi:F^\times\rightarrow \mathbf{C}^\times$ is an unramified character and $\pi$ is an unramified principal-series representation with Satake parameters $\alpha_\pi,\beta_\pi$. It is thus clear that $K_\Pi=K_1\times H(\mathcal{O})$. 
 \begin{prop}\label{thm unram + steinberg}
     Let $\Pi:=\mathrm{St}_\chi\times \pi$ where $\chi:F^\times\rightarrow \mathbf{C}^\times$ is an unramified character and $\pi$ is an unramified principal-series representation. Let $A\subseteq \mathbf{C}$ be a $\mathbf{Z}[q^{-1}]$-algebra containing $\chi$, the spherical Hecke eigenvalues of $\pi$ and its central character. Let $(\phi,g_1,g_2)\in \mathcal{S}(F^2)\times G(F)$ be a $\Pi$-integral datum. Then, \eqref{eq: Problem} holds.
     \begin{proof}
         Again, implicitly we'll use that $L(\Pi,s)^{-1}$ is contained in $A[q^s,q^{-s}]$. This follows from \eqref{eq: gl2 l-factor}. We're again interested in the terms 
         \begin{align}
            \vol_{P(F)}(P(F)\cap \gamma K_\Pi \gamma^{-1})^{-1}\cdot \mathcal{I}_{\Pi}^\mathrm{new}(\gamma ;s),\ \ \gamma\in P(F)\backslash G(F)/K_\Pi.
        \end{align}
        By Iwasawa decomposition and \Cref{lem general coset decomp}, we can work with coset representatives of the following form
         $$(z_1g_{t,k,1},z_2n),\ \ z_i\in Z(F),n\in N(F),t\in\mathbf{Z},k\in\{0,1\}.$$
        Let $C(k):=\chi(\det(z_1g_{t,k,1}))\omega_{\pi}(z_2)\in A$, $i_0:=\mathrm{max}\{0,-t-1\}$ and $i_1:=\mathrm{max}\{0,-t-2\}$. We will again canonically identify $N(F)$ with $F$ where convenient. We evaluate
        \begin{align*}
           \mathcal{I}_\Pi^\mathrm{new}((z_1g_{t,k,1}, z_2n);s)&=
            C\cdot \int_{F^\times}\psi(-ny)\chi(y)W_{\mathrm{St}}(\left[\begin{smallmatrix}
                y & \\
                & 1
            \end{smallmatrix}\right]g_{t,k,1})W_\pi^\mathrm{sph}(\left[\begin{smallmatrix}
                y & \\
                 & 1
            \end{smallmatrix}\right])|y|^{s-1}\ d^\times y\\
           &= {C\cdot \sum_{i\in\mathbf{Z}_{\geq 0}} \chi(\varpi)^i q^{-i(s-1)}W_\pi^\mathrm{sph}(\left[\begin{smallmatrix}
                \varpi^i & \\
                & 1
            \end{smallmatrix}\right])\int_{\mathcal{O}^\times}\psi(-n\varpi^iy)W_\mathrm{St}^\mathrm{new}(g_{t+i,k,y^{-1}})\ d^\times y}\\
           &={\begin{dcases}
                -Cq^{-t-1}\cdot\sum_{i\geq i_0} \chi(\varpi)^iq^{-i(s+\frac{1}{2})}\mathrm{Sch}_i(\alpha_\pi,\beta_\pi)\int_{\mathcal{O}^\times}\psi(-n\varpi^iy)\ d^\times y,\ &\mathrm{if}\ k=0\\
                Cq^{-t-2}\cdot \sum_{i\geq i_1} \chi(\varpi)^iq^{-i(s+\frac{1}{2})}\mathrm{Sch}_i(\alpha_\pi,\beta_\pi)\int_{\mathcal{O}^\times}\psi((-\varpi^{t+1}-n)\varpi^iy)\ d^\times y,\ &\mathrm{if}\ k=1\\
            \end{dcases}}
        \end{align*}
        where in the last equality we have used \Cref{lem Assing for steinberg} for the new vector in the Steinberg representation, and \cite{shintani1976explicit} to express the spherical vector in terms of Satake parameters. Write $D_k$ to denote the constant $-Cq^{-t-1}$ if $k=0$ and $Cq^{-t-2}$ if $k=1$. Using our usual splitting into an $L$-factor term and a finite ``error'' term, we have that $\mathcal{I}_\Pi^\mathrm{new}((z_1g_{t,k,1}, z_2n);s)$ is equal to
        \begin{align*}
            {D_k\sum_{i\geq i_k}\chi(\varpi)^iq^{-i(s+\frac{1}{2})}\mathrm{Sch}_i(\alpha_\pi,\beta_\pi)+D_k\begin{dcases}
                \sum_{i=i_0}^{-v(n)-1}\chi(\varpi)^iq^{-i(s+\frac{1}{2})}\mathrm{Sch}_i(\alpha_\pi,\beta_\pi)\epsilon_i(n),\ &\mathrm{if}\ k=0\\
                \sum_{i=i_1}^{-v(\varpi^{t+1}-n)-1}\chi(\varpi)^iq^{-i(s+\frac{1}{2})}\mathrm{Sch}_i(\alpha_\pi,\beta_\pi)\epsilon_i(-\varpi^{t+1}-n),\ &\mathrm{if}\ k=1
            \end{dcases}
            }
        \end{align*}
        where both finite sums are zero by convention if $i_0>-v(n)-1$ and $i_1>-v(\varpi^{t+1}-n)$ respectively, and $\epsilon_i(-)$ is defined in the same way as in \eqref{eq: epsilon}. 
        A lengthy but standard calculation shows that the left-most term is precisely equal to 
        $${L(\Pi,s)\cdot D_k\chi(\varpi)^{i_k}q^{-i_ks}\cdot\left\{q^{\frac{-i_k}{2}}\mathrm{Sch}_{i_k}(\alpha_\pi,\beta_p)-\chi(\varpi)\omega_\pi(\varpi)q^{-(s+1)}q^{\frac{-(i_k-1)}{2}}\mathrm{Sch}_{i_k-1}(\alpha_\pi,\beta_\pi)\right\}}$$
        which is an element of $L(\Pi,s)A[q^s,q^{-s}].$
        Thus, if the finite sum on the right is zero, we are done. If not, then we have a condition on the valuation of $n$, respectively $-\varpi^{t+1}-n$. We only need to show that under this condition, the factor of $\frac{1}{q-1}$ occurring in the last $\epsilon$-term, is always counteracted by the volume factor
        $$\vol_{P(F)}\left(P(F)\cap (z_1g_{t,k,1}, z_2n) K_\Pi (z_1g_{t,k,1}, z_2n)^{-1}\right)^{-1}.$$
        Let $\mathcal{U}_k:=P(F)\cap (z_1g_{t,k,1}, z_2n) K_\Pi (z_1g_{t,k,1}, z_2n)^{-1}$.
        If $k=0$, then this is in fact independent of the valuation condition on $n$. We re-write $\vol_{P(F)}(\mathcal{U}_0)$ as $$\vol_{P(F)}\left(P(F)\cap (g_{0,0,1}, \left[\begin{smallmatrix}
            \varpi^{-t} & \\
            & 1
        \end{smallmatrix}\right]n)K_\Pi (g_{0,0,1}, \left[\begin{smallmatrix}
            \varpi^{-t} & \\
            & 1
        \end{smallmatrix}\right]n)^{-1}\right)$$ 
        up to an inconsequential power of $q$. In this case, an element $x=\left[\begin{smallmatrix}
            a & b\\
            & 1
        \end{smallmatrix}\right]\in P(F)$, contained in the subgroup above, satisfies $g_{0,0,1}^{-1}xg_{0,0,1}=\left[\begin{smallmatrix}
            1 & \\
            -b & a
        \end{smallmatrix}\right]\in K_1$. Thus, in this case, we are done. If $k=1$, the finite sum is non-zero if and only if $-v(-\varpi^{t+1}-n)=-v(\varpi^{t+1}+n)\geq i_1 +1$. In this case, we re-write $\vol_{P(F)}(\mathcal{U}_1)$ as 
        $$\vol_{P(F)}\left( P(F)\cap (n^{-1}g_{t,1,1},1)K_\Pi (n^{-1}g_{t,1,1},1)^{-1}\right)=\vol_{P(F)}\left( P(\mathcal{O})\cap n^{-1}g_{t,1,1}K_1 g_{t,1,1}^{-1}n\right).$$
        An element $x=\left[\begin{smallmatrix}
            a & b \\
            & 1
        \end{smallmatrix}\right]\in P(\mathcal{O})\cap n^{-1}g_{t,1,1}K_1 g_{t,1,1}^{-1}n$ satisfies 
        \begin{align}\label{eq: A}g_{t,1,1}^{-1}nxn^{-1} g_{t,1,1}=\left[\begin{smallmatrix}
            * & *\\
            (an-b-n)\varpi^{-t} & a+(an-b-n)\varpi^{-t-1}
        \end{smallmatrix}\right]\in K_1.\end{align}
       If $t\geq -1$, then since $-v(\varpi^{t+1}+n)\geq i_1+1\geq 1,$ implies that $v(\varpi^{t+1}+n)\leq -1$, and thus $v(n)\leq -1$. Looking at the bottom left entry of \eqref{eq: A}, we see that under these conditions, $a\equiv 1\mod\varpi$ and hence we are done. Thus, we assume that $t\leq -2$. If $v(n)<t+1$, then once again, the bottom left entry of \eqref{eq: A} shows that $a\equiv 1\mod\varpi$ and we are done. If $v(n)\geq t+1$, then the bottom right entry of \eqref{eq: A}, shows that 
       $$a(1+n\varpi^{-t-1})\equiv 1+n\varpi^{-t-1}\mod\varpi.$$
       However, since we also have $-v(\varpi^{t+1}+n)\geq i_1+1\geq -t-1$, it must be the case that $1+n\varpi^{-t-1}\in\mathcal{O}^\times$. Thus once again, $a\equiv 1\mod\varpi.$ Thus concludes the proof. 
     \end{proof}
 \end{prop}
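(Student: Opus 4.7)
The plan is to invoke the reduction established in Section~\ref{sec reduction} and verify the left-hand side of \eqref{eq: reduction step}; namely, that
$$\vol_{P(F)}(P(F)\cap \gamma K_\Pi\gamma^{-1})^{-1}\cdot \mathcal{I}_{\Pi,\delta_P}^\mathrm{new}(\gamma W_\Pi^\mathrm{new};s)\in L(\Pi,s)A[q^s,q^{-s}]$$
for every double coset representative $\gamma\in P(F)\backslash G(F)/K_\Pi$. Here $K_\Pi=K_1\times H(\mathcal{O})$ since $c(\mathrm{St}_\chi)=1$ and $c(\pi)=0$, and $L(\Pi,s)^{-1}\in A[q^s,q^{-s}]$ is immediate from \eqref{eq: gl2 l-factor} and the hypothesis that $A$ contains $\chi$ and the spherical Hecke eigenvalues of $\pi$.

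First, I would use Iwasawa decomposition on the principal-series factor together with \Cref{lem general coset decomp} on the Steinberg factor to reduce to representatives of the form $\gamma=(z_1 g_{t,k,1},z_2 n)$ with $z_i\in Z(F)$, $n\in N(F)$, $t\in\mathbf{Z}$, and $k\in\{0,1\}$. Evaluating $\mathcal{I}_\Pi^\mathrm{new}(\gamma;s)$, the $F^\times$-integral breaks up as a sum over $i\in\mathbf{Z}$ of products of three ingredients: Whittaker values of $\mathrm{St}$ at $g_{t+i,k,y^{-1}}$, which are given by \Cref{lem Assing for steinberg}; spherical values of $\pi$ at $\left[\begin{smallmatrix}\varpi^i & \\ & 1\end{smallmatrix}\right]$, which by Shintani's formula equal $q^{-i/2}\mathrm{Sch}_i(\alpha_\pi,\beta_\pi)$; and inner character-integrals $\int_{\mathcal{O}^\times}\psi(\lambda\varpi^i y)\,d^\times y$ with $\lambda$ depending on $n,t,k$ (specifically $\lambda=-n$ when $k=0$ and $\lambda=-\varpi^{t+1}-n$ when $k=1$).

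I would then split each resulting geometric series into an infinite tail plus a finite error term, exactly as in the proof of \Cref{thm steinberg}. Telescoping the tail and invoking the standard fact that $q^{-i/2}\mathrm{Sch}_i(\alpha_\pi,\beta_\pi)\in A$ (a parity argument on the Schur expansion), the tail collapses to $L(\Pi,s)$ times a Laurent polynomial in $q^s$ with coefficients in $A$. The error term is a finite sum, and its size is governed by $v(n)$ (when $k=0$) or $v_1:=v(\varpi^{t+1}+n)$ (when $k=1$); the only possible denominator it contributes is a single factor of $\tfrac{1}{q-1}$, coming from the boundary Gauss-sum value $\tfrac{-q}{q-1}$ recorded in Section~\ref{sec Gauss sums}.

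The main obstacle, as in Proposition~\ref{thm steinberg}, is showing that whenever this error term is non-zero the factor of $\tfrac{1}{q-1}$ is absorbed by the integer $[P(\mathcal{O}):P(\mathcal{O})\cap\gamma K_\Pi\gamma^{-1}]$. For $k=0$ this is clean and unconditional: one checks directly that $P(\mathcal{O})\cap g_{0,0,1}K_1 g_{0,0,1}^{-1}\subseteq \left[\begin{smallmatrix}1+\varpi\mathcal{O} & \varpi\mathcal{O}\\ & 1\end{smallmatrix}\right]$, which has index a positive multiple of $q-1$ in $P(\mathcal{O})$. For $k=1$ the argument is more delicate: one must conjugate an arbitrary $\left[\begin{smallmatrix}a&b\\ & 1\end{smallmatrix}\right]$ in the intersection through both $n$ and $g_{t,1,1}$, and analyze the bottom row of the resulting matrix case-by-case on the sign of $t$ and on whether $v(n)\ge t+1$. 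The error-producing inequality $-v_1\ge i_1+1$ then has to force $a\equiv 1\pmod\varpi$ in every sub-case, so that the containment $\left[\begin{smallmatrix}a&b\\ & 1\end{smallmatrix}\right]\in\left[\begin{smallmatrix}1+\varpi\mathcal{O} & \varpi\mathcal{O}\\ & 1\end{smallmatrix}\right]$ again yields the needed factor of $q-1$. This bookkeeping on matrix entries at $k=1$ is the principal technical step.
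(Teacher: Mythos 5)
Your proposal follows the paper's own argument essentially step for step: the reduction of \Cref{sec reduction}, representatives $(z_1g_{t,k,1},z_2n)$ via Iwasawa decomposition and \Cref{lem general coset decomp}, Assing's formula (\Cref{lem Assing for steinberg}) together with Shintani's spherical formula, the split into an $L(\Pi,s)$-tail plus a finite Gauss-sum error term controlled by $v(n)$ or $v(\varpi^{t+1}+n)$, and the absorption of the single $\tfrac{1}{q-1}$ by the index $[P(\mathcal{O}):P(\mathcal{O})\cap\gamma K_\Pi\gamma^{-1}]$, with the $k=1$ case handled exactly by the conjugation-and-bottom-row case analysis you describe. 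This matches the paper's proof, so the plan is correct and not a different route.
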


 \subsubsection{One half-ramified and one unramified principal-series representation} 
 
 In this section, we treat the case $\Pi=\pi_1 \times \pi_2$, where $\pi_2$ is an unramified principal-series representation with Satake parameters $\alpha=\alpha_{\pi_2},\beta=\beta_{\pi_2}$, and $\pi_1$ is a half-ramified unitary principal-series representation. Such a representation $\pi_1$, is up to an unramified twist, of the form $I(\chi\omega,\chi^{-1})$ for an unramified unitary character $\chi:F^\times\rightarrow\mathbf{C}^\times$ and a ramified unitary character $\omega=\omega_{\pi_1}:F^\times\rightarrow \mathbf{C}^\times$ with $\omega(\varpi)=1$.
 In this case, we have $\pi_1^\vee= I(\chi^{-1}\omega^{-1},\chi)$ and $c(\pi_1)=c(\omega)=c(\pi_1^\vee)$. 

\begin{lem}[\cite{Assing_2019} Lemma $3.3.5$]\label{prop Assing half-ramified princ series}
Let $\pi_1=I(\chi\omega,\chi^{-1})$ be as above. Then, for $t\in\mathbf{Z},0\leq k\leq c(\pi_1)$ and $v\in\mathcal{O}^\times$, we have
    \begin{align*}
        W_{\pi_1^\vee}^{\mathrm{new}^*}(g_{t,k,v})=\begin{dcases}
            \chi(\varpi^{t+2c(\pi_1)})q^{-\frac{t+c(\pi_1)}{2}}\epsilon(\tfrac{1}{2},\omega),\ &{\mathrm{if}\ k=0\ \&\ t\geq -c(\pi_1)}\\
            q^{\tfrac{k}{2}}\chi(\varpi^{-t-2k})\epsilon(\tfrac{1}{2},\omega),\ &\substack{\mathrm{if}\ 0<k\leq \left\lfloor \tfrac{c(\pi_1)}{2}\right\rfloor, \ t=-c(\pi_1)-k \\ \&\  v\in b_{\omega^{-1}}^{-1}+\varpi^k\mathcal{O}}\\
            \omega^{-1}(-v^{-1})\psi(-v^{-1}\varpi^{t+k})\chi(\varpi^{-t-2k})q^{\tfrac{c(\pi_1)-k}{2}},\ &\substack{\mathrm{if}\ \left\lceil \tfrac{c(\pi_1)}{2}\right\rceil \leq k< c(\pi_1),\ t=-c(\pi_1)-k\\ \&\ v\in b_{\omega^{-1}}^{-1}+\varpi^{c(\pi_1)-k}\mathcal{O}}\\
            \omega^{-1}(-v^{-1})q^{-\tfrac{t+2k}{2}}\chi(\varpi^{-t-2k})\psi(-v^{-1}\varpi^{t+k}),\ &{\mathrm{if}\ k= c(\pi_1)\ \&\ t\geq -2k}\\
            0,\ &\mathrm{otherwise}
        \end{dcases}
    \end{align*}

\end{lem}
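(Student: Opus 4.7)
The plan is to compute $W_{\pi_1^\vee}^{\mathrm{new}^*}(g_{t,k,v})$ through its integral representation against the new vector of the induced model of $\pi_1^\vee=I(\chi^{-1}\omega^{-1},\chi)$. Let $f^*$ denote the (suitably normalized) vector in this induced space which is fixed under the conjugate congruence subgroup $w_{c(\pi_1)}K_{c(\pi_1)}w_{c(\pi_1)}^{-1}$, where $w_{c(\pi_1)}:=\left[\begin{smallmatrix} & \varpi^{-c(\pi_1)}\\ 1 & \end{smallmatrix}\right]$. After the standard regularization one has
$$W_{\pi_1^\vee}^{\mathrm{new}^*}(g)=\int_F f^*\!\left(\left[\begin{smallmatrix} & 1\\ -1 & \end{smallmatrix}\right]\left[\begin{smallmatrix} 1 & x\\ & 1\end{smallmatrix}\right]g\right)\psi(-x)\ dx.$$

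First I would pin down $f^*$ explicitly. Using the Bruhat decomposition together with the characterization of the conjugate new vector, one shows $f^*$ is supported on $B(F)\cdot w_{c(\pi_1)}K_{c(\pi_1)}w_{c(\pi_1)}^{-1}$, and on that support its value is determined by the inducing characters $\chi^{-1}\omega^{-1},\chi$ and the normalization. I would then substitute $g=g_{t,k,v}$ and use the factorization $g_{t,k,v}=\left[\begin{smallmatrix}\varpi^t & \\ & 1\end{smallmatrix}\right]\left[\begin{smallmatrix} & 1\\ -1 & \end{smallmatrix}\right]\left[\begin{smallmatrix}1 & v\varpi^{-k}\\ & 1\end{smallmatrix}\right]$ to bring the argument of $f^*$ into Iwasawa form; the outcome depends only on $v(x+v\varpi^{-k})$. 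The support condition on $f^*$ then translates to the constraints of the lemma on $(t,k,v)$: an inequality on $t$ at the extreme values $k\in\{0,c(\pi_1)\}$, the equality $t=-c(\pi_1)-k$ for $0<k<c(\pi_1)$, and a congruence on $v$ modulo a power of $\varpi$ in the intermediate range.

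Next I would evaluate the resulting one-variable integral. After a change of variables $x\mapsto x-v\varpi^{-k}$ and collecting all induction characters and powers of $|x|$, the integral collapses to
$$q^{\ast}\cdot \chi^{\ast}\cdot \int_S \omega^{-1}(u)\psi(\alpha u)\ d^\times u$$
for an explicit prefactor $q^{\ast}\chi^{\ast}$, a unit $\alpha\in F^\times$, and an integration domain $S$ equal either to $\mathcal{O}^\times$ (yielding a full Gauss sum $\mathscr{G}(\omega^{-1},\alpha)$) or to a coset $1+\varpi^\ell\mathcal{O}$ (yielding a partial Gauss sum $\mathscr{G}_\ell(\omega^{-1},\alpha)$), depending on the range of $k$. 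Invoking the evaluations of $\mathscr{G}$ and $\mathscr{G}_\ell$ recorded in \Cref{sec Gauss sums} -- which encode $\epsilon(\tfrac{1}{2},\omega^{-1})$ and, in the intermediate range, the unit $b_{\omega^{-1}}$ -- produces the four nonzero cases of the lemma, after rewriting $\epsilon(\tfrac{1}{2},\omega^{-1})$ in terms of $\epsilon(\tfrac{1}{2},\omega)$ via the standard root-number relation.

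The main technical obstacle is bookkeeping. One must carefully track (i) the conjugation by $w_{c(\pi_1)}$, which swaps the roles of $\lfloor c(\pi_1)/2\rfloor$ and $\lceil c(\pi_1)/2\rceil$ and inverts $\omega$ in the congruence on $v$, so that $v\in b_{\omega^{-1}}^{-1}+\varpi^{\min(k,c(\pi_1)-k)}\mathcal{O}$ appears in the correct range; (ii) the shift $-v^{-1}\varpi^{t+k}$ inside $\psi$ in the two lower regimes, arising from the diagonal part of the Iwasawa decomposition; and (iii) the normalization $W_{\pi_1^\vee}^{\mathrm{new}^*}(1)=1$, which fixes the overall prefactors $q^{-(t+c(\pi_1))/2}$, $q^{k/2}$, $q^{(c(\pi_1)-k)/2}$ and $q^{-(t+2k)/2}$ in each case. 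Once these three points are aligned, the four nontrivial regimes of the Gauss-sum evaluations in \Cref{sec Gauss sums} correspond one-to-one with the four nonzero cases of the statement.
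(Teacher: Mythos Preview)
The paper does not give a proof of this lemma at all: it is quoted verbatim from Assing's thesis \cite[Lemma~3.3.5]{Assing_2019}, so there is nothing in the paper to compare your argument against. What the paper does tell us (in the paragraph just before \Cref{lem general coset decomp}) is how Assing proves it: not via the Jacquet integral as you propose, but by taking \cite[Proposition~2.23]{Saha_2015} as the starting point. Saha's result expresses $W_{\pi^\vee}^{\mathrm{new}^*}(g_{t,k,v})$ directly as a finite sum over characters of $\mathcal{O}^\times$ weighted by epsilon-factors of twists of $\pi$, so the computation reduces to evaluating $\epsilon(\tfrac12,\pi_1\otimes\mu)$ for the half-ramified principal series and then summing; the Gauss and partial Gauss sums of \Cref{sec Gauss sums} enter at that stage.

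Your route through the Jacquet integral is in principle viable and more self-contained, but the step ``pin down $f^*$ explicitly'' is doing a lot of work you have not displayed. The conjugate new vector in the induced model is not supported on a single Bruhat cell in general, and writing it down on each cell with the correct normalization (especially tracking how the support condition interacts with the conjugation by $w_{c(\pi_1)}$) is exactly where the case distinctions on $k$ originate. If you pursue this, you would essentially be reproving the relevant special case of \cite[Proposition~2.23]{Saha_2015} along the way. Assing's approach trades that for a black-box appeal to Saha, at the cost of needing the twist epsilon-factors as input.
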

\begin{prop}\label{thm tamely ram p.s + unr p.s}
    Let $\pi_2$ be an unramified principal-series representation. Let $\chi$ (resp. $\omega$) be an unramified (resp. ramified) unitary character of $F^\times$ and let $\pi_1=I(\chi\omega,\chi^{-1})$ with $\omega(\varpi)=1$. Set $\tau:=c(\pi_1)$, $\nu:=q-1$ and $\Pi:=\pi_1\times\pi_2$. Let $A\subseteq\mathbf{C}$ be a $\mathbf{Z}[q^{-1/2},\mu_{\nu q^\tau}]$-algebra containing $\chi$ and the central character and spherical Hecke eigenvalues of $\pi_2$. Let $(\phi,g_1,g_2)\in\mathcal{S}(F^2)\times G(F)$ be a $\Pi$-integral datum. Then, \eqref{eq: Problem} holds.
\end{prop}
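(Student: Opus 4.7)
As in the preceding propositions, the reduction of \Cref{sec reduction} tells us it suffices to check that
\[
\vol_{P(F)}\!\left(P(F)\cap \gamma K_\Pi \gamma^{-1}\right)^{-1} \cdot \mathcal{I}_{\Pi,\delta_P}^\mathrm{new}(\gamma W_\Pi^\mathrm{new}; s) \in L(\Pi, s)\, A[q^s, q^{-s}]
\]
for all $\gamma \in P(F) \backslash G(F) / K_\Pi$, where $L(\Pi, s)^{-1} \in A[q^s, q^{-s}]$ by \eqref{eq: gl2 l-factor} combined with the hypotheses on $A$. Since $K_\Pi = K_\tau \times H(\mathcal{O})$, Iwasawa decomposition on the second factor combined with \Cref{lem general coset decomp} on the first reduces us to representatives
\[
\gamma = (z_1 g_{t, k, v},\, z_2 n), \quad z_i \in Z(F),\ n \in N(F),\ t \in \mathbf{Z},\ 0 \leq k \leq \tau,\ v \in \mathcal{O}^\times/(1 + \varpi^{\min\{k, \tau - k\}}\mathcal{O}).
\]

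The plan is to substitute $y = u\varpi^i$ in the integral and use the matrix identity
\[
\begin{bmatrix} u\varpi^i & \\ & 1\end{bmatrix} g_{t, k, v} = \begin{bmatrix} u & \\ & u\end{bmatrix} g_{t+i,\, k,\, u^{-1} v} \begin{bmatrix} u^{-1} & \\ & 1\end{bmatrix},
\]
whose rightmost factor lies in $K_\tau$. Together with $K_\tau$-invariance of $W_{\pi_1}^\mathrm{new}$, the central character property, and \Cref{prop new vs conj. new} (applicable because $\omega(\varpi) = 1$), this yields
\[
W_{\pi_1}^\mathrm{new}\!\left(\begin{bmatrix} u\varpi^i & \\ & 1\end{bmatrix} g_{t, k, v}\right) = \omega(u)\, W_{\pi_1^\vee}^{\mathrm{new}^*}(g_{t + i,\, k,\, u^{-1} v}).
\]
Plugging this and Shintani's formula for $W_{\pi_2}^\mathrm{sph}$ into $\mathcal{I}_\Pi^\mathrm{new}(\gamma; s)$, and appealing to \Cref{prop Assing half-ramified princ series}, the $u$-integral becomes a full Gauss sum $\mathscr{G}(\omega, -n\varpi^i)$ when $k = 0$, a partial Gauss sum $\mathscr{G}_{\min\{k, \tau - k\}}(\omega, \cdot)$ in the two intermediate regimes $0 < k < \tau$ (both of which pin $i$ to $-t - \tau - k$ and $u^{-1}v$ to $b_{\omega^{-1}}^{-1} + \varpi^{\min\{k, \tau - k\}}\mathcal{O}$), and an untwisted character integral $\mathscr{G}(\mathbf{1}, \cdot)$ when $k = \tau$. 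Applying the explicit formulas of \Cref{sec Gauss sums} together with the integral representation of $\epsilon(1/2, \omega)$, each of these evaluates to an element of $\mathbf{Z}[q^{-1/2}, \mu_{\nu q^\tau}] \subseteq A$, up to at most a single factor of $\tfrac{q}{q-1}$. In the boundary regimes $k \in \{0, \tau\}$ the resulting $i$-series splits as in \Cref{thm unram + steinberg} into an $L(\Pi, s)$-multiple of an $A$-valued Laurent polynomial plus a finite error; in the intermediate regimes the support constraint produces only a single term and no infinite sum.

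The main obstacle, as in the previous cases, is to verify that every residual factor of $\tfrac{1}{q-1}$ in the finite error is absorbed by $\vol_{P(F)}(P(F) \cap \gamma K_\Pi \gamma^{-1})^{-1}$. Since $\pi_2$ is spherical, this reduces to showing the index $[P(\mathcal{O}) : P(\mathcal{O}) \cap g_{t, k, v} K_\tau g_{t, k, v}^{-1}]$ is divisible by $q - 1$ whenever the error is non-zero. The strategy, for $x = \left[\begin{smallmatrix} a & b \\ & 1 \end{smallmatrix}\right]$ in this intersection, is to expand $g_{t, k, v}^{-1} x g_{t, k, v} \in K_\tau$, extract polynomial congruences on $(a, b)$ modulo powers of $\varpi$, and exploit the precise valuation conditions making the error non-zero (those pinning $n$, or $n + \varpi^{t+1}$, or $u^{-1} v - b_{\omega^{-1}}^{-1}$ to a specific coset) to force $a \in 1 + \varpi\mathcal{O}$, producing the required factor of $q - 1$. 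The boundary regimes $k \in \{0, \tau\}$ follow by arguments parallel to \Cref{thm steinberg} and \Cref{thm unram + steinberg}; the intermediate regimes $0 < k < \tau$ are the most delicate, and will require careful use of both the extra level structure coming from $\tau > 1$ and the support constraint on $u^{-1} v$ near $b_{\omega^{-1}}^{-1}$ to pin down $a \bmod \varpi$.
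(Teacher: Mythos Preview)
Your overall scaffold mirrors the paper's proof, but you have the difficulty of the cases inverted, and this leads to a genuine gap in the $k=\tau$ regime. For $0<k<\tau$ the volume factor argument is in fact \emph{unconditional}: after renormalizing to $g_{0,k,v}$, the bottom row of $g_{0,k,v}^{-1}xg_{0,k,v}\in K_\tau$ forces $b\equiv 0\bmod\varpi^{\tau}$ and $a-vb\varpi^{-k}\equiv 1\bmod\varpi^{\tau}$; since $0<k<\tau$ one gets $a\equiv 1\bmod\varpi$ with no appeal whatsoever to the support constraint on $u^{-1}v$ near $b_{\omega^{-1}}^{-1}$. So the intermediate cases are the easiest, not the hardest.

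The delicate case is $k=\tau$, and here your stated reduction ``$[P(\mathcal{O}):P(\mathcal{O})\cap g_{t,k,v}K_\tau g_{t,k,v}^{-1}]$ is divisible by $q-1$ whenever the error is non-zero'' cannot work: that index does not see $n$ at all, whereas the error condition is precisely a valuation inequality on $n+v^{-1}\varpi^{t+\tau}$ (not $n+\varpi^{t+1}$ as you wrote). The correct move, as in the $k=c(\pi_1)$ case of \Cref{thm unram + steinberg}, is to shift $n$ to the first factor and study $P(\mathcal{O})\cap n^{-1}g_{t,\tau,v}K_\tau g_{t,\tau,v}^{-1}n$; the bottom row of $g_{t,\tau,v}^{-1}nxn^{-1}g_{t,\tau,v}$ then produces congruences involving $an_0-b-n_0$, and one must split into sub-cases on $t$ (namely $t\geq -\tau$, $-\tau>t\geq -2\tau$, and $t<-2\tau$) to extract $a\equiv 1\bmod\varpi$ from the valuation condition on $1+vn_0\varpi^{-t-\tau}$. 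Finally, a minor point: in the $k=0$ case the Gauss sum $\mathscr{G}(\omega,-n\varpi^i)$ is for a \emph{ramified} $\omega$, so it is supported at a single $i$ and there is no $L$-factor contribution there; only the $k=\tau$ case yields a genuine infinite series requiring the splitting trick.
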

\begin{proof}
   The inverse $L$-factor $L(\Pi,s)^{-1}$ is as usual, contained in $A[q^s,q^{-s}]$ by the assumptions on the algebra $A$ and \eqref{eq: gl2 l-factor}. Similarly to the proof of \Cref{thm unram + steinberg}, now using \Cref{lem general coset decomp}, we work with coset representatives 
     $$(z_1g_{t,k,v},z_2n),\ \ z_i\in Z(F),n\in N(F),t\in\mathbf{Z}, 0\leq k\leq c(\pi_1), v\in\mathcal{O}^\times$$
     for $\gamma\in P(F)\backslash G(F)/K_\Pi$
     in order to analyze the rational functions 
     $$\vol_{P(F)}\left(P(F)\cap \gamma K_\Pi\gamma^{-1}\right)^{-1}\cdot \mathcal{I}_{\Pi}^\mathrm{new}(\gamma ;s).$$
     Let $C:=\omega_{\pi_1}(z_1)\omega_{\pi_2}(z_2)$ Similarly to before, one easily sees that 
     \begin{align}
       \nonumber \mathcal{I}_{\Pi}^\mathrm{new}((z_1g_{t,k,v},z_2n))&{=C\cdot \sum_{i\in\mathbf{Z}_{\geq 0}}q^{-\tfrac{i}{2}}\mathrm{Sch}_i(\alpha,\beta)q^{-i(s-1)}\int_{\mathcal{O}^\times}\psi(-n\varpi^i y)\omega(y) W_{\pi_1}^\mathrm{new}(g_{i+t,k,vy^{-1}})\ d^\times y}\\
         &{=C\cdot \sum_{i\in\mathbf{Z}_{\geq 0}}q^{-\tfrac{i}{2}}\mathrm{Sch}_i(\alpha,\beta)q^{-i(s-1)}\int_{\mathcal{O}^\times}\psi(-n\varpi^i y)\omega(y) W_{\pi_1^\vee}^{\mathrm{new}^*}(g_{i+t,k,vy^{-1}})\ d^\times y}\label{eq: 20}
     \end{align}
     where the last equality follows from \Cref{prop new vs conj. new}. We are now in a position to apply Assings' result regarding $W_{\pi_1^\vee}^{\mathrm{new}^*}(g_{i+t,k,vy^{-1}})$. To do this, we split the proof into four different cases arising from \Cref{prop Assing half-ramified princ series}.\\
     \textit{Case} $1$: In the first case, we assume that $k=0$. In this case, $W_{\pi_1^\vee}^{\mathrm{new}^*}(g_{i+t,k,vy^{-1}})=0$ unless $i\geq i_0:=\mathrm{max}\{0, -t-c(\pi_1)\}$, and \eqref{eq: 20} reads 
     \begin{align}\label{eq: 21}
        C\chi(\varpi)^{t+2c(\pi_1)}q^{-\tfrac{t+c(\pi_1)}{2}}\epsilon(\tfrac{1}{2},\omega)\cdot \sum_{i\geq i_0}q^{-\tfrac{i}{2}}\mathrm{Sch}_i(\alpha,\beta)\chi(\varpi)^iq^{-\tfrac{i}{2}}\omega(-n\varpi^i)^{-1}\int_{\varpi^{i+v(n)}\mathcal{O}^\times}\psi(y)\omega(y)\ d^\times y.
     \end{align}
     It is clear that the constant in front of the sum is contained in $A$. Passing to the normalized additive Haar measure, the integral on the far right of \eqref{eq: 21}, is given by 
     \begin{align}\label{eq: 22}\tfrac{q^{i+v(n)+1}}{q-1}\cdot \int_{\varpi^{i+v(n)}\mathcal{O}^\times}\psi(y)\omega(y)\ dy\end{align}
     which by \cite[Lemma $1.1.1$]{schmidt2002some} is non-zero if and only if $i=-v(n)-c(\pi_1)$, in which case \eqref{eq: 22} takes the value $\tfrac{q^{i+v(n)+1}}{q-1}\epsilon(0,\omega^{-1})\in\frac{1}{q-1}A$. In this $k=0$ case, the factor of $\tfrac{1}{q-1}$ is always killed by the index in the same way as in \Cref{thm unram + steinberg} and this concludes the first case.\\
     \\
     \textit{Case} $2$: In the second case, we assume that $0<k\leq \left\lfloor \tfrac{c(\pi_1)}{2}\right\rfloor$. Thus, $W_{\pi_1^\vee}^{\mathrm{new}^*}(g_{i+t,k,vy^{-1}})=0$ unless $i=-t-k-c(\pi_1)$ and $y\in vb_{\omega^{-1}} + \varpi^k\mathcal{O}$. Let $j:=-t-c(\pi_1)-k$ and assume that $j\geq 0$\textcolor{black}{, since} otherwise there's nothing to prove. Hence \eqref{eq: 20} now reads
     \begin{align}\label{eq: 23}
         Cq^{-\tfrac{j}{2}}\mathrm{Sch}_j(\alpha,\beta) q^{-j(s-1)} \chi(\varpi)^{c(\pi_1)-k}q^{-\tfrac{c(\pi_1)-k}{2}}\epsilon(0,\omega)\int_{vb_{\omega^{-1}}+\varpi^k\mathcal{O}}\psi(n\varpi^jy)\omega(y)\ d^\times y.
     \end{align}
     The collection of terms on the left of the epsilon-factor in \eqref{eq: 23} is contained in $A$. The integral on the right of \eqref{eq: 23} in the notation of \Cref{sec Gauss sums} is given by $\omega(vb_{\omega^{-1}})\cdot\mathscr{G}_k(\omega,vb_{\omega^{-1}} n\varpi^j)$ and thus it's an element of $\frac{1}{q-1}A$. In this case, we re-write the volume factor in question as $$\vol_{P(F)}(P(F)\cap (g_{0,k,v},\left[\begin{smallmatrix}
         \varpi^{-t} & \\
         & 1
\end{smallmatrix}\right]n)K_{\Pi}(g_{0,k,v},\left[\begin{smallmatrix}
         \varpi^{-t} & \\
         & 1
\end{smallmatrix}\right]n)^{-1}).$$
If we call this subgroup $\mathcal{U}$, then it is clearly contained in $P(F)\cap g_{0,k,v}K_{c(\pi_1)}g_{0,k,v}^{-1}$. Thus,
     an element $x=\left[\begin{smallmatrix}
         a & b\\
         & 1
     \end{smallmatrix}\right]\in\mathcal{U}$, satisfies $g_{0,k,v}^{-1}xg_{0,k,v}=\left[\begin{smallmatrix}
            * & * \\
            -b & a-vb\varpi^{-k}
        \end{smallmatrix}\right]\in K_{c(\pi_1)}.$ Hence $b\equiv 0\mod\varpi^{c(\pi_1)}$ and $a-vb\varpi^{-k}\equiv 1\mod\varpi^{c(\pi_1)}$\textcolor{black}{, thus $a-vb\varpi^{-k}\equiv 1\mod \varpi$}. But since $0<k<c(\pi_1)$, we have $b\varpi^{-k}\equiv 0\mod\varpi$, which forces $a\equiv 1\mod\varpi$.
     Thus, the corresponding volume factor in question annihilates the factor of $\tfrac{1}{q-1}$. This concludes the second case.\\
     \\
     \textit{Case} $3$: In the third case, we assume that $\left\lceil \tfrac{c(\pi_1)}{2}\right\rceil \leq k < c(\pi_1)$. Then, again $W_{\pi_1^\vee}^{\mathrm{new}^*}(g_{i+t,k,vy^{-1}})=0$ unless $i=j$ and $y\in vb_{\omega^{-1}}+\varpi^{c(\pi_1)-k}\mathcal{O}$. Thus, we can write \eqref{eq: 20} as
     \begin{align}
        Cq^{-\tfrac{j}{2}}\mathrm{Sch}_j(\alpha,\beta)q^{-j(s-1)}\chi(\varpi)^{c(\pi_1)-k}q^{\tfrac{c(\pi_1)-k}{2}}\omega(-v)\int_{1+\varpi^{c(\pi_1)-k}\mathcal{O}}\psi((-n\varpi^j-v^{-1}\varpi^{-c(\pi_1)})vb_{\omega^{-1}} y)\ d^\times y.
     \end{align}
     The expression on the left of the integral is contained in $A$. The integral on the right is once again a partial Gauss sum, this time for the trivial character of $F^\times$. The integral if non-zero, is simply equal to $\tfrac{q}{q-1}$ times a $q^{c(\pi_1)}$-th root of unity. The factor of $\tfrac{1}{q-1}$ is annihilated using the same argument as in the previous case, since once again $k<c(\pi_1)$.\\
     \\
     \textit{Case} $4$: In the fourth and final case, we assume that $k=c(\pi_1)$. In this case, $W_{\pi_1^\vee}^{\mathrm{new}^*}(g_{i+t,k,vy^{-1}})=0$ unless $i\geq -t-2c(\pi_1)$. Let $i_1:=\mathrm{max}\{0,-t-2c(\pi_1)\}$. Then \eqref{eq: 20} reads
     \begin{align*}
         C\chi(\varpi)^{-t-2c(\pi_1)}q^{-\tfrac{t+2c(\pi_1)}{2}}\omega(-v)\sum_{i\geq i_1}\mathrm{Sch}_i(\alpha,\beta)q^{-is}\chi(\varpi)^{-i}\int_{\mathcal{O}^\times}\psi((-n-v^{-1}\varpi^{t+c(\pi_1)})\varpi^iy)\ d^\times y.
     \end{align*}
     The expression in front of the sum is already contained in $A$. We are now in a familiar situation to manipulate the above infinite sum and split \textcolor{black}{it} into a main term giving us an $L$-factor contribution, and an error term given by a Laurent polynomial. Comparing with \eqref{eq: gl2 l-factor}, it is a standard calculation to retrieve the $L$-factor term from $\sum_{i\geq i_1}\mathrm{Sch}_i(\alpha,\beta)q^{-is}\chi(\varpi)^{-i}$. The remaining Laurent polynomial (if it shows up), which has coefficients given by integrals of the form $\int_{\mathcal{O}^\times}\psi((-n-v^{-1}\varpi^{t+c(\pi_1)})\varpi^iy)-1\ d^\times y$, and how it is balanced with the corresponding volume factor, is treated as in the proof of \Cref{thm unram + steinberg}. This error term shows up only if $-v(n+v^{-1}\varpi^{t+c(\pi_1)})\geq i_1+1$. For this fourth and final case, we re-write the volume factor as 
     \begin{align*}
         \vol_{P(F)}(P(\mathcal{O})\cap n^{-1}g_{t,k,v} K_{c(\pi_1)} g_{t,k,v}^{-1}n).
     \end{align*}
     Thus an element $x=\left[\begin{smallmatrix}
         a & b\\
         & 1
     \end{smallmatrix}\right]$ contained in this subgroup satisfies 
     \begin{align}\label{eq: 24 matrix}
         g_{t,k,v}^{-1}nxn^{-1}g_{t,k,v}=\left[\begin{smallmatrix}
             * & * \\
             (an-b-n)\varpi^{-t} & a+(an-b-n)v\varpi^{-t-k}
         \end{smallmatrix}\right]\in K_{c(\pi_1)}.
     \end{align}
     \\
     Firstly, assume that $t\geq -c(\pi_1)$. Since $-v(n+v^{-1}\varpi^{t+c(\pi_1)})\geq i_1+1\geq 1$ implies that $v(n+v^{-1}\varpi^{t+c(\pi_1)})\leq -1$, we must have $v(n)\leq -1$. Thus, looking at the bottom left entry of \eqref{eq: 24 matrix}, we see that $an-b-n\in\mathcal{O}$ and hence $an-n\in\mathcal{O}$. Since $v(n)\leq -1$, it follows that $a\equiv 1\mod\varpi$ and we are done. \\
     Now assume that $-c(\pi_1)>t\geq -2c(\pi_1)$. In this case, if $v(n)< t+c(\pi_1)$, then the same matrix entry once again shows that $a\equiv1\mod\varpi$ and we are done. If $v(n)\geq t+c(\pi_1)$ then we consider the bottom right entry of \eqref{eq: 24 matrix}, which gives 
     \begin{align}
      \nonumber   a(1+vn\varpi^{-t-c(\pi_1)})&\equiv1 + vn\varpi^{-t-c(\pi_1)}+bv\varpi^{-t-c(\pi_1)}&&\mod\varpi^{c(\pi_1)}\\
         &\equiv1+vn\varpi^{-t-c(\pi_1)}&&\mod\varpi^{-t-c(\pi_1)}\label{eq: 29}
     \end{align}
     where the second congruence follows from the inequality $-c(\pi_1)>t\geq-2c(\pi_1)$. But we already have the inequality $v(n+v^{-1}\varpi^{t+c(\pi_1)})\leq -1$ which implies that $v(1+vn\varpi^{-t-c(\pi_1)})\leq -t-c(\pi_1)-1$. Thus \eqref{eq: 29} always implies that $a\equiv1\mod\varpi$ and we are done.\\
     Finally, we assume that $t\leq -2c(\pi_1)$. Once again, if $v(n)<t+c(\pi_1)$, the bottom left entry of \eqref{eq: 24 matrix} shows that $a\equiv 1\mod\varpi$ and we are done. If $v(n)\geq t+c(\pi_1)$, then the bottom right entry of \eqref{eq: 24 matrix} this time reads
    \begin{align}\label{eq: 30}a(1+vn\varpi^{-t-c(\pi_1)})\equiv 1+vn\varpi^{-t-c(\pi_1)}\mod\varpi^{c(\pi_1)}.\end{align}
    However, the inequality $-v(n+v^{-1}\varpi^{t+c(\pi_1)})\geq i_1+1\geq-t-2c(\pi_1)+1$ implies that $v(1+vn\varpi^{-t-c(\pi_1)})\leq c(\pi_1)-1$. Thus \eqref{eq: 30} always implies that $a\equiv1\mod\varpi$. This concludes the proof.
\end{proof}

\subsubsection{One supercuspidal representation and one unramified principal-series representation}\label{supercuspidal x unr p-s} 
Before we dive into the main result of this section, we first report on some standard facts regarding supercuspidal representations of $\GL_2(F)$ via means of the Weil representation. 
Suppose $F$ has odd residue characteristic. Let $E/F$ be a quadratic extension. Write $f=f(E/F)$ for the residue degree and $e=e(E/F)$ for the ramification index. We will always write $\eta_{E/F}$ for the quadratic character of $F^\times$ associated to $E$, via local class field theory. We also write $\mathrm{Nr}=\mathrm{Nr}_{E/F}$ and $\mathrm{Tr}=\mathrm{Tr}_{E/F}$ for the usual norm and trace maps attached to the extension $E/F$. Let $\varpi_E$ be a choice of uniformizer of $E$ for which $\mathrm{Nr}(\varpi_E)=\varpi^f$, under the convention that $\varpi_E=\varpi$ whenever $E/F$ is unramified. This canonically defines an extension of the valuation of $F$ to $E$. Let $\xi:E^\times\rightarrow\mathbf{C}^\times$ be a unitary character which is non-trivial on $\mathrm{ker}(\mathrm{Nr}_{E/F})$. We know that every unitary supercuspidal representation of $\GL_2(F)$ is of the form $\pi(\xi)$, where $\pi(\xi)$ is the Weil representation associated to $(\xi,E)$. For the details of the construction, we refer to \cite[$1.1$]{automorphic_on_GL} or \cite[$4.8$]{bump_1997}.
The central character of $\pi(\xi)$ is given by $\omega_{\pi(\xi)}=\eta_{E/F}\cdot \xi|_{F^\times}$, and the twist of $\pi(\xi)$ by a character $\chi:F^\times\rightarrow\mathbf{C}^\times$ is given by $\chi\otimes\pi(\xi)\simeq \pi(\xi\cdot (\chi\circ\mathrm{Nr})).$ By 
\cite[Theorem $2.3.2$]{schmidt2002some} we have the following formula for the conductor of $\pi(\xi)$
$$c(\pi(\xi))=f(E/F)c(\xi)+e(E/F)-1.$$
Furthermore, by \cite[Theorem $4.7$]{automorphic_on_GL}, there exists a local constant $\gamma_{E/F}\in S^1$, depending only on $E/F$, for which
$$\epsilon(s,\pi(\xi))=\gamma_{E/F}\cdot \epsilon(s,\xi,\psi\circ\mathrm{Tr}_{E/F}).$$
 \textcolor{black}{The abelian $\epsilon$-factor on the right-hand side of the above equality admits an integral representation as in \cite[\S $1.1$(iii)]{schmidt2002some} which we will explicitly recall in due time.}
 We write $d_Ex$ for the additive Haar measure on $E$, normalized to give $\mathcal{O}_E$ volume $q^{\tfrac{1-e(E/F)}{2}}$. Note that by \cite[Lemma $2.3.1$]{schmidt2002some}, the quantity $1-e(E/F)$ is precisely the conductor of the additive character $\psi\circ\mathrm{Tr}_{E/F}$ of $E$. Following \cite{Assing_2018}, we'll consider the functions
 $$K(\xi,A,B):=\int_{\mathcal{O}_E^\times}\xi(x)\psi(\mathrm{Tr}(Ax)+B\mathrm{Nr}(x))\ d_Ex,\ \ A\in E,B\in F.$$ As usual, without loss of generality, we'll assume that our supercuspidal representation has central character which is trivial on $\varpi$. 
\begin{prop}[\cite{Assing_2018} Lemma $3.1$, \cite{Assing_2019} Lemma $3.3.1$]\label{prop new vector supercuspidal}
    Let $\pi=\pi(\xi)$ be a unitary supercuspidal representation with $\omega_{\pi}(\varpi)=1$. For $t\in\mathbf{Z},  0\leq k \leq c(\pi)$ and $v\in\mathcal{O}^\times$, we have
    \begin{align*}
        W_{\pi}^\mathrm{new}(g_{t,k,v})=W_{\pi^\vee}^{\mathrm{new}^*}(g_{t,k,v})=\begin{dcases}
            \epsilon(\tfrac{1}{2},\pi),\ &\mathrm{if}\ k=0\ \&\ t=-c(\pi)\\
            \gamma_{E/F}q^{-\tfrac{t}{2}}K(\xi^{-1},\varpi_E^{t/f},v\varpi^{-k}),\ &\mathrm{if}\ k=\tfrac{c(\pi)}{2}\ \& \ -c(\pi)\leq t<0\\
            \gamma_{E/F}q^{-\tfrac{t}{2}}K(\xi^{-1},\varpi_E^{t/f},v\varpi^{-k}),\ &\mathrm{if}\ k\neq 0,\tfrac{c(\pi)}{2},c(\pi)\ \&\ t={-\mathrm{max}\{c(\pi),2k\}}\\
            \omega_{\pi}^{-1}(-v^{-1})\psi(-v^{-1}\varpi^{-c(\pi)}),\ &\mathrm{if}\ k=c(\pi)\ \&\ t=-2k\\
            0,\ &\mathrm{otherwise.}
        \end{dcases}
    \end{align*}
    Furthermore, if $E/F$ is unramified, $ W_{\pi}^\mathrm{new}(g_{t,k,v})$ vanishes for odd values of $t$.
\end{prop}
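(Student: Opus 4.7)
The plan is to follow the standard approach via the Weil representation realisation of $\pi = \pi(\xi)$. I would first realise $\pi$ on the Schwartz space $\mathcal{S}(E)$ using the Weil representation $r_\psi$ attached to the quadratic form $\mathrm{Nr}_{E/F}$: the upper unipotent $\left[\begin{smallmatrix} 1 & b \\ & 1 \end{smallmatrix}\right]$ acts by multiplication by $\psi(b\,\mathrm{Nr}(x))$, the split diagonal $\left[\begin{smallmatrix} a & \\ & 1 \end{smallmatrix}\right]$ acts by a $\xi$-twisted scaling, and the Weyl element $w=\left[\begin{smallmatrix} & 1 \\ -1 & \end{smallmatrix}\right]$ acts, up to the Weil constant absorbed into $\gamma_{E/F}$, via the Fourier transform on $E$ with respect to $\psi\circ\mathrm{Tr}_{E/F}$. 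The new vector $W_\pi^{\mathrm{new}}$ then corresponds to a normalised multiple of $f_0 := \xi^{-1}\mathbf{1}_{\mathcal{O}_E^\times}\in\mathcal{S}(E)$; one checks directly from the explicit Weil formulas that $f_0$ is $K_{c(\pi)}$-fixed, which recovers Schmidt's conductor formula $c(\pi)=f\,c(\xi)+e-1$, after which one normalises so that $W_\pi^{\mathrm{new}}(1)=1$.

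The core computation then uses the decomposition $g_{t,k,v} = \left[\begin{smallmatrix} \varpi^t & \\ & 1 \end{smallmatrix}\right] w \left[\begin{smallmatrix} 1 & v\varpi^{-k} \\ & 1 \end{smallmatrix}\right]$ built into the definition of $g_{t,k,v}$, and evaluates the three actions in order on $f_0$. The upper unipotent produces $\psi(v\varpi^{-k}\mathrm{Nr}(x))f_0(x)$; the Weyl element turns this into (a constant multiple of) its Fourier transform on $E$; and the split diagonal evaluates that Fourier transform at a point of valuation $t/f$, namely $\varpi_E^{t/f}$. Unpacking gives, up to the expected $\gamma_{E/F}q^{-t/2}$ prefactor, precisely $K(\xi^{-1},\varpi_E^{t/f},v\varpi^{-k})$, which yields the two middle cases of the formula. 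The extreme cases are degenerations of this same integral: for $k=0$ the norm-phase is trivial on $\mathcal{O}_E^\times$, the integral collapses to a Gauss sum for $\xi^{-1}$, and one reads off $\epsilon(\tfrac{1}{2},\pi)$ via the Jacquet--Langlands identity $\epsilon(s,\pi)=\gamma_{E/F}\epsilon(s,\xi,\psi\circ\mathrm{Tr})$; for $k=c(\pi)$ the norm-phase dominates, and a Hensel-type argument localises the integrand on the single coset $x\equiv -v^{-1}\varpi^{t+c(\pi)}$ modulo a high power of $\varpi_E$, producing the claimed closed form.

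The main obstacle is proving the vanishing outside the stated ranges, which amounts to a $p$-adic stationary phase problem for $K(\xi,A,B)$. The trace phase $\psi(\mathrm{Tr}(Ax))$ oscillates at the scale of $v_E(A)$ while $\psi(B\,\mathrm{Nr}(x))$ oscillates at scale $v(B)$, and a non-negligible contribution is possible only when both phases admit a common stationary point on $\mathcal{O}_E^\times$ compatible with the ramification of $\xi$. Balancing the two scales forces $t=-\max\{c(\pi),2k\}$ in the intermediate ranges, and a slightly separate analysis is required when $k=c(\pi)/2$, where neither phase dominates and one cannot cleanly separate them; one must also keep track of whether $E/F$ is ramified, since this shifts both the self-dual measure $d_E x$ and the conductor $1-e$ of $\psi\circ\mathrm{Tr}$. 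The final parity assertion in the unramified case is then automatic: when $f=2$, the point $\varpi_E^{t/f}$ only makes sense for $t\in 2\mathbf{Z}$, and each of the three non-vanishing cases forces $t$ even, so $W_\pi^{\mathrm{new}}(g_{t,k,v})=0$ whenever $t$ is odd.
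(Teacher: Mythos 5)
In the paper this proposition is not proved at all: it is imported verbatim from Assing (Lemma 3.1 of the cited paper, Lemma 3.3.1 of his thesis), whose derivation runs through Saha's ``basic identity'' (Proposition 2.23 of the cited Saha paper), i.e.\ an expression for $W(g_{t,k,v})$ in terms of $\epsilon$-factors of $\GL_1$-twists, specialised to dihedral supercuspidals via $\epsilon(s,\pi(\xi)\otimes\mu)=\gamma_{E/F}\,\epsilon(s,\xi\cdot(\mu\circ\mathrm{Nr}),\psi\circ\mathrm{Tr})$; the first equality $W_\pi^{\mathrm{new}}=W_{\pi^\vee}^{\mathrm{new}^*}$ on the $g_{t,k,v}$ comes from Saha's Corollary 2.27 (the paper's Proposition on new versus conjugate new vectors) together with $\omega_\pi(\varpi)=1$ and $\det g_{t,k,v}=\varpi^t$. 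Your route --- realising $\pi(\xi)$ on $\mathcal{S}(E)$ via the Weil representation, taking $f_0=\xi^{-1}\cdot\ch(\mathcal{O}_E^\times)$ as new vector, and reading off $W(g_{t,k,v})$ from the actions of $n(v\varpi^{-k})$, $w$ and $\mathrm{diag}(\varpi^t,1)$ --- is a genuinely different and, in principle, workable approach: it does produce the $K(\xi^{-1},\varpi_E^{t/f},v\varpi^{-k})$ integrals directly rather than via $\epsilon$-factor identities. So the comparison is not with a proof in this paper but with Assing's method, and your method is a known alternative.

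As a proof, however, the proposal has real gaps. First, the identification of the new vector is asserted, not proved: checking that $f_0$ is fixed by all of $K_{c(\pi)}$ (in particular by the lower-triangular congruence elements, which act through conjugated Fourier transforms) and that the normalisation $W(1)=1$ is compatible with the constants in the statement is essentially the whole content of the conductor computation you say it ``recovers''; you would either have to carry this out or argue via newvector theory, and in the ramified case ($e=2$, $c(\pi)$ odd, measure $d_Ex$ giving $\mathcal{O}_E$ volume $q^{-1/2}$) the bookkeeping is delicate. Second, the vanishing outside the listed ranges and the pinning down of $t=-\max\{c(\pi),2k\}$, the special role of $k=c(\pi)/2$, and the exact closed forms in the cases $k=0$ and $k=c(\pi)$ are precisely the content of the lemma, and your sketch replaces them by a stationary-phase heuristic whose conclusion is the statement itself; nothing in the proposal would let a reader verify, say, why $k=c(\pi)/2$ allows the whole range $-c(\pi)\leq t<0$ while other intermediate $k$ force a single value of $t$. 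Third, the equality $W_\pi^{\mathrm{new}}=W_{\pi^\vee}^{\mathrm{new}^*}$ is never addressed, and the final parity claim is argued circularly: the value $W_\pi^{\mathrm{new}}(g_{t,k,v})$ is defined for every $t$, so one must prove it vanishes for odd $t$ when $E/F$ is unramified (e.g.\ by a support argument in the Weil model, where functions in $\pi(\xi)$ supported on $\mathcal{O}_E^\times$-translates see only norms of even valuation), not merely observe that the symbol $\varpi_E^{t/f}$ requires $t$ even.
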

\begin{prop}\label{thm sup. x unr. p-s}
    Let $\pi_1=\pi(\xi)$ be a unitary supercuspidal representation with $\omega_{\pi_1}(\varpi)=1$, associated to a quadratic extension $E/F$ and a (regular) unitary character $\xi:E^\times\rightarrow\mathbf{C}^\times$. Let $\pi_2$ be an unramified principal-series representation, and set $\Pi:=\pi_1\times\pi_2$. Let $\tau:=c(\pi_1)$, $\nu:=(q^{f(E/F)}-1)(1+\delta_{c(\pi_1)\equiv 1(2)})$ and $A\subseteq\mathbf{C}$ be a $\mathbf{Z}[q^{-1/2},\mu_{\nu q^\tau}]$-algebra containing $\omega_{\pi_2}$ and the spherical Hecke eigenvalues of $\pi_2$. Let $(\phi,g_1,g_2)\in\mathcal{S}(F^2)\times G(F)$ be a $\Pi$-integral datum. Then, \eqref{eq: Problem} holds.
    \begin{proof}
        In this case it is clear that $L(\Pi,s)=1$. As in the proof of \Cref{thm tamely ram p.s + unr p.s}, we will work with coset representatives $\gamma=(z_1g_{t,k,v},z_2n)$ with $z_i\in Z(F), n\in N(F), t\in \mathbf{Z}, 0\leq k\leq c(\pi_1)$ and $v\in\mathcal{O}^\times$ to analyze the rational functions
        \begin{align}\label{eq: 31} \vol_{P(F)}(P(F)\cap\gamma K_\Pi\gamma^{-1})^{-1}\cdot\mathcal{I}_{\Pi}^\mathrm{new}(\gamma;s).\end{align}
        We write $C:=\omega_{\pi_1}(z_1)\omega_{\pi_2}(z_2)$. Then as in the proof of \Cref{thm tamely ram p.s + unr p.s}, we have
        \begin{align}\label{eq: 32}
            \mathcal{I}_{\Pi}^\mathrm{new}(\gamma ;s)=C \sum_{i\in\mathbf{Z}_{\geq 0}}q^{-\tfrac{i}{2}}\mathrm{Sch}_i(\alpha,\beta)q^{-i(s-1)}\int_{\mathcal{O}^\times}\omega_{\pi_1}(y)\psi(-n\varpi^iy)W_{\pi_1}^\mathrm{new}(g_{i+t,k,vy^{-1}})\ d^\times y.
        \end{align}
        We distinguish between various cases depending on the values of $0\leq k\leq c(\pi_1)$.\\
        \textit{Case} $1$: If $k=0$, then \Cref{prop new vector supercuspidal} gives that
        \begin{align*}
             \mathcal{I}_{\Pi}^\mathrm{new}(\gamma;s)&=D(q^s) \epsilon(\tfrac{1}{2},\pi_1)\int_{\mathcal{O}^\times}\omega_{\pi_1}(y)\psi(-n\varpi^{-t-c(\pi_1)}y)\ d^\times y\\
             &=D(q^s) \epsilon(\tfrac{1}{2},\pi_1)\mathscr{G}(\omega_{\pi_1},-n\varpi^{-t-c(\pi_1)})\\
             &=D(q^s)\gamma_{E/F}\ \epsilon(\tfrac{1}{2},\xi,\psi\circ\mathrm{Tr})\mathscr{G}(\omega_{\pi_1},-n\varpi^{-t-c(\pi_1)})
        \end{align*}
        where $D(q^s):=Cq^{-\tfrac{-t-c(\pi_1)}{2}}\mathrm{Sch}_{-t-c(\pi_1)}(\alpha,\beta)q^{(t+c(\pi_1))(s-1)}\in A[q^s,q^{-s}]$. By \cite{automorphic_on_GL}, the local constant, $\gamma_{E/F}$ is given explicitly by $$\gamma_{E/F}=\eta_{E/F}\left(\varpi^{c(\eta_{E/F})}\right)\frac{\int_{\mathcal{O}^\times}\eta_{E/F}^{-1}(x)\psi(x\varpi^{-c(\eta_{E/F})})\ dx}{|\int_{\mathcal{O}^\times}\eta_{E/F}^{-1}(x)\psi(x\varpi^{-c(\eta_{E/F})})\ dx|}.$$
        If $c(\eta_{E/F})=0$, then clearly $\gamma_{E/F}=1$. If
        there's ramification, then
         the numerator is still a $\mathbf{Z}[q^{-1}]$-linear combination of $q^{c(\eta_{E/F})}$-roots of unity, and the denominator is equal to $q^{-c(\eta_{E/F})/2}$ by \cite{biswas2017twisting} (or \cite{gel1969representation}). Note that $c(\eta_{E/F})\leq c(\pi_1)$ by central character considerations. 
        The $\epsilon$-factor appearing in the third equality above, is given by 
        $$\epsilon(\tfrac{1}{2},\xi,\psi\circ\mathrm{Tr})=\int_{\varpi_E^{1-e-c(\xi)}\mathcal{O}_E^\times}|x|_E^{-1/2}\xi^{-1}(x)\psi(\mathrm{Tr}(x))\ d_Ex.$$
        This reduces to a $\xi(\varpi_E^{c(\xi)+e-1})$-multiple of a $\mathbf{Z}[q^{-f/2}]$-linear combination of $\xi|_{\mathcal{O}_E^\times}$-translates of $q^{c(\xi)}$-roots of unity. \textcolor{black}{The values of $\xi|_{\mathcal{O}_E^\times}$ are roots of unity of order $(q^f-1)q^{c(\pi_1)-2}$ which are contained in $A$ by construction. If $E/F$ is unramified, $\xi(\varpi_E^{c(\xi)+e-1})$ is simply equal to $\pm 1$. If $E/F$ is ramified,  $\xi(\varpi_E^{c(\xi)+e-1})=\xi(\varpi_E^{c(\pi_1)})$ is a root of unity of order $(q-1)^{c(\pi_1)-2}$ if $c(\pi_1)$ is even, and of order $2(q-1)^{c(\pi_1)-2}$ if $c(\pi_1)$ is odd. This follows from $\omega_{\pi(\xi)}=\eta_{E/F}\cdot \xi|_{F^\times}$, and all such roots of unity are contained in $A$ from the definition of $\nu$}. Finally, the Gauss sum $\mathscr{G}(\omega_{\pi_1},-n\varpi^{-t-c(\pi_1)})$, if non-zero, is always contained in $\tfrac{1}{q-1}A$ since $c(\omega_{\pi_1})\leq c(\pi_1)$. To see that in this case the inverse volume factor appearing in \eqref{eq: 31} is an integral multiple of $q-1$, one argues as in the $k=0$ case of \Cref{thm unram + steinberg}.\\
        \textit{Case} $2$: We now assume that $0<k<c(\pi_1)$. In this case, \Cref{prop new vector supercuspidal} implies that the summands appearing in \eqref{eq: 32} are non-zero only for finitely many values of $i\in\mathbf{Z}_{\geq 0}$, and for such a value of $i$ the corresponding integral appearing in \eqref{eq: 32} is a $\gamma_{E/F}$-multiple of a $\mathbf{Z}[q^{-f/2}]$-linear combination of 
        $$\int_{\mathcal{O}^\times}\omega_{\pi_1}(y)\psi(-n\varpi^iy)\int_{\mathcal{O}_E^\times}\xi^{-1}(x)\psi\left(\mathrm{Tr}(\varpi_E^{(t+i)/f}x)+v\varpi^{-k}y^{-1}\mathrm{Nr}(x)\right)\ d_Ex\ d^\times y.$$ 
        Using the fact that $y^{-1}\mathrm{Nr}(x)=y\mathrm{Nr}(y^{-1}x)$ for $y\in\mathcal{O}^\times$, a change of variables $y^{-1}x\rightsquigarrow x$, and the fact that $\omega_{\pi_1}=\eta_{E/F}\xi|_{F^\times}$, the above is equal to
        $$\int_{\mathcal{O}^\times}\eta_{E/F}(y)\psi(-n\varpi^iy)\int_{\mathcal{O}_E^\times}\xi^{-1}(x)\psi\left(y\mathrm{Tr}(\varpi_E^{(t+i)/f}x)+yv\varpi^{-k}\mathrm{Nr}(x)\right)\ d_Ex\ d^\times y.$$
        We can now pick $M\geq 1$ large enough such that the innermost integrand is invariant on the right by $1+\varpi_E^M\mathcal{O}_E$. Then the above expression is a $\mathbf{Z}[q^{-f/2}]$-multiple of $$\sum_{\delta\in \mathcal{O}_E^\times/1+\varpi_E^M\mathcal{O}_E}\xi^{-1}(\delta)\mathscr{G}\left(\eta_{E/F},-n\varpi^i+\mathrm{Tr}(\varpi_E^{(t+i)/f}\delta)+v\varpi^{-k}\mathrm{Nr}(\delta)\right).$$
        This is an instance of \cite[Lemma $3.1.1$]{cesnavicius2022manin}.
        Each Gauss sum appearing above, if non-zero, reduces to a $\mathbf{Z}[q^{-1}]$-multiple of $1, \tfrac{1}{q-1}$, or $\tfrac{1}{q-1}\epsilon(0,\eta_{E/F})$. The latter $\epsilon$-factor reduces to a $\mathbf{Z}[q^{-1}]$ linear combination of $q^{c(\eta_{E/F})}$-roots of unity and hence \textcolor{black}{is} contained in $A$. It remains to show that the possible $\tfrac{1}{q-1}$ factor is always countered by the volume factor appearing in \eqref{eq: 32}. Indeed, up to an integral power of $q$, the volume factor in question, is given by $\vol_{P(F)}(\mathcal{U})^{-1}$, where $$\mathcal{U}:=P(F)\cap (g_{0,k,v},\left[\begin{smallmatrix}
            \varpi^{-t} & \\
            & 1
        \end{smallmatrix}\right]n)K_\Pi  (g_{0,k,v},\left[\begin{smallmatrix}
            \varpi^{-t} & \\
            & 1
        \end{smallmatrix}\right]n)^{-1}\subseteq g_{0,k,v} K_{c(\pi_1)} g_{0,k,v}^{-1}.$$
        But if $x=\left[\begin{smallmatrix}
            a & b \\
            & 1
        \end{smallmatrix}\right]\in\mathcal{U}$, then 
        $g_{0,k,v}^{-1}x g_{0,k,v}=\left[\begin{smallmatrix}
            * & * \\
            -b & a-vb\varpi^{-k}
        \end{smallmatrix}\right]\in K_{c(\pi_1)}.$ Hence $b\equiv 0\mod\varpi^{c(\pi_1)}$ and $a-vb\varpi^{-k}\equiv 1\mod\varpi^{c(\pi_1)}$\textcolor{black}{, thus $a-vb\varpi^{-k}\equiv 1\mod \varpi$}. But since $0<k<c(\pi_1)$, we have $b\varpi^{-k}\equiv 0\mod\varpi$, which forces $a\equiv 1\mod\varpi$, and we are done.\\
        \textit{Case} $3$: In the third and final case, we assume that $k=c(\pi_1)$. Then the summand in \eqref{eq: 32} is non-zero only for $i=-t-2c(\pi_1)\geq 0$, and using \Cref{prop new vector supercuspidal}, the corresponding integral appearing in \eqref{eq: 32}, reduces to
        $$\omega_{\pi_1}(v)\int_{\mathcal{O}^\times}\psi\left((-n-v^{-1}\varpi^{t+c(\pi_1)})\varpi^{-t-2c(\pi_1)}y\right)\ d^\times y.$$ This puts us in exactly the same situation as the very last part of the proof of \Cref{thm tamely ram p.s + unr p.s}, and thus the inverse volume factor in \eqref{eq: 32} can be shown to always counter the $\tfrac{1}{q-1}$ factor of the above integral, whenever it appears.
    \end{proof}
\end{prop}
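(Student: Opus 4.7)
My plan is to follow the same template used in the previous two propositions (\Cref{thm tamely ram p.s + unr p.s} and the earlier Steinberg case), adapted to the explicit supercuspidal new vector values supplied by \Cref{prop new vector supercuspidal}. Note first that $L(\Pi,s)=1$, so the target is simply to show that the relevant rational functions lie in $A[q^s,q^{-s}]$. By the reduction in \Cref{sec reduction}, it suffices to show that
\[
\vol_{P(F)}\bigl(P(F)\cap \gamma K_\Pi\gamma^{-1}\bigr)^{-1}\cdot \mathcal{I}_{\Pi,\delta_P}^{\mathrm{new}}(\gamma W_\Pi^{\mathrm{new}};s)\in A[q^s,q^{-s}]
\]
for any $\gamma\in P(F)\backslash G(F)/K_\Pi$. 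Using \Cref{lem general coset decomp} on the first coordinate and Iwasawa decomposition on the second, it is enough to take $\gamma=(z_1 g_{t,k,v},z_2 n)$ with $z_i\in Z(F)$, $n\in N(F)$, $t\in\mathbf{Z}$, $0\leq k\leq c(\pi_1)$, $v\in\mathcal{O}^\times$; the central character factor $\omega_{\pi_1}(z_1)\omega_{\pi_2}(z_2)$ lies in $A$ by the assumptions on the algebra.

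The main step is to unfold $\mathcal{I}_\Pi^\mathrm{new}((z_1 g_{t,k,v},z_2 n);s)$ exactly as in the proof of \Cref{thm tamely ram p.s + unr p.s}, expressing it as a sum over $i\in\mathbf{Z}_{\geq 0}$ of $q^{-i/2}\mathrm{Sch}_i(\alpha,\beta) q^{-i(s-1)}$ times an integral of $\omega_{\pi_1}(y)\psi(-n\varpi^i y) W_{\pi_1}^{\mathrm{new}}(g_{i+t,k,vy^{-1}})$ over $\mathcal{O}^\times$. Then split into three cases according to $k$, mirroring \Cref{prop new vector supercuspidal}: (i) $k=0$, (ii) $0<k<c(\pi_1)$, (iii) $k=c(\pi_1)$. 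In case (i), the resulting integral collapses to a Gauss sum $\mathscr{G}(\omega_{\pi_1},-n\varpi^{-t-c(\pi_1)})$ multiplied by $\epsilon(\tfrac{1}{2},\pi_1)=\gamma_{E/F}\epsilon(\tfrac{1}{2},\xi,\psi\circ\mathrm{Tr})$; the explicit descriptions of $\gamma_{E/F}$ and of the $\GL_1$ $\epsilon$-factor on $E^\times$, together with the hypothesis that $A$ contains $\mu_{\nu q^\tau}$ and the $\xi$-values on $\mathcal{O}_E^\times$ (absorbed through $\omega_{\pi_1}=\eta_{E/F}\cdot\xi|_{F^\times}$), show that everything but possibly a factor $\tfrac{1}{q-1}$ lies in $A$. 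In case (iii), only the index $i=-t-2c(\pi_1)$ survives and the integrand reduces to $\omega_{\pi_1}(v)\int_{\mathcal{O}^\times}\psi((-n-v^{-1}\varpi^{t+c(\pi_1)})\varpi^{-t-2c(\pi_1)}y)\,d^\times y$, placing us in exactly the $k=c(\pi_1)$ situation treated at the end of \Cref{thm tamely ram p.s + unr p.s}.

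The main obstacle is case (ii), $0<k<c(\pi_1)$, where the integrand involves $K(\xi^{-1},\varpi_E^{(t+i)/f},v\varpi^{-k}y^{-1})$, an integral over $\mathcal{O}_E^\times$. The plan is: use $y^{-1}\mathrm{Nr}(x)=y\mathrm{Nr}(y^{-1}x)$ and the substitution $y^{-1}x\rightsquigarrow x$ to move the $y$-dependence outside the $E$-integral as an $\eta_{E/F}(y)$-twist, after absorbing $\xi|_{F^\times}$; then pick $M\geq 1$ large enough that the $E$-integrand is invariant under $1+\varpi_E^M\mathcal{O}_E$, replacing it by a finite sum over $\mathcal{O}_E^\times/(1+\varpi_E^M\mathcal{O}_E)$ of $\xi^{-1}$-twists of ordinary Gauss sums $\mathscr{G}(\eta_{E/F},\cdot)$ on $F^\times$. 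This is precisely the rearrangement used in \cite[Lemma $3.1.1$]{cesnavicius2022maninconstantmodulardegree}. Each such Gauss sum is either $0$, or a $\mathbf{Z}[q^{-1}]$-multiple of $1$, $\tfrac{1}{q-1}$, or $\tfrac{1}{q-1}\epsilon(0,\eta_{E/F})$, all of which lie in $\tfrac{1}{q-1}A$. Finally, to kill the potential $\tfrac{1}{q-1}$, show that the inverse volume factor $\vol_{P(F)}(P(F)\cap \gamma K_\Pi\gamma^{-1})^{-1}$ is divisible by $q-1$: the same matrix manipulation as in case 2 of \Cref{thm tamely ram p.s + unr p.s} (conjugating an element $\left[\begin{smallmatrix} a & b \\ & 1\end{smallmatrix}\right]$ by $g_{0,k,v}^{-1}$ and reading off that $b\equiv 0\bmod \varpi^{c(\pi_1)}$ and hence $a\equiv 1\bmod\varpi$, using $0<k<c(\pi_1)$) works verbatim, concluding the proof.
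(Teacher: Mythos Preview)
Your proposal is correct and follows essentially the same approach as the paper: the same coset representatives, the same three-case split on $k$, the same change-of-variables trick $y^{-1}\mathrm{Nr}(x)=y\mathrm{Nr}(y^{-1}x)$ in the middle case to reduce the $K$-function integral to a finite sum of Gauss sums for $\eta_{E/F}$ (with the same citation to \cite[Lemma $3.1.1$]{cesnavicius2022maninconstantmodulardegree}), and the same volume-factor matrix computations. The only difference is that the paper spells out in more detail why $\gamma_{E/F}$, $\epsilon(\tfrac{1}{2},\xi,\psi\circ\mathrm{Tr})$, and $\xi(\varpi_E^{c(\xi)+e-1})$ land in $A$ (this is where the specific form of $\nu$ is used), and explicitly notes in Case~1 that the volume factor kills the $\tfrac{1}{q-1}$ by the $k=0$ argument from \Cref{thm unram + steinberg}.
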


\subsubsection{One supercuspidal representation and one unramified twist of the Steinberg representation}

In this section, we resolve the case $\pi_1\times \pi_2$, where one of the representations is supercuspidal and the other one is an unramified twist of the Steinberg representation. Without loss of generality, we will assume that $\pi_1$ is a supercuspidal representation. The notation will be the same as that of \Cref{supercuspidal x unr p-s} and \Cref{unr StxSt}.
\begin{prop}\label{thm sup x unr St}
    Let $\pi_1=\pi(\xi)$ be a unitary supercuspidal representation as in \emph{\Cref{thm sup. x unr. p-s}}. Let $\pi_2=\mathrm{St}_\chi$ be a twist of the Steinberg representation by an unramified character $\chi:F^\times\rightarrow\mathbf{C}^\times$, and set $\Pi:=\pi_1\times \pi_2$. Let $\tau:=c(\pi_1)$, $\nu:=(q^{f(E/F)}-1)(1+\delta_{c(\pi_1)\equiv 1(2)})$ and $A\subseteq\mathbf{C}$ be a $\mathbf{Z}[q^{-1/2},\mu_{\nu q^\tau}]$-algebra containing $\chi$. Let $(\phi,g_1,g_2)\in\mathcal{S}(F^2)\times G(F)$ be a $\Pi$-integral datum. Then, \eqref{eq: Problem} holds.
    \begin{proof}
        In this case, once again consulting \Cref{lem general coset decomp}, we can work with coset representatives $\gamma=(z_1g_{t,k_1,v},z_2ng_{0,k_2,1})$ for $P(F)\backslash G(F)/K_\Pi$, where $z_i\in Z(F), n\in N(F), t\in\mathbf{Z}, 0\leq k_1\leq c(\pi_1), 0\leq k_2\leq 1$ and $v\in\mathcal{O}^\times$, in order to analyze the functions 
        \begin{align}\label{eq: 33}
            \vol_{P(F)}\left(P(F)\cap \gamma K_\Pi \gamma^{-1}\right)^{-1}\cdot \mathcal{I}_{\Pi}^\mathrm{new}(\gamma;s).
        \end{align}
        In this case, in order to avoid potential confusion, we write $n=\left[\begin{smallmatrix}
            1 & n_0\\
            & 1
        \end{smallmatrix}\right]\in N(F)$.
        Let $C:=\omega_\Pi(z_1,z_2)$. Then
        $$\mathcal{I}_{\Pi}^\mathrm{new}(\gamma;s)=C\sum_{i\geq -k_2-1}\chi(\varpi^i)q^{i(1-s)}\int_{\mathcal{O}^\times}\omega_{\pi_1}(y)\psi(-n_0y\varpi^i)W_{\pi_1}(g_{i+t,k_1,vy^{-1}})W_{\mathrm{St}}^\mathrm{new}(g_{i,k_2,y^{-1}})\ d^\times y. $$
        \textit{Case} $1$: We firstly assume that $k_2=0$. Then, by \Cref{lem Assing for steinberg}, we have 
        $$\mathcal{I}_{\Pi}^\mathrm{new}(\gamma;s)=-Cq^{-1}\sum_{i\geq -1}\chi(\varpi^i)q^{-is}\int_{\mathcal{O}^\times}\omega_{\pi_1}(y)\psi(-n_0y\varpi^i)W_{\pi_1}(g_{i+t,k_1,vy^{-1}})\ d^\times y$$
        and this is actually a finite sum. The shape of the integrals appearing in the summands above, puts us exactly in the same situation as the proof of \Cref{thm sup. x unr. p-s} when it comes to showing that they're always contained in $\tfrac{1}{q-1}A$. In this $k_2=0$ case, the unwanted $\tfrac{1}{q-1}$ is easily killed by the volume factor in \eqref{eq: 33}, since 
        $$P(F)\cap \gamma K_\Pi\gamma^{-1}\subseteq P(F)\cap g_{0,0,1}K_1 g_{0,0,}^{-1}$$
        which is a situation we have dealt with before.\\
        \textit{Case} $2$: We now assume $k_2=1$. Then 
        $$\mathcal{I}_{\Pi}^\mathrm{new}(\gamma;s)=Cq^{-2}\sum_{i\geq -2}\chi(\varpi^i)q^{-is}\int_{\mathcal{O}^\times}\omega_{\pi_1}(y)\psi(-(n_0-\varpi)\varpi^iy)W_{\pi_1}(g_{i+t,k_1,vy^{-1}})\ d^\times y. $$
        If $0\leq k_1<c(\pi_1)$, then by replacing $n_0$ with $n_0-\varpi$, we have once again shown in the proof of \Cref{thm sup. x unr. p-s} that the above expression is always in $\tfrac{1}{q-1}A[q^s,q^{-s}]$ and the $\tfrac{1}{q-1}$ factor is always killed by the volume factor since $k_1<c(\pi_1)$. \\
        We are thus left to deal with $k_1=c(\pi_1)$. In this case, we have 
        \begin{align}\label{eq: 34}\mathcal{I}_{\Pi}^\mathrm{new}(\gamma ;s)=Cq^{-2+js}\chi(\varpi^j)\int_{\mathcal{O}^\times}\psi\left((-vn_0\varpi^{-t-c(\pi_1)}+v\varpi^{-t-c(\pi_1)+1}-1)\varpi^{-c(\pi_1)}y\right)\ d^\times y\end{align}
        where $j:=t+2c(\pi_1)\leq 2$ (if $t+2c(\pi_1) > 2$, then everything is zero and there is nothing to prove). Notice how $j\leq 2$ implies $-t-c(\pi_1)\geq c(\pi_1)-2\geq 0$ since $\pi_1$ is a supercuspidal representation.\\
            The volume factor $\vol_{P(F)}(P(F)\cap (g_{t,c(\pi_1),v},n g_{0,1,1}) K_\Pi  (g_{t,c(\pi_1),v},n g_{0,1,1})^{-1}) $ is the same as the volume factor $ \vol_{P(F)}(P(F)\cap (n^{-1}g_{t,c(\pi_1),v},g_{0,1,1}) K_\Pi  (n^{-1}g_{t,c(\pi_1),v},g_{0,1,1})^{-1})$. We recall that if $x=\left[\begin{smallmatrix}
                a & b \\
                & 1
            \end{smallmatrix}\right] $ is an element of $P(F)\cap (n^{-1}g_{t,c(\pi_1),v},g_{0,1,1}) K_\Pi  (n^{-1}g_{t,c(\pi_1),v},g_{0,1,1})^{-1}$, then we must have 
            \begin{align}\label{eq: 35}
    g_{0,1,1}^{-1}xg_{0,1,1}&=\left[\begin{smallmatrix}
                    * & * \\
                    -b & a-b\varpi^{-1}
                \end{smallmatrix}\right]\in K_1\\
            g_{t,c(\pi_1),v}^{-1}nxn^{-1}g_{t,c(\pi_1),v}&=\left[\begin{smallmatrix}
                    * & *\\
                    (an_0-b-n_0)\varpi^{-t} & a+(an_0-b-n_0)v\varpi^{-t-c(\pi_1)}
                \end{smallmatrix}\right]\in K_{c(\pi_1)}.\label{eq: 36}
            \end{align}
            The bottom-right entry of \eqref{eq: 35} gives $$b\varpi^{-t-c(\pi_1)}\in(a-1)\varpi^{-t-c(\pi_1)+1}+\varpi^{-t-c(\pi_1)+2}\mathcal{O}\subseteq(a-1)\varpi^{-t-c(\pi_1)+1}+\varpi^{c(\pi_1)}\mathcal{O}$$
            where the last inclusion follows from $j\leq 2$. Thus, the bottom-right entry of \eqref{eq: 36} implies
            \begin{align}\label{eq: 37}a(1+n_0v\varpi^{-t-c(\pi_1)}-v\varpi^{-t-c(\pi_1)+1})\in 1+n_0v\varpi^{-t-c(\pi_1)}-v\varpi^{-t-c(\pi_1)+1}+ \varpi^{c(\pi_1)}\mathcal{O}.\end{align}
            However, the expression in \eqref{eq: 34} has an unwanted $\tfrac{1}{q-1}$ contribution only if $v(1+vn_0\varpi^{-t-c(\pi_1)}-v\varpi^{-t-c(\pi_1)+1})$ is equal to $c(\pi_1)-1$. In this case, \eqref{eq: 37} implies that $a\equiv 1\mod \varpi$, which forces the volume factor of \eqref{eq: 33} to be an integral multiple of $q-1$, once again killing the unwanted $\tfrac{1}{q-1}$ factor. This completes all the cases and the proof.
        \end{proof}
\end{prop}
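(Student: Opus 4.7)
The plan is to mirror the strategy that has worked in the previous cases (\Cref{thm unram + steinberg}, \Cref{thm tamely ram p.s + unr p.s}, \Cref{thm sup. x unr. p-s}): invoke the reduction of \Cref{sec reduction} and then analyze the function $\vol_{P(F)}(P(F)\cap \gamma K_\Pi\gamma^{-1})^{-1}\cdot\mathcal{I}_{\Pi,\delta_P}^\mathrm{new}(\gamma W_{\Pi}^\mathrm{new};s)$ for $\gamma$ running over $P(F)\backslash G(F)/K_\Pi$. Since $c(\pi_2)=1$ and $c(\pi_1)$ is arbitrary, applying \Cref{lem general coset decomp} in each slot lets me take representatives $\gamma=(z_1g_{t,k_1,v},\,z_2n\,g_{0,k_2,1})$ with $0\le k_1\le c(\pi_1)$, $k_2\in\{0,1\}$, $n\in N(F)$, $v\in\mathcal{O}^\times$, $t\in\mathbf{Z}$, $z_i\in Z(F)$. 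Note that $L(\Pi,s)=L(\mathrm{St}_\chi,s)$ by \eqref{eq: gl2 l-factor}, whose inverse lies in $A[q^s,q^{-s}]$, so the target of integrality is essentially polynomial.

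Next I would expand $\mathcal{I}_{\Pi,\delta_P}^\mathrm{new}$ in exactly the same manner as in the proofs of \Cref{thm unram + steinberg} and \Cref{thm sup. x unr. p-s}: writing $y\in F^\times$ as $\varpi^i\cdot u$ with $u\in\mathcal{O}^\times$, the identity $\left[\begin{smallmatrix} y&\\ &1\end{smallmatrix}\right]\left[\begin{smallmatrix} &1\\ -1&\end{smallmatrix}\right]\left[\begin{smallmatrix} 1 & v\varpi^{-k}\\ & 1\end{smallmatrix}\right]=\left[\begin{smallmatrix} y&\\ &y\end{smallmatrix}\right]g_{0,k,vy^{-1}}\left[\begin{smallmatrix} y^{-1}&\\ &1\end{smallmatrix}\right]$ absorbs the $y$-translate into the third argument of $g_{t,k,v}$. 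This gives
\[
\mathcal{I}_{\Pi,\delta_P}^\mathrm{new}(\gamma W_\Pi^\mathrm{new};s)=C\sum_{i}\chi(\varpi)^iq^{i(1-s)}\int_{\mathcal{O}^\times}\omega_{\pi_1}(y)\psi(-n_0\varpi^iy)\,W_{\pi_1}^\mathrm{new}(g_{t+i,k_1,vy^{-1}})\,W_\mathrm{St}^\mathrm{new}(g_{i,k_2,y^{-1}})\,d^\times y
\]
with $C\in A$ a constant depending on the central characters and $\chi(\det g_{t,k_1,v})$. Now \Cref{lem Assing for steinberg} pins down the $W_\mathrm{St}^\mathrm{new}$-factor as either $-q^{-1}$ (if $k_2=0$, $i\ge-1$) or $q^{-2}\psi(-\varpi^{i+1}y^{-1})$ (if $k_2=1$, $i\ge-2$), and \Cref{prop new vector supercuspidal} shows $W_{\pi_1}^\mathrm{new}(g_{t+i,k_1,vy^{-1}})$ is supported on at most one or two values of $i$ depending on $k_1$. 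The sum then becomes finite except in the one pairing that produces the honest $L(\mathrm{St}_\chi,s)$ contribution, and the integrals over $\mathcal{O}^\times$ reduce to (partial) Gauss sums and Kloosterman-type sums $K(\xi^{-1},A,B)$ treated exactly as in \Cref{thm sup. x unr. p-s} via \cite[Lemma $3.1.1$]{cesnavicius2022maninconstantmodulardegree}; every such evaluation lies in $\tfrac{1}{q-1}A$ up to the usual roots of unity and $\gamma_{E/F}$, all of which are in $A$ by the hypothesis on $\mu_{\nu q^\tau}$.

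The whole game is therefore to kill any stray $\tfrac{1}{q-1}$ by the volume factor. The case $k_2=0$ is harmless because $P(F)\cap\gamma K_\Pi\gamma^{-1}\subseteq P(F)\cap g_{0,0,1}K_1g_{0,0,1}^{-1}$, which already has index divisible by $q-1$ in $P(\mathcal O)$ (the argument from the $k=0$ case of \Cref{thm unram + steinberg}). For $k_2=1$ and $0\le k_1<c(\pi_1)$, the standard conjugation argument used in the $0<k<c(\pi_1)$ case of \Cref{thm sup. x unr. p-s}---computing $g_{0,k_1,v}^{-1}x g_{0,k_1,v}\in K_{c(\pi_1)}$ for $x=\left[\begin{smallmatrix} a & b\\ & 1\end{smallmatrix}\right]$ forces $b\in\varpi^{c(\pi_1)}\mathcal{O}$ and hence $a\equiv 1\pmod{\varpi}$---settles the matter.

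The main obstacle, and the only genuinely new calculation, is the extremal case $k_1=c(\pi_1)$, $k_2=1$. Here only a single term survives, namely $i=-t-2c(\pi_1)$, and both new-vector values become pure exponentials in $\psi$, producing an integral of the form $\int_{\mathcal{O}^\times}\psi\bigl((-vn_0\varpi^{-t-c(\pi_1)}+v\varpi^{-t-c(\pi_1)+1}-1)\varpi^{-c(\pi_1)}y\bigr)d^\times y$ whose Gauss-sum evaluation produces a $\tfrac{1}{q-1}$ precisely when the element inside has valuation $c(\pi_1)-1$. To counter this, I will conjugate $x=\left[\begin{smallmatrix} a&b\\ & 1\end{smallmatrix}\right]$ in $P(F)\cap(n^{-1}g_{t,c(\pi_1),v},g_{0,1,1})K_\Pi(\cdot)^{-1}$ by both $g_{0,1,1}$ and by $g_{t,c(\pi_1),v}$ after translating by $n$, and use the bottom-right entry of the first conjugate to feed the bottom-right entry of the second. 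The resulting congruence $a\bigl(1+vn_0\varpi^{-t-c(\pi_1)}-v\varpi^{-t-c(\pi_1)+1}\bigr)\equiv 1+vn_0\varpi^{-t-c(\pi_1)}-v\varpi^{-t-c(\pi_1)+1}\pmod{\varpi^{c(\pi_1)}}$ will force $a\equiv 1\pmod\varpi$ exactly when the Gauss sum fails to vanish, so the index $[P(\mathcal O):P(\mathcal O)\cap\gamma K_\Pi\gamma^{-1}]$ will be divisible by $q-1$ in precisely the situations where it is needed. This interlocking of a Steinberg-type condition ($j:=t+2c(\pi_1)\le 2$) with a supercuspidal-type condition (the Gauss sum valuation) is the delicate point of the proof.
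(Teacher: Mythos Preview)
Your approach is essentially identical to the paper's: same coset representatives, same unfolding into $\int_{\mathcal{O}^\times}$-integrals against $W_{\pi_1}^\mathrm{new}\cdot W_\mathrm{St}^\mathrm{new}$, same case split on $k_2\in\{0,1\}$, and in the boundary case $(k_1,k_2)=(c(\pi_1),1)$ the same double-conjugation trick feeding the bottom-right entry of $g_{0,1,1}^{-1}xg_{0,1,1}\in K_1$ into the bottom-right entry of $g_{t,c(\pi_1),v}^{-1}nxn^{-1}g_{t,c(\pi_1),v}\in K_{c(\pi_1)}$ to force $a\equiv 1\bmod\varpi$ precisely when the Gauss sum contributes $\tfrac{1}{q-1}$.

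One small correction: the $L$-factor here is identically $1$ (since $\pi_1$ is supercuspidal, $L(\pi_1\otimes\chi|\cdot|^{1/2},s)=1$), not $L(\mathrm{St}_\chi,s)$; consequently every sum over $i$ is genuinely finite and there is no ``honest $L$-factor pairing'' to extract. This is harmless for your argument, but worth fixing. Also, in Case~2 with $k_1=0$ your cited conjugation argument (which needs $k_1>0$ to conclude $b\varpi^{-k_1}\equiv 0\bmod\varpi$) does not literally apply, but the same computation with $g_{0,0,v}$ still forces $b\in\varpi^{c(\pi_1)}\mathcal{O}$ and $a\equiv 1\bmod\varpi$ directly, since $c(\pi_1)\ge 2$.
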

\subsubsection{One supercuspidal representation and one half-ramified principal-series representation}
In this section we treat the case $\pi_1\times \pi_2$ where one of the two representations is supercuspidal and the other is a half-ramified unitary principal-series representation. Once again without loss of generality, we will assume that $\pi_1$ is a supercuspidal representation.
\begin{prop}\label{thm sup + half-ram p-s}
    Let $\pi_1=\pi(\xi)$ be a unitary supercuspidal representation as in \emph{\Cref{thm sup. x unr. p-s}}. Let $\pi_2=I(\chi \omega,\chi^{-1})$ for unramified (resp. ramified) unitary character $\chi$ (resp, $\omega$) of $F^\times$, with $\omega(\varpi)=1$. Set $\Pi=\pi_1\times \pi_2$, $\tau:=\max\{c(\pi_1),c(\omega)\}$, $\nu:=(q^{f(E/F)}-1)(1+\delta_{c(\pi_1)\equiv 1(2)})$ and let $A\subseteq \mathbf{C}$ be a $\mathbf{Z}[q^{-1/2},\mu_{\nu q^\tau}]$-algebra containing $\chi$. Let $(\phi,g_2,g_2)\in \mathcal{S}(F^2)\times G(F)$ be a $\Pi$-integral datum.Then, \eqref{eq: Problem} holds.
    \begin{proof}
        We use coset representatives $\gamma=(z_1g_{t,k_1,v_1},z_2ng_{0,k_2,v_2})$ for $P(F)\backslash G(F)/K_\Pi$, where $z_i\in Z(F),n=\left[\begin{smallmatrix}
            1 & n_0\\
            & 1
        \end{smallmatrix}\right]\in N(F),t\in\mathbf{Z}, 0\leq k_i\leq c(\pi_i)$ and $v_i\in\mathcal{O}^\times$ to analyze, as usual, the functions
        \begin{align}\label{eq: 38}
            \vol_{P(F)}\left(P(F)\cap \gamma K_\Pi \gamma^{-1}\right)^{-1}\cdot \mathcal{I}_{\Pi}^\mathrm{new}(\gamma;s).
        \end{align}
        We start with, the actually finite sum, (writing $C:=\omega_{\Pi}(z_1,z_2)$)
        \begin{align}\label{eq: 39}
            &{\mathcal{I}_{\Pi}^\mathrm{new}(\gamma;s)=C\sum_{i\geq -k_2-c(\pi_2)}q^{i(1-s)}\int_{\mathcal{O}^\times}(\omega_{\pi_1}\omega)(y)\psi(-n_0y\varpi^i)W_{\pi_1}^\mathrm{new}(g_{i+t,k_1,v_1y^{-1}})W_{\pi_2}^\mathrm{new}(g_{i,k_2,v_2y^{-1}})\ d^\times y}.
        \end{align}
        By \Cref{prop Assing half-ramified princ series}, and similar arguments as before, it is not hard to see that every integral in the sum above, is an $A[q^s,q^{-s}]$-multiple of:
        $$\begin{dcases}
\int_{v_2b_{\omega^{-1}}+\varpi^{k_2}\mathcal{O}}(\omega_{\pi_1}\omega)(y)\psi(-n_0y\varpi^i)W_{\pi_1}^\mathrm{new}(g_{i+t,k_1,v_1y^{-1}})\ d^\times y,\ &\mathrm{if}\ 0\leq k_2\leq\lfloor \tfrac{c(\pi_2)}{2}\rfloor\\
    \int_{v_2b_{\omega^{-1}}+\varpi^{c(\pi_2)-k_2}\mathcal{O}}\omega_{\pi_1}(y)\psi(-(n_0-v_2^{-1}\varpi^{k_2})y\varpi^i)W_{\pi_1}^\mathrm{new}(g_{i+t,k_1,v_1y^{-1}})\ d^\times y,\ &\mathrm{if}\ \lceil\tfrac{c(\pi_2)}{2}\rceil\leq k_2\leq c(\pi_2)
        \end{dcases}$$
        where $v_2b_{\omega^{-1}}+\varpi^0\mathcal{O}$ should be interpreted as $\mathcal{O}^\times$.
        If $k_1$ is not equal to $c(\pi_1)$, or $k_2$ is not equal to $c(\pi_2)$, then the proof of \Cref{thm sup. x unr. p-s} gives the required result, in exactly the same way, using partial Gauss sums instead, together with \Cref{sec Gauss sums}. If $k_1=c(\pi_1)$ and $k_2=c(\pi_2)$, then the only integral involved in  \eqref{eq: 39} reads
        \begin{align}\label{eq: 40}\int_{\mathcal{O}^\times}\psi\left((-v_1n_0\varpi^{-t-c(\pi_1)}+v_1v_2^{-1}\varpi^{-t-c(\pi_1)+c(\pi_2)}-1)\varpi^{-c(\pi_1)}y\right)\ d^\times y\end{align}
        and this is only if $-t-2c(\pi_1)\geq -2c(\pi_2)$, i.e $t+2c(\pi_1)\leq 2c(\pi_2)$ (otherwise everything is zero and there's nothing to prove). If $x=\left[\begin{smallmatrix}
            a & b\\
            & 1
        \end{smallmatrix}\right]\in P(F)\cap \gamma K_\Pi\gamma^{-1}$, we have 
        \begin{align}\label{eq: 41}
    g_{0,k_2,v_2}^{-1}xg_{0,k_2,v_2}&=\left[\begin{smallmatrix}
                    * & * \\
                    -b & a-bv_2\varpi^{-c(\pi_2)}
                \end{smallmatrix}\right]\in K_{c(\pi_2)}\\
            g_{t,c(\pi_1),v_1}^{-1}nxn^{-1}g_{t,c(\pi_1),v_1}&=\left[\begin{smallmatrix}
                    * & *\\
                    (an_0-b-n_0)\varpi^{-t} & a+(an_0-b-n_0)v_1\varpi^{-t-c(\pi_1)}
                \end{smallmatrix}\right]\in K_{c(\pi_1)}.\label{eq: 42}
            \end{align}
           The bottom right entry of \eqref{eq: 41} gives that
           $$b\varpi^{-t-c(\pi_1)}\in v_2^{-1}(a-1)\varpi^{-t-c(\pi_1)+c(\pi_2)}+\varpi^{-t-c(\pi_1)+2c(\pi_2)}\mathcal{O}\subseteq v_2^{-1}(a-1)\varpi^{-t-c(\pi_1)+c(\pi_2)}+\varpi^{c(\pi_1)}\mathcal{O}$$
            where the last inclusion follows from $-t-2c(\pi_1)\geq -2c(\pi_2)$. Hence, the bottom-right entry of \eqref{eq: 42} implies that
            \begin{align}\label{eq: 43}a(1+v_1n_0\varpi^{-t-c(\pi_1)}-v_2^{-1}v_1\varpi^{-t-c(\pi_1)+c(\pi_2)})\in 1+v_1n_0\varpi^{-t-c(\pi_1)}-v_2^{-1}v_1\varpi^{-t-c(\pi_1)+c(\pi_2)}+\varpi^{c(\pi_1)}\mathcal{O.}\end{align}
            However, the expression in \eqref{eq: 40} has an unwanted $\tfrac{1}{q-1}$ factor only if $v(1+v_1n_0\varpi^{-t-c(\pi_1)}-v_2^{-1}v_1\varpi^{-t-c(\pi_1)+c(\pi_2)})$ is equal to $c(\pi_1)-1$, in which case \eqref{eq: 43} gives $a\equiv 1\mod\varpi$ and we are done.
    \end{proof}
\end{prop}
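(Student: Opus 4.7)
The plan is to follow the template established by the previous cases, especially \Cref{thm sup x unr St} and \Cref{thm sup. x unr. p-s}, using the reduction of \Cref{sec reduction}. Concretely, it suffices to show that for every double-coset representative $\gamma\in P(F)\backslash G(F)/K_\Pi$, the rational function
\[
\vol_{P(F)}(P(F)\cap\gamma K_\Pi\gamma^{-1})^{-1}\cdot\mathcal{I}_{\Pi,\delta_P}^\mathrm{new}(\gamma W_\Pi^\mathrm{new};s)
\]
lies in $L(\Pi,s)A[q^s,q^{-s}]$; here $L(\Pi,s)=1$ since $\pi_1$ is supercuspidal and its twists remain supercuspidal. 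Using \Cref{lem general coset decomp} applied to both factors, I will choose coset representatives of the form $\gamma=(z_1g_{t,k_1,v_1},z_2ng_{0,k_2,v_2})$ with $z_i\in Z(F)$, $n\in N(F)$, $t\in\mathbf{Z}$, $0\leq k_i\leq c(\pi_i)$, $v_i\in\mathcal{O}^\times$.

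Next, I will expand $\mathcal{I}_{\Pi,\delta_P}^\mathrm{new}(\gamma W_\Pi^\mathrm{new};s)$ by splitting the $F^\times$-integral over cosets $\varpi^i\mathcal{O}^\times$ as in the previous propositions, and substitute the explicit formulas of \Cref{prop new vector supercuspidal} for $W_{\pi_1}^\mathrm{new}$ and \Cref{prop Assing half-ramified princ series} (combined with \Cref{prop new vs conj. new}) for $W_{\pi_2}^\mathrm{new}$. Because $\pi_1$ is supercuspidal, the resulting expression is a genuinely finite sum indexed by $i$, and each summand reduces to an integral of the partial Gauss-sum type studied in \Cref{sec Gauss sums} — namely, an integral over a translate $v_2b_{\omega^{-1}}+\varpi^\ell\mathcal{O}$ (or over $\mathcal{O}^\times$ when $k_2=0$) of a product involving $\omega_{\pi_1}$, $\omega$, $\psi$, and a $K(\xi^{-1},-,-)$-integral arising from $\pi_1$. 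The combinatorics of these quantities are well-controlled: each $\xi$-twisted inner sum is a $\mathbf{Z}[q^{-f/2}]$-combination of partial Gauss sums of $\eta_{E/F}$, which, by \Cref{sec Gauss sums}, lie in $\tfrac{1}{q-1}A$ in the worst case. The constants $\gamma_{E/F}$, $\xi(\varpi_E^{c(\xi)+e-1})$, $\epsilon(0,\eta_{E/F})$, and the $o(\omega)$-th roots of unity entering these expressions all belong to $A$ by the choice of $\nu$ and $\tau$.

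The case analysis is then driven by the pair $(k_1,k_2)$. If either $k_1<c(\pi_1)$ or $k_2<c(\pi_2)$, I expect that essentially the same argument used in the corresponding sub-cases of \Cref{thm sup. x unr. p-s} and \Cref{thm tamely ram p.s + unr p.s} will show that any stray $\tfrac{1}{q-1}$ factor is already killed by the volume term, because the stabilizer $P(F)\cap\gamma K_\Pi\gamma^{-1}$ will be forced into the subgroup $\left[\begin{smallmatrix}1+\varpi\mathcal{O} & \varpi\mathcal{O}\\ & 1\end{smallmatrix}\right]$ by a short matrix calculation (as in those earlier proofs).

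The main obstacle, and the case that warrants the most care, is $k_1=c(\pi_1)$ and $k_2=c(\pi_2)$. Here there is exactly one surviving value of $i$, and the integral collapses to
\[
\int_{\mathcal{O}^\times}\psi\bigl((-v_1n_0\varpi^{-t-c(\pi_1)}+v_1v_2^{-1}\varpi^{-t-c(\pi_1)+c(\pi_2)}-1)\varpi^{-c(\pi_1)}y\bigr)\ d^\times y,
\]
which produces an unwanted $\tfrac{1}{q-1}$ precisely when the argument has valuation $c(\pi_1)-1$. To handle this, I will take an element $x=\left[\begin{smallmatrix}a & b\\ & 1\end{smallmatrix}\right]$ of the stabilizer and derive two matrix congruences coming from the two conjugates $g_{0,k_2,v_2}^{-1}xg_{0,k_2,v_2}\in K_{c(\pi_2)}$ and $g_{t,c(\pi_1),v_1}^{-1}nxn^{-1}g_{t,c(\pi_1),v_1}\in K_{c(\pi_1)}$. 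Using $-t-2c(\pi_1)\geq -2c(\pi_2)$ (which holds automatically in the range where the integral is non-zero) to reduce the bottom-right entry mod $\varpi^{c(\pi_1)}$, the bottom-right of the second congruence will yield
\[
a\bigl(1+v_1n_0\varpi^{-t-c(\pi_1)}-v_1v_2^{-1}\varpi^{-t-c(\pi_1)+c(\pi_2)}\bigr)\equiv 1+v_1n_0\varpi^{-t-c(\pi_1)}-v_1v_2^{-1}\varpi^{-t-c(\pi_1)+c(\pi_2)}\pmod{\varpi^{c(\pi_1)}},
\]
and exactly the valuation condition that produces the $\tfrac{1}{q-1}$ forces the coefficient of $a$ to be a unit, hence $a\equiv 1\pmod\varpi$. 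This forces the stabilizer into $\left[\begin{smallmatrix}1+\varpi\mathcal{O} & *\\ & 1\end{smallmatrix}\right]$, making $\vol_{P(F)}(\cdot)^{-1}$ an integral multiple of $q-1$ and cancelling the obstruction. Combining all cases completes the proof.
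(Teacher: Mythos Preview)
Your proposal is correct and follows essentially the same approach as the paper: the same choice of coset representatives, the same reduction via \Cref{prop Assing half-ramified princ series} and \Cref{prop new vector supercuspidal} to partial Gauss sums handled as in \Cref{thm sup. x unr. p-s}, and the same boundary-case matrix computation for $(k_1,k_2)=(c(\pi_1),c(\pi_2))$. One small slip of wording: in the boundary case the coefficient $1+v_1n_0\varpi^{-t-c(\pi_1)}-v_1v_2^{-1}\varpi^{-t-c(\pi_1)+c(\pi_2)}$ is not a unit but has valuation exactly $c(\pi_1)-1$, and it is this (being strictly less than the modulus $c(\pi_1)$) that yields $a\equiv 1\pmod\varpi$ after dividing through; your conclusion is right even if the phrasing is off.
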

\subsubsection{Two supercuspidal representations}
We now \textcolor{black}{address the case of two supercuspidal representations $\pi_1,\pi_2$}. The proofs from now on will get more brief since the arguments are very similar and start repeating.

\begin{prop}
    Let $\pi_i=\pi(\xi_i)$ be two unitary supercuspidal representations associated to quadratic extensions $E_i/F$ and regular unitary characters $\xi_i:E_i^\times\rightarrow\mathbf{C}^\times$, with $\omega_{\pi_i}(\varpi)=1$. Set $\Pi:=\pi_1\times \pi_2$, $\nu:=(q^{\max_{i\in\{1,2\}}f(E_i/F)}-1)(1+(\delta_{\max_{i}f(E_i/F)=1})(\max_{i}\delta_{c(\pi_i)\equiv1(2)}))$ and $\tau:=\max_{i\in\{1,2\}}\{c(\pi_i)\}$. Let $A\subseteq\mathbf{C}$ be a $\mathbf{Z}[q^{-1/2},\mu_{\nu q^\tau}]$-algebra. Let $(\phi,g_1,g_2)\in \mathcal{S}(F^2)\times G(F)$ be a $\Pi$-integral datum. Then, \eqref{eq: Problem} holds.
    \begin{proof}
        The inverse $L$-factor $L(\Pi,s)^{-1}$ is always contained in $A[q^s,q^{-s}]$ even when it is not identically $1$. We use coset representatives for $P(F)\backslash G(F)/K_\Pi$ of the same form as in \Cref{thm sup + half-ram p-s}, i.e.\ $\gamma=(z_1g_{t,k_1,v_1},ng_{0,k_2,v_2})$. The \textcolor{black}{(finite)} sum corresponding to the expression for \eqref{eq: 39} becomes
        \begin{align}\label{eq: 44}
            &{\mathcal{I}_{\Pi}^\mathrm{new}(\gamma;s)=C\sum_{i\geq -k_2-c(\pi_2)}q^{i(1-s)}\int_{\mathcal{O}^\times}(\omega_{\pi_1}\omega_{\pi_2})(y)\psi(-n_0y\varpi^i)W_{\pi_1}^\mathrm{new}(g_{i+t,k_1,v_1y^{-1}})W_{\pi_2}^\mathrm{new}(g_{i,k_2,v_2y^{-1}})\ d^\times y}. 
        \end{align}
        By \Cref{prop new vector supercuspidal}, and treatment of $\epsilon$-factors and local constants $\gamma_{E_i/F}$ seen in previous proofs, for each $i\geq -k_2-c(\pi_2)$ the corresponding integral appearing in the (finite) sum above is an $A[q^s,q^{-s}]$-multiple (possibly zero) of 
        $$\begin{dcases}
            \int_{\mathcal{O}^\times}(\omega_{\pi_1}\omega_{\pi_2})(y)\psi(-n_0y\varpi^{-c(\pi_2)})W_{\pi_1}^\mathrm{new}(g_{-c(\pi_2)+t,k_1,v_1y^{-1}})\ d^\times y,\ &\mathrm{if}\ k_2=0\\
            \int_{\mathcal{O}^\times}(\omega_{\pi_1}\omega_{\pi_2})(y)\psi(-n_0y\varpi^i)W_{\pi_1}^\mathrm{new}(g_{i+t,k_1,v_1y^{-1}})K(\xi_2^{-1},\varpi_{E_2}^{i/f_2},v_2y^{-1}\varpi^{-k_2})\  d^\times y,\ &\mathrm{if}\ 0<k_2<c(\pi_2) \\
            \int_{\mathcal{O}^\times}\omega_{\pi_1}(y)\psi((-n_0\varpi^{-c(\pi_2)}+v_2^{-1})\varpi^{-c(\pi_2)}y)W_{\pi_1}^\mathrm{new}(g_{-2k_2+t,k_1,v_1y^{-1}})\ d^\times y,\ &\mathrm{if}\ k_2=c(\pi_2).
        \end{dcases}$$
        If $k_1\neq c(\pi_1)$ or $k_2\neq c(\pi_2)$, then all three possibilities appearing above are handled in the same way as in \Cref{thm sup. x unr. p-s}, showing they're contained in $\tfrac{1}{q-1}A$. Note that the case $0<k_2<c(\pi_2)$ which might lead to two $K$-functions inside the integral is not at all more complicated than \textit{Case} $2$ of the proof of \Cref{thm sup. x unr. p-s}; one just applies the same splitting argument twice. Of course, in this case where $k_1\neq c(\pi_1)$ or $k_2\neq c(\pi_2)$ we've seen that $\vol_{P(F)}(P(F)\cap\gamma K_\Pi \gamma^{-1})^{-1}$ is always an integral multiple of $q-1$, giving the required result. If both $k_1=c(\pi_1)$ and $k_2=c(\pi_2)$, then \eqref{eq: 44} is non-zero only if $t+2c(\pi_1)=2c(\pi_2)$, in which case, it is an $A$-multiple of the same exact expression as \eqref{eq: 40}. One then proceeds in an identical fashion to study the volume factor.
    \end{proof}
\end{prop}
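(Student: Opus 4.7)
The plan is to invoke the reduction of \Cref{sec reduction}, which reduces \eqref{eq: Problem} to showing that
\[\vol_{P(F)}\bigl(P(F)\cap \gamma K_\Pi \gamma^{-1}\bigr)^{-1}\cdot \mathcal{I}_{\Pi,\delta_P}^\mathrm{new}(\gamma W_\Pi^\mathrm{new};s)\in L(\Pi,s)A[q^s,q^{-s}]\]
for every $\gamma\in P(F)\backslash G(F)/K_\Pi$. In every nontrivial sub-case of the Rankin--Selberg $L$-factor for two supercuspidals, the relation $\omega_{\pi_i}(\varpi)=1$ forces $q^{s_0}\in\{\pm 1\}\subseteq A$, so $L(\Pi,s)^{-1}\in A[q^s,q^{-s}]$ and hence $A[q^s,q^{-s}]\subseteq L(\Pi,s)A[q^s,q^{-s}]$; it is therefore enough to land in $A[q^s,q^{-s}]$. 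By \Cref{lem general coset decomp} and Iwasawa, one may take $\gamma=(z_1 g_{t,k_1,v_1},\,z_2 n g_{0,k_2,v_2})$; unfolding as in \Cref{thm sup + half-ram p-s}, $\mathcal{I}_{\Pi,\delta_P}^\mathrm{new}$ becomes a sum over $i\in\mathbf{Z}$ that is \emph{finite}, because \Cref{prop new vector supercuspidal} forces both $W_{\pi_1}^\mathrm{new}(g_{i+t,k_1,\cdot})$ and $W_{\pi_2}^\mathrm{new}(g_{i,k_2,\cdot})$ to vanish outside a bounded window.

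I would then split into cases according to the pair $(k_1,k_2)$. For $k_j\in\{0,c(\pi_j)\}$ the Whittaker factor collapses to an explicit $\epsilon$-factor times a character in $v_j$, while for $0<k_j<c(\pi_j)$ it is a $\gamma_{E_j/F}$-multiple of a $K(\xi_j^{-1},\cdot,\cdot)$-kernel. Treating the $\epsilon$-factors and the local constants $\gamma_{E_j/F}$ exactly as in the proof of \Cref{thm sup. x unr. p-s}, and applying the splitting trick of \cite[Lemma $3.1.1$]{cesnavicius2022maninconstantmodulardegree} once or twice to reduce each $K$-kernel to a finite sum of $\xi_j(\delta)$-weighted Gauss sums for $\eta_{E_j/F}$, the designed $\nu$ and $\tau$ in the algebra $A$ absorb every root of unity and every $\xi_j$-value that appears. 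The innermost $\mathcal{O}^\times$-integrals then evaluate via \Cref{sec Gauss sums} to elements of $A$ or $\tfrac{1}{q-1}A$, the latter possibly multiplied by $\epsilon(0,\eta_{E_j/F})\in A$.

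The hard part will be the deepest case $k_1=c(\pi_1)$ and $k_2=c(\pi_2)$, where $P(F)\cap \gamma K_\Pi\gamma^{-1}$ is too large for the volume factor to automatically contain a $q-1$ counteracting the $\tfrac{1}{q-1}$ from the innermost integral. In this sub-case the support constraints force $i=-2c(\pi_2)$ and $t+2c(\pi_1)=2c(\pi_2)$, and the remaining integral has exactly the shape of \eqref{eq: 40}. I would recycle the matrix computation from the proof of \Cref{thm sup + half-ram p-s}: for $x=\left[\begin{smallmatrix}a & b \\ & 1\end{smallmatrix}\right]\in P(F)\cap \gamma K_\Pi\gamma^{-1}$, the two conjugation conditions $g_{0,c(\pi_2),v_2}^{-1}x g_{0,c(\pi_2),v_2}\in K_{c(\pi_2)}$ and $g_{t,c(\pi_1),v_1}^{-1}nxn^{-1}g_{t,c(\pi_1),v_1}\in K_{c(\pi_1)}$ let one use the first to re-express $b\varpi^{-t-c(\pi_1)}$ modulo $\varpi^{c(\pi_1)}$ in terms of $(a-1)$, and then the bottom-right of the second, combined with the precise valuation hypothesis forcing $\tfrac{1}{q-1}$ to appear, yields $a\equiv 1\pmod{\varpi}$. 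This makes $[P(\mathcal{O}):P(\mathcal{O})\cap \gamma K_\Pi\gamma^{-1}]$ divisible by $q-1$ and kills the unwanted denominator. The remaining sub-cases ($k_1<c(\pi_1)$ or $k_2<c(\pi_2)$) are handled by the same $K_{c(\pi_j)}$-conjugation argument as in \emph{Case} $2$ of \Cref{thm sup. x unr. p-s}, completing the proof.
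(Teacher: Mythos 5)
Your proposal is correct and follows essentially the same route as the paper: the reduction of \Cref{sec reduction}, the coset representatives and finite-sum unfolding of \Cref{thm sup + half-ram p-s}, the splitting trick applied (twice, if needed) to the $K$-kernels as in \Cref{thm sup. x unr. p-s}, and the boundary case $(k_1,k_2)=(c(\pi_1),c(\pi_2))$ handled via the same matrix/volume computation surrounding \eqref{eq: 40}. Your explicit remark that $\omega_{\pi_i}(\varpi)=1$ forces $q^{s_0}=\pm1$, hence $L(\Pi,s)^{-1}\in A[q^s,q^{-s}]$, is a welcome justification of a step the paper only asserts.
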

\subsubsection{One supercuspidal representation and one fully-ramified principal-series representation}
In this section we treat the case $\pi_1\times \pi_2$ where one representation is supercuspidal and the other one is a fully-ramified principal-series representation; i.e. up to an unramified twist, a principal-series representation of the form $I(\chi_1\chi,\chi_2\chi^{-1})$ where $\chi$ is unramified and $\chi_i$ are both ramified characters of $F^\times$ with $\chi_i(\varpi)=1$. In this case the conductor is given by $c(\chi_1)+c(\chi_2)$, by \cite{schmidt2002some}. The behavior of the new vector is once again determined by Assing in \cite[Lemma $3.6$]{Assing_2018}. The result of \textit{loc.cit.} in this case is very long and distinguishes between several cases, so we will not state it here. We do remark however that there's a minor sign error in \textit{loc.cit.}, which can be found corrected in \textcolor{black}{Assing's} thesis \cite[Lemma $3.3.9$]{Assing_2019}. The statement of the results in this case, involve  the generalized functions $${K\left(\chi_1\otimes\chi_2,(\varpi^{-a_1},\varpi^{-a_2}),v\varpi^{-a}\right):=\int_{\mathcal{O}^\times}\int_{\mathcal{O}^\times}}\chi_1(x_1)\chi_2(x_2)\psi(x_1\varpi^{-a_1}+x_2\varpi^{-a_2}+vx_1x_2\varpi^{-a})\ d^\times x_1\ d^\times x_2$$
where $\chi_i$ are ramified characters of $F^\times$, $a,a_i\in\mathbf{Z}$ and $v\in \mathcal{O}^\times$. These functions should be thought of as an analogue of the $K$-functions in the supercuspidal case (i.e. $E/F$ quadratic extension) , but for the degenerate situation of the quadratic space $E=F\oplus F$.
\begin{prop}\label{thm sup + fully ram p-s}
    Let $\pi_1=\pi(\xi)$ be a unitary supercuspidal representation as in \emph{\Cref{thm sup. x unr. p-s}} and let $\pi_2=I(\chi_1\chi,\chi_2\chi^{-1})$ be a fully-ramified unitary principal-series representation with $\chi$ unramified, $\chi_1,\chi_2$ 
 distinct ramified characters and $\chi_i(\varpi)=1$. Let $\tau:=\max\{c(\pi_1),c(\chi_1),c(\chi_2)\}$, $\nu:=(q^{f(E/F)}-1)(1+\delta_{c(\pi_1)\equiv 1(2)})$ and let $A\subseteq\mathbf{C}$ be a $\mathbf{Z}[q^{-1/2},\mu_{\nu q^\tau}]$-algebra containing $\chi$. Let $(\phi,g_1,g_2)\in \mathcal{S}(F^2)\times G(F)$ be a $\Pi$-integral datum.Then, \eqref{eq: Problem} holds.
    \begin{proof}
        The local $L$-factor in this case is identically equal to $1$. Without loss of generality, we will also assume that $c(\chi_1)\geq c(\chi_2)$.  We use the usual coset representatives $\gamma=(z_1g_{t,k_1,v_1},z_2ng_{0,k_2,v_2})$ for $P(F)\backslash G(F)/K_\Pi$, where $z_i\in Z(F),n=\left[\begin{smallmatrix}
            1 & n_0\\
            & 1
        \end{smallmatrix}\right]\in N(F),t\in\mathbf{Z}, 0\leq k_i\leq c(\pi_i)$. As we do in every case involving supercuspidal representations, we start with the finite sum 
        \begin{align}\label{eq: 44}
            &{\mathcal{I}_{\Pi}^\mathrm{new}(\gamma;s)=C\sum_{i\geq -k_2-c(\pi_2)}q^{i(1-s)}\int_{\mathcal{O}^\times}(\omega_{\pi_1}\omega_{\pi_2})(y)\psi(-n_0y\varpi^i)W_{\pi_1}^\mathrm{new}(g_{i+t,k_1,v_1y^{-1}})W_{\pi_2}^\mathrm{new}(g_{i,k_2,v_2y^{-1}})\ d^\times y}.
        \end{align}
        The boundary case $k_1=c(\pi_1)$ and $k_2=c(\pi_2)$ is exactly the same as the one in \Cref{thm sup + half-ram p-s}. For the cases where either $k_1\neq c(\pi_1)$ or $k_2\neq c(\pi_2)$ the treatment is extremely similar to prior proofs. However, due to the introduction of the new type of $K$-functions in this case, we will give the details in one of the various cases arising in \cite[Lemma $3.6$]{Assing_2018}. Let's assume that $c(\chi_1)<k_2<c(\pi_2).$ Then, by \cite[Lemma $3.6$]{Assing_2018}, and \Cref{prop new vs conj. new}, we see that
        \eqref{eq: 44} is an $A[q^s,q^{-s}]$-multiple of 
        \begin{align*}
           &{\int_{\mathcal{O}^\times}(\omega_{\pi_1}\omega_{\pi_2})(y)\psi(-n_0y\varpi^{-2k_2})W_{\pi_1}^\mathrm{new}(g_{-2k_2+t,k_1,v_1y^{-1}}) {(q-1)^2}K\left(\chi_1^{-1}\otimes \chi_2^{-1}, (\varpi^{-k_2},\varpi^{-k_2}),v_2y^{-1}\varpi^{-k_2}\right)\ d^\times y.} \end{align*}
           \begin{align*}=\int_{\mathcal{O}^\times}\int_{\mathcal{O}^\times}\int_{\mathcal{O}^\times} (\omega_{\pi_1}&\omega_{\pi_2})(y)\psi(-n_0y\varpi^{-2k_2})W_{\pi_1}^\mathrm{new}(g_{-2k_2+t,k_1,v_1y^{-1}}) \\
           &(q-1)^2\chi_1^{-1}(x_1)\chi_2^{-1}(x_2)\psi(x_1\varpi^{-k_2}+x_2\varpi^{-k_2}+v_2y^{-1}x_1x_2\varpi^{-k_2})\ d^\times x_1\ d^\times x_2\ d^\times y.
        \end{align*}
        Noting that $y^{-1}x_1x_2=y(y^{-1}x_1)(y^{-1}x_2)$ and applying a double change of variables on $x_1$ and $x_2$, and then passing from $d^\times x_1$ to $dx_1$ and from $d^\times x_2$ to $dx_2$, this is (up to an inconsequential integral power of $q$) equal to
        \begin{align*}\int_{\mathcal{O}^\times}\int_{\mathcal{O}^\times}\int_{\mathcal{O}^\times} (\omega_{\pi_1}&\omega_{\pi_2})(y)\psi(-n_0y\varpi^{-2k_2})W_{\pi_1}^\mathrm{new}(g_{-2k_2+t,k_1,v_1y^{-1}}) \\
           &\chi_1^{-1}(x_1y)\chi_2^{-1}(x_2y)\psi(x_1y\varpi^{-k_2}+x_2y\varpi^{-k_2}+v_2yx_1x_2\varpi^{-k_2})\ dx_1\ dx_2\ d^\times y.
        \end{align*}
        Now, we can choose $M\geq 1$ large enough such that
        $\chi_1^{-1}(x_1y)\chi_2^{-1}(x_2y)\psi(x_1y\varpi^{-k_2}+x_2y\varpi^{-k_2}+v_2yx_1x_2\varpi^{-k_2})$ is constant for all $x_1,x_2\in 1+ \varpi^M\mathcal{O}$. Thus, everything reduces to a finite $A$-linear combination of integrals 
        $$\int_{\mathcal{O}^\times}\omega_{\pi_1}(y)\psi(-n_1y\varpi^{-2k_2}) W_{\pi_1}^\mathrm{new}(g_{-2k_2+t,k_1,v_1y^{-1}})\ d^\times y$$
        for finitely many $n_1\in F$. These are handled in the exact same manner as in \Cref{thm sup. x unr. p-s}, also recalling that in the non-boundary case the inverse volume factor $\vol_{P(F)}(P(F)\cap \gamma K_\Pi\gamma^{-1})^{-1}$ is always an integral multiple of $q-1$. Every other non-boundary case appearing in \cite[Lemma $3.6$]{Assing_2018} is dealt in a very similar or simpler manner. We also note that in one of the cases, $\epsilon(\tfrac{1}{2},\pi_2)$ appears. It is well-known that this is equal to $\epsilon(\tfrac{1}{2},\chi_1\chi)\epsilon(\tfrac{1}{2},\chi_2\chi^{-1})$ which is contained in $A$ by construction.
    \end{proof}
\end{prop}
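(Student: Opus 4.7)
The plan is to apply the reduction step of \Cref{sec reduction}, so it suffices to verify, for every double coset representative $\gamma\in P(F)\backslash G(F)/K_\Pi$, that
$$\vol_{P(F)}(P(F)\cap\gamma K_\Pi\gamma^{-1})^{-1}\cdot \mathcal{I}_{\Pi,\delta_P}^\mathrm{new}(\gamma W_\Pi^\mathrm{new};s)\in L(\Pi,s)A[q^s,q^{-s}].$$
Since $\pi_1$ is supercuspidal and $\pi_2$ is a fully-ramified principal-series with distinct ramified components, the Rankin-Selberg $L$-factor satisfies $L(\Pi,s)=1$, so the target ring is simply $A[q^s,q^{-s}]$. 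Following the strategy of the earlier propositions, I would work with representatives $\gamma=(z_1g_{t,k_1,v_1},z_2ng_{0,k_2,v_2})$ coming from \Cref{lem general coset decomp}, with $z_i\in Z(F)$, $n=\left[\begin{smallmatrix}1 & n_0\\ & 1\end{smallmatrix}\right]\in N(F)$, $0\leq k_1\leq c(\pi_1)$, $0\leq k_2\leq c(\pi_2)$ and $v_i\in\mathcal{O}^\times$.

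Unraveling $\mathcal{I}_{\Pi,\delta_P}^\mathrm{new}$ via an Iwasawa-type decomposition on the second factor, exactly as in \Cref{thm sup + half-ram p-s}, collapses the integral into a finite sum over $i\in\mathbf{Z}_{\geq -k_2-c(\pi_2)}$ of $A[q^s,q^{-s}]$-multiples of
$$\int_{\mathcal{O}^\times}(\omega_{\pi_1}\omega_{\pi_2})(y)\psi(-n_0y\varpi^i)W_{\pi_1}^\mathrm{new}(g_{i+t,k_1,v_1y^{-1}})W_{\pi_2}^\mathrm{new}(g_{i,k_2,v_2y^{-1}})\ d^\times y.$$
I would then evaluate the first Whittaker factor using \Cref{prop new vector supercuspidal} and the second using \cite[Lemma $3.6$]{Assing_2018} (corrected as in \cite[Lemma $3.3.9$]{Assing_2019}) together with \Cref{prop new vs conj. new}; the latter introduces the degenerate $K$-functions $K(\chi_1^{-1}\otimes\chi_2^{-1},(\varpi^{-a_1},\varpi^{-a_2}),v\varpi^{-a})$ in place of the $K(\xi,A,B)$-functions that showed up in the quadratic-field case.

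In the non-boundary regime $k_1\neq c(\pi_1)$ or $k_2\neq c(\pi_2)$, the genuinely new ingredient is the double integral hiding inside the $K$-function. I would handle it by the substitutions $x_i\rightsquigarrow yx_i$, pass from $d^\times x_i$ to $dx_i$, and then use smoothness to choose $M\geq 1$ large enough so that the combined inner integrand is invariant under $1+\varpi^M\mathcal{O}$ in each variable. This reduces everything to a finite $A$-linear combination of one-variable integrals of the form $\int_{\mathcal{O}^\times}\omega_{\pi_1}(y)\psi(\cdots)W_{\pi_1}^\mathrm{new}(g_{\cdots,k_1,v_1y^{-1}})\ d^\times y$, which are already controlled in \Cref{thm sup. x unr. p-s} and lie in $\tfrac{1}{q-1}A$. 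The unwanted $\tfrac{1}{q-1}$ is then absorbed by the volume factor via the same $P(\mathcal{O})$-index argument used in \Cref{thm sup + half-ram p-s} and \Cref{thm sup x unr St}, which applies exactly when $k_1<c(\pi_1)$ or $k_2<c(\pi_2)$.

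The boundary case $k_1=c(\pi_1)$, $k_2=c(\pi_2)$ is then a direct rerun of the corresponding boundary argument in \Cref{thm sup + half-ram p-s}: only the single index $i=-t-2c(\pi_1)$ contributes (provided $t+2c(\pi_1)\leq 2c(\pi_2)$, else everything vanishes), the surviving one-variable integral coincides with the integral \eqref{eq: 40} appearing there, and the matrix conditions \eqref{eq: 41}--\eqref{eq: 43} applied to elements of $P(F)\cap\gamma K_\Pi\gamma^{-1}$ force $a\equiv 1\mod\varpi$ precisely in the range where $\tfrac{1}{q-1}$ would otherwise survive. The hardest part, in my estimation, is the bookkeeping of epsilon-factors and local constants: one must verify that $\gamma_{E/F}$, the partial Gauss sum values of \Cref{sec Gauss sums} with $o(\chi_i)\mid q^\tau$, and $\epsilon(\tfrac{1}{2},\pi_2)=\epsilon(\tfrac{1}{2},\chi_1\chi)\epsilon(\tfrac{1}{2},\chi_2\chi^{-1})$ all land in $A$ under the assumption that $A$ is a $\mathbf{Z}[q^{-1/2},\mu_{\nu q^\tau}]$-algebra containing $\chi$.
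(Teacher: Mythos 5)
Your proposal is correct and follows essentially the same route as the paper: the same reduction to the volume-weighted values of $\mathcal{I}_{\Pi,\delta_P}^\mathrm{new}$ on the coset representatives $(z_1g_{t,k_1,v_1},z_2ng_{0,k_2,v_2})$, the same finite-sum starting expression, the same handling of the degenerate $K$-functions via the substitution $x_i\rightsquigarrow yx_i$, passage to additive measure and a smoothness choice of $M$ reducing to the one-variable integrals of the supercuspidal-times-unramified case, and the same boundary-case and volume-factor arguments borrowed from the supercuspidal-times-half-ramified proposition. The only cosmetic slip is in your closing remark: the partial Gauss sums involve roots of unity of order dividing $(q-1)q^{\tau}$ rather than $q^{\tau}$, which is still harmless since $\mu_{\nu q^{\tau}}\supseteq\mu_{(q-1)q^{\tau}}$.
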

\subsubsection{One supercuspidal representation and one ramified twist of the Steinberg representation}
Without loss of generality, we will assume that $\pi_1$ is a supercuspidal representation and $\pi_2=\mathrm{St}_\chi$ for some ramified character $\chi$ of $F^\times$. By \cite{schmidt2002some}, one has $c(\mathrm{St}_\chi)=2c(\chi)$. The behavior of the new vector for $\mathrm{St}_\chi$ is fully described in \cite[Lemma $3.3$]{Assing_2018}, or \cite[Lemma $3.3.4$]{Assing_2019} 
\begin{prop}\label{thm sup x ram St}
    Let $\pi_1=\pi(\xi)$ be a unitary supercuspidal representation, and $\pi_2=\mathrm{St}_\chi$ be a ramified unitary twist of the Steinberg representation, with $\omega_{\pi_1}(\varpi)=\chi(\varpi)=1$. Let $\tau:=\max\{c(\pi_1),c(\chi)\}$, $\nu:=(q^{f(E/F)}-1)(1+\delta_{c(\pi_1)\equiv 1(2)})$ and $A\subseteq \mathbf{C}$ be a $\mathbf{Z}[q^{-1/2},\mu_{\nu q^\tau}]$-algebra. Let $(\phi,g_1,g_2)\in\mathcal
    S(F^2)\times G(F)$ be a $\Pi$-integral datum. Then, \eqref{eq: Problem} holds.
\end{prop}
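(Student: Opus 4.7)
The plan is to follow the template established in the preceding Propositions pairing supercuspidals with non-supercuspidal classes. Since $\pi_1\otimes\chi=\pi(\xi\cdot(\chi\circ\mathrm{Nr}_{E/F}))$ is again supercuspidal, the Rankin--Selberg $L$-factor collapses: $L(\Pi,s)=L(\pi_1\otimes\chi,s)=1$, so the target of \eqref{eq: Problem} is simply $A[q^s,q^{-s}]$. By the reduction of \Cref{sec reduction}, it is enough to prove
$$\vol_{P(F)}\bigl(P(F)\cap \gamma K_\Pi\gamma^{-1}\bigr)^{-1}\cdot \mathcal{I}_{\Pi,\delta_P}^{\mathrm{new}}(\gamma W_\Pi^{\mathrm{new}};s)\in A[q^s,q^{-s}]$$
for every $\gamma\in P(F)\backslash G(F)/K_\Pi$. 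Applying \Cref{lem general coset decomp} on each factor, I would take $\gamma=(z_1g_{t,k_1,v_1},z_2ng_{0,k_2,v_2})$ with $n=\left[\begin{smallmatrix}1 & n_0\\ & 1\end{smallmatrix}\right]$, $0\leq k_1\leq c(\pi_1)$, and $0\leq k_2\leq c(\mathrm{St}_\chi)=2c(\chi)$ (using $c(\mathrm{St}_\chi)=2c(\chi)$ from \cite{schmidt2002some}).

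After writing the $F^\times$-integral as a sum over $\varpi^i\mathcal{O}^\times$-annuli, I would plug in \Cref{prop new vector supercuspidal} for $W_{\pi_1}^{\mathrm{new}}$ (via \Cref{prop new vs conj. new}) and Assing's \cite[Lemma $3.3$]{Assing_2018} (corrected in \cite[Lemma $3.3.4$]{Assing_2019}) for $W_{\mathrm{St}_\chi}^{\mathrm{new}}$. The sum then collapses to finitely many $i$, and every surviving term is an $A[q^s,q^{-s}]$-multiple---coming from $\epsilon(\tfrac{1}{2},\pi_1)$, the local constant $\gamma_{E/F}$, and Gauss or partial Gauss sums in $\chi^{\pm 1}$ of conductor $\leq c(\chi)\leq c(\mathrm{St}_\chi)$---of an integral of the form $\int_{\mathcal{O}^\times}\omega_{\pi_1}(y)\psi(my\varpi^i)W_{\pi_1}^{\mathrm{new}}(g_{i+t,k_1,v_1y^{-1}})\,d^\times y$ for some $m\in F$. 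This is precisely the family of integrals dissected in \Cref{thm sup. x unr. p-s}, each of which lies in $\tfrac{1}{q-1}A$.

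Whenever either $k_1<c(\pi_1)$ or $k_2<2c(\chi)$---the non-boundary regime---the same matrix-congruence argument used in the non-boundary cases of \Cref{thm sup x unr St} and \Cref{thm sup + half-ram p-s} shows that $\vol_{P(F)}(P(F)\cap \gamma K_\Pi\gamma^{-1})^{-1}$ is an integral multiple of $q-1$, cancelling the possible $\tfrac{1}{q-1}$. The only genuinely delicate case, which I expect to be the main obstacle, is the boundary pair $k_1=c(\pi_1)$, $k_2=2c(\chi)$. There, exactly one $i=2c(\chi)-t-2c(\pi_1)\geq 0$ survives, and $\mathcal{I}_{\Pi,\delta_P}^{\mathrm{new}}(\gamma W_\Pi^{\mathrm{new}};s)$ reduces to a single integral of the same shape as \eqref{eq: 40}, yielding an unwanted $\tfrac{1}{q-1}$ precisely when $v\bigl(1+v_1n_0\varpi^{-t-c(\pi_1)}-v_1v_2^{-1}\varpi^{-t-c(\pi_1)+2c(\chi)}\bigr)=c(\pi_1)-1$.

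To handle this boundary case, I would run the analogue of \eqref{eq: 41}--\eqref{eq: 43}: writing $x=\left[\begin{smallmatrix}a & b\\ & 1\end{smallmatrix}\right]\in P(F)\cap \gamma K_\Pi\gamma^{-1}$ and combining the two inclusions $g_{0,2c(\chi),v_2}^{-1}xg_{0,2c(\chi),v_2}\in K_{2c(\chi)}$ and $g_{t,c(\pi_1),v_1}^{-1}nxn^{-1}g_{t,c(\pi_1),v_1}\in K_{c(\pi_1)}$. The governing inequality here is $t+2c(\pi_1)\leq 4c(\chi)$ (playing the role of $t+2c(\pi_1)\leq 2c(\pi_2)$ in \Cref{thm sup + half-ram p-s}), which forces $b\varpi^{-t-c(\pi_1)}\equiv v_2^{-1}(a-1)\varpi^{-t-c(\pi_1)+2c(\chi)}\pmod{\varpi^{c(\pi_1)}}$. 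Feeding this into the bottom-right entry of the second inclusion then forces $a\equiv 1\pmod\varpi$ in precisely the critical valuation regime, supplying the required factor of $q-1$ in $\vol_{P(F)}(P(F)\cap \gamma K_\Pi\gamma^{-1})^{-1}$ and completing the argument. The only structural deviation from the earlier proofs is that $c(\pi_2)=2c(\chi)$ rather than $c(\pi_2)$ being a principal-series conductor, but the congruence cascade is formally identical.
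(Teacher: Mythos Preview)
Your overall architecture matches the paper's, and your treatment of the boundary case $(k_1,k_2)=(c(\pi_1),2c(\chi))$ is exactly right---it does reduce to the same congruence cascade as \eqref{eq: 41}--\eqref{eq: 43} with $c(\pi_2)=2c(\chi)$.

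The gap is in your non-boundary analysis. You claim that after plugging in \cite[Lemma $3.3$]{Assing_2018}, every surviving term is an $A[q^s,q^{-s}]$-multiple (built from $\epsilon$-factors, $\gamma_{E/F}$, and Gauss sums in $\chi^{\pm1}$) of an integral $\int_{\mathcal{O}^\times}\omega_{\pi_1}(y)\psi(my\varpi^i)W_{\pi_1}^{\mathrm{new}}(g_{i+t,k_1,v_1y^{-1}})\,d^\times y$. This is not what Assing's formula actually produces. For certain values of $k_2$ the new-vector of the ramified Steinberg involves the degenerate $K$-function $K(\chi^{-1}\otimes\chi^{-1},(\varpi^{j/2},\varpi^{j/2}),v_2y^{-1}\varpi^{-k_2})$ and---this is the genuinely new feature of this proposition---the function
$$S(A,B,m):=\int_{\mathcal{O}^\times}\psi\bigl((Ax+Bx^{-1})\varpi^{-m}\bigr)\,d^\times x,$$
appearing as $(q-1)S(1,-b_{\chi^{-1}}v_2^{-1}y,1)$. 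Both of these depend on the integration variable $y$ in a way that is not captured by a single $\psi(my\varpi^i)$, so you cannot pull them out as $A$-constants and appeal directly to \Cref{thm sup. x unr. p-s}.

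The $K$-function is handled by the change-of-variables/coset-splitting trick from \Cref{thm sup + fully ram p-s}, which you do not cite. The $S$-function has not appeared before this point in the paper and needs its own (short) argument: pass to the additive measure, split $\mathcal{O}^\times$ into cosets of $1+\varpi\mathcal{O}$, and observe that $\psi((x-b_{\chi^{-1}}v_2^{-1}yx^{-1})\varpi^{-1})$ is constant on each coset (a $q$-th root of unity), after which the $y$-integral collapses to a finite $A$-linear combination of the familiar Gauss-type integrals. Only then does your reduction to \Cref{thm sup. x unr. p-s} go through. The paper explicitly flags this $S$-function step as ``one that we haven't encountered so far,'' so your sketch is missing precisely the one new ingredient specific to this case.
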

\begin{proof}
    The local $L$-factor is once more identically equal to $1$, We use the usual coset representatives $\gamma=(z_1g_{t,k_1,v_1},z_2ng_{0,k_2,v_2})$ for $P(F)\backslash G(F)/K_\Pi$, where $z_i\in Z(F),n=\left[\begin{smallmatrix}
            1 & n_0\\
            & 1
        \end{smallmatrix}\right]\in N(F),t\in\mathbf{Z}, 0\leq k_i\leq c(\pi_i)$, and start with 
    \begin{align}\label{eq: 45}
            &{\mathcal{I}_{\Pi}^\mathrm{new}(\gamma;s)=C\sum_{i\geq -k_2-c(\pi_2)}q^{i(1-s)}\int_{\mathcal{O}^\times}(\omega_{\pi_1}\chi^2)(y)\psi(-n_0y\varpi^i)W_{\pi_1}^\mathrm{new}(g_{i+t,k_1,v_1y^{-1}})W_{\pi_2}^\mathrm{new}(g_{i,k_2,v_2y^{-1}})\ d^\times y}.
        \end{align}
        By \cite[Lemma $3.3$]{Assing_2018}, the treatment of the boundary case, $k_1=c(\pi_1)$ and $k_2=c(\pi_2)$, is identical to that of \Cref{thm sup + half-ram p-s}. We now discuss the non-boundary cases, i.e. $k_1\neq c(\pi_1)$ or $k_2\neq c(\pi_2)$, where we recall that the volume factors in these cases are easily seen to be integral multiples of $q-1$. All of the input regarding the various expressions for the new vectors comes from \cite[Lemma $3.3$]{Assing_2018}. If $k_2=0$, then \eqref{eq: 45} reduces to an $A$-multiple of 
        $$\epsilon(\tfrac{1}{2},\pi_2)\int_{\mathcal{O}^\times}(\omega_{\pi_1}\chi^2)(y)\psi(-n_0y\varpi^{-c(\pi_2)})W_{\pi_1}^\mathrm{new}(g_{-c(\pi_2)+t,k_1,v_1y^{-1}})\ d^\times y$$
        and it is well-known that $\epsilon(\tfrac{1}{2},\pi_2)=\epsilon(\tfrac{1}{2},\chi)^2$, which is an element of $A$. Then this is handled in the same manner as \Cref{thm sup. x unr. p-s}. If $k_2>0$, then \eqref{eq: 45} is a finite $A[q^s,q^{-s}]$-linear sum of integrals:
        \begin{enumerate}
            \item $\displaystyle\int_{\mathcal{O}^\times}\omega_{\pi_1}(y)\psi\left((-n_0\varpi^{-2c(\pi_2)}+v_2^{-1}\varpi^{c(\pi_2)})y\right)W_{\pi_1}^\mathrm{new}(g_{-2c(\pi_2)+t,k_1,v_1y^{-1}})\ d^\times y$
            \item $\displaystyle\int_{\mathcal{O}^\times}(\omega_{\pi_1}\chi)(y)\psi\left(-n_0\varpi^{i}y\right)W_{\pi_1}^\mathrm{new}(g_{i+t,k_1,v_1y^{-1}})\ d^\times y$, with $i>-2$
            \item $\displaystyle\int_{\mathcal{O}^\times}(\omega_{\pi_1}\chi^2)(y)\psi(-n_0y\varpi^i)W_{\pi_1}^\mathrm{new}(g_{i+t,k_1,v_1y^{-1}})(q-1)^2K(\chi^{-1}\otimes \chi^{-1},(\varpi^{\tfrac{j}{2}},\varpi^{\tfrac{j}{2}}),v_2y^{-1}\varpi^{-k_2})\ d^\times y$ where $j$ is even and $k_2\neq c(\pi_2)$
            \item $\displaystyle\int_{\mathcal{O}^\times}(\omega_{\pi_1}\chi)(y)\psi\left(-n_0\varpi^{-2}y\right)W_{\pi_1}^\mathrm{new}(g_{-2+t,k_1,v_1y^{-1}}) (q-1)S(1,-b_{\chi^{-1}} v_2^{-1}y,1)\ d^\times y$.
        \end{enumerate}
        The first two cases are dealt identically, in the same fashion as \Cref{thm sup. x unr. p-s}. We've seen how to deal with the third case in  \Cref{thm sup + fully ram p-s}. The fourth and final type of integral that might appear is one that we haven't encountered so far, so we give more details. The $S$-function showing up is given by
        $$S(A,B,m):=\int_{\mathcal{O}^\times}\psi((Ax+Bx^{-1})\varpi^{-m})\ d^\times x,\ A,B\in\mathcal{O},\ m\in\mathbf{Z}_{>0}$$
        Thus, the fourth type of integral appearing is of the form
        $$(q-1)\int_{\mathcal{O}^\times}\int_{\mathcal{O}^\times}(\omega_{\pi_1}\chi)(y)\psi\left(-n_0\varpi^{-2}y\right)W_{\pi_1}^\mathrm{new}(g_{-2+t,k_1,v_1y^{-1}})\psi\left((x-b_{\chi^{-1}} v_2^{-1}yx^{-1})\varpi^{-1}\right)\ d^\times x\ d^\times y.$$
       Passing to the additive measure, this becomes (up to an integral power of $q$) equal to $$\int_{\mathcal{O}^\times}\int_{\mathcal{O}^\times}(\omega_{\pi_1}\chi)(y)\psi\left(-n_0\varpi^{-2}y\right)W_{\pi_1}^\mathrm{new}(g_{-2+t,k_1,v_1y^{-1}})\psi\left((x-b_{\chi^{-1}} v_2^{-1}yx^{-1})\varpi^{-1}\right)\ d x\ d^\times y.$$
       We notice that $\psi\left((x-b_{\chi^{-1}} v_2^{-1}yx^{-1})\varpi^{-1}\right)=\psi\left((1-b_{\chi^{-1}} v_2^{-1}y)\varpi^{-1}\right)$ for all $x\in 1+\varpi\mathcal{O}$ and $y\in\mathcal{O}^\times$, and it is simply a $q$-th root of unity. Then, this once again reduces to a finite sum of the familiar integrals (Gauss sums) dealt with before. 
\end{proof}
Now that we have introduced the functions $S(A,B,m)$ and seen how to handle them, one can go back and also prove \Cref{thm sup + fully ram p-s} in the case where $\chi_1=\chi_2$, using \cite[Lemma $3.5$]{Assing_2018}. The proof is essentially the same as \Cref{thm sup x ram St}. A small detail to keep in mind, is that in \textit{loc.cit.}, ``$G(\varpi^{-l},\chi)$'' in the second line of page $14$, should read ``$G(v\varpi^{-l},\chi)$''. This can be found corrected in \cite[Lemma $3.3.8$]{Assing_2019}.
\subsubsection{Two half-ramified principal-series representation or one half-ramified principal-series representation and one unramified twist of the Steinberg representation} In this section we deal with the case $\pi_1\times\pi_2$ where one representation is a half-ramified principal-series representation and the other is either a half-ramified principal-series representation or an unramified twist of the Steinberg representation. 
\begin{thm}\label{thm two half-ram p-s}
    Let $\pi_i=I(\chi_i\omega_i,\chi_i^{-1})$ for $i=1,2$ be half-ramified unitary principal-series representation representations with $\omega_i(\varpi)=1.$ Set $\Pi:=\pi_1\times\pi_2$, Let $\tau:=\max\{c(\omega_1),c(\omega_2)\}$, $\nu:=q-1$ and let $A\subseteq\mathbf{C}$ be a $\mathbf{Z}[q^{-1/2},\mu_{\nu q^\tau}]$-algebra containing $\chi_i$ for $i=1,2$. Let $(\phi,g_1,g_2)$ be a $\Pi$-integral datum. Then \eqref{eq: Problem} holds.
\end{thm}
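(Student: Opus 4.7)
The plan is to follow the general strategy of the preceding propositions. By the reduction of \Cref{sec reduction}, it suffices to show that for every $\gamma\in P(F)\backslash G(F)/K_\Pi$,
$$\vol_{P(F)}\bigl(P(F)\cap \gamma K_\Pi \gamma^{-1}\bigr)^{-1}\cdot \mathcal{I}_{\Pi,\delta_P}^\mathrm{new}(\gamma W_\Pi^\mathrm{new};s)\in L(\Pi,s) A[q^s,q^{-s}].$$
As a preliminary check, $L(\Pi,s)^{-1}\in A[q^s,q^{-s}]$ by the explicit table \eqref{eq: gl2 l-factor}: the unramified characters $\chi_1,\chi_2$ lie in $A$ by hypothesis, and the ramified characters $\omega_i$ contribute only trivial $L$-factors.

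I would then use \Cref{lem general coset decomp} applied to each factor, combined with Iwasawa decomposition, to take coset representatives
$$\gamma=\bigl(z_1 g_{t,k_1,v_1},\ z_2 n g_{0,k_2,v_2}\bigr),\quad z_i\in Z(F),\ n=\left[\begin{smallmatrix}1 & n_0\\ & 1\end{smallmatrix}\right]\in N(F),\ t\in\mathbf{Z},\ 0\leq k_i\leq c(\omega_i),\ v_i\in\mathcal{O}^\times.$$
Writing $C:=\omega_\Pi(z_1,z_2)\in A$ and using \Cref{prop new vs conj. new} to pass to the conjugate new-vectors, I would unfold $\mathcal{I}_{\Pi,\delta_P}^\mathrm{new}(\gamma W_\Pi^\mathrm{new};s)$ as in \eqref{eq: 39} and \eqref{eq: 44} into a finite sum (finiteness coming from the vanishing statements of \Cref{prop Assing half-ramified princ series} applied to both factors) of integrals of the form
$$\int_{\mathcal{O}^\times} (\omega_1\omega_2)(y)\psi(-n_0 y\varpi^i) W_{\pi_1^\vee}^{\mathrm{new}^*}(g_{i+t,k_1,v_1 y^{-1}})\, W_{\pi_2^\vee}^{\mathrm{new}^*}(g_{i,k_2,v_2 y^{-1}})\ d^\times y,$$
all other scalar factors being absorbed into $A$.

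Next, I would split into cases according to $(k_1,k_2)$. In every non-boundary case, that is, whenever $k_1<c(\omega_1)$ or $k_2<c(\omega_2)$, the factor $W_{\pi_i^\vee}^{\mathrm{new}^*}$ either restricts $y$ to a coset $v_i b_{\omega_i^{-1}}+\varpi^{\star}\mathcal{O}$ (by the second and third lines of \Cref{prop Assing half-ramified princ series}) or imposes no restriction at all (the $k_i=0$ case). After absorbing the resulting $A$-valued constants, each surviving integral reduces to a partial Gauss sum for $\omega_1\omega_2$, or a combination of $\mathscr{G}_\ell$-type integrals, evaluated via \Cref{sec Gauss sums}. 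Any factor of $\tfrac{1}{q-1}$ appearing there is killed by $\vol_{P(F)}(P(F)\cap \gamma K_\Pi\gamma^{-1})^{-1}$: exactly as in the $0<k\leq\lfloor c(\pi_1)/2\rfloor$ case of \Cref{thm tamely ram p.s + unr p.s}, the condition $k_i<c(\omega_i)$ together with $g_{0,k_i,v_i}^{-1}\left[\begin{smallmatrix}a & b\\ & 1\end{smallmatrix}\right]g_{0,k_i,v_i}\in K_{c(\omega_i)}$ forces the diagonal entry $a$ to be $\equiv 1\bmod\varpi$, whence the index is an integral multiple of $q-1$.

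The hard part is the boundary case $k_1=c(\omega_1),\ k_2=c(\omega_2)$. Then both new-vector factors degenerate to pure characters in $y$, and only the unique index $i$ with $-t-2c(\omega_1)=-2c(\omega_2)$ can contribute; otherwise the whole expression vanishes. The surviving integral reduces to
$$\int_{\mathcal{O}^\times}\psi\bigl((-v_1 n_0\varpi^{-t-c(\omega_1)} + v_1 v_2^{-1}\varpi^{-t-c(\omega_1)+c(\omega_2)}-1)\varpi^{-c(\omega_1)} y\bigr)\ d^\times y,$$
and yields an unwanted factor of $\tfrac{1}{q-1}$ precisely when $v\bigl(1+v_1 n_0\varpi^{-t-c(\omega_1)}-v_1 v_2^{-1}\varpi^{-t-c(\omega_1)+c(\omega_2)}\bigr)=c(\omega_1)-1$. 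The matrix calculations \eqref{eq: 41}--\eqref{eq: 43} carried out in the proof of \Cref{thm sup + half-ram p-s} depend only on the conductors $c(\pi_i)=c(\omega_i)$ and on the inequality $-t-2c(\omega_1)\geq -2c(\omega_2)$ encoded by the surviving summand; they transpose verbatim to the present setting, forcing any $x=\left[\begin{smallmatrix}a & b\\ & 1\end{smallmatrix}\right]\in P(F)\cap \gamma K_\Pi\gamma^{-1}$ to satisfy $a\equiv 1\bmod\varpi$ under the above valuation constraint. Consequently the inverse volume factor is an integral multiple of $q-1$, the offending denominator is cancelled, and the required integrality follows. The companion Steinberg variant (one factor replaced by an unramified twist of Steinberg) is handled identically, using \Cref{lem Assing for steinberg} in place of \Cref{prop Assing half-ramified princ series} for the Steinberg factor, and the same boundary argument; the resulting coefficient algebra still has the form $\mathbf{Z}[q^{-1/2},\mu_{\nu q^\tau}]$ with $\nu=q-1$ since no new roots of unity beyond those already demanded by the half-ramified factor are introduced.
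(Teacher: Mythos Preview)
Your reduction and choice of coset representatives are correct, and the non-boundary cases are handled essentially as in the paper. However, there is a genuine gap in your treatment of the boundary case $(k_1,k_2)=(c(\omega_1),c(\omega_2))$, and a related oversight in the $(0,0)$ case.

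You claim that in the boundary case ``only the unique index $i$ with $-t-2c(\omega_1)=-2c(\omega_2)$ can contribute''. This is false: you have confused the half-ramified support condition with the supercuspidal one. In \Cref{prop new vector supercuspidal} the $k=c(\pi)$ clause requires the \emph{equality} $t=-2k$, so the sum collapses to one term. But in \Cref{prop Assing half-ramified princ series} the $k=c(\pi)$ clause only requires the \emph{inequality} $t\geq -2k$. Hence for both factors one has nonvanishing for all $i\geq M:=\max\{-2c(\omega_2),\,-t-2c(\omega_1)\}$, and \eqref{eq: 46} is an honest infinite sum in $i$. After inserting the explicit values, the paper obtains (up to an $A$-multiple)
\[
\sum_{i\geq M} q^{-is}(\chi_1^{-1}\chi_2^{-1})(\varpi^i)\int_{\mathcal{O}^\times}\psi\!\bigl((-n_0-v_1^{-1}\varpi^{t+c(\omega_1)}+v_2^{-1}\varpi^{c(\omega_2)})\varpi^{i}y\bigr)\,d^\times y,
\]
which must be split as usual into an $L$-factor contribution $L(\chi_1^{-1}\chi_2^{-1},s)$ and a finite error term. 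The error term carries an unwanted $\tfrac{1}{q-1}$ for each $i$ satisfying a valuation condition that depends on $i$ itself (see \eqref{eq: 48}--\eqref{eq: 49}). Because of this $i$-dependence, the volume-factor argument you invoke from \Cref{thm sup + half-ram p-s} does not transpose verbatim: one must distinguish whether $M=-t-2c(\omega_1)$ or $M=-2c(\omega_2)$ and, in the latter case, show that $a\in 1+\varpi^{j+1}\mathcal{O}$ for the running $j\geq 0$, as the paper does around \eqref{eq: 50}. A parallel issue occurs at $(k_1,k_2)=(0,0)$: when $c(\omega_1\omega_2)=0$ the sum is again infinite and produces the other pole $L(\chi_1\chi_2,s)$, which your ``finiteness'' claim overlooks.
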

\begin{proof}
    The $L$-factor $L(\Pi,s)$ is given by $L(\chi_1^{-1}\chi_2^{-1},s)$ if $c(\omega_1\omega_2)>0$, and by $L(\chi_1\chi_2,s)L(\chi_1^{-1}\chi_2^{-1},s)$ if $c(\omega_1\omega_2)=0$. We use the usual coset representatives $\gamma=(z_1g_{t,k_1,v_1},z_2ng_{0,k_2,v_2})$ for $P(F)\backslash G(F)/K_\Pi$, where $z_i\in Z(F),n=\left[\begin{smallmatrix}
            1 & n_0\\
            & 1
        \end{smallmatrix}\right]\in N(F),t\in\mathbf{Z}, 0\leq k_i\leq c(\pi_i)$, and the starting expression
    \begin{align}\label{eq: 46}
        {\mathcal{I}_{\Pi}^\mathrm{new}(\gamma;s)=C\sum_{i\geq -k_2-c(\pi_2)}q^{i(1-s)}\int_{\mathcal{O}^\times}(\omega_1\omega_2)(y)\psi(-n_0y\varpi^i)W_{\pi_1}^\mathrm{new}(g_{i+t,k_1,v_1y^{-1}})W_{\pi_2}^\mathrm{new}(g_{i,k_2,v_2y^{-1}})\ d^\times y}.
    \end{align}
        To analyze this we'll use \Cref{prop Assing half-ramified princ series}.\\
    If $(k_2,k_2)=(0,0)$, then \eqref{eq: 46} is an $A$-multiple of 
    $$\sum_{i\geq \max\{-c(\pi_2),-t-c(\pi_1)\}}q^{-is}(\chi_1\chi_2)(\varpi^i)\int_{\mathcal{O}^\times}(\omega_1\omega_2)(y)\psi(-n_0y\varpi^i)\ d^\times y.$$
    If $c(\omega_1\omega_2)>0$, then there's only one non-zero term which is a simple Gauss sum contained in $\tfrac{1}{q-1}A$. Thus assume $c(\omega_1\omega_2)=0$. Then we do the usual splitting trick as in \Cref{thm unram + steinberg}. The finite sum term is again a simple Gauss sum associated to the trivial character of $F^\times$ , which is contained in $\tfrac{1}{q-1}A$. Recall that $(k_1,k_2)=(0,0)$ is a non-boundary case thus the volume factor in question is always an integral multiple of $q-1$ thus we need not worry about the $\tfrac{1}{q-1}$ factors. The infinite sum is an  $A$-multiple of the familiar 
    $$\sum_{i\geq 0}q^{-is}(\chi_1\chi_2)(\varpi^i)=L(\chi_1\chi_2,s)\in L(\Pi,s)A[q^s,q^{-s}].$$
    If $k_1\neq c(\pi_1)$ or $k_2\neq c(\pi_2)$, then we are again in a non-boundary case, and looking at \Cref{prop Assing half-ramified princ series}, it is easy to see that \eqref{eq: 46} always collapses to a single $A$-multiple of a partial Gauss sum for the trivial character, $\omega_1,\omega_2$ or $\omega_1\omega_2$ (and $\psi)$ over an appropriate region of integration of the form $1+\varpi^\ell\mathcal{O}$ with $\ell<\tau$. Thus looking at \Cref{sec Gauss sums}, these are always contained in $\tfrac{1}{q-1}A$, and the $\tfrac{1}{q-1}$ factor is always killed by the volume factor since we are in a non-boundary case. Finally, if $(k_1,k_2)=(c(\pi_1),c(\pi_2))$, then we are in the boundary case, and \eqref{eq: 46} is an $A$-multiple of 
    \begin{align}\label{eq: 47}\sum_{i\geq M}q^{-is}(\chi_1^{-1}\chi_2^{-1})(\varpi^i)\int_{\mathcal{O}^\times}\psi(-n_0y\varpi^i)\psi(-v_1^{-1}y\varpi^{i+t+c(\pi_1)})\psi(v_2^{-1}\varpi^{i+c(\pi_2)}y)\ d^\times y\end{align}
    where $M:=\max\{-2c(\pi_2),-t-2c(\pi_1)\}$. Doing the same splitting trick as before, we obtain an $A$-multiple of $L(\chi_1^{-1}\chi_2^{-1},s)\in L(\Pi,s)A[q^s,q^{-s}]$, plus a finite $A[q^s,q^{-s}]$-linear sum of integrals (Gauss sums) of the form
    $$\int_{\mathcal{O}^\times}\psi\left((-n_0v_1\varpi^{i+c(\pi_1)}+v_1v_2^{-1}\varpi^{i+c(\pi_1)+c(\pi_2)}-\varpi^{i+t+2c(\pi_1)})\varpi^{-c(\pi_1)}y\right)\ d^\times y$$
    with $i\geq M$. The above integral is always contained in $\tfrac{1}{q-1}A$. In addition, it is not contained in $A$ if and only if 
    \begin{align}\label{eq: 48}v\left(-n_0v_1\varpi^{i+c(\pi_1)}+v_1v_2^{-1}\varpi^{i+c(\pi_1)+c(\pi_2)}-\varpi^{i+t+2c(\pi_1)}\right)=c(\pi_1)-1.\end{align}
    In this case, we write $M=N+(-t-2c(\pi_1))$ for some uniquely determined $N\in\mathbf{Z}_{\geq 0}$. Then every $i\geq M$, can be written as $i=j+N+(-t-2c(\pi_1))$ for $j\in\mathbf{Z}_{\geq 0}$. Using this, \eqref{eq: 48} can be re-written as
    \begin{align}\label{eq: 49}
        v\left(-n_0v_1\varpi^{-t-c(\pi_1)}+v_1v_2^{-1}\varpi^{-t-c(\pi_1)+c(\pi_2)}-1\right)=c(\pi_1)-N-j-1.
    \end{align}
    We now slightly adapt the argument at the end of \Cref{thm sup + half-ram p-s}. The conditions \eqref{eq: 41} and \eqref{eq: 42} remain unchanged. The bottom-right entry of \eqref{eq: 41} implies that 
    \begin{align}\label{eq: 50}b\varpi^{-t-c(\pi_1)}\in v_2^{-1}(a-1)\varpi^{-t-c(\pi_1)+c(\pi_2)}+\varpi^{-t-c(\pi_1)+2c(\pi_2)}\mathcal{O}.\end{align}
    If $M=-t-2c(\pi_1)$, then one proceeds exactly as in the final step of \Cref{thm sup + half-ram p-s}. If $M=-2c(\pi_2)$, then $-t-c(\pi_1)+2c(\pi_2)=c(\pi_1)-N< c(\pi_1)$. Hence \eqref{eq: 50} together with the bottom-right entry of \eqref{eq: 42} give us that 
    $$a\left(1+n_0v_1\varpi^{-t-c(\pi_1)}-v_1v_2^{-1}\varpi^{-t-c(\pi_1)+c(\pi_2)}\right) \in 1+n_0v_1\varpi^{-t-c(\pi_1)}-v_1v_2^{-1}\varpi^{-t-c(\pi_1)+c(\pi_2)}+\varpi^{c(\pi_1)-N}\mathcal{O}.$$
    However, \eqref{eq: 49} now implies that $a\in1+\varpi^{j+1}\mathcal{O}$, and $j\in\mathbf{Z}_{\geq 0}$ hence we are done.
\end{proof}

\begin{prop}
    Let $\pi_1=I(\chi_1\omega,\chi_1^{-1})$ be a half-ramified unitary principal-series representation with $\omega(\varpi)=1$, and $\pi_2=\mathrm{St}_\chi$ an unramified twist of the Steinberg representation. Set $\Pi:=\pi_1\times \pi_2$, let $\tau:=c(\omega)$, $\nu:=q-1$ and let $A\subseteq \mathbf{C}$ be a $\mathbf{Z}[q^{-1/2},\mu_{\nu q^\tau}]$-algebra containing $\chi_1$ and $\chi$. Let $(\phi,g_1,g_2)$ be a $\Pi$-integral datum. Then, \eqref{eq: Problem} holds.
\end{prop}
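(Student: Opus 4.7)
The plan is to run the same reduction as in the preceding cases, in close parallel with \Cref{thm two half-ram p-s} and \Cref{thm sup x unr St}. By \Cref{sec reduction} it suffices to verify the required integrality at coset representatives
$$\gamma = (z_1 g_{t, k_1, v_1},\, z_2 n g_{0, k_2, 1}),\quad z_i \in Z(F),\ n = \left[\begin{smallmatrix} 1 & n_0 \\ & 1 \end{smallmatrix}\right],\ 0 \leq k_1 \leq c(\omega),\ 0 \leq k_2 \leq 1,\ v_1 \in \mathcal{O}^\times,$$
supplied by \Cref{lem general coset decomp}. Since $\omega$ is ramified, \eqref{eq: gl2 l-factor} gives $L(\Pi,s) = L(\chi_1^{-1}\chi\lvert\cdot\rvert^{1/2}, s)$. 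Applying \Cref{prop new vs conj. new} to trade $W_{\pi_1}^\mathrm{new}$ for $W_{\pi_1^\vee}^{\mathrm{new}^*}$, one obtains the expansion
$$\mathcal{I}_{\Pi,\delta_P}^\mathrm{new}(\gamma W_\Pi^\mathrm{new}; s) = C \sum_{i \geq -k_2 - 1} \chi(\varpi)^i q^{i(1-s)} \int_{\mathcal{O}^\times} \omega(y)\, \psi(-n_0 y \varpi^i)\, W_{\pi_1^\vee}^{\mathrm{new}^*}(g_{i+t,k_1,v_1 y^{-1}})\, W_\mathrm{St}^\mathrm{new}(g_{i,k_2,y^{-1}})\, d^\times y$$
for some $C \in A$, which I would analyze via \Cref{prop Assing half-ramified princ series} and \Cref{lem Assing for steinberg} according to the values of $(k_1, k_2)$.

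In every non-boundary case ($k_1 < c(\omega)$ or $k_2 = 0$), the support conditions on the two Whittaker values collapse the sum to finitely many terms, and each remaining $d^\times y$-integral reduces to a Gauss or partial Gauss sum for one of $\omega$, $\chi\omega$, or the trivial character against $\psi$. By \Cref{sec Gauss sums} these lie in $\tfrac{1}{q-1}A$, with any $\epsilon$-factors appearing being combinations of roots of unity in $\mu_{\nu q^\tau} \subseteq A$. The unwanted $\tfrac{1}{q-1}$ is absorbed by the volume factor via the by-now-standard argument: an upper triangular $x = \left[\begin{smallmatrix} a & b \\ & 1 \end{smallmatrix}\right]$ in $P(F) \cap \gamma K_\Pi \gamma^{-1}$ satisfies $g_{0, k_1, v_1}^{-1} x g_{0, k_1, v_1} \in K_{c(\omega)}$ when $0 < k_1 < c(\omega)$, or $g_{0, 0, 1}^{-1} x g_{0, 0, 1} \in K_1$ when $k_2 = 0$; either condition forces $a \equiv 1 \pmod \varpi$, and hence $(q-1) \mid [P(\mathcal{O}) : P(\mathcal{O}) \cap \gamma K_\Pi \gamma^{-1}]$.

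The main obstacle is the boundary case $(k_1, k_2) = (c(\omega), 1)$, where neither Whittaker factor has bounded support in $i$. Substituting the fourth subcase of \Cref{prop Assing half-ramified princ series} and the $k = 1$ line of \Cref{lem Assing for steinberg}, the $\omega(y)$-dependence cancels against the factor $\omega^{-1}(-v_1^{-1}y)$ produced by $W_{\pi_1^\vee}^{\mathrm{new}^*}$, and the expression reduces (up to an $A$-constant) to an infinite sum of the shape
$$\sum_{i \geq N}\, (\chi_1^{-1}\chi)(\varpi)^{i}\, q^{-i(s + 1/2)} \int_{\mathcal{O}^\times} \psi\!\left( \bigl(-n_0 - v_1^{-1}\varpi^{t+c(\omega)} - \varpi\bigr) \varpi^i y \right) d^\times y,$$
with $N := \max\{-2,\, -t - 2c(\omega)\}$. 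The usual splitting into a main and an error term extracts $L(\chi_1^{-1}\chi\lvert\cdot\rvert^{1/2}, s) \in L(\Pi, s)A[q^s, q^{-s}]$ plus a finite sum of Gauss sums, whose only $\tfrac{1}{q-1}$-denominator occurs when $v\bigl(-n_0 - v_1^{-1}\varpi^{t+c(\omega)} - \varpi\bigr) = -i - 1$ for some $i$ in the relevant range. A combined matrix argument entirely analogous to the boundary step of \Cref{thm two half-ram p-s} and \Cref{thm sup x unr St} — imposing $g_{0, 1, 1}^{-1} x g_{0, 1, 1} \in K_1$ together with $g_{t, c(\omega), v_1}^{-1} n x n^{-1} g_{t, c(\omega), v_1} \in K_{c(\omega)}$ and using the precise valuation identity to propagate a congruence from the bottom-left to the bottom-right entry — then forces $a \equiv 1 \pmod \varpi$ in exactly this bad-valuation regime, which kills the residual $\tfrac{1}{q-1}$ and completes the proof.
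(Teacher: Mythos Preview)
Your approach matches the paper's, and most of the analysis is correct. There is, however, one genuine oversight in your treatment of the non-boundary cases. You claim that whenever $k_1 < c(\omega)$ or $k_2 = 0$ the sum over $i$ collapses to finitely many terms. This fails for $(k_1,k_2) = (c(\omega),0)$: there the fourth line of \Cref{prop Assing half-ramified princ series} contributes a factor $\omega^{-1}(-v_1^{-1}y)$, which cancels the $\omega(y)$ in the integrand exactly as in your boundary analysis, and the $k_2=0$ line of \Cref{lem Assing for steinberg} has no $y$-dependence. The remaining $d^\times y$-integral is therefore a Gauss sum for the \emph{trivial} character, which is nonzero for infinitely many $i$. (In \Cref{thm two half-ram p-s} the analogous case collapses because the surviving character is the ramified $\omega_2$; here the Steinberg central character is unramified, so no such collapse occurs.)

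The fix is routine and uses only machinery you already invoke: the resulting sum is an $A$-multiple of
\[
\sum_{i\geq \max\{-1,\,-t-2c(\omega)\}} (\chi_1^{-1}\chi)(\varpi)^i\, q^{-i(s+1/2)} \int_{\mathcal{O}^\times}\psi\bigl((-n_0 - v_1^{-1}\varpi^{t+c(\omega)})\varpi^i y\bigr)\, d^\times y,
\]
and the same splitting trick extracts $L(\chi_1^{-1}\chi|\cdot|^{1/2},s)=L(\Pi,s)$ plus a finite error term in $\tfrac{1}{q-1}A[q^s,q^{-s}]$. Since $k_2=0$, your $g_{0,0,1}$-argument already shows the volume factor absorbs the $\tfrac{1}{q-1}$. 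So the gap is local and easily patched, but your blanket ``collapse'' claim for non-boundary cases is not correct as stated.
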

\begin{proof}
    The $L$-factor in this case is given by $L(\chi\chi_1^{-1},s)$. The proof is extremely similar to that of \Cref{thm two half-ram p-s}, and is in fact simpler. The argument for the boundary case $(k_1,k_2)=(c(\pi_1),1)$ is identical.
\end{proof}

\subsubsection{One unramified principal-series representation and one fully-ramified principal-series representation or ramified twist of the Steinberg representation}
In this section we consider $\pi_1\times\pi_2$ where one of the representations is an unramified principal-series representation and the other one is either a fully-ramified principal-series representation or a ramified twist of the Steinberg representation. 
\begin{prop}\label{thm unr p-s x fully ram p-s}
    Let $\pi_1=I(\chi_1\chi,\chi_2\chi^{-1})$ be a fully-ramified unitary principal-series representation with $\chi_1,\chi_2$ distinct, $\chi_i(\varpi)=1$, and let $\pi_2$ be an unramified principal-series representation. Set $\Pi:=\pi_1\times \pi_2$, let $\tau:=\max\{c(\chi_1),c(\chi_2)\}$, $\nu:=q-1$ and $A\subseteq\mathbf{C}$ be a $\mathbf{Z}[q^{-1/2},\mu_{\nu q^\tau}]$-algebra containing $\chi,\omega_{\pi_2}$ and the Spherical Hecke eigenvalues of $\pi_2$. Let $(\phi,g_1,g_2)\in\mathcal{S}(F^2)\times G(F)$ be a $\Pi$-integral datum. Then, \eqref{eq: Problem} holds.
\end{prop}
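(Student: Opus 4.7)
The plan is to adapt the case-by-case template established in \Cref{thm sup. x unr. p-s} and \Cref{thm sup + fully ram p-s}. A preliminary observation is that here $L(\Pi,s)\equiv 1$: writing $\pi_2=I(\mu_1,\mu_2)$ with the $\mu_j$ unramified, each twist $\pi_1\otimes\mu_j=I(\chi_1\chi\mu_j,\chi_2\chi^{-1}\mu_j)$ is a principal series with \emph{both} inducing characters ramified, so the Dirichlet factors appearing in \eqref{eq: gl2 l-factor} are trivial. Consequently it suffices to show $\vol_{P(F)}(P(F)\cap\gamma K_\Pi\gamma^{-1})^{-1}\cdot\mathcal{I}_{\Pi,\delta_P}^\mathrm{new}(\gamma W_\Pi^\mathrm{new};s)\in A[q^s,q^{-s}]$ for every representative $\gamma\in P(F)\backslash G(F)/K_\Pi$. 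I would take $\gamma=(z_1g_{t,k_1,v_1},z_2n)$ with $z_i\in Z(F)$, $n=\left[\begin{smallmatrix}1 & n_0\\ & 1\end{smallmatrix}\right]\in N(F)$, $t\in\mathbf{Z}$, $0\leq k_1\leq c(\pi_1)$ and $v_1\in\mathcal{O}^\times$; expanding the spherical vector of $\pi_2$ via its Satake parameters $\alpha,\beta$ exactly as in Case~1 of \Cref{thm sup. x unr. p-s} and setting $C:=\omega_{\pi_1}(z_1)\omega_{\pi_2}(z_2)\in A$, one arrives at
$$\mathcal{I}_{\Pi}^\mathrm{new}(\gamma W_\Pi^\mathrm{new};s)=C\sum_{i\geq 0}q^{-i/2}\mathrm{Sch}_i(\alpha,\beta)q^{-i(s-1)}\int_{\mathcal{O}^\times}\omega_{\pi_1}(y)\psi(-n_0\varpi^iy)W_{\pi_1}^\mathrm{new}(g_{i+t,k_1,v_1y^{-1}})\,d^\times y.$$
The values of $W_{\pi_1}^\mathrm{new}$ at $g_{i+t,k_1,v_1y^{-1}}$ are then read off from \cite[Lemma~3.6]{Assing_2018} (in its corrected form \cite[Lemma~3.3.9]{Assing_2019}), and the analysis splits on $k_1$.

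For every non-boundary $k_1<c(\pi_1)$ (including $k_1=0$), the support restrictions in Assing's lemma collapse the sum to finitely many terms. Each surviving summand is processed exactly as in \Cref{thm sup + fully ram p-s}: the generalized $K(\chi_1^{-1}\otimes\chi_2^{-1},\,\cdot\,,\,\cdot)$-functions are unwound by a double change of variables on the inner $\mathcal{O}^\times$-integrations, then translated from $d^\times x_i$ to $dx_i$, and finally discretised using that the integrand is locally constant modulo a sufficiently deep $1+\varpi^M\mathcal{O}$. The outcome is a finite $A[q^s,q^{-s}]$-linear combination of standard (partial) Gauss sums, each in $\tfrac{1}{q-1}A$ by \Cref{sec Gauss sums}. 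The compensating factor $q-1$ is produced by $\vol_{P(F)}(P(F)\cap\gamma K_\Pi\gamma^{-1})^{-1}$: the subgroup computation from Case~2 of \Cref{thm sup. x unr. p-s} — which only uses $0<k_1<c(\pi_1)$ — makes it an integral multiple of $q-1$, and for $k_1=0$ a direct inspection of $g_{t,0,v_1}^{-1}xg_{t,0,v_1}\in K_{c(\pi_1)}$ (using $c(\pi_1)\geq 1$) yields the same conclusion.

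The boundary case $k_1=c(\pi_1)$ is the one I expect to be the genuine obstacle. In this regime, Assing's lemma supplies an explicit boundary formula for $W_{\pi_1}^\mathrm{new}(g_{i+t,c(\pi_1),v_1y^{-1}})$ of the shape $\omega_{\pi_1}^{-1}(-v_1^{-1}y)\psi(-v_1^{-1}y\varpi^{i+t+c(\pi_1)})$ times an explicit $A$-valued factor depending on $\chi,t,i,v_1$, valid for $i\geq i_1:=\max\{0,-t-2c(\pi_1)\}$, and the sum stays infinite. Performing the standard splitting of \Cref{thm tamely ram p.s + unr p.s} produces an $L$-factor contribution — which, because $L(\Pi,s)=1$, is automatically an element of $A[q^s,q^{-s}]$ — plus a finite error sum of Gauss sums $\int_{\mathcal{O}^\times}\psi\bigl((-n_0-v_1^{-1}\varpi^{t+c(\pi_1)})\varpi^iy\bigr)\,d^\times y$ indexed over $i_1\leq i\leq -v(n_0+v_1^{-1}\varpi^{t+c(\pi_1)})-1$. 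Under the nonvanishing condition $-v(n_0+v_1^{-1}\varpi^{t+c(\pi_1)})\geq i_1+1$, the potential factor $\tfrac{1}{q-1}$ in the error term is killed by the volume factor via the matrix-entry analysis from the end of Case~4 of \Cref{thm tamely ram p.s + unr p.s}: inspecting $g_{t,c(\pi_1),v_1}^{-1}nxn^{-1}g_{t,c(\pi_1),v_1}\in K_{c(\pi_1)}$ for $x=\left[\begin{smallmatrix}a & b\\ & 1\end{smallmatrix}\right]\in P(F)\cap\gamma K_\Pi\gamma^{-1}$, the valuation hypothesis forces $a\equiv 1\pmod{\varpi}$, so $\vol_{P(F)}(P(F)\cap\gamma K_\Pi\gamma^{-1})^{-1}$ is an integral multiple of $q-1$. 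This closes the boundary case and completes the proof.
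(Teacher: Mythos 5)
Your overall architecture matches the paper's: reduction to the terms $\vol_{P(F)}(P(F)\cap\gamma K_\Pi\gamma^{-1})^{-1}\mathcal{I}_{\Pi,\delta_P}^\mathrm{new}(\gamma W_\Pi^\mathrm{new};s)$ on representatives $(z_1g_{t,k_1,v_1},z_2n)$, expansion of the spherical vector of $\pi_2$ via Schur polynomials, input from \cite[Lemma $3.6$]{Assing_2018}, reduction of the generalized $K$-functions to (partial) Gauss sums lying in $\tfrac{1}{q-1}A$, and cancellation of the $\tfrac{1}{q-1}$ by the volume factor; the observation $L(\Pi,s)=1$ is also correct. One imprecision in the non-boundary range: for $k_1\in\{c(\chi_1),c(\chi_2)\}$ the sum over $i$ is \emph{not} cut down to finitely many terms by support restrictions on the Whittaker values alone — the values are nonvanishing for an infinite range of $i$ there, and finiteness comes only because the resulting Gauss sums attached to the ramified characters $\chi_1,\chi_2$ vanish for all but one $i$ (this is exactly how the paper treats \eqref{eq: 53}); the tools you invoke still suffice, so this is cosmetic.

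The genuine problem is your boundary case $k_1=c(\pi_1)$. You assert that $W_{\pi_1}^\mathrm{new}(g_{i+t,c(\pi_1),v_1y^{-1}})$ is given by $\omega_{\pi_1}^{-1}(-v_1^{-1}y)\psi(-v_1^{-1}y\varpi^{i+t+c(\pi_1)})$ times an $A$-valued factor for \emph{all} $i\geq\max\{0,-t-2c(\pi_1)\}$, so that "the sum stays infinite", and you then dismiss the split-off tail on the grounds that, since $L(\Pi,s)=1$, the "$L$-factor contribution" is automatically in $A[q^s,q^{-s}]$. That is the boundary behaviour of a \emph{half-ramified} principal series (or a twist of Steinberg), not of a fully-ramified one, and the ensuing logic does not close: with the formula you quote, the tail would be $\sum_{i}\mathrm{Sch}_i(\alpha,\beta)\,u^i q^{-i(s+\frac12)}$ (times a unit $u$ and a trivial-character Gauss sum equal to $1$ for large $i$), a rational function with denominator of the form $(1-\alpha uX)(1-\beta uX)$ in $X=q^{-s}$; since $L(\Pi,s)=1$ there is nothing to absorb these poles, so "automatically a Laurent polynomial" is a non sequitur. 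What actually happens — and what the paper uses — is that because both $\chi_1\chi$ and $\chi_2\chi^{-1}$ are ramified, $L(\pi_1,s)=1$ and the Kirillov new vector of $\pi_1$ is supported on $\mathcal{O}^\times$, so at $k_1=c(\pi_1)$ the values vanish unless $i+t=-2c(\pi_1)$ (this is the content of the relevant entry of \cite[Lemma $3.6$]{Assing_2018}); hence \eqref{eq: 51} collapses to an $A[q^s,q^{-s}]$-multiple of the single Gauss sum $\int_{\mathcal{O}^\times}\psi((-n_0-v_1^{-1}\varpi^{t+c(\pi_1)})\varpi^{-t-2c(\pi_1)}y)\,d^\times y$. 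Once this is corrected, your closing step coincides with the paper's: the $\tfrac{1}{q-1}$ occurs precisely when $v(n_0+v_1^{-1}\varpi^{t+c(\pi_1)})=t+2c(\pi_1)-1$, and it is killed by the matrix-entry/volume argument from the end of \Cref{thm tamely ram p.s + unr p.s}.
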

\begin{proof}
    In this case, the $L$-factor $L(\Pi,s)$ is identically equal to $1$. We work with coset representatives 
     $\gamma:=(z_1g_{t,k,v},z_2n),\ \ z_i\in Z(F),n\in N(F),t\in\mathbf{Z}, 0\leq k\leq c(\pi_1), v\in\mathcal{O}^\times$
     and the starting expression 
     \begin{align}\label{eq: 51}
         \mathcal{I}_{\Pi}^\mathrm{new}(\gamma;s)&{=C\cdot \sum_{i\in\mathbf{Z}_{\geq 0}}q^{-\tfrac{i}{2}}\mathrm{Sch}_i(\alpha,\beta)q^{-i(s-1)}\int_{\mathcal{O}^\times}\psi(-n_0\varpi^i y)\omega_{\pi_1}(y) W_{\pi_1}^\mathrm{new}(g_{i+t,k,vy^{-1}})\ d^\times y}
     \end{align}
     The main input towards studying this, is once again \cite[Lemma $3.6$]{Assing_2018}. We start by noting that in the boundary case $k=c(\pi_1)$, \eqref{eq: 51} collapses to an $A[q^s,q^{-s}]$-multiple of
     $$\int_{\mathcal{O}^\times}\psi((-n_0-v^{-1}\varpi^{t+c(\pi_1)})\varpi^{-t-2c(\pi_1)}y)\ d^\times y.$$
     This \textcolor{black}{$A[q^s,q^{-s}]$-multiple} is non-zero only if $-t-2c(\pi_1)\geq 0$. This integral is always contained in $\tfrac{1}{q-1}A$. It is not contained in $A$ itself if and only if $v(-n_0-v^{-1}\varpi^{t+c(\pi_1)})=t+2c(\pi_1)-1$. In this case, the $\tfrac{1}{q-1}$ factor is killed by the corresponding volume factor in exactly the same way as in the very last part of \Cref{thm tamely ram p.s + unr p.s}. The non-boundary cases, i.e. $k\neq c(\pi_1)$ are again familiar, and they fall within the cases we've dealt with before. We give details for the different types of cases that could arise. Recall that for these non-boundary cases, the corresponding volume factors $\vol_{P(F)}(P(F)\cap \gamma K_\Pi\gamma)^{-1}$ are always integral multiples of $q-1$, so we shall not worry about them. If $k=0$, then \eqref{eq: 51} collapses to an $A$-multiple of
     $$\epsilon(\tfrac{1}{2},\pi_1)\int_{\mathcal{O}^\times}\omega_{\pi_1}(y)\psi(-n_0\varpi^{-t-c(\pi_1)}y)\ d^\times y$$
     and since $\epsilon(\tfrac{1}{2},\pi_1)=\epsilon(\tfrac{1}{2},\chi_1\chi)\epsilon(\tfrac{1}{2},\chi_2\chi^{-1})$, this whole thing is an element of $\tfrac{1}{q-1}A$.\\
     If $k\notin \{0,c(\pi_1),c(\pi_2)\}$ then let's say for $c(\chi_1)<k<c(\pi_1)$ (the other two cases showing up in \cite[Lemma $3.6$]{Assing_2018} for  $k\notin \{0,c(\pi_1),c(\pi_2)\}$ are identical up to numerology), \eqref{eq: 51} collapses to an $A$-multiple of
     \begin{align}\label{eq: 52}\int_{\mathcal{O}^\times}\omega_{\pi_1}(y)\psi(-n_0\varpi^{-t-2k})(q-1)^2K(\chi_1^{-1}\otimes \chi_2^{-1},(\varpi^{-k},\varpi^{-k}),vy^{-1}\varpi^{-k})\ d^\times y.\end{align}
     But we have seen how to handle expressions of this form in \Cref{thm sup + fully ram p-s}, to reduce them into a Gauss sum for $\omega_{\pi_1}$ , which always lies in $\tfrac{1}{q-1}A$. \\
     If $k=c(\chi_i)\neq c(\chi_j)$ for $\{i,j\}=\{1,2\}$, then four cases arise, two of which are in the same spirit as \eqref{eq: 52}, so we won't discuss them further. The other two are identical between them so we give the details for $k=c(\chi_1)\neq c(\chi_2)$. In this case, \eqref{eq: 52} is made up a finite $A$-linear sum of expressions like \eqref{eq: 52}, summed up with an $A[q^s,q^{-s}]$-multiple of
     \begin{align}\label{eq: 53}
         \sum_{i\geq \max\{0,-t-c(\pi_1)\}}\mathrm{Sch}_i(\alpha,\beta)q^{-is}\int_{\mathcal{O}^\times}\omega_{\pi_1}(y)\psi(-n_0\varpi^{i}y)(q-1)^2\mathscr{G}(\chi_2^{-1}\chi_1,\varpi^{-c(\chi_1)})\mathscr{G}(\chi_1^{-1},vy^{-1}\varpi^{-c(\chi_1)})\ d^\times y.
     \end{align}
    It is clear that $(q-1)\mathscr{G}(\chi_1,vy^{-1}\varpi^{-c(\chi_1)})$ is an $A$-multiple of $\chi_1(y^{-1})$, and $(q-1)\mathscr{G}(\chi_2^{-1}\chi_1,\varpi^{-c(\chi_1)})$ is always an element of $A$ (possibly zero). Thus \eqref{eq: 53} becomes an $A$-multiple of 
    $$ \sum_{i\geq \max\{0,-t-c(\pi_1)\}}\mathrm{Sch}_i(\alpha,\beta)q^{-is}\int_{\mathcal{O}^\times}\chi_2(y)\psi(-n_0\varpi^iy)\ d^\times y.$$
    Since $\chi_2$ is ramified, this infinite sum collapses to an $A$-multiple of a single Gauss sum for $\chi_2$, and hence an element of $\tfrac{1}{q-1}A$. \\
    Finally, if $k=c(\chi_1)=c(\chi_2)$, again looking at the long explicit formulas in \cite[Lemma $3.6$]{Assing_2018} regarding the behavior of the new vector, we see that \eqref{eq: 51} is made up of a finite $A[q^s,q^{-s}]$-linear combination of integrals we've dealt with before, summed 
 with an $A$-multiple of
 \begin{align*} \sum_{i\geq \max\{0,-t-2\}}\mathrm{Sch}_i(\alpha,\beta)q^{-is}\chi(\varpi^i)&\int_{\mathcal{O}^\times}\chi_2(y)\psi(-n_0\varpi^iy)\ d^\times y \\
 &+  \sum_{i\geq \max\{0,-t-2\}}\mathrm{Sch}_i(\alpha,\beta)q^{-is}\chi^{-1}(\varpi^i)\int_{\mathcal{O}^\times}\chi_1(y)\psi(-n_0\varpi^iy)\ d^\times y.\end{align*}
 However, since both $\chi_1$ and $\chi_2$ are ramified, both infinite sums are actually finite, and made up of $A[q^s,q^{-s}]$-multiples of a single Gauss sum (for $\chi_2$ and $\chi_1$ respectively) contained in $\tfrac{1}{q-1}A$.
\end{proof}
Once again, \Cref{thm unr p-s x fully ram p-s} also holds for $\chi_1=\chi_2$ and the proof is extremely similar. One uses \cite[Lemma $3.5$]{Assing_2018} instead of \cite[Lemma $3.6$]{Assing_2018}. The $S$-functions appearing are handled as in \Cref{thm sup x ram St}. The $L$-factor in this case is again identically equal to $1$, and infinite sums collapse in the same way due to ramification. 

\begin{prop}\label{thm unr p-s x ram St}
    Let $\pi_1=\mathrm{St}_\chi$ for some ramified unitary character $\chi$, with $\chi(\varpi)=1$, and let $\pi_2$ be an unramified principal-series representation. Set $\Pi:=\pi_1\times \pi_2$, $\tau:=c(\chi)$ and $\nu:=q-1$. Let $A\subseteq\mathbf{C}$ be a $\mathbf{Z}[q^{-1},\mu_{\nu q^\tau}]$-algebra containing $\omega_{\pi_2}$, and the spherical Hecke eigenvalues of $\pi_2$. Let $(\phi,g_1,g_2)\in\mathcal{S}(F^2)\times G(F)$ be a $\Pi$-integral datum. Then, \eqref{eq: Problem} holds.
\end{prop}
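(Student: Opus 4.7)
The plan is to adapt the case-by-case template used in the preceding propositions. First, I would identify the $L$-factor: writing $\pi_2 = I(\chi_1, \mu_1)$ with $\chi_1, \mu_1$ unramified, \eqref{eq: gl2 l-factor} gives $L(\Pi, s) = L(\chi \chi_1, s) L(\chi \mu_1, s) = 1$ because both $\chi\chi_1$ and $\chi\mu_1$ are ramified. By the reduction diagram of \Cref{sec reduction}, it is therefore enough to show
$$\vol_{P(F)}(P(F) \cap \gamma K_\Pi \gamma^{-1})^{-1} \cdot \mathcal{I}_{\Pi, \delta_P}^{\mathrm{new}}(\gamma W_\Pi^{\mathrm{new}}; s) \in A[q^s, q^{-s}]$$
for every $\gamma \in P(F) \backslash G(F)/K_\Pi$, which by \Cref{lem general coset decomp} one may take to be of the form $\gamma = (z_1 g_{t, k, v}, z_2 n)$ with $z_i \in Z(F)$, $n = \left[\begin{smallmatrix} 1 & n_0 \\ & 1 \end{smallmatrix}\right] \in N(F)$, $t \in \mathbf{Z}$, $0 \le k \le c(\pi_1) = 2c(\chi)$, and $v \in \mathcal{O}^\times$.

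Next, combining Shintani's formula for the spherical vector of $\pi_2$ with the torus-conjugation identity used throughout the preceding case analyses, I would expand the period as
$$\mathcal{I}_{\Pi, \delta_P}^{\mathrm{new}}(\gamma W_\Pi^{\mathrm{new}}; s) = C \sum_{i \ge 0} q^{-i/2} \mathrm{Sch}_i(\alpha, \beta) q^{-i(s-1)} \int_{\mathcal{O}^\times} \chi^2(y) \psi(-n_0 y \varpi^i) W_{\mathrm{St}_\chi}^{\mathrm{new}}(g_{t + i, k, v y^{-1}}) \, d^\times y,$$
with $C := \omega_\Pi(z_1, z_2) \in A$ and $\alpha, \beta$ the Satake parameters of $\pi_2$. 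Noting that $q^{-i/2}\mathrm{Sch}_i(\alpha, \beta) \in A$ by hypothesis on $A$, the key technical input then becomes \cite[Lemma 3.3]{Assing_2018} (equivalently, \cite[Lemma 3.3.4]{Assing_2019}), which pins down the support and explicit values of $W_{\mathrm{St}_\chi}^{\mathrm{new}}$ on each $g_{t + i, k, v y^{-1}}$.

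In every non-boundary case $k < 2c(\chi)$, Assing's formula forces the outer sum to collapse to finitely many terms, and each surviving integral reduces to a Gauss sum, a partial Gauss sum, or (at $k = c(\chi)$) an $S$-function for $\chi$, $\chi^2$, or the trivial character; the $S$-functions are handled exactly as in the last paragraph of \Cref{thm sup x ram St}. By \Cref{sec Gauss sums} each such expression lies in $\frac{1}{q-1}A$, and the spurious factor of $\frac{1}{q-1}$ is then annihilated by $\vol_{P(F)}(P(F) \cap \gamma K_\Pi \gamma^{-1})^{-1}$ through the same matrix-entry argument that eliminates it in the $k < c(\pi_1)$ step of \Cref{thm sup + half-ram p-s}, \Cref{thm sup x ram St}, and \Cref{thm unr p-s x fully ram p-s}. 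The $\epsilon$-factor $\epsilon(\tfrac12,\pi_1) = \epsilon(\tfrac12,\chi)^2$, which appears at $k = 0$, is immediately in $A$ by construction.

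The main obstacle is the boundary case $k = 2c(\chi)$. Here Assing's formula forces a single value of $i$ to contribute, so the period collapses to an integral of the form
$$\int_{\mathcal{O}^\times} \psi\bigl((A n_0 + B)\varpi^{-2c(\chi)} y\bigr)\, d^\times y$$
for certain $A, B$ depending on $t, v, c(\chi)$, and this yields a spurious $\frac{1}{q-1}$ precisely when the valuation of $A n_0 + B$ reaches $2c(\chi) - 1$. To offset it I would show that $\vol_{P(F)}(P(F) \cap \gamma K_\Pi \gamma^{-1})^{-1}$ is always a multiple of $q - 1$ in this borderline regime, by imposing the congruence $g_{t, 2c(\chi), v}^{-1} n x n^{-1} g_{t, 2c(\chi), v} \in K_{2c(\chi)}$ on a generic $x = \left[\begin{smallmatrix} a & b \\ & 1 \end{smallmatrix}\right] \in P(F) \cap \gamma K_\Pi \gamma^{-1}$ and reading off $a \equiv 1 \pmod{\varpi}$ from its bottom-row entries, exactly as in the boundary analysis of \Cref{thm sup + half-ram p-s}, with $c(\pi_1)$ there replaced by $2c(\chi)$ and all supercuspidal-specific factors discarded.
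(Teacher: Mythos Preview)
Your proposal is correct and follows essentially the same route as the paper's (very brief) proof, which simply points back to \Cref{thm unr p-s x fully ram p-s} and \Cref{thm sup x ram St}. One clarification worth making explicit: at $k = c(\chi)$ Assing's formula does \emph{not} give $W_{\mathrm{St}_\chi}^{\mathrm{new}}(g_{t+i,k,\,\cdot\,})$ finite support in $i$ --- the paper flags this (``where the new-vector does not have `finite' support'') --- so the collapse of the outer sum is not a support statement but comes from the fact that the resulting $\mathcal{O}^\times$-integral is a Gauss sum for the \emph{ramified} character $\chi$, which is nonzero for a single value of $i$. Your boundary reference to \Cref{thm sup + half-ram p-s} also works, though the structurally closer analogue (one ramified factor, one spherical) is Case~4 of \Cref{thm tamely ram p.s + unr p.s} or the boundary step of \Cref{thm unr p-s x fully ram p-s}.
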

\begin{proof}
    The $L$-factor in this case is again identically equal to $1$. Looking at \cite[Lemma $3.3$]{Assing_2018} once more, it is not hard to see that the proof in this case is extremely similar to \Cref{thm unr p-s x fully ram p-s}, and is in fact simpler since there are fewer cases to check. The boundary case $k=c(\pi_1)$ is exactly the same. The case $k=c(\chi)$, where the new vector does not have ``finite'' support, collapses in the same manner as in \Cref{thm unr p-s x fully ram p-s}, since $\chi$ is ramified. The $S$-function appearing in this case is handled in exactly the same manner as \Cref{thm sup x ram St}.
\end{proof}
\subsubsection{One unramified twist of the Steinberg representation and one fully-ramified principal-series representation or ramified twist of the Steinberg representation}

In this section we deal with $\pi_1\times \pi_2$ where one representation is an unramified twist of the Steinberg representation, and the other one is either a fully-ramified principal-series representation or a ramified twist of the Steinberg representation.

\begin{prop}
    Let $\pi_1=I(\chi_1\chi_0,\chi_2\chi_0^{-1})$ be a fully-ramified unitary principal-series representation with $\chi_i(\varpi)=1$ \emph{(}resp. $\pi_1=\mathrm{St}_\chi$ be a unitary ramified twist of the Steinberg representation with $\chi(\varpi)=1$\emph{)} and let $\pi_2=\mathrm{St}_\mu$ for unramified $\mu$. Set $\Pi:=\pi_1\times \pi_2$ and let $\tau:=\max\{c(\chi_1),c(\chi_2)\}$ \emph{(}resp. $\tau:=c(\chi)$\emph{)} and $\nu:=q-1$. Let $A\subseteq\mathbf{C}$ be a $\mathbf{Z}[q^{-1/2},\mu_{\nu q^\tau}]$-algebra containing $\chi_0$ and $\mu$ \emph{(}resp. $\mu$\emph{)}. Let $(\phi,g_1,g_2)\in\mathcal{S}(F^2)\times G(F)$ be a $\Pi$-integral datum. Then, $\eqref{eq: Problem}$ holds.
\end{prop}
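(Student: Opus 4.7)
The plan is to follow the same template as the two closely related propositions \Cref{thm sup x ram St} and \Cref{thm sup + fully ram p-s}, exploiting the fact that $\pi_2 = \mathrm{St}_\mu$ has conductor $1$, so that $k_2 \in \{0,1\}$ only, and its new-vector is given by the short formula of \Cref{lem Assing for steinberg}. The first observation is that in both variants of the hypothesis, the $L$-factor $L(\Pi,s)$ is identically $1$: the character-twisted Steinberg components $\mathrm{St}_{\mu \chi_1 \chi_0}$, $\mathrm{St}_{\mu \chi_2 \chi_0^{-1}}$ (respectively $\mathrm{St}_{\mu\chi}$) all have ramified ``dominant character'', so the corresponding Dirichlet $L$-factors are trivial by \eqref{eq: gl2 l-factor}. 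In particular we need only show that the zeta integral lies in $A[q^s,q^{-s}]$.

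Following \Cref{sec reduction}, I would work with the coset representatives $\gamma = (z_1 g_{t,k_1,v_1}, z_2 n g_{0,k_2,1})$ with $z_i \in Z(F)$, $n = \left[\begin{smallmatrix} 1 & n_0 \\ & 1\end{smallmatrix}\right] \in N(F)$, $t \in \mathbf{Z}$, $0 \leq k_1 \leq c(\pi_1)$, and $0 \leq k_2 \leq 1$, and expand
\begin{align*}
\mathcal{I}_{\Pi,\delta_P}^{\mathrm{new}}(\gamma W_\Pi^{\mathrm{new}};s) = C \sum_{i \geq -k_2 - 1} q^{i(1-s)} \int_{\mathcal{O}^\times} (\omega_{\pi_1}\omega_{\pi_2})(y)\,\psi(-n_0 y\varpi^i)\, W_{\pi_1}^{\mathrm{new}}(g_{i+t,k_1,v_1 y^{-1}})\, W_{\mathrm{St}_\mu}^{\mathrm{new}}(g_{i,k_2,y^{-1}})\, d^\times y.
\end{align*}
For $k_2 = 0$, \Cref{lem Assing for steinberg} forces $i \geq -1$ and contributes a factor $-q^{-i-1}$, so the sum becomes a finite $A[q^s,q^{-s}]$-linear combination of integrals of exactly the shape encountered for $\pi_1$ in the proofs of \Cref{thm sup + fully ram p-s} (invoking \cite[Lemma 3.5 \& 3.6]{Assing_2018}) or \Cref{thm sup x ram St} (invoking \cite[Lemma 3.3]{Assing_2018}). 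For $k_2 = 1$ we get $i \geq -2$, contribution $q^{-i-2}\psi(-\varpi^{i+1} y^{-1})$, and after the change of variables $y \rightsquigarrow y$ this just replaces $n_0$ by $n_0 - \varpi$ inside the very same type of integrals. So in all non-boundary cases (either $k_1 \neq c(\pi_1)$ or $k_2 \neq 1$) the resulting expressions lie in $\tfrac{1}{q-1} A[q^s,q^{-s}]$, and the usual argument (an element $x = \left[\begin{smallmatrix} a & b \\ & 1\end{smallmatrix}\right] \in \mathcal{U} := P(F) \cap \gamma K_\Pi \gamma^{-1}$ is forced into $\left[\begin{smallmatrix} 1+\varpi\mathcal{O} & \varpi\mathcal{O} \\ & 1\end{smallmatrix}\right]$ either via $k_1 < c(\pi_1)$ as in the second case of \Cref{thm sup. x unr. p-s}, or via $k_2 = 0$ and conjugation by $g_{0,0,1}$ as in the first case of \Cref{thm steinberg}) shows that the inverse volume factor is an integral multiple of $q-1$, cancelling the $\tfrac{1}{q-1}$.

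The only genuinely separate work is the boundary case $(k_1, k_2) = (c(\pi_1), 1)$, where the Whittaker function of $\pi_1$ is supported at the single value $i = t + 2 c(\pi_1)$ of the summation index and the integrals do not collapse through the $k_1 < c(\pi_1)$ volume argument. Here the analysis is a verbatim adaptation of the end of the proof of \Cref{thm sup x ram St}: the remaining expression reduces to an $A$-multiple of a single integral of the form
\begin{align*}
\int_{\mathcal{O}^\times} \psi\bigl((-v_1 n_0 \varpi^{-t-c(\pi_1)} + v_1 \varpi^{-t-c(\pi_1)+1} - 1)\varpi^{-c(\pi_1)} y\bigr)\, d^\times y,
\end{align*}
and an unwanted $\tfrac{1}{q-1}$ factor appears exactly when the valuation of the argument equals $c(\pi_1) - 1$. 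The volume factor $\vol_{P(F)}(P(F) \cap (n^{-1} g_{t,c(\pi_1),v_1}, g_{0,1,1})K_\Pi(n^{-1} g_{t,c(\pi_1),v_1}, g_{0,1,1})^{-1})^{-1}$ is studied by imposing the pair of conditions
\begin{align*}
g_{0,1,1}^{-1} x g_{0,1,1} &= \left[\begin{smallmatrix} * & * \\ -b & a - b\varpi^{-1} \end{smallmatrix}\right] \in K_1, \\
g_{t,c(\pi_1),v_1}^{-1} n x n^{-1} g_{t,c(\pi_1),v_1} &= \left[\begin{smallmatrix} * & * \\ (an_0 - b - n_0)\varpi^{-t} & a + (an_0 - b - n_0)v_1\varpi^{-t-c(\pi_1)} \end{smallmatrix}\right] \in K_{c(\pi_1)},
\end{align*}
combining the bottom-right entries and reducing modulo $\varpi^{c(\pi_1)}$ exactly as in equations \eqref{eq: 35}--\eqref{eq: 37}. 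The bound $t + 2c(\pi_1) \leq 2$ forced by non-vanishing of the Whittaker value then shows that the precise valuation condition triggering the $\tfrac{1}{q-1}$ contribution implies $a \equiv 1 \bmod \varpi$, so the volume factor absorbs the denominator. The ramified-Steinberg variant is identical. The main subtlety is purely bookkeeping in the ramified PS case, handling the $K(\chi_1^{-1}\otimes \chi_2^{-1}, \cdots)$ and (when $\chi_1 = \chi_2$) the $S$-function contributions, but these are reduced to Gauss sums exactly as in \Cref{thm sup + fully ram p-s} and \Cref{thm sup x ram St}, so no new ideas are required.
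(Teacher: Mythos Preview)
Your proposal is correct and follows essentially the same route as the paper's proof: both note $L(\Pi,s)=1$, use the coset representatives $(z_1g_{t,k_1,v_1},z_2ng_{0,k_2,1})$ with $k_2\in\{0,1\}$, reduce the non-boundary cases to the integrals already handled in \Cref{thm unr p-s x fully ram p-s}, \Cref{thm unr p-s x ram St} (equivalently \Cref{thm sup + fully ram p-s}, \Cref{thm sup x ram St}) via \cite[Lemmas 3.3, 3.5, 3.6]{Assing_2018}, and treat the boundary case $(k_1,k_2)=(c(\pi_1),1)$ exactly as in \eqref{eq: 34}--\eqref{eq: 37}. One small slip: the single surviving summation index in the boundary case is $i=-t-2c(\pi_1)$, not $i=t+2c(\pi_1)$ (your subsequent bound $t+2c(\pi_1)\le 2$ is nonetheless stated correctly).
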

\begin{proof}
    The $L$-factor $L(\Pi,s)$ in all cases is once more identically equal to $1$. In this case, one reverts back to using the coset representatives $\gamma=(z_1g_{t,k_1,v_1},z_2ng_{0,k_2,v_2})$, where $z_i\in Z(F),n=\left[\begin{smallmatrix}
            1 & n_0\\
            & 1
        \end{smallmatrix}\right]\in N(F),t\in\mathbf{Z}, 0\leq k_i\leq c(\pi_i)$ (recall that $c(\pi_2)=1)$. The starting expressions 
        are the ones found in \textit{Case} $1$ and \textit{Case} $2$ of the proof of \Cref{thm sup x unr St}, depending on whether $k_2$ is $0$ or $1$. For the non-boundary cases, i.e. $(k_1,k_2)\neq (c(\pi_1),1)$, the treatment is identical to that of \Cref{thm unr p-s x fully ram p-s} and \Cref{thm unr p-s x ram St}. For the boundary case $(k_1,k_2)=(c(\pi_1),1)$, the starting expression (i.e. the one of \textit{Case} $2$ in \Cref{thm sup x unr St}) is essentially \eqref{eq: 34}, and thus balanced with the volume factor in the same exact way. The main input for this is \cite[Lemma $3.3$, Lemma $3.5$ \& Lemma $3.6$]{Assing_2018}.
        \end{proof}

\subsubsection{One half-ramified principal-series representation and one fully-ramified principal-series representation or ramified twist of the Steinberg representation}

In this section we settle the case $\pi_1\times \pi_2$, where one of the representations is a half-ramified principal-series representation and the other one is either a fully-ramified principal-series representation or a ramified twist of the Steinberg representation.

\begin{prop}\label{thm full ram p-s x half ram p-s}
    Let $\pi_1=I(\chi_1\chi,\chi_2\chi^{-1})$ be a fully-ramified unitary principal-series representation with $\chi_i(\varpi)=1$ and $\pi_2=I(\mu\omega,\mu^{-1})$ be a half-ramified unitary principal-series representation, with $\omega(\varpi)=1$. Set $\Pi:=\pi_1\times\pi_2$, let $\tau:=\max\{c(\chi_1),c(\chi_2),c(\omega)\}$, $\nu:=q-1$ and $A\subseteq\mathbf{C}$ be a $\mathbf{Z}[q^{-1/2},\mu_{\nu q^\tau}]$-algebra containing $\chi$ and $\mu$. Let $(\phi,g_1,g_2)\in \mathcal{S}(F^2)\times G(F)$ be a $\Pi$-integral datum. Then, \eqref{eq: Problem} holds.
\end{prop}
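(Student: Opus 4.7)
The plan is to invoke the reduction of \Cref{sec reduction} and analyze the rational functions
$$\vol_{P(F)}(P(F)\cap \gamma K_\Pi\gamma^{-1})^{-1}\cdot \mathcal{I}_{\Pi,\delta_P}^\mathrm{new}(\gamma W_\Pi^\mathrm{new};s)$$
on coset representatives $\gamma=(z_1 g_{t,k_1,v_1},z_2 n g_{0,k_2,v_2})$ coming from \Cref{lem general coset decomp}, with $z_i\in Z(F)$, $n=\left[\begin{smallmatrix}1 & n_0\\ & 1\end{smallmatrix}\right]\in N(F)$, $t\in\mathbf{Z}$, $0\leq k_i\leq c(\pi_i)$, and $v_i\in\mathcal{O}^\times$. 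First, by the Rankin-Selberg product formula applied to $\pi_2=I(\mu\omega,\mu^{-1})$, one has $L(\Pi,s)=L(\pi_1\otimes\mu\omega,s)L(\pi_1\otimes\mu^{-1},s)$; the second factor is trivial since $\pi_1$ is fully-ramified and $\mu^{-1}$ unramified, while the first is a product of at most two geometric series $(1-\alpha q^{-s})^{-1}$ with $\alpha\in\{\chi\mu(\varpi),\chi^{-1}\mu(\varpi)\}\subseteq A$, so $L(\Pi,s)^{-1}\in A[q^s,q^{-s}]$. Setting $C:=\omega_\Pi(z_1,z_2)\in A$, the usual unfolding then yields the finite sum
$$\mathcal{I}_{\Pi,\delta_P}^\mathrm{new}(\gamma W_\Pi^\mathrm{new};s)=C\sum_{i\geq -k_2-c(\pi_2)}q^{i(1-s)}\int_{\mathcal{O}^\times}(\omega_{\pi_1}\omega_{\pi_2})(y)\psi(-n_0 y\varpi^i)W_{\pi_1}^\mathrm{new}(g_{i+t,k_1,v_1 y^{-1}})W_{\pi_2}^\mathrm{new}(g_{i,k_2,v_2 y^{-1}})\ d^\times y,$$
with $(\omega_{\pi_1}\omega_{\pi_2})(y)=\chi_1\chi_2\omega(y)$.

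From here I would split on $(k_1,k_2)$, using \cite[Lemma $3.6$]{Assing_2018} (in its corrected form \cite[Lemma $3.3.9$]{Assing_2019}) for $W_{\pi_1}^\mathrm{new}$ and \Cref{prop Assing half-ramified princ series} for $W_{\pi_2}^\mathrm{new}$, after passing to the conjugate new-vectors via \Cref{prop new vs conj. new}. In every non-boundary case, i.e.\ whenever either $k_1\neq c(\pi_1)$ or $k_2\neq c(\pi_2)$, the integrand reduces to an $A$-multiple of a partial Gauss sum for one of $\omega_{\pi_1},\omega_{\pi_2},\omega_{\pi_1}\omega_{\pi_2}$ (or one of their trivializations on a subgroup $1+\varpi^\ell\mathcal{O}$ with $\ell\leq\tau$), possibly convolved with a $K$-function attached to the pair $(\chi_1^{-1},\chi_2^{-1})$. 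Any such $K$-function collapses to a finite $A$-linear combination of Gauss sums via the change-of-variables argument of \Cref{thm sup + fully ram p-s}, and every Gauss sum that appears lies in $\tfrac{1}{q-1}A$ by \Cref{sec Gauss sums}. The offending $\tfrac{1}{q-1}$ factor is absorbed by the volume factor, since whenever $k_1<c(\pi_1)$ or $k_2<c(\pi_2)$ the familiar short calculation from \Cref{thm tamely ram p.s + unr p.s} and \Cref{thm sup + half-ram p-s} shows $P(F)\cap \gamma K_\Pi\gamma^{-1}\subseteq\left[\begin{smallmatrix}1+\varpi\mathcal{O} & \varpi\mathcal{O}\\ & 1\end{smallmatrix}\right]$, a subgroup of $P(\mathcal{O})$ of index divisible by $q-1$.

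The main obstacle is the boundary case $(k_1,k_2)=(c(\pi_1),c(\pi_2))$. Here Assing's formula collapses the sum to the single index $i=-t-2c(\pi_1)$, which is nonzero only when $-t-2c(\pi_1)\geq -2c(\pi_2)$, and produces (up to an $A$-multiple) an integral of exactly the shape \eqref{eq: 40}:
$$\int_{\mathcal{O}^\times}\psi\bigl((-v_1 n_0\varpi^{-t-c(\pi_1)}+v_1 v_2^{-1}\varpi^{-t-c(\pi_1)+c(\pi_2)}-1)\varpi^{-c(\pi_1)}y\bigr)\ d^\times y.$$
The accompanying volume factor is then analyzed exactly as in the final step of \Cref{thm sup + half-ram p-s}: an element $x=\left[\begin{smallmatrix}a & b\\ & 1\end{smallmatrix}\right]\in P(F)\cap \gamma K_\Pi\gamma^{-1}$ must satisfy the congruences \eqref{eq: 41} and \eqref{eq: 42}, and combining the bottom-right entries of these two relations (as in the derivation of \eqref{eq: 43}) yields
$$a\bigl(1+v_1 n_0\varpi^{-t-c(\pi_1)}-v_1 v_2^{-1}\varpi^{-t-c(\pi_1)+c(\pi_2)}\bigr)\in 1+v_1 n_0\varpi^{-t-c(\pi_1)}-v_1 v_2^{-1}\varpi^{-t-c(\pi_1)+c(\pi_2)}+\varpi^{c(\pi_1)}\mathcal{O}.$$
The displayed integral carries an unwanted $\tfrac{1}{q-1}$ contribution precisely when $v(1+v_1 n_0\varpi^{-t-c(\pi_1)}-v_1 v_2^{-1}\varpi^{-t-c(\pi_1)+c(\pi_2)})=c(\pi_1)-1$, and under this condition the above congruence forces $a\equiv 1\pmod\varpi$, making the volume factor an integral multiple of $q-1$ and killing the unwanted $\tfrac{1}{q-1}$. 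This settles all cases.
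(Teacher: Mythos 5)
Your setup (reduction to $\mathcal{I}_{\Pi,\delta_P}^\mathrm{new}$ on coset representatives, the formula $L(\Pi,s)=L(\chi_1\chi\mu\omega,s)L(\chi_2\chi^{-1}\mu\omega,s)$ with $L(\Pi,s)^{-1}\in A[q^s,q^{-s}]$, and the boundary case $(k_1,k_2)=(c(\pi_1),c(\pi_2))$ handled via \eqref{eq: 40}--\eqref{eq: 43} as in \Cref{thm sup + half-ram p-s}) matches the paper. But there is a genuine gap in your treatment of the non-boundary cases: you assert that whenever $k_1\neq c(\pi_1)$ or $k_2\neq c(\pi_2)$ everything collapses to a \emph{finite} $A$-linear combination of (partial) Gauss sums killed by the volume factor. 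That is false precisely in the cases $k_1\in\{c(\chi_1),c(\chi_2)\}$ with $k_2\in\{0,c(\pi_2)\}$: there the fully-ramified new-vector has infinite support in $t$ (by \cite[Lemma $3.6$]{Assing_2018}, resp.\ Lemma $3.5$ when $\chi_1=\chi_2$), so the sum over $i$ has an infinite tail. After the $K$-function reduction the tail is governed by $\int_{\mathcal{O}^\times}(\chi_2\omega)(y)\psi(-n_0y\varpi^i)\,d^\times y$ (or the analogue with $\chi_1\omega$), and when $\chi_2\omega$ (resp.\ $\chi_1\omega$) is \emph{unramified} this integral equals $1$ for all large $i$, producing a genuine geometric series, i.e.\ a pole. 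Your argument would then conclude that the normalized term is a Laurent polynomial, which is exactly wrong in the situations where $L(\Pi,s)\neq 1$; the content of the paper's proof at this point is the splitting into a finite error term plus an $L$-factor term and the identification $\sum_{i\geq 0}(\chi^{-1}\mu)(\varpi^i)q^{-is}=L(\chi_2\chi^{-1}\mu\omega,s)\in L(\Pi,s)A[q^s,q^{-s}]$ (and similarly with $\chi_1$, and with $L(\mu\chi,s)L(\mu\chi^{-1},s)=L(\Pi,s)$ together with Schur-polynomial coefficients and $S$-functions in the $\chi_1=\chi_2$ case). So the pole-matching step, which is the only place where the specific shape of $L(\Pi,s)$ is used, is missing from your proposal; without it the claim that the quotient by $L(\Pi,s)$ is a polynomial with $A$-coefficients is not established in these cases.
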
 
\begin{proof}
    In this case, the $L$-factor $L(\Pi,s)$ is given by $L(\chi_1\chi\mu\omega,s)L(\chi_2\chi^{-1}\mu\omega,s)$. We again assume without loss of generality, that $c(\chi_1)\geq c(\chi_2).$ We use the coset representatives $\gamma=(z_1g_{t,k_1,v_1},z_2ng_{0,k_2,v_2})$, where $z_i\in Z(F),n=\left[\begin{smallmatrix}
            1 & n_0\\
            & 1
        \end{smallmatrix}\right]\in N(F),t\in\mathbf{Z}, 0\leq k_i\leq c(\pi_i)$. The starting expression is as in \eqref{eq: 39}, i.e.
        \begin{align}\label{eq: 54}
            {\mathcal{I}_{\Pi}^\mathrm{new}(\gamma;s)=C\sum_{i\geq -k_2-c(\pi_2)}q^{i(1-s)}\int_{\mathcal{O}^\times}(\omega_{\pi_1}\omega)(y)\psi(-n_0y\varpi^i)W_{\pi_1}^\mathrm{new}(g_{i+t,k_1,v_1y^{-1}})W_{\pi_2}^\mathrm{new}(g_{i,k_2,v_2y^{-1}})\ d^\times y}.
        \end{align}
        Looking at \Cref{prop Assing half-ramified princ series} and \cite[Lemma $3.6$ \& Lemma $3.5$]{Assing_2018}, one sees that the boundary case $(k_1,k_2)=(c(\pi_1),c(\pi_2))$ is the same as \eqref{eq: 40} and is thus balanced with the volume factor $\vol_{P(F)}(P(F)\cap \gamma K_\Pi\gamma^{-1})^{-1}$, in the exact same way. We now deal with the non-boundary cases, i.e. $(k_1,k_2)\neq (c(\pi_1),c(\pi_2))$ where we need not worry about the volume factors since now we know they're always integral multiples of $q-1$. If $k_1\notin \{c(\chi_1),c(\chi_2)\}$ or $k_2\notin\{0,c(\pi_2)\}$, then looking at \Cref{prop Assing half-ramified princ series} and \cite[Lemma $3.5$ \& Lemma $3.6$]{Assing_2018}, we see that \eqref{eq: 54} is simply a finite $A[q^s,q^{-s}]$-linear combination of integrals we've dealt with in previous proofs. These are just (partial) Gauss sums for characters made up of $\chi_1,\chi_2$ and $\omega$. All such integrals are in $\tfrac{1}{q-1}A$.\\ 
        Now let's assume that $k_1=c(\chi_1)\neq c(\chi_2)$ (in particular $\chi_1\neq \chi_2$, thus we are in the setting of \cite[Lemma $3.6$]{Assing_2018}) and $k_2=0$. Then, \eqref{eq: 54} is made up of a finite $A[q^s,q^{-s}]$-linear combination of familiar integrals, summed with an $A$-multiple of
        $${\sum_{i}q^{-is}\chi^{-1}(\varpi^i)\mu(\varpi^i)\int_{\mathcal{O}^\times}(\omega_{\pi_1}\omega)(y)\psi(-n_0y\varpi^i)(q-1)^2\mathscr{G}(\varpi^{-c(\chi_1)},\chi_2^{-1}\chi_1)\mathscr{G}(v_1y^{-1}\varpi^{-c(\chi_1)},\chi_1^{-1})\ d^\times y}$$
        where the sum is over integers $i \geq \max\{-c(\pi_2),-c(\chi_1)\}$.
        As in a previous case, we know that $(q-1)\mathscr{G}(\varpi^{-c(\chi_1)},\chi_2^{-1}\chi_1)\in A$, and $(q-1)\mathscr{G}(v_1y^{-1}\varpi^{-c(\chi_1)},\chi_1^{-1})$ is an $A$-multiple of $\chi_1(y^{-1}).$ Thus, the above becomes
\begin{align}\label{eq: 55}{\sum_{i\geq \max\{-c(\pi_2),-c(\chi_1)\}}q^{-is}\chi^{-1}(\varpi^i)\mu(\varpi^i)\int_{\mathcal{O}^\times}(\chi_2\omega)(y)\psi(-n_0y\varpi^i)\ d^\times y.}\end{align}
If $\chi_2\omega$ is ramified, then this infinite sum collapses to a single $A[q^s,q^{-s}]$-multiple of Gauss sum contained in $\tfrac{1}{q-1}A$, hence we are done. If $\chi_2\omega$ is unramified, then applying the usual splitting trick into a finite error term and an $L$-factor term, \eqref{eq: 55} becomes a finite $A[q^s,q^{-s}]$-linear combination of Gauss sums which are contained in $\tfrac{1}{q-1}A$, summed with an $A$-multiple of 
$$\sum_{i\geq 0}(\chi^{-1}\mu)(\varpi^i)q^{-is}=L(\chi^{-1}\mu,s)=L(\chi_2\chi^{-1}\mu\omega,s)\in L(\Pi,s)A[q^s,q^{-s}]$$
and thus we are done. The case $k_1=c(\chi_2)\neq c(\chi_1)$, the treatment is identical (with $\chi_1$ and $\chi_2$ swapped). If $k_2=c(\pi_2)$, then the treatment is the same and hence \eqref{eq: 54} is again made up of a finite $A[q^s,q^{-s}]$-linear combination of familiar integrals, summed with an $A$-multiple of 
\begin{align}\label{eq: 54.1}\sum_{i\geq \max\{-2c(\pi_2),-c(\chi_1)\}}q^{-is}\chi^{-1}(\varpi^i)\mu^{-1}(\varpi^i)\int_{\mathcal{O}^\times}\chi_2(y)\psi(-n_1y\varpi^i)\ d^\times y\end{align}
but now $\chi_2$ is always ramified and hence this always collapses to a single $A[q^s,q^{-s}]$-multiple of a Gauss sum contained in $\tfrac{1}{q-1}A$.\\
The case $k_1=c(\chi_1)=c(\chi_2)$ and still $\chi_1\neq \chi_2$, is a combination of these. Firstly assume $k_2=0$, then from \cite[Lemma $3.6$]{Assing_2018} we see that \eqref{eq: 54} is given by a finite $A[q^s,q^{-s}]$-linear combination of familiar integrals (which we have treated before and all reduce to Gauss summs contained in $\tfrac{1}{q-1}A$), summed with an $A$-linear combination of the following two terms
\begin{align*}
    &\sum_{i\geq M}q^{-is}\chi^{-1}(\varpi^i)\mu(\varpi^i)\int_{\mathcal{O}^\times}(\chi_2\omega)(y)\psi(-n_0y\varpi^i)\ d^\times y\\
    &\sum_{i\geq M}q^{-is}\chi(\varpi^i)\mu(\varpi^i)\int_{\mathcal{O}^\times}(\chi_1\omega)(y)\psi(-n_0y\varpi^i)\ d^\times y
\end{align*}
for some large enough $M$.
Then both of these are treated exactly as \eqref{eq: 55}, depending on the ramification of $\chi_i\omega$. In any case, they're always contained in $L(\Pi,s)A[q^s,q^{-s}]$. If $k_2=c(\pi_2)$, then the infinite sums arrising allways collapse to finite ones and are dealt as in \eqref{eq: 54.1}.\\
Finally, we consider the case where $\chi_1=\chi_2$, which puts us in the setting of \cite[Lemma $3.5$]{Assing_2018}. Recall that we are now treating the (non-boundary) case $k_1=c(\chi_1)$. Then, if $k_2=0$, \eqref{eq: 54} is given by an element of $\tfrac{1}{q-1}A[q^s,q^{-s}]$ summed with an $A$-linear combination of the following three expressions (notice there are two $\pm$ cases in the first line below)
\begin{align*}
    &\sum_{i\geq M}q^{-is}(\mu\chi^{\pm 1})(\varpi^i)\int_{\mathcal{O}^\times}(\chi_1\omega)(y)\psi(-n_0y\varpi^i)\ d^\times y\\
    &\sum_{i\geq M}q^{-is}\mathrm{Sch}_{i}\left((\mu\chi)(\varpi),(\mu\chi^{-1})(\varpi)\right)\int_{\mathcal{O}^\times}(\chi_1\omega)(y)\psi(-n_1y\varpi^i)\ d^\times y
\end{align*}
for some appropriately large $M$, and potentially different $n_1\in F$. We note the addition of the Schur polynomials on the second line, evaluated at the Satake parameters of the unramified principal-series representation $\mu\otimes I(\chi,\chi^{-1})$. This term comes from the finite sum appearing at the end of the second line on page $14$ of 
\cite{Assing_2018}. The appearance of this Schur polynomial resembles the unramified principal-series representation case, which is justifiable since in this $\chi_1=\chi_2$ case, $\pi_1$ is indeed a ramified twist of an unramified principal-series representation. Going back to the task at hand, if $\chi_1\omega$ is ramified, everything collapses to single terms in $\tfrac{1}{q-1}A[q^s,q^{-s}]$. Whereas if $\chi_1\omega$ is unramified, then up to a linear $\tfrac{1}{q-1}A[q^s,q^{-s}]$-translate, the three expressions above, are given by $L(\mu\chi^{\pm1},s)$ and $L(\mu\chi,s)L(\mu\chi^{-1},s)=L(\Pi,s)$ respectively. In the case where $k_2=c(\pi_2)$, the approach is the same but then there are no poles.
\end{proof}
\begin{prop}
    Let $\pi_1=\mathrm{St}_\chi$ for a unitary ramified character $\chi$ with $\chi(\varpi)=1$, and let $\pi_2=I(\mu\omega,\mu^{-1})$ be a half-ramified unitary principal-series representation with $\omega(\varpi)=1$. Set $\Pi:=\pi_1\times \pi_2$, let $\tau:=\max\{c(\chi),c(\omega)\}$, $\nu:=q-1$ and $A\subseteq\mathbf{C}$ be a $\mathbf{Z}[q^{-1/2},\mu_{\nu q^\tau}]$-algebra containing $\mu$. Let $(\phi,g_1,g_2)\in\mathcal{S}(F^2)\times G(F)$ be a $\Pi$-integral datum. Then, \eqref{eq: Problem} holds.
\end{prop}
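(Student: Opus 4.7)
The plan is to mirror the proof of the preceding \Cref{thm full ram p-s x half ram p-s} line by line, replacing the inputs from \cite[Lemma $3.6$]{Assing_2018} (or \cite[Lemma $3.5$]{Assing_2018}) with those from \cite[Lemma $3.3$]{Assing_2018} which describe $W_{\mathrm{St}_\chi}^{\mathrm{new}}$ on the coset representatives $g_{t,k,v}$. We use the same type of double-coset representatives $\gamma=(z_1 g_{t,k_1,v_1}, z_2 n g_{0,k_2,v_2})$ with $0\le k_1\le c(\chi)$ and $0\le k_2\le c(\omega)$, and work with the familiar expression
\begin{align*}
\mathcal{I}_{\Pi,\delta_P}^\mathrm{new}(\gamma W_\Pi^\mathrm{new};s)=C\sum_{i\geq -k_2-c(\pi_2)}q^{i(1-s)}\int_{\mathcal{O}^\times}(\omega_{\pi_1}\omega)(y)\psi(-n_0y\varpi^i)W_{\pi_1}^\mathrm{new}(g_{i+t,k_1,v_1y^{-1}})W_{\pi_2}^\mathrm{new}(g_{i,k_2,v_2y^{-1}})\ d^\times y.
\end{align*}
Since $L(\Pi,s)^{-1}$ is a Laurent polynomial in $A[q^s,q^{-s}]$ thanks to the $L$-factor formula \eqref{eq: gl2 l-factor} and the fact that $\chi,\mu$ lie in $A$, we only need $A$-integrality of the above, divided by the corresponding inverse volume factor.

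The boundary case $(k_1,k_2)=(c(\chi),c(\omega))$ collapses to a single integral of exactly the same shape as expression \eqref{eq: 40}, and the matching computation with the volume factor $\vol_{P(F)}(P(F)\cap \gamma K_\Pi\gamma^{-1})^{-1}$ carries over verbatim: the relevant conditions \eqref{eq: 41} and \eqref{eq: 42} only depend on $k_1=c(\pi_1)$, $k_2=c(\pi_2)$ and the valuation constraint giving the unwanted $\frac{1}{q-1}$, so the chain of implications ending in $a\equiv 1\mod\varpi$ is identical. For all non-boundary cases one has $\vol_{P(F)}(P(F)\cap \gamma K_\Pi\gamma^{-1})^{-1}\in (q-1)\mathbf{Z}$ by the arguments already used repeatedly, so it suffices to show the zeta-integral expression lies in $\frac{1}{q-1}A[q^s,q^{-s}]\cdot L(\Pi,s)$.

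In each such non-boundary case, the support conditions on $W_{\mathrm{St}_\chi}^\mathrm{new}$ from \cite[Lemma $3.3$]{Assing_2018} and on $W_{\pi_2}^\mathrm{new}$ from \Cref{prop Assing half-ramified princ series} force the $y$-integral to be an $A$-multiple of a (partial) Gauss sum for one of the characters $\chi$, $\omega$, $\chi\omega$, $\chi\mu^{\pm 1}$, $\chi\mu^{\pm 1}\omega$ (built out of $\omega_{\pi_1}\omega = \chi^2\omega$ twisted by contributions from $W_{\pi_2}^\mathrm{new}$), and by \Cref{sec Gauss sums} these always lie in $\tfrac{1}{q-1}A$. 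The one subtlety is the $k_1=c(\chi)$ range where $W_{\mathrm{St}_\chi}^\mathrm{new}$ has infinite support on the $t$-direction, so infinite sums in $i$ may appear. I would handle these exactly as in the analysis of \eqref{eq: 55} and \eqref{eq: 54.1} in the proof of the previous proposition: split into a finite ``error'' sum (contributing partial Gauss sums in $\tfrac{1}{q-1}A[q^s,q^{-s}]$) plus a geometric-series tail which reconstructs the relevant $L$-factor, $L(\chi\mu\omega,s)$ or $L(\chi\mu^{-1},s)$, as an element of $L(\Pi,s)A[q^s,q^{-s}]$. If the corresponding character is actually ramified, the tail collapses to a single term and no $L$-factor is produced.

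The main obstacle is simply bookkeeping: tracking which of the characters $\chi\mu^{\pm 1}$, $\chi\mu^{\pm 1}\omega$ is unramified in each subcase determined by whether $k_2\in\{0,c(\omega)\}$ versus $0<k_2<c(\omega)$, and by the relative sizes of $c(\chi)$ and $c(\omega)$. Since the ramification of $\omega$ forces most mixed characters $\chi\mu^{\pm 1}\omega$ to be ramified automatically, the only contributions that can genuinely produce an $L$-factor are those matching a pole of $L(\pi_1\otimes \mu\omega,s)L(\pi_1\otimes \mu^{-1},s)$, which by the explicit formula \eqref{eq: gl2 l-factor} is exactly what we expect. No new technical input beyond the proof of \Cref{thm full ram p-s x half ram p-s} is required; the argument is strictly simpler because $\mathrm{St}_\chi$ has a more constrained support pattern than a fully-ramified principal-series.
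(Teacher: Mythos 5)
Your overall strategy --- reuse the coset representatives and the starting expression, feed in \cite[Lemma $3.3$]{Assing_2018} for $W_{\mathrm{St}_\chi}^{\mathrm{new}}$, reduce non-boundary cases to (partial) Gauss sums lying in $\tfrac{1}{q-1}A$, and recover the $L$-factor by the usual splitting trick in the infinite-support range --- is exactly the paper's. However, there is a concrete error in your case bookkeeping: $c(\mathrm{St}_\chi)=2c(\chi)$, not $c(\chi)$. Consequently the coset parameter $k_1$ from \Cref{lem general coset decomp} runs over $0\le k_1\le 2c(\chi)$, and your decomposition with $0\le k_1\le c(\chi)$ omits all cosets with $c(\chi)<k_1\le 2c(\chi)$; in particular it omits the true boundary case $(k_1,k_2)=(2c(\chi),c(\omega))$, which is the one that collapses to a single integral of the shape \eqref{eq: 40} and requires the matrix-entry/volume argument of \eqref{eq: 41}--\eqref{eq: 42}. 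Conversely, the pair $(k_1,k_2)=(c(\chi),c(\omega))$ that you label ``boundary'' is in fact a non-boundary case, precisely because $c(\chi)\neq c(\pi_1)=2c(\chi)$: there the inverse volume factor is an integral multiple of $q-1$ for the usual reasons, $W_{\mathrm{St}_\chi}^{\mathrm{new}}$ has infinite support in the $t$-direction, and this pair --- together with $(c(\chi),0)$ --- is exactly where a pole can appear. Your own later remark that ``the $k_1=c(\chi)$ range'' has infinite support contradicts your treatment of $(c(\chi),c(\omega))$ as an \eqref{eq: 40}-type single integral; the two regimes must be kept separate, and the \eqref{eq: 41}/\eqref{eq: 42} argument is tailored to $k_1=c(\pi_1)$ and does not apply at $k_1=c(\chi)$.

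Once the conductor is corrected, the rest goes through as you describe and matches the paper: the only potential-pole cases are $(k_1,k_2)=(c(\chi),0)$ and $(c(\chi),c(\omega))$, both non-boundary, so no $\tfrac{1}{q-1}$ issue arises there; in the first, the tail is $\sum_{i\ge 0}q^{-is}q^{-i/2}\mu(\varpi^i)=L(\mu|\cdot|^{1/2},s)$, which equals $L(\Pi,s)=L(\chi\mu\omega|\cdot|^{1/2},s)$ exactly when $\chi\omega$ is unramified (note the $|\cdot|^{1/2}$ shift coming from Steinberg, which is missing from your ``$L(\chi\mu\omega,s)$ or $L(\chi\mu^{-1},s)$''; moreover $\chi\mu^{-1}$ is always ramified here since $\chi$ is ramified and $\mu$ unramified, so that factor never contributes); in the second, the infinite sum collapses and no pole occurs.
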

\begin{proof}
    In this case, the $L$-factor is given by $L(\mu\omega\chi|\cdot|^{1/2},s)$.  We use the coset representatives $\gamma=(z_1g_{t,k_1,v_1},z_2ng_{0,k_2,v_2})$, where $z_i\in Z(F),n=\left[\begin{smallmatrix}
            1 & n_0\\
            & 1
        \end{smallmatrix}\right]\in N(F),t\in\mathbf{Z}, 0\leq k_i\leq c(\pi_i)$. The starting expression is as in \eqref{eq: 39}, i.e.
        \begin{align}\label{eq: 57}
            {\mathcal{I}_{\Pi}^\mathrm{new}(\gamma;s)=C\sum_{i\geq -k_2-c(\pi_2)}q^{i(1-s)}\int_{\mathcal{O}^\times}(\chi^2\omega)(y)\psi(-n_0y\varpi^i)W_{\pi_1}^\mathrm{new}(g_{i+t,k_1,v_1y^{-1}})W_{\pi_2}^\mathrm{new}(g_{i,k_2,v_2y^{-1}})\ d^\times y}.
        \end{align}
        By \cite[Lemma $3.3$]{Assing_2018} and \Cref{prop Assing half-ramified princ series}, it is not hard to see that the types of integrals appearing in every case which never gives rise to a pole (i.e. a proper $L$-factor contribution) have been treated before. They all collapse to finite $A[q^s,q^{-s}]$-linear combinations of (partial) Gauss sums for characters made up of $\chi$ and $\omega$, and these expressions are in turn contained in $\tfrac{1}{q-1}A$. Thus, it suffices to check the cases that might produce poles. There are two such cases. Namely $(k_1,k_2)=(c(\chi),0)$ and $(k_1,k_2)=(c(\chi),c(\pi_2))$, which are both non-boundary cases since $c(\pi_1)=2c(\chi)>0$, thus we need not worry about $\tfrac{1}{q-1}$ factors. \\
        Let's assume we are in the first case, then \eqref{eq: 57} is given by a finite $A[q^s,q^{-s}]$-linear combination of integrals which we have treated before, summed with an $A$-multiple of 
        $$\sum_{i\geq M}q^{-is}q^{-i/2} \mu(\varpi^i)\int_{\mathcal{O}^\times}(\chi\omega)(y)\psi(-n_0y\varpi^i)\ d^\times y$$
        for an appropriate large enough $M$.
Hence, if $\chi\omega$ is ramified, this collapses to a finite sum, and is an element of $\tfrac{1}{q-1}A[q^s,q^{-s}]$. If $\chi\omega$ is unramified, then applying the usual splitting trick, this is given by an element of $\tfrac{1}{q-1}A[q^s,q^{-s}]$ summed with an $A$-multiple of 
$$\sum_{i\geq 0}q^{-is}q^{-i/2}\mu(\varpi^i)=L(\mu|\cdot|^{1/2},s)=L(\Pi,s)$$
and hence we are done. If on the other hand $k_2=c(\pi_2)$, then there's actually no pole and the infinite sum collapses to a single familiar expression in $\tfrac{1}{q-1}A[q^s,q^{-s}]$.
\end{proof}

\subsubsection{Two fully-ramified principal-series representations or one fully-ramified principal-series representation and one ramified twist of the Steinberg representation}

In this section, we cover the case of two fully-ramified principal-series representations and the case of one fully-ramified principal-series representation and one ramified twist of the Steinberg representation. The cases are very similar and hence grouped together.
\begin{prop}\label{thm 2 full-ram p-s}
    Let $\pi_1=I(\chi_1\chi,\chi_2\chi^{-1})$ and $\pi_2=I(\mu_1\mu,\mu_2\mu^{-1})$ be two fully-ramified unitary principal-series representations with $\chi_i(\varpi)=\mu_i(\varpi)=1$. Set $\Pi:=\pi_1\times \pi_2$, let $\tau:=\max_{i\in\{1,2\}}\{c(\chi_i),c(\mu_i)\}$, $\nu:=q-1$ and let $A\subseteq\mathbf{C}$ be a $\mathbf{Z}[q^{-1/2},\mu_{\nu q^\tau}]$-algebra containing $\chi$ and $\mu$. Let $(\phi,g_1,g_2)\in\mathcal{S}(F^2)\times G(F)$ be a $\Pi$-integral datum. Then, \eqref{eq: Problem} holds.
\end{prop}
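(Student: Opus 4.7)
My proof would follow the by-now-familiar template of \Cref{thm full ram p-s x half ram p-s}. I use coset representatives $\gamma=(z_1g_{t,k_1,v_1},z_2ng_{0,k_2,v_2})$ for $P(F)\backslash G(F)/K_\Pi$, with $z_i\in Z(F)$, $n=\left[\begin{smallmatrix}1 & n_0\\ & 1\end{smallmatrix}\right]\in N(F)$, $t\in\mathbf{Z}$, $0\leq k_i\leq c(\pi_i)$. Note that $L(\Pi,s)^{-1}\in A[q^s,q^{-s}]$ by the assumptions on $A$ together with the explicit formula for $L(\Pi,s)$, which is a product of at most four Dirichlet $L$-factors associated to the characters $\chi_i\chi\cdot\mu_j\mu^{\varepsilon}$ with $\varepsilon\in\{\pm 1\}$. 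The starting point is the finite sum analogous to \eqref{eq: 54}.

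For the boundary case $(k_1,k_2)=(c(\pi_1),c(\pi_2))$, the explicit formulas in \cite[Lemma $3.5$ \& Lemma $3.6$]{Assing_2018} give that both new-vectors collapse to their ``$k=c(\pi)$'' expressions, producing precisely the same integrand structure as \eqref{eq: 40}; the volume factor argument of \Cref{thm sup + half-ram p-s} then carries over verbatim to kill any unwanted $\tfrac{1}{q-1}$ factor. For non-boundary cases the volume factor is automatically an integral multiple of $q-1$, so we need only show that $\mathcal{I}_{\Pi,\delta_P}^\mathrm{new}(\gamma W_\Pi^\mathrm{new};s)\in L(\Pi,s)\cdot\tfrac{1}{q-1}A[q^s,q^{-s}]$. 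When $k_1\notin\{c(\chi_1),c(\chi_2)\}$ or $k_2\notin\{c(\mu_1),c(\mu_2)\}$, the support of at least one Whittaker new-vector forces the summation over $i$ to collapse to finitely many terms, each of which reduces (after the two-variable change-of-variables trick used in \Cref{thm sup + fully ram p-s} to turn the $K$-function into a linear sum of partial Gauss sums) to a familiar (partial) Gauss sum for a character built from $\chi_1,\chi_2,\mu_1,\mu_2$, and therefore lies in $\tfrac{1}{q-1}A$.

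The substantive case is when $k_1\in\{c(\chi_1),c(\chi_2)\}$ \emph{and} $k_2\in\{c(\mu_1),c(\mu_2)\}$, where both new-vectors exhibit ``infinite'' support in $i$. After performing the $K$-function reductions on both factors as in \Cref{thm sup + fully ram p-s}, the sum in \eqref{eq: 54} splits into a finite $A[q^s,q^{-s}]$-linear combination of Gauss sums (all in $\tfrac{1}{q-1}A$) plus an $A$-multiple of an expression of the shape
\[
\sum_{i\geq M} q^{-is}\,\lambda_1(\varpi^i)\lambda_2(\varpi^i)\int_{\mathcal{O}^\times}\nu(y)\psi(-n_0y\varpi^i)\,d^\times y,
\]
where $\lambda_1\in\{\chi,\chi^{-1}\}$, $\lambda_2\in\{\mu,\mu^{-1}\}$ are unramified twists coming from the prefactors of the new-vector formulas, and $\nu$ is a character built from the ramified parts $\chi_i,\mu_j$. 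If $\nu$ is ramified, the inner integral forces $i$ to a single value and the whole contribution lies in $\tfrac{1}{q-1}A[q^s,q^{-s}]$. If $\nu$ is unramified then the standard splitting trick yields an element of $\tfrac{1}{q-1}A[q^s,q^{-s}]$ plus an $A$-multiple of the Dirichlet $L$-factor $L(\lambda_1\lambda_2,s)$, which is precisely one of the (up to four) sub-factors of $L(\Pi,s)$. Running through the four pairings $(k_1,k_2)$ in the regime above recovers exactly the four potential $L$-factor contributions.

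The hard part will be the bookkeeping: one must verify that each of the up-to-four sub-factors $L(\chi_i\chi\cdot\mu_j\mu^{\varepsilon},s)$ is produced precisely once by the correct pairing in the Assing formulas, and that the degenerate sub-cases $\chi_1=\chi_2$ and/or $\mu_1=\mu_2$ (which require switching from \cite[Lemma $3.6$]{Assing_2018} to \cite[Lemma $3.5$]{Assing_2018}, and which introduce additional Schur-polynomial prefactors and $S$-functions) do not alter the final identification. The $S$-functions that appear in the coincident case are reduced to Gauss sums exactly as in \Cref{thm sup x ram St}, and the Schur-polynomial terms are handled by the parity observation used in the unramified principal-series case of the paper.
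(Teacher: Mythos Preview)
Your proposal is correct and follows essentially the same approach as the paper's proof: the same coset representatives, the same boundary/non-boundary dichotomy, the same identification of the potential-pole cases $k_1\in\{c(\chi_1),c(\chi_2)\}$ and $k_2\in\{c(\mu_1),c(\mu_2)\}$, and the same reduction of the $K$- and $S$-functions to (partial) Gauss sums via the change-of-variables trick, with the degenerate coincident cases handled via \cite[Lemma~3.5]{Assing_2018}. The paper's write-up additionally spells out an explicit enumeration of the four integral ``types'' that can arise in the analytic cases, and works through one representative pole case to exhibit the sub-factor $L(\chi_2\chi^{-1}\mu_2\mu^{-1},s)$, but this is exactly the bookkeeping you anticipate in your final paragraph.
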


\begin{proof}
    In this case, the $L$-factor is given by the lengthy expression
$$L(\Pi,s)=L(\chi_1\chi\mu_1\mu,s)L(\chi_1\chi\mu_2\mu^{-1},s)L(\chi_2\chi^{-1}\mu_1\mu,s)L(\chi_2\chi^{-1}\mu_2\mu^{-1},s).$$
As usual, we use the coset representatives $\gamma=(z_1g_{t,k_1,v_1},z_2ng_{0,k_2,v_2})$, where $z_i\in Z(F),n=\left[\begin{smallmatrix}
            1 & n_0\\
            & 1
        \end{smallmatrix}\right]\in N(F),t\in\mathbf{Z}, 0\leq k_i\leq c(\pi_i)$, and the starting expression
        \begin{align}\label{eq: 58}
            {\mathcal{I}_{\Pi}^\mathrm{new}(\gamma;s)=C\sum_{i\geq -k_2-c(\pi_2)}q^{i(1-s)}\int_{\mathcal{O}^\times}(\omega_{\pi_1}\omega_{\pi_2})(y)\psi(-n_0y\varpi^i)W_{\pi_1}^\mathrm{new}(g_{i+t,k_1,v_1y^{-1}})W_{\pi_2}^\mathrm{new}(g_{i,k_2,v_2y^{-1}})\ d^\times y}.
        \end{align}
       Let's assume that $\chi_1\neq \chi_2$ and $\mu_1\neq \mu_2$. Then, looking once more at \cite[Lemma $3.6$]{Assing_2018}, we see that in every case which never produces a pole, \eqref{eq: 58} is a finite $A[q^s,q^{-s}]$-linear combination of integrals of the same shape as the ones appearing in \Cref{thm sup + fully ram p-s}, which by now, we know how to handle. For every such analytic case, \eqref{eq: 58} becomes a finite $A[q^s,q^{-s}]$-linear combination of integrals of the following shapes:
        \begin{enumerate}
\item\begin{align*}\displaystyle\int_{\mathcal{O}^\times}(\omega_{\pi_1}&\omega_{\pi_2})(y)\psi(-n_0y\varpi^{i})(q-1)^4 \\
            &K(\chi_1^{-1}\otimes \chi_2^{-1},(\varpi^{a_1},\varpi^{b_1}),v_1y^{-1}\varpi^{-k_1})K(\mu_1^{-1}\otimes \mu_2^{-1},(\varpi^{a_2},\varpi^{b_2}),v_2y^{-1}\varpi^{-k_2})\ d^\times y\end{align*}
            \item ${\displaystyle\int_{\mathcal{O}^\times}(\omega_{\pi_1}\mu_j)(y)\psi(-n_0y\varpi^{i})(q-1)^2K(\chi_1^{-1}\otimes \chi_2^{-1},(\varpi^{a_1},\varpi^{b_1}),v_1y^{-1}\varpi^{-k_1})\ d^\times y}$
            \item ${\displaystyle\int_{\mathcal{O}^\times}(\chi_j\omega_{\pi_2})(y)\psi(-n_0y\varpi^{i})(q-1)^2K(\mu_1^{-1}\otimes \mu_2^{-1},(\varpi^{a_2},\varpi^{b_2}),v_2y^{-1}\varpi^{-k_2})\ d^\times y}$
            \item ${\displaystyle\int_{\mathcal{O}^\times}(\chi_j\mu_r)(y)\psi(-n_0y\varpi^{i})\ d^\times y}$
        \end{enumerate}
        for some $a_1,b_1,a_2,b_2\in\mathbf{Z}$ determined by $i,t,k_1,k_2$, and $j,r\in\{1,2\}$. We've seen how to work with such integrals in order to collapse to finite $A$-linear combinations of Gauss sums for characters made up of $\chi_i$'s and $\mu_j$'s and the latter expressions are always contained in $\tfrac{1}{q-1}A$. Keep in mind that the boundary case $(k_1,k_2)=(c(\pi_1),c(\pi_2))$ is identical to previous ones.
        The last thing we need to check is the cases which might potentially give rise to a pole, depending on the ramification of the product of various characters. These cases can only occur whenever $k_1\in\{c(\chi_1),c(\chi_2)\}$ and $ k_2\in \{c(\mu_1),c(\mu_2)\}$, which are all non-boundary cases.\\
        We give the details for the case $k_1=c(\chi_1)\neq c(\chi_2)$ and $k_2=c(\mu_1)$. All other cases, are handled in the same manner. In this case \eqref{eq: 58} is given by a finite $A[q^s,q^{-s}]$-linear combination of integrals of type $(1)$ and $(2)$ above, summed with an $A$-multiple of
        $$\sum_{i\geq M}q^{-is}(\chi^{-1}\mu^{-1})(\varpi^i)\int_{\mathcal{O}^\times}(\chi_2\mu_2)(y)\psi(-n_0y\varpi^i)\ d^\times y$$
        for an appropriate large enough $M$.
        If $\chi_2\mu_2$ is ramified, then this collapses. If $\chi_2\mu_2$ is unramified, then applying the usual splitting, we can write this as an element of $\tfrac{1}{q-1}A[q^s,q^{-s}]$ summed with an $A$-multiple of
        $$\sum_{i\geq 0}q^{-is}(\chi^{-1}\mu^{-1})(\varpi^i)=L(\chi^{-1}\mu^{-1},s)=L(\chi_2\chi^{-1}\mu_2\mu^{-1},s)\in L(\Pi,s)A[q^s,q^{-s}]$$
        and this completes the proof. The cases where $\chi_1=\chi_2$ and (or) $\mu_1=\mu_2$ are handled similarly, using \cite[Lemma $3.5$]{Assing_2018}. The boundary case is identical. The possible types of integrals appearing in analytic (non-boundary) cases, include the types $(1)$-$(4)$ above, plus the integrals obtained by replacing one or both $K$-functions with the $S$-functions appearing in \textit{loc.cit.}. We know how to handle all such expressions. To obtain the $L$-factor terms whenever $\chi_1=\chi_2$ and (or) $\mu_1=\mu_2$, one argues as in \Cref{thm full ram p-s x half ram p-s}.
\end{proof}

\begin{prop}
    Let $\pi_1=I(\chi_1\chi,\chi_2\chi^{-1})$ be a fully-ramified unitary principal-series representation as in \emph{\Cref{thm 2 full-ram p-s}}, and $\pi_2=\mathrm{St}_\mu$ for some ramified unitary character $\mu$ with $\mu(\varpi)=1$. Set $\Pi:=\pi_1\times\pi_2$, let $\tau:=\max\{c(\chi_1),c(\chi_2),c(\mu)\}$, $\nu:=q-1$ and $A\subseteq\mathbf{C}$ a $\mathbf{Z}[q^{-1/2},\mu_{\nu q^\tau}]$-algebra containing $\chi$ and $\mu$. Let $(\phi,g_1,g_2)$ be a $\Pi$-integral datum. Then, \eqref{eq: Problem} holds. 
\end{prop}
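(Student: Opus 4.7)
The plan is to follow the exact same template established in the preceding propositions, particularly the very recent \Cref{thm 2 full-ram p-s} for two fully-ramified principal-series and the case of supercuspidal times ramified Steinberg in \Cref{thm sup x ram St}. First I would write down the $L$-factor: using the formula for $L(\pi_1\times\pi_2,s)$ when one factor is Steinberg and the other is a principal-series, $\mathrm{St}_\mu=\mathrm{St}(\mu|\cdot|^{1/2},\mu|\cdot|^{-1/2})$ gives $L(\Pi,s)=L(\chi_1\chi\mu|\cdot|^{1/2},s)\,L(\chi_2\chi^{-1}\mu|\cdot|^{1/2},s)$, so $L(\Pi,s)^{-1}\in A[q^s,q^{-s}]$ under the assumptions on $A$. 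Then I would work with the usual coset representatives $\gamma=(z_1g_{t,k_1,v_1},z_2ng_{0,k_2,v_2})$ with $0\leq k_1\leq c(\pi_1)$, $0\leq k_2\leq c(\pi_2)=2c(\mu)$, and begin from the familiar finite-sum expression for $\mathcal{I}_{\Pi,\delta_P}^\mathrm{new}(\gamma W_\Pi^\mathrm{new};s)$.

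Next I would split into cases by $(k_1,k_2)$, using \cite[Lemma $3.3$]{Assing_2018} for the Steinberg factor and \cite[Lemma $3.5$ \& Lemma $3.6$]{Assing_2018} for the fully-ramified principal-series factor. The boundary case $(k_1,k_2)=(c(\pi_1),c(\pi_2))$ collapses to a single integral of the shape \eqref{eq: 40}, and is balanced against the volume factor by the identical matrix-entry argument used in \Cref{thm sup + half-ram p-s} and \Cref{thm 2 full-ram p-s}. For the non-boundary cases, where the inverse volume factor is automatically an integral multiple of $q-1$, the integrals that appear reduce (via the change-of-variables and splitting manipulations already employed) to finite $A[q^s,q^{-s}]$-linear combinations of (partial) Gauss sums for characters built from $\chi_1,\chi_2,\mu$, possibly multiplied by $K$-functions or, when $\chi_1=\chi_2$, by the $S$-functions from \cite[Lemma $3.5$]{Assing_2018}. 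All such expressions have been dealt with in \Cref{thm sup + fully ram p-s} and \Cref{thm sup x ram St}, and always lie in $\tfrac{1}{q-1}A$.

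Finally I would isolate the cases where an actual $L$-factor contribution can appear: these are the non-boundary cases with $k_1\in\{c(\chi_1),c(\chi_2)\}$ and $k_2=c(\mu)$ or $k_2=0$, where the new-vector for $\pi_1$ has infinite support and the Steinberg new-vector contributes a pure power term. In each of these, after carrying out the $K$-function and double Gauss sum simplifications, the tail of the sum reduces to something of the form $\sum_{i\geq i_0} q^{-is}(\chi^{\pm 1}\mu)(\varpi^i)\int_{\mathcal O^\times}(\chi_j\mu)(y)\psi(-n_0y\varpi^i)\,d^\times y$ (up to the usual root-of-unity prefactor). If $\chi_j\mu$ is ramified the sum collapses to a single Gauss sum in $\tfrac{1}{q-1}A$; if $\chi_j\mu$ is unramified, applying the standard splitting into an error term and an $L$-factor term produces an element of $\tfrac{1}{q-1}A[q^s,q^{-s}]$ plus an $A$-multiple of $L(\chi^{\pm 1}\mu|\cdot|^{1/2},s)$, which is precisely a factor of $L(\Pi,s)$.

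The main obstacle will be the bookkeeping in the mixed case $\chi_1=\chi_2$, where \cite[Lemma $3.5$]{Assing_2018} introduces both $K$- and $S$-functions, and where the Schur-polynomial contribution that appeared in the proof of \Cref{thm full ram p-s x half ram p-s} (reflecting the fact that $\pi_1$ becomes a ramified twist of an unramified principal-series) must be tracked, in order to correctly recover both $L$-factor pieces $L(\chi_1\chi\mu|\cdot|^{1/2},s)$ and $L(\chi_2\chi^{-1}\mu|\cdot|^{1/2},s)$ simultaneously. Since every analytic ingredient has already been established in the preceding subsections, no new technique is required; only the case analysis from \Cref{thm 2 full-ram p-s} needs to be carried over almost verbatim, with the Steinberg new-vector playing the role formerly played by the second fully-ramified principal-series new-vector.
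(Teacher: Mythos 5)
Your proposal is correct and follows essentially the same route as the paper: the same $L$-factor $L(\chi_1\chi\mu|\cdot|^{1/2},s)L(\chi_2\chi^{-1}\mu|\cdot|^{1/2},s)$, the same coset representatives and starting sum, the same appeal to Assing's Lemmas $3.3$, $3.5$ and $3.6$, reduction of all analytic cases to integrals already treated in the two-fully-ramified and supercuspidal-times-ramified-Steinberg propositions (boundary case handled by the matrix-entry volume argument), and the same splitting trick to extract $L(\chi^{\pm1}\mu|\cdot|^{1/2},s)$ in the pole cases. One minor slip: since $\mu$ is ramified, the Steinberg new-vector at $k_2=0$ is supported at the single value $i=-c(\pi_2)$ (an $\epsilon(\tfrac{1}{2},\pi_2)$-term), so genuine $L$-factor tails can only arise for $k_1\in\{c(\chi_1),c(\chi_2)\}$ together with $k_2=c(\mu)$, exactly as in the paper; your inclusion of $k_2=0$ among the potential pole cases is harmless, since there the sum over $i$ collapses to a single Gauss-sum term handled as in the non-pole cases.
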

\begin{proof}
    In this case, the  $L$-factor is given by $L(\chi_1\chi\mu|\cdot|^{1/2},s)L(\chi_2\chi^{-1}\mu|\cdot|^{1/2})$. We use the coset representatives $\gamma=(z_1g_{t,k_1,v_1},z_2ng_{0,k_2,v_2})$, where $z_i\in Z(F),n=\left[\begin{smallmatrix}
            1 & n_0\\
            & 1
        \end{smallmatrix}\right]\in N(F),t\in\mathbf{Z}, 0\leq k_i\leq c(\pi_i)$. The starting expression is 
        \begin{align}\label{eq: 59}
            {\mathcal{I}_{\Pi}^\mathrm{new}(\gamma;s)=C\sum_{i\geq -k_2-c(\pi_2)}q^{i(1-s)}\int_{\mathcal{O}^\times}(\omega_{\pi_1}\mu^2)(y)\psi(-n_0y\varpi^i)W_{\pi_1}^\mathrm{new}(g_{i+t,k_1,v_1y^{-1}})W_{\pi_2}^\mathrm{new}(g_{i,k_2,v_2y^{-1}})\ d^\times y}.
        \end{align}
        Looking at \cite[Lemma $3.3$, Lemma $3.5$ \& Lemma  $3.6$]{Assing_2018}, we see that every analytic case, has already been handled. The interesting integrals appearing in such cases are combinations of the ones in \Cref{thm 2 full-ram p-s} and in \Cref{thm sup x ram St}. The cases with potential poles, are $k_1\in\{c(\chi_1),c(\chi_2)\}$ with $k_2=c(\chi)$. These are non-boundary cases, thus cancellation of $\tfrac{1}{q-1}$ by the volume factor always occurs. Plugging the expressions in these cases found in \textit{op.cit.} and using the usual arguments, one obtains the required result.
\end{proof}
\subsubsection{Two ramified twists of the Steinberg representation}
In this final section, we treat the case of two ramified twists of the Steinberg representation. This will finally put an end to our case-by-case analysis.

\begin{prop}
 Let $\pi_i=\mathrm{St}_{\chi_i}$ for ramified unitary characters $\chi_i$, with $\chi_i(\varpi)=1$. Set $\Pi:=\pi_1\times\pi_2$, let $\tau:=\max\{c(\chi_1),c(\chi_2)\}$, $\nu:=q-1$ and $A\subseteq\mathbf{C}$ be a $\mathbf{Z}[q^{-1/2},\mu_{\nu q^{\tau}}]$-algebra. Let $(\phi,g_1,g_2)\in\mathcal{S}(F^2)\times G(F)$ be a $\Pi$-integral datum. Then, \eqref{eq: Problem} holds.
\end{prop}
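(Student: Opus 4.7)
The plan is to follow the same template used throughout \Cref{sec 4}, relying on the reduction \eqref{eq: reduction step} and Assing's explicit formulas for the Whittaker new-vector of a ramified twist of Steinberg (\cite[Lemma $3.3$]{Assing_2018}). More precisely, I would work with the standard coset representatives $\gamma=(z_1 g_{t,k_1,v_1},z_2 n g_{0,k_2,v_2})$ for $P(F)\backslash G(F)/K_\Pi$, where $z_i\in Z(F)$, $n=\left[\begin{smallmatrix}1 & n_0\\ & 1\end{smallmatrix}\right]$, $t\in\mathbf{Z}$, $0\leq k_i\leq c(\pi_i)=2c(\chi_i)$, $v_i\in\mathcal{O}^\times$, and study
\begin{align*}
\mathcal{I}_{\Pi,\delta_P}^\mathrm{new}(\gamma W_\Pi^\mathrm{new};s)=C\sum_{i\geq -k_2-c(\pi_2)}q^{i(1-s)}\int_{\mathcal{O}^\times}(\chi_1^2\chi_2^2)(y)\psi(-n_0 y\varpi^i)W_{\pi_1}^\mathrm{new}(g_{i+t,k_1,v_1 y^{-1}})W_{\pi_2}^\mathrm{new}(g_{i,k_2,v_2 y^{-1}})\, d^\times y.
\end{align*}
One then partitions the analysis according to $(k_1,k_2)$: the genuinely new piece is the boundary case $(k_1,k_2)=(c(\pi_1),c(\pi_2))$, while all other (non-boundary) cases benefit from the volume factor $\vol_{P(F)}(P(F)\cap\gamma K_\Pi\gamma^{-1})^{-1}$ being an integral multiple of $q-1$, by the same matrix arguments used in \Cref{thm tamely ram p.s + unr p.s} and \Cref{thm sup + half-ram p-s}.

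For the non-boundary cases, plugging \cite[Lemma $3.3$]{Assing_2018} into the sum reduces each summand to one of four shapes already encountered in \Cref{thm sup x ram St}: (i) a single $\epsilon(\tfrac12,\pi_i)$-multiple of a Gauss sum of $\chi_1^2\chi_2^2$; (ii) an $A[q^s,q^{-s}]$-combination of Gauss sums of the form $\mathscr{G}(\chi,\cdot)$ with $\chi$ built from $\chi_1,\chi_2$; (iii) $K(\chi_i^{-1}\otimes\chi_i^{-1},\ldots)$-type double integrals which collapse to finite sums of Gauss sums via the splitting trick of \Cref{thm sup + fully ram p-s}; and (iv) $S$-function terms $S(A,B,m)$ which reduce, after passing to the additive measure and truncating, to finite sums of Gauss sums, exactly as in the fourth type of integral in \Cref{thm sup x ram St}. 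In each instance the output lies in $\tfrac{1}{q-1}A$, and the volume factor absorbs the $\tfrac{1}{q-1}$. The only possibility of an $L$-factor contribution arises at $(k_1,k_2)=(c(\chi_1),c(\chi_2))$; here, applying the usual splitting of an infinite geometric-type sum into a finite error term plus an $L$-factor term yields contributions of the form $L(\chi_1\chi_2|\cdot|^{\pm 1},s)$, matching the formula for $L(\mathrm{St}_{\chi_1}\times\mathrm{St}_{\chi_2},s)$, and this is handled exactly as in \Cref{thm full ram p-s x half ram p-s}.

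For the boundary case $(k_1,k_2)=(c(\pi_1),c(\pi_2))=(2c(\chi_1),2c(\chi_2))$, the sum collapses (by the support constraints in \cite[Lemma $3.3$]{Assing_2018}) to a single term, proportional to
\begin{align*}
\int_{\mathcal{O}^\times}\psi\!\left((-v_1 n_0\varpi^{-t-c(\pi_1)}+v_1 v_2^{-1}\varpi^{-t-c(\pi_1)+c(\pi_2)}-1)\varpi^{-c(\pi_1)}y\right)d^\times y,
\end{align*}
which is the same Gauss-sum expression as \eqref{eq: 40}. The required balancing with $\vol_{P(F)}(P(F)\cap\gamma K_\Pi\gamma^{-1})^{-1}$ is then done verbatim as in the boundary argument of \Cref{thm sup + half-ram p-s}: one checks that any $x=\left[\begin{smallmatrix}a & b\\ & 1\end{smallmatrix}\right]$ in the intersection forces $b\varpi^{-t-c(\pi_1)}\equiv v_2^{-1}(a-1)\varpi^{-t-c(\pi_1)+c(\pi_2)}\pmod{\varpi^{c(\pi_1)}}$, and feeding this into the bottom-right entry of the $g_{t,c(\pi_1),v_1}^{-1}nxn^{-1}g_{t,c(\pi_1),v_1}\in K_{c(\pi_1)}$ condition forces $a\equiv 1\pmod{\varpi}$ precisely in the case $v(1+v_1n_0\varpi^{-t-c(\pi_1)}-v_1v_2^{-1}\varpi^{-t-c(\pi_1)+c(\pi_2)})=c(\pi_1)-1$, i.e. exactly when the Gauss sum above carries the unwanted factor of $\tfrac{1}{q-1}$.

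The main obstacle, such as it is, is purely bookkeeping: Assing's formulas for ramified Steinberg bifurcate into many subcases (depending on the position of $k_i$ relative to $c(\chi_i)$, $\lfloor c(\pi_i)/2\rfloor$, etc.), and each subcase must be matched to one of the integral templates (Gauss sum, partial Gauss sum, $K$-type or $S$-type) that have been handled earlier. Once this dictionary is set up, the integrality bounds, the collapse of infinite sums due to ramification of $\chi_1$ and $\chi_2$, the extraction of the $L$-factor, and the cancellation of $\tfrac{1}{q-1}$ by the index $\vol_{P(F)}(P(F)\cap\gamma K_\Pi\gamma^{-1})^{-1}$ all go through by the same arguments already established; no genuinely new phenomenon appears, so the proof is completed by directly invoking the methods of the preceding propositions.
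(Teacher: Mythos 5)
Your proposal follows essentially the same route as the paper: the same coset representatives and starting sum, the same dispatch of the non-boundary cases to Gauss-sum/$K$-function/$S$-function templates with the volume factor absorbing the $\tfrac{1}{q-1}$, the pole analysis isolated at $(k_1,k_2)=(c(\chi_1),c(\chi_2))$ via the usual splitting (the paper extracts precisely $L(\chi_1\chi_2|\cdot|,s)=L(|\cdot|,s)$ there, which sits inside $L(\Pi,s)A[q^s,q^{-s}]$), and the boundary case $(c(\pi_1),c(\pi_2))$ reduced to the expression \eqref{eq: 40} and balanced exactly as in \Cref{thm sup + half-ram p-s}. No substantive difference from the paper's argument.
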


\begin{proof}
    The $L$-factor in this case, is given by $L(\chi_1\chi_2|\cdot|,s)L(\chi_1\chi_2,s)$. As usual, we use the coset representatives $\gamma=(z_1g_{t,k_1,v_1},z_2ng_{0,k_2,v_2})$, where $z_i\in Z(F),n=\left[\begin{smallmatrix}
            1 & n_0\\
            & 1
        \end{smallmatrix}\right]\in N(F),t\in\mathbf{Z}, 0\leq k_i\leq c(\pi_i)$, and the starting expression
        \begin{align}\label{eq: 60}
            {\mathcal{I}_{\Pi}^\mathrm{new}(\gamma;s)=C\sum_{i\geq -k_2-c(\pi_2)}q^{i(1-s)}\int_{\mathcal{O}^\times}(\chi_1^2\chi_2^2)(y)\psi(-n_0y\varpi^i)W_{\pi_1}^\mathrm{new}(g_{i+t,k_1,v_1y^{-1}})W_{\pi_2}^\mathrm{new}(g_{i,k_2,v_2y^{-1}})\ d^\times y}.
        \end{align}
        The analytic cases (i.e. no poles) follow in the same manner as \Cref{thm sup x ram St} and \Cref{thm 2 full-ram p-s}, using \cite[Lemma $3.3$]{Assing_2018}. Poles might occur for $(k_1,k_2)=(c(\chi_1),c(\chi_2))$; both non-boundary cases. In this case, using the usual tricks, \eqref{eq: 60} is given by an element of $\tfrac{1}{q-1}A[q^s,q^{-s}]$ summed with an $A$-multiple of 
        $$\sum_{i\geq M}q^{i(1-s)}q^{-2i}\int_{\mathcal{O}^\times}(\chi_1\chi_2)(y)\psi(-n_0y\varpi^i)\ d^\times y$$
        for some appropriate $M$. If $\chi_1\chi_2$ is ramified then this collapses as before to an element of $\tfrac{1}{q-1}A[q^s,q^{-s}]$. If instead it's unramified, then doing the usual splitting if necessary, this is given by an element of $\tfrac{1}{q-1}A[q^s,q^{-s}]$ summed with an $A$-multiple of
        $$\sum_{i\geq 0}q^{-is}q^{-i}=L(|\cdot|,s)=L(\chi_1\chi_2|\cdot|,s)\in L(\Pi,s)A[q^s,q^{-s}]$$
        where the containment follows from the explicit expression of $L(\Pi,s)$.
\end{proof}
\subsection{The main result.}
After this very lengthy case-by-case analysis, we can finally state a complete version of the result, for all irreducible admissible generic tempered representations $\pi_1\times\pi_2$ as follows. We now set 
$$\tau=\tau(\pi_1,\pi_2)=\max\{c(\pi_1),c(\pi_2)\},\ \nu=\nu(\pi_1,\pi_2)=\begin{dcases}
    1,\ &\mathrm{if}\ c(\pi_1)=c(\pi_2)=0\\
    q^2-1,\ &\mathrm{otherwise.}
\end{dcases}$$
\begin{thm}\label{thm: main theorem}
    Let $\pi_1,\pi_2$ be irreducible admissible generic tempered representations of $\GL_2(F)$. Set $\Pi:=\pi_1\times\pi_2$, and let $A\subseteq\mathbf{C}$ be a $\mathbf{Z}[q^{-1},\mu_{\nu q^\tau}]$-algebra containing the spherical Hecke eigenvalues of $\pi_i$ \emph{(}if spherical\emph{)} and the following unramified unitary characters in each case: 
    \begin{enumerate}
        \item $\omega_{\pi_i}$\emph{:} if $\pi_i$ is an unramified principal-series representation.
        \item $\chi_i$\emph{:} if $\pi_i\simeq\mathrm{St}_{\chi_i}$ is an unramified twist of the Steinberg representation
        \item $(\omega_{\pi_i}^\mathrm{unr})^{1/2}$ and $\chi_i$\emph{:} if $\pi_i\simeq (\omega_{\pi_i}^\mathrm{unr})^{1/2}\otimes I(\chi_i\omega,\chi_i^{-1})$ is a half-ramified principal-series representation with $\omega(\varpi)=1$, or $\pi_i\simeq (\omega_{\pi_i}^\mathrm{unr})^{1/2}\otimes I(\chi_{i,1}\chi_i,\chi_{i,2}\chi_i^{-1})$ is a fully-ramified principal-series representation with $\chi_{i,1}(\varpi)=\chi_{i,2}(\varpi)=1$.
        \item $(\omega_{\pi_i}^\mathrm{unr})^{1/2}$\emph{:} if $\pi_i\simeq (\omega_{\pi_i}^\mathrm{unr})^{1/2}\otimes \mathrm{St}_{\chi_i}$ is a ramified twist of the Steinberg representation with $\chi_i(\varpi)=1$, or $\pi_i\simeq (\omega_{\pi_i}^\mathrm{unr})^{1/2}\otimes \pi(\xi_i)$ is a supercuspidal representation with $\omega_{\pi(\xi_i)}(\varpi)=1$.
    \end{enumerate}
    Let $(\phi,g_1,g_2)$ be any $\Pi$-integral datum. Then,  $\Phi(\phi,g_1W_{\pi_1}^\mathrm{new},g_2W_{\pi_2}^\mathrm{new};X)\in A[X,X^{-1}]$ and
    $$Z(\phi,g_1W_{\pi_1}^\mathrm{new},g_2W_{\pi_2}^\mathrm{new};s)=L(\Pi,s)\cdot \Phi(\phi,g_1W_{\pi_1}^\mathrm{new},g_2W_{\pi_2}^\mathrm{new};q^s)$$
    in $L(\Pi,s)A[q^s,q^{-s}]\subseteq A(q^s,q^{-s}).$
\end{thm}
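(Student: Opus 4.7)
The plan is to assemble the result from the two main ingredients developed earlier in the paper: the geometric reinterpretation/reduction of \Cref{sec 3}, and the case-by-case integrality analysis of \Cref{sec 4} for the new-vector integral $\mathcal{I}_{\Pi,\delta_P}^{\mathrm{new}}$.

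First I would normalize. Writing each $\pi_i \simeq \pi_i^{\prime} \otimes (\omega_{\pi_i}^{\mathrm{unr}})^{1/2}$ with $\omega_{\pi_i^{\prime}}(\varpi)=1$, I can replace $\Pi$ by $\Pi^{\prime} := \pi_1^{\prime} \times \pi_2^{\prime}$, at the cost of a factor of $(\omega_{\pi_1}^{\mathrm{unr}} \omega_{\pi_2}^{\mathrm{unr}})^{1/2}(\det)\cdot q^{(\text{integer})s}$ on the Rankin-Selberg integral. Both $(\omega_{\pi_i}^{\mathrm{unr}})^{1/2}$ are in $A$ by hypothesis, and the $L$-factor transforms compatibly, so it suffices to treat the case where both central characters are trivial on $\varpi$.

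Next I would invoke the reduction of \Cref{sec reduction}. By \Cref{prop lambda and zeta}, \Cref{prop lambda on char functions}, \Cref{lem integrality lemma}, and the diagram immediately preceding \eqref{eq: reduction step}, together with the lemma that lets us cancel the extra Euler-like factor $(1-\omega_\Pi(\varpi)q^{-2s})$, the task reduces to verifying that
\[
\frac{1}{\vol_{P(F)}(P(F)\cap \gamma K_\Pi \gamma^{-1})}\cdot \mathcal{I}_{\Pi,\delta_P}^{\mathrm{new}}(\gamma W_\Pi^{\mathrm{new}};s) \in L(\Pi,s)A[q^s,q^{-s}]
\]
for every $\gamma$ in a chosen set of representatives for $P(F)\backslash G(F)/K_\Pi$. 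I would fix such representatives using \Cref{lem general coset decomp} coordinate-wise on each factor of $G(F)=H(F)\times H(F)$, so that each $\gamma$ has the form $(z_1 n g_{t,k_1,v_1}, z_2 n^{\prime} g_{0,k_2,v_2})$ (with the first factor absorbing a translation parameter $t$ while the second is normalized by Iwasawa decomposition).

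At this point I would apply the case-by-case analysis of \Cref{sec 4}. Using the classification of irreducible admissible generic representations of $\GL_2(F)$, every pair $(\pi_1,\pi_2)$ falls into exactly one of the combinations treated in Sections 4.1.1--4.1.18: two unramified principal-series; unramified principal-series paired with unramified-twist Steinberg, half-ramified, fully-ramified, ramified Steinberg or supercuspidal; and all remaining pairs built from these classes. In each case, the cited proposition explicitly verifies the required membership, with the base $\mathbf{Z}[q^{-1/2},\mu_{\nu_0 q^{\tau_0}}]$-algebra containing at most the characters enumerated in (1)--(4) of the statement (after unramified normalization). Since in every listed case $\nu_0 \in \{q-1, q^{f(E/F)}-1, (q^{f(E/F)}-1)(1+\delta_{c(\pi_i)\equiv 1(2)})\}$ with $f(E/F)\in\{1,2\}$ and each such $\nu_0$ divides $q^2-1=\nu$, while $\tau_0 \leq \max\{c(\pi_1),c(\pi_2)\}=\tau$, the algebra $A$ of the theorem subsumes each intermediate algebra. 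Hence the result holds in every case.

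The main subtlety, and the step where essentially all the work of \Cref{sec 4} is invested, is handling the non-spherical coset representatives where $k_1=c(\pi_1)$ and $k_2=c(\pi_2)$ (the ``boundary cases''): here $\vol_{P(F)}(P(F)\cap \gamma K_\Pi \gamma^{-1})^{-1}$ no longer automatically kills the factor of $\tfrac{1}{q-1}$ produced by a (partial) Gauss sum appearing in $\mathcal{I}_{\Pi,\delta_P}^{\mathrm{new}}(\gamma W_\Pi^{\mathrm{new}};s)$. The delicate matrix congruence arguments in each proposition show that whenever the Gauss-sum denominator $q-1$ actually occurs, a nontrivial congruence on the bottom-right entry forces $\mathcal{U}:=P(F)\cap \gamma K_\Pi\gamma^{-1} \subseteq \left[\begin{smallmatrix}1+\varpi\mathcal{O} & \varpi\mathcal{O}\\ & 1\end{smallmatrix}\right]$, producing the necessary factor of $q-1$ in the inverse volume. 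Taken together with the explicit Assing--Saha formulas for $W_\pi^{\mathrm{new}}(g_{t,k,v})$ in each class, this cancellation is what keeps each contribution integral over $A$, and combining it with the standard splitting of geometric series into an $L$-factor piece plus a Laurent-polynomial error term produces the stated $L(\Pi,s) A[q^s,q^{-s}]$-containment.
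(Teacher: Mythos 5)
Your proposal is correct and follows essentially the same route as the paper's own proof: normalize each $\pi_i$ by an unramified twist so that the central character is trivial on $\varpi$ (using that $(\omega_{\pi_i}^{\mathrm{unr}})^{\pm 1/2}\in A$ and that the Section~4 propositions are stable under unramified twisting), and then invoke the relevant case-by-case proposition of Section~4, which already rests on the Section~3 reduction to the integrality of $\vol_{P(F)}(P(F)\cap\gamma K_\Pi\gamma^{-1})^{-1}\mathcal{I}_{\Pi,\delta_P}^{\mathrm{new}}(\gamma W_\Pi^{\mathrm{new}};s)$. Your explicit check that each intermediate $\nu_0,\tau_0$ is absorbed by $\nu=q^2-1$, $\tau=\max\{c(\pi_1),c(\pi_2)\}$ is a detail the paper leaves implicit, and is a welcome addition.
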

\begin{proof}
    If $\pi_i$ is not an unramified principal-series representation or an unramified twist of the Steinberg representation, then we can re-write it as  $\pi_i\simeq (\omega_{\pi_i}^\mathrm{unr})^{1/2}\otimes \pi_i^{'}$ where $\omega_{\pi_i}^\mathrm{unr}$ is the unramified character defined by $\omega_{\pi_i}^\mathrm{unr}(\varpi):=\omega_{\pi_i}(\varpi)$. The representation $\pi_i^{'}$ is in the same class as $\pi_i$, but with central character that is trivial on $\varpi$, and equal to $\omega_{\pi_i}$ on $\mathcal{O}^\times$. Finally, $c(\pi_i^{'})=c(\pi_i)$. We note that all of our results in this section remain unchanged after unramified twists, and thus we can now apply the required proposition of this section, depending on the classes of $\pi_1,\pi_2$, while potentially enlarging $A$ to ensure it contains $(\omega_{\pi_i}^\mathrm{unr})^{1/2}$. This also implicitly ensures that in every case, the inverse $L$-factor $L(\Pi,s)^{-1}$ is contained in $A[q^s,q^{-s}]$.
\end{proof}

\begin{rem}\label{rem p=2}
    We want to note that \Cref{thm: main theorem} also holds for $p=2$ (i.e. $2$-adic field $F$ with residue cardinality $q=2^t$) under the extra assumption that $\pi_i$ is dihedral if it's a supercuspidal representation. In this case one takes $A\subseteq \mathbf{C}$ a $\mathbf{Z}[q^{-1},\mu_{4\nu q^\tau}]$-algebra as in \Cref{thm: main theorem}. The proofs are identical and the expressions of \cite{Assing_2018} for the Whittaker new vectors still hold under this extra dihedral assumption in the supercuspidal case.
\end{rem}

\subsubsection{Relation to trilinear forms}
 One of the main results of Prasad \cite{prasad1990trilinear} on trilinear forms for representations of $p$-adic $\GL_2$ is the following.
\begin{thm}[\cite{prasad1990trilinear}]\label{thm prasad trilinear}
    Let $\pi_1,\pi_2,\pi_3$ be irreducible admissible generic representations of $\GL_2(F)$ with $\omega_{\pi_1}\omega_{\pi_2}\omega_{\pi_3}=1$. If at least one of the $\pi_i$ is a principal-series representation, then the space 
    $\mathrm{Hom}_{\GL_2(F)}(\pi_1\otimes\pi_2\otimes\pi_3,\mathbf{1})$
    is one-dimensional. Otherwise, it is zero-dimensional.
    \begin{proof}
        This is part of \cite[Theorem $1.1$ \& Theorem $1.2$]{prasad1990trilinear}.
    \end{proof}
\end{thm}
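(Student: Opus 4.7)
The plan is to reduce, via Frobenius reciprocity, to a Mackey-style computation on the Borel subgroup. Write $G = \GL_2(F)$, let $B$ be its upper-triangular Borel with modulus character $\delta_B$, and assume without loss of generality that $\pi_3 = I(\chi_1, \chi_2)$ is a principal series. Since $\pi_3^\vee \simeq I(\chi_1^{-1}, \chi_2^{-1})$, Frobenius reciprocity for smooth (normalized) induction gives the chain
\[
\mathrm{Hom}_G\bigl(\pi_1 \otimes \pi_2 \otimes \pi_3,\,\mathbf{1}\bigr) \;\simeq\; \mathrm{Hom}_G(\pi_1 \otimes \pi_2,\, \pi_3^\vee) \;\simeq\; \mathrm{Hom}_B\!\bigl(\pi_1 \otimes \pi_2,\, \chi_1^{-1} \otimes \chi_2^{-1} \otimes \delta_B^{1/2}\bigr).
\]
The central-character constraint $\omega_{\pi_1}\omega_{\pi_2}\omega_{\pi_3} = 1$ ensures the centres match up, so the problem reduces to a $P$-equivariant Hom computation, where $P$ is the mirabolic.

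Next I would invoke the Kirillov model: each irreducible admissible generic $\pi_i|_P$ is canonically an extension of $\mathrm{c}\text{-}\mathrm{ind}_N^P \psi$ by (at most) a finite-length quotient coming from the Jacquet module. Writing $\pi_1 \otimes \pi_2$ as the tensor of such extensions and unfolding the Hom space above against the character $\chi_1^{-1} \otimes \chi_2^{-1} \otimes \delta_B^{1/2}$, one obtains a filtration whose ``open orbit'' piece is a space of bilinear pairings between Whittaker functionals that transform in the prescribed way under the diagonal torus. This open piece contributes exactly one dimension via a canonical pairing of Whittaker functions integrated against a natural Schwartz-type kernel, explicitly constructing a non-zero trilinear form. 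The ``closed orbit'' contributions from Jacquet-module pieces either vanish by genericity or are absorbed by the central-character matching, forcing the total dimension to be exactly one.

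For the vanishing statement, when none of the $\pi_i$ is principal series (so all three are essentially square-integrable), I would appeal to the Jacquet-Langlands correspondence and transfer the data to the unique quaternion division algebra $D/F$. Since $D^\times / Z$ is compact, every irreducible $\rho_i$ of $D^\times$ is finite-dimensional, and a Peter-Weyl / character computation shows that $\mathrm{Hom}_{D^\times}(\rho_1 \otimes \rho_2 \otimes \rho_3,\, \mathbf{1})$ is one-dimensional whenever the central characters match. The ``$\GL_2$ versus quaternion'' dichotomy then forces the $\GL_2$ side of the Hom space to vanish.

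The main obstacle is establishing this dichotomy rigorously: one has to show that the \emph{total} dimension summed over $\GL_2(F)$ and $D^\times$ is always exactly one. This is the content of Prasad's epsilon-factor criterion, matching the non-vanishing side with the sign $\epsilon(\pi_1 \otimes \pi_2 \otimes \pi_3,\, \tfrac{1}{2}) \in \{\pm 1\}$. Carrying it out requires a delicate comparison of the explicit trilinear forms constructed on each side with local epsilon-factor identities in the spirit of Saito-Tunnell, and this is where I would expect to spend most of the work.
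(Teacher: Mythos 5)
The paper gives no argument for this theorem at all: its ``proof'' is a citation to Prasad's Theorems $1.1$ and $1.2$, so the only meaningful comparison is with Prasad's original proofs. Your first half is essentially a correct outline of Prasad's argument for the principal-series case: assuming $\pi_3=I(\chi_1,\chi_2)$, Frobenius reciprocity reduces the problem to $\mathrm{Hom}_{B}(\pi_1\otimes\pi_2,\chi_1^{-1}\otimes\chi_2^{-1}\delta_B^{1/2})$, and the Kirillov-model filtration of each $\pi_i$ restricted to the mirabolic produces the one-dimensional ``open-orbit'' space of Whittaker-type pairings. Be aware, though, that the disposal of the Jacquet-module pieces is where Prasad's actual case analysis lives (including reducibility/coincidence configurations of the inducing characters); it cannot simply be waved away as ``vanishing by genericity or absorbed by central-character matching.''

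The second half has a genuine gap. For finite-dimensional representations $\rho_i$ of $D^\times$, the space $\mathrm{Hom}_{D^\times}(\rho_1\otimes\rho_2\otimes\rho_3,\mathbf{1})$ equals the multiplicity of $\rho_3^\vee$ in $\rho_1\otimes\rho_2$; it is \emph{not} one-dimensional merely because the central characters match, and no Peter--Weyl computation gives that. For instance, taking $\pi_1=\pi_2=\mathrm{St}$ (so $\rho_1=\rho_2=\mathbf{1}$) and $\pi_3$ supercuspidal with trivial central character (so $\rho_3$ nontrivial), one gets $\mathrm{Hom}_{D^\times}(\rho_1\otimes\rho_2\otimes\rho_3,\mathbf{1})=0$, and Prasad's dichotomy $\dim\mathrm{Hom}_{\GL_2(F)}+\dim\mathrm{Hom}_{D^\times}=1$ then forces the $\GL_2(F)$-space to be \emph{one}-dimensional even though none of the $\pi_i$ is a principal series. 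So the deduction ``the dichotomy forces the $\GL_2$ side to vanish'' fails, and it cannot be repaired: the ``otherwise zero-dimensional'' clause, read with the paper's convention that Steinberg is not a principal series, is not what Prasad's Theorem $1.2$ says. The correct statement for a triple of discrete series is the $\epsilon$-factor dichotomy, with the $\GL_2(F)$-space nonzero exactly when $\epsilon(\pi_1\otimes\pi_2\otimes\pi_3,\tfrac{1}{2})=+1$ (this does not affect the rest of the paper, which only uses the case where $\pi_3$ is a principal series). You correctly identify the dichotomy as the hard analytic input, but the shortcut you propose for the $D^\times$ side is false, and with it the vanishing conclusion collapses.
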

\noindent From now on, let's assume, without loss of generality, that $\pi_3$ is a principal-series representation. Then up to twisting, we can also assume that $\pi_3\simeq I(|\cdot|^{-1/2},|\cdot|^{1/2}\nu)$ for some character $\nu\neq 1$ of $F^\times$, and $\omega_{\pi_1}\omega_{\pi_2}=\nu^{-1}$. Then, for example by \cite[Proposition $3.3$]{loeffler2024local}, the representation $\pi_3$ can be identified with the maximal quotient of $\mathcal{S}(F^2)$ on which $F^\times\simeq Z(F)$ acts via the character $\nu$. Thus, we can regard vectors in $\pi_3$ as equivalence classes of Schwartz functions, which we denote by $[\phi]_{\pi_3}$. It follows that the non-zero linear form $\mathcal{Z}:W_1\otimes W_2\otimes \phi\mapsto \mathrm{lim}_{s\rightarrow0}\frac{Z(\phi,W_1,W_2;s)}{L(\pi_1\times\pi_2,s)}$ in $\mathrm{Hom}_{\GL_2(F)}(\pi_1\otimes\pi_2\otimes\mathcal{S}(F^2),\mathbf{1})$ descends to a well-defined nonzero linear form $\mathcal{Z}^{\mathrm{tri}}: W_1\otimes W_2\otimes [\phi]_{\pi_3}\mapsto \mathcal{Z}(W_1\otimes W_2\otimes\phi)$ in $\mathrm{Hom}_{\GL_2(F)}(\pi_1\otimes\pi_2\otimes\pi_3,\mathbf{1})$. By \Cref{thm prasad trilinear} this is the unique (up to scalars) non-zero element of this Hom-space. Using \Cref{thm: main theorem} we can obtain the following integrality result regarding this unique trilinear form. This extends (and gives an alternative proof of) \cite[Corollary $6.2.1(1)$ \& Corollary $6.3.1(1)$]{groutides2024rankinselbergintegralstructureseuler} from the unramified case to the general case. 
\begin{thm}
    Let $\pi_1,\pi_2$ be irreducible, admissible generic tempered representations of $\GL_2(F)$, and let $\pi_3=I(|\cdot|^{-1/2},|\cdot|^{1/2}\nu)$ be an irreducible principal-series representation with $\omega_{\pi_1}\omega_{\pi_2}\nu=1$. Set $\Pi:=\pi_1\times\pi_2$ and let $A$ be a $\mathbf{Z}[q^{-1},\mu_{\nu q^\tau}]$-algebra as in \emph{\Cref{thm: main theorem}}. Then, the unique (up to scalars) non-zero linear form $\mathcal{Z}^\mathrm{tri}\in\mathrm{Hom}_{\GL_2(F)}(\pi_1\otimes\pi_2\otimes\pi_3,\mathbf{1})$, realized using the Rankin-Selberg zeta-integral, satisfies
    $$\mathcal{Z}^\mathrm{tri}(g_1W_{\pi_1}^\mathrm{new}\otimes g_2W_{\pi_2}^\mathrm{new}\otimes [\phi]_{\pi_3})\in A$$
    for any $\Pi$-integral datum $(\phi,g_1,g_2)\in\mathcal{S}(F^2)\times \GL_2(F)^2$.
\end{thm}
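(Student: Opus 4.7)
The strategy is to deduce the statement directly from \Cref{thm: main theorem} together with the definition of $\mathcal{Z}^{\mathrm{tri}}$ as a limit of the normalized Rankin-Selberg integral at $s=0$. By construction, $\mathcal{Z}^{\mathrm{tri}}$ is the descent to $\pi_3$ of the non-zero linear form
$$W_1 \otimes W_2 \otimes \phi \;\longmapsto\; \lim_{s \to 0}\, \frac{Z(\phi, W_1, W_2; s)}{L(\pi_1 \times \pi_2, s)}$$
on $\pi_1 \otimes \pi_2 \otimes \mathcal{S}(F^2)$, which by \Cref{thm prasad trilinear} is the unique such element of $\mathrm{Hom}_{\GL_2(F)}(\pi_1\otimes\pi_2\otimes\pi_3,\mathbf{1})$ up to scalars. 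Thus the first step is simply to unpack this definition on the specific vector $g_1 W_{\pi_1}^{\mathrm{new}} \otimes g_2 W_{\pi_2}^{\mathrm{new}} \otimes [\phi]_{\pi_3}$.

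Next, I would invoke \Cref{thm: main theorem}: since $(\phi,g_1,g_2)$ is $\Pi$-integral, there exists a Laurent polynomial $\Phi(\phi, g_1W_{\pi_1}^{\mathrm{new}}, g_2W_{\pi_2}^{\mathrm{new}}; X) \in A[X,X^{-1}]$ with
$$\frac{Z(\phi, g_1 W_{\pi_1}^{\mathrm{new}}, g_2 W_{\pi_2}^{\mathrm{new}}; s)}{L(\pi_1 \times \pi_2, s)} \;=\; \Phi(\phi, g_1 W_{\pi_1}^{\mathrm{new}}, g_2 W_{\pi_2}^{\mathrm{new}}; q^s).$$
The right-hand side is a Laurent polynomial in the variable $q^s$, hence entire in $s$, so the limit at $s=0$ exists unconditionally and equals $\Phi(\phi, g_1 W_{\pi_1}^{\mathrm{new}}, g_2 W_{\pi_2}^{\mathrm{new}}; 1)$. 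Since evaluation at $X=1$ is an $A$-algebra morphism $A[X,X^{-1}] \to A$, this quantity visibly lies in $A$, yielding
$$\mathcal{Z}^{\mathrm{tri}}\bigl(g_1 W_{\pi_1}^{\mathrm{new}}\otimes g_2 W_{\pi_2}^{\mathrm{new}}\otimes [\phi]_{\pi_3}\bigr) \;=\; \Phi(\phi, g_1 W_{\pi_1}^{\mathrm{new}}, g_2 W_{\pi_2}^{\mathrm{new}}; 1) \;\in\; A.$$

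There is essentially no obstacle beyond bookkeeping: the entire content of the statement is absorbed into \Cref{thm: main theorem}, and once the Laurent polynomial $\Phi$ with coefficients in $A$ is in hand, specialising at $X=1$ finishes the proof. The only subtlety worth a sentence is to confirm that the limit functional descends correctly to the quotient realization of $\pi_3$ inside $\mathcal{S}(F^2)$, but this is built into the identification $\pi_3 \simeq \mathcal{S}(F^2)/\langle z\cdot \phi - \nu(z)\phi\rangle$ recalled in the discussion preceding the statement, and requires no new argument here.
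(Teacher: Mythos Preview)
Your proposal is correct and matches the paper's approach: the paper states this theorem as an immediate consequence of \Cref{thm: main theorem} (writing only ``Using \Cref{thm: main theorem} we can obtain the following integrality result'') without spelling out a proof, and your argument—apply the main theorem to obtain $\Phi\in A[X,X^{-1}]$, then specialise at $X=1$—is exactly the intended deduction.
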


\section{An application to newforms}\label{sec application fo newforms}
\subsection{Setup}\label{sec setup}
Let $\psi_\mathbf{A}:=\prod_v\psi_v:\mathbf{Q}\backslash \mathbf{A}\rightarrow\mathbf{C}^\times$ be the standard additive character, where at each finite prime, $\psi_p$ is the standard additive character of conductor $\mathbf{Z}_p$ and $\psi_\infty:=\exp(2\pi i(-))$. For ease of notation, we write  $\psi^{(1)}:=\psi_\mathbf{A}$ and $\psi^{(2)}:=\psi_\mathbf{A}^{-1}$. Consider two normalized cuspidal new eigenforms $f_1,f_2$ of levels $N_1,N_2$, even integral weights $k_1\geq k_2\geq 2$, and Nebentypes $\epsilon_1,\epsilon_2$. Let $\omega_1,\omega_2$ be the adelization of $\epsilon_1,\epsilon_2$, and write $\omega_{1,v},\omega_{2,v}$ for their restrictions to $\mathbf{Q}_v^\times$. Let $i\in\{1,2\}$. Writing 
$$K_0(N_i):=\left\{\left[\begin{smallmatrix}
    a & b\\
    c & d
\end{smallmatrix}\right]\in \GL_2(\hat{\mathbf{Z}})\ |\ c\equiv0\mod\ N_i \right\}$$
one can define a character $\lambda_i:K_0(N_i)\rightarrow\mathbf{C}^\times$ by $\lambda_i(\left[\begin{smallmatrix}
    a & b\\
    c & d
\end{smallmatrix}\right]):=\prod_{p|N_i}\omega_{i,p}(d_p)$. Using the well-know fact that $H(\mathbf{A})=H(\mathbf{Q})H(\mathbf{R})^+K_0(N_i)$, one can use $f_i$, to define a function $\varphi_{f_i}$ on $H(\mathbf{A})$ by 
$$\varphi_{f_i}(\gamma g_\infty \kappa):=f_i(g_\infty (\sqrt{ -1}))\cdot (c\sqrt{-1}+d)^{-k_i}\cdot \det(g_\infty)^{-k_i/2}\cdot \lambda_i(\kappa)$$
where $\gamma\in H(\mathbf{Q}), g_\infty=\left[\begin{smallmatrix}
    a & b\\
    c & d
\end{smallmatrix}\right]\in H(\mathbf{R})^+$ and $\kappa\in K_0(N_i)$. By \cite{gelbart1975automorphic}, this function $\varphi_{f_i}$ is a cuspidal automorphic form on $H(\mathbf{A})$ with central character $\omega_i$. We write $\pi_{f_i}\simeq\otimes_p^{'}\pi_{f_i,p}$ for the irreducible unitary cuspidal automorphic representation of $H(\mathbf{A)}$ given by the linear span of the $H(\mathbf{A})$-translates of $\varphi_{f_i}$. The representations $\pi_{f_1},\pi_{f_2}$ are globally and locally generic and we can identify them with $\mathcal{W}(\pi_{f_i},\psi^{(i)})\simeq \otimes^{'}_v \mathcal{W}(\pi_{f_i,v},\psi_v^{(i)})$. For cusp forms $\varphi_i\in\pi_{f_i}$ and $\phi_\mathbf{A}\in\mathcal{S}(\mathbf{A}^2)$, we have the global Rankin-Selberg integral of \cite{jacquet1981euler}
$$I(\phi_\mathbf{A},\varphi_1,\varphi_2;s)=\int_{Z(\mathbf{A})H(\mathbf{Q})\backslash H(\mathbf{A})}\varphi_1(h)\varphi_2(h)E(\phi_\mathbf
A,h;s)\ dh$$
where $E(\phi_\mathbf
A,h;s)$ is the usual adelic Eisenstein series attached to $\phi_\mathbf{A}$ as in \cite{automorphic_on_GL}. This is a meromorphic function of $s\in\mathbf
C$ and $L(\pi_{f_1}\times\pi_{f_2},s)^{-1}I(\phi_\mathbf{A},\varphi_1,\varphi_2;s)$ is entire, where the automorphic $L$-function is defined as usual, through an Euler product of local $L$-factors:
$$L(\pi_{f_1}\times\pi_{f_2},s):=L(\pi_{f_1,\infty}\times\pi_{f_2,\infty},s)\prod_{p}L(\pi_{f_1,p}\times\pi_{f_2,p},s).$$
The local $L$-factors at the finite places, are given by \eqref{eq: gl2 l-factor}. At the archimedean place, the local component $\pi_{f_i,\infty}$ is the discrete series representation of lowest weight $k_i$, and following \cite{chen2020primitive}, the local $L$-factor at $\infty$ is given by $L(\pi_{f_1,\infty}\times\pi_{f_2,\infty},s)=\Gamma_\mathbf{C}(s+\tfrac{k_1-k_2}{2})\Gamma_\mathbf{C}(s+\tfrac{k_1+k_2}{2}-1)$, where $\Gamma_\mathbf{C}(s):=2(2\pi)^{-s}\Gamma(s)$, and $\Gamma(s)=\int_{\mathbf{R}_+}y^se^{-y}\ d^\times y$ is the usual Gamma function. Since $f_i$ is a normalized newform, its associated automorphic form $\varphi_{f_i}$ is the normalized new vector of $\pi_{f_i}$. In other words,
$$\varphi_{f_i}\simeq W_{\pi_{f_i},\infty}^\mathrm{new}\otimes \bigotimes_{p}W_{\pi_{f_i},p}^\mathrm{new}$$
where for primes $p\nmid\infty$, the local new vectors are the ones of the previous section, and at the archimedean place, $W_{\pi_{f_i},\infty}^\mathrm{new}$ is given by
$$W_{\pi_{f_i},\infty}^\mathrm{new}\left(\left[\begin{smallmatrix}
    z & \\
    & z
\end{smallmatrix}\right]\left[\begin{smallmatrix}
    y & x \\
    & 1
\end{smallmatrix}\right]\left[\begin{smallmatrix}
    \cos(\theta) & \sin(\theta)\\
    -\sin(\theta) & \cos(\theta)
\end{smallmatrix}\right]\right)=\ch(\mathbf{R}_+)(y)\cdot y^{k_i/2}e^{-2\pi y}\cdot \mathrm{sign}(z)^{k_i}\psi^{(i)}_\infty(x)e^{\sqrt{-1} k_i\theta}$$
for $z,y\in\mathbf{R}^\times$ and $x,\theta\in\mathbf{R}$.
\begin{defn}[\cite{chen2020primitive} Definition $4.1$]
   For an integer $k\in\mathbf{Z}_{\geq 0}$, we define the Schwartz function $\phi_\infty^{(k)}$ in $\mathcal{S}(\mathbf{R}^2)$ by 
$$\phi_\infty^{(k)}(x,y):=2^{-k}(x+\sqrt{-1}y)^ke^{-\pi(x^2+y^2)}.$$
\end{defn}
\subsection{The integral coefficients}
\begin{defn}

Given $f_1,f_2$ as above, we define the following. We firstly set constants:
\begin{enumerate}
    \item$N:=N_1N_2$.
    \item$\nu:=\prod_{\substack{p|N\\ p\ \mathrm{prime}}}\nu_p$ where $\nu_p:=p^2-1$.
    \item$\rho:=\prod_{\substack{p|N\\ p\ \mathrm{prime}}}p^{\tau_p}$ where $\tau_p:=\max\{c(\pi_{f_1,p}),c(\pi_{f_2,p}),v_p(4N)\}=v_p(4N)$. 
\end{enumerate}
Let $\Sigma_{f_i}^{\mathrm{p\text{-}s}}$ be the subset of \textcolor{black}{the set of primes dividing $N_i$} for which $\pi_{f_i,p}$ is a (ramified) principal-series representation. For every prime $p\in \Sigma_{f_i}^{\mathrm{p\text{-}s}}$, we fix a Dirichlet character $\theta^{(i,p)}$ of minimal conductor $n_{i,p}$, such that the twist $f_i^{p\text{-}\mathrm{prim}}:=f_i\otimes\theta^{(i,p)}$ is $p$-primitive. Here, $f_i\otimes\theta^{(i,p)}$ denotes the unique normalized newform whose $\ell$-th Fourier coefficient is given by $\theta^{(i,p)}(\ell)a_\ell(f_i)$ for almost all primes $\ell$. Let $L_{f_1,f_2}$ be the Galois closure of $\mathbf{Q}(f_1,f_2)$. Finally, we define the number field $E_{f_1,f_2}:=L_{f_1,f_2}(\mu_{\nu\rho})(\mu_{n_{1,p}}:p\in\Sigma_{f_1}^{\mathrm{p\text{-}s}})(\mu_{n_{2,p}}:p\in \Sigma_{f_2}^{\mathrm{p\text{-}s}})$ \textcolor{black}{which is a Galois extension of $\mathbf{Q}$.}
\end{defn}
\begin{thm}\label{thm 5.2.2}
     Let $f_1,f_2$ be normalized cuspidal new eigenforms of even integral weights $k_1\geq k_2\geq 2$ and levels $N_1,N_2$ such that the $2$-component of $\pi_{f_i}$ is dihedral, if it's a supercuspidal representation. Set $\Pi:=\pi_{f_1}\times\pi_{f_2}$ and let $(\phi_\mathrm{f},g_1,g_2)\in \mathcal{S}(\mathbf{A}_\mathrm{f}^2)\times G(\mathbf{A}_\mathrm{f})$ be an arbitrary, locally $\Pi_p$-integral datum for all primes $p$. Let $S$ be the finite set of primes made up of \textcolor{black}{the prime divisors of $N$} and all other primes for which $(\phi_{\mathrm{f},p},g_{1,p},g_{2,p})$ is not an unramified datum. Finally, let $\phi_\mathbf{A}:=2^{k_1+1}\phi_\infty^{(k_1-k_2)}\otimes\phi_\mathrm{f}\in\mathcal
     S(\mathbf{A}^2)$. Then,
\begin{align*}
    I(\phi_\mathbf{A},g_1\varphi_{f_1},g_2\varphi_{f_2};s)=L(\pi_{f_1}\times\pi_{f_2},s)\cdot \Phi(p^s : p\in S)
\end{align*}
where $\Phi(X_p :p\in S)$ is a polynomial in $ \mathcal{O}_{E_{f_1,f_2}}[S^{-1}][X_p^{\pm1} : p\in S]$. 
\end{thm}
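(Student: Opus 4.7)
The plan is to Euler-factor the global zeta integral and then invoke Theorem \ref{thm: main theorem} prime by prime, carefully tracking that each local coefficient algebra lies in $\mathcal{O}_{K_{f_1,f_2}}[p^{-1}]$. Since $\varphi_{f_i}$ is a pure tensor of Whittaker new vectors and $\phi_\mathbf{A} = 2^{k_1+1}\phi_\infty^{(k_1-k_2)} \otimes \phi_\mathrm{f}$ is a pure tensor Schwartz function, the standard unfolding of \cite{jacquet1981euler} yields
$$I(\phi_\mathbf{A},g_1\varphi_{f_1},g_2\varphi_{f_2};s) = Z_\infty(s) \cdot \prod_p Z(\phi_{\mathrm{f},p},g_{1,p}W_{\pi_{f_1},p}^\mathrm{new},g_{2,p}W_{\pi_{f_2},p}^\mathrm{new};s),$$
convergent for $\Re(s)$ large and extended by meromorphic continuation; the product is finite at each stage since only primes $p \in S$ contribute a non-unramified factor.

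The archimedean factor $Z_\infty(s)$ is handled by direct computation. The choice of Schwartz function $\phi_\infty^{(k_1-k_2)}$ together with the normalizing constant $2^{k_1+1}$ is engineered following \cite{chen2020primitive} precisely so that $Z_\infty(s) = L(\pi_{f_1,\infty}\times\pi_{f_2,\infty},s)$, after which the problem is reduced entirely to the finite places. For primes $p \notin S$, the local datum is unramified and both $\pi_{f_i,p}$ are spherical; the classical unramified unfolding (as in \cite[\S 4]{groutides2024rankinselbergintegralstructureseuler}) gives $Z_p(s) = L(\pi_{f_1,p}\times\pi_{f_2,p},s)\cdot \Phi_p(p^s)$ with $\Phi_p(X) \in \mathcal{O}_{K_{f_1,f_2}}[p^{-1}][X,X^{-1}]$, because the polynomial part is a polynomial in the Satake parameters/Hecke eigenvalues of $f_1,f_2$ at $p$, which are algebraic integers in $\mathbf{Q}(f_i) \subseteq K_{f_1,f_2}$.

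The technical core is the treatment of primes $p \in S$, where one applies Theorem \ref{thm: main theorem} with $q = p$ and $\tau = \tau_p, \nu = \nu_p$ (extended to $4\nu_p p^{\tau_p}$ at $p = 2$ by \Cref{rem p=2}, permissible by our dihedral hypothesis). The theorem produces an $A_p$-valued Laurent polynomial with $A_p$ a $\mathbf{Z}[p^{-1},\mu_{\nu_p p^{\tau_p}}]$-algebra generated by the spherical Hecke eigenvalues of $\pi_{f_i,p}$ (when spherical) and the unramified unitary characters listed in (1)--(4) of the theorem statement. The key verification is that $A_p \subseteq \mathcal{O}_{K_{f_1,f_2}}[p^{-1}]$. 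The requisite roots of unity $\mu_{\nu_p p^{\tau_p}}$ are built into $K_{f_1,f_2}$ via $\mu_{\nu\rho}$, where the choice of $\tau_p \geq v_p(4N)$ absorbs the $p = 2$ enlargement. Integral Hecke eigenvalues at spherical components lie in $\mathcal{O}_{\mathbf{Q}(f_i)}[p^{-1}]$ by Deligne. For the unramified characters $\chi_i$ arising in ramified principal-series or ramified Steinberg components, the twist $f_i \mapsto f_i^{p\text{-}\mathrm{prim}}$ by $\theta^{(i,p)}$ converts $\chi_i$ into the Hecke eigenvalue at $p$ of a $p$-primitive newform (after extraction of a Dirichlet root of unity), reducing to \cite[\S 2]{loeffler2012computation}; this is why $K_{f_1,f_2}$ contains each $\mu_{n_{i,p}}$. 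Finally, the unramified character $(\omega_{\pi_{f_i,p}}^\mathrm{unr})^{1/2}$ has values in a finite group of roots of unity in $K_{f_1,f_2}$: since $k_i$ is even, $\omega_{\pi_{f_i,p}}$ factors as the $p$-component of an adelized Nebentype (a Dirichlet character), and the square-root is accommodated by the factor $\mu_{\nu \rho}$.

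The main obstacle is organizing this bookkeeping across the many component-type combinations catalogued in Section \ref{sec 4}; once each ramified-component case is matched to the appropriate generators of $A_p$ and those generators are located inside $\mathcal{O}_{K_{f_1,f_2}}[p^{-1}]$, multiplying the local $L$-factors recovers the global $L$-function and multiplying the $\Phi_p$'s produces the desired polynomial $\Phi(X_p : p \in S) \in \mathcal{O}_{K_{f_1,f_2}}[S^{-1}][X_p : p \in S]$. This finishes the argument.
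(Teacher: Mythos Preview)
Your approach is essentially the same as the paper's: Euler-factor via \cite{jacquet1981euler}, compute the archimedean piece following \cite{chen2020primitive}, take the unramified primes to contribute exactly $L_p(s)$, and for $p\in S$ invoke \Cref{thm: main theorem} after checking that each local coefficient algebra $A_p$ sits inside $\mathcal{O}_{K_{f_1,f_2}}[p^{-1}]$ using \cite{loeffler2012computation} and the twist to $f_i^{p\text{-}\mathrm{prim}}$. Two small slips to fix: the archimedean integral with $2^{k_1+1}\phi_\infty^{(k_1-k_2)}$ actually comes out to $(\sqrt{-1})^{k_1-k_2}L(\pi_{f_1,\infty}\times\pi_{f_2,\infty},s)$ rather than the bare $L$-factor (harmless, since $k_1-k_2$ is even so this is $\pm 1$ and gets absorbed into $\Phi$); and for $p\notin S$ you must have $\Phi_p=1$ exactly, not merely $\Phi_p\in\mathcal{O}_{K_{f_1,f_2}}[p^{-1}][X,X^{-1}]$, otherwise the final coefficient ring would not be $\mathcal{O}_{K_{f_1,f_2}}[S^{-1}]$.
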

\begin{proof}
    Recall that the $f_i$ are normalized newforms, thus under the identifications $\pi_{f_i}\simeq\otimes_v^{'}\mathcal{W}(\pi_{f_i,v},\psi_v^{(i)})$, we have $g_i\varphi_{f_i}\simeq W_{\pi_{f_i},\infty}^\mathrm{new}\otimes \bigotimes_pg_{i,p}W_{\pi_{f_i},p}^\mathrm{new}$. By \cite{jacquet1981euler}, we have $$I(\phi_\mathbf{A},g_1\varphi_{f_1},g_2\varphi_{f_2};s)=Z(\phi_\infty^{(k_1-k_2)},W_{\pi_{f_1},\infty}^\mathrm{new},W_{\pi_{f_2},\infty}^\mathrm{new};s)\prod_p Z(\phi_{\mathrm{f},p},g_{1,p}W_{\pi_{f_1},p}^\mathrm{new}, g_{2,p}W_{\pi_{f_2},p}^\mathrm{new};s).$$
    The archimedean zeta-integral is given by 
    \begin{align}\label{eq: archimedean}\int_{\mathbf{R}^\times}\int_{\mathbf{R}^\times}\int_{\mathrm{SO}_2(\mathbf{R})}W_{\pi_{f_1},\infty}^\mathrm{new}(\left[\begin{smallmatrix}
        y & \\
        & 1
    \end{smallmatrix}\right]u)W_{\pi_{f_2},\infty}^\mathrm{new}(\left[\begin{smallmatrix}
        y & \\
        & 1
    \end{smallmatrix}\right]u)|y|_\mathbf{R}^{s-1}\phi_\infty^{(k_1-k_2)}((0,t)u)\omega_\infty(t)|t|_\mathbf{R}^{2s}\ d^\times t\ d^\times y\ du\end{align}
    where $\omega_\infty:=\omega_{1,\infty}\omega_{2,\infty}$, $d^\times y,d^\times t$ are the standard Lebesgue measures, and $du$ is the normalized Haar measure on $\mathrm{
    SO}_2(\mathbf{R})$ giving it volume $1$. A computation along the same lines as \cite[Proposition $5.3$]{chen2020primitive} shows that \eqref{eq: archimedean} is given by $(\sqrt{-1})^{k_1-k_2}L(\pi_{f_1,\infty}\times\pi_{f_2,\infty},s)$. If $p\notin S$, then the $p$-adic zeta-integral is identically equal to the local $L$-factor. For the $p$-adic zeta-inegrals regarding primes $p\in S$, we wish to apply \Cref{thm: main theorem} for the $\Pi_p$-integral datum $(\phi_{\mathrm{f},p},g_{1,p},g_{2,p})\in \mathcal{S}(\mathbf{Q}_p^2)\times G(\mathbf{Q}_p)$, with $A:=\mathcal{O}_{E_{f_1,f_2}}[S ^{-1}]$.\\
    If $p\in S$, $p\nmid N$, then $\pi_{f_i,p}$ is the unramified principal-series representation $I(\chi,\mu)$, with $\chi(p):=\alpha_i/p^{(k_i-1)/2}$ and $\mu(p):=\beta_i/p^{(k_i-1)/2}$, where $\alpha_i,\beta_i$ are the roots of the Hecke polynomial $X^2-a_p(f_i)X+\epsilon_i(p)p^{k_i-1}.$ Thus, $\pi_{f_i,p}$ has spherical Hecke eigenvalues equal to $(\alpha_i +\beta_i)p^{1-\tfrac{k_i}{2}}$ and $(\epsilon_1\epsilon_2)(p)$. By a theorem of Shimura, the coefficients of the Hecke polynomial are elements of $\mathcal{O}_{E_{f_1,f_2}}$. Thus, since $k_i$ is even by assumption, we see that $(\alpha_i +\beta_i)p^{1-\tfrac{k_i}{2}}\in A$, and we also have $(\epsilon_1\epsilon_2)(p)^{\pm 1}\in A$.\\
    If $p|N$, then at least one $\pi_{f_i,p}$ is ramified. We always have $\omega_{\pi_{f_i,p}}(p)^{\pm 1/2}\in A$ since $2N|\nu\rho.$ If $\pi_{f_i,p}$ is either a supercuspidal representation or a twist of the Steinberg representation then looking at \Cref{thm: main theorem}, this is enough. If $\pi_{f_i,p}$ is a half-ramified principal-series representation (i.e. $f_i$ is $p$-primitive), we have by \cite[Proposition $2.8$]{loeffler2012computation} $\pi_{f_i,p}\simeq I(\chi_{i}\omega_{i,p},\chi_{i}^{-1})$ with $\omega_{i,p}$ ramified, and $\chi_{i}$ the unramified character given by $\chi_{i}(p)^{-1}:=a_p(f_i)/p^{(k_i-1)/2}.$ This value is contained in $A$ (since $\mu_{4p}\subseteq A$). We also need to verify that $\chi_{i}(p)$ is contained in $A$. We know that $|a_p(f_i)|=p^{(k_i-1)/2}$ and so the minimal polynomial of $a_p(f_i)/p^{(k_i-1)/2}$ over $\mathbf{Q}$ has constant term $\pm 1$. But $E_{f_1,f_2}$ is Galois, and $a_p(f_i)$ and $\mu_{4p}$ are contained in $\mathcal{O}_{E_{f_1,f_2}}$. Thus, $a_p(f_i)/p^{(k_i-1)/2}\in\mathcal{O}_{E_{f_1,f_2}}[p^{-1}]^\times$. From this we conclude that $A$ contains $\chi_{i}$. Writing 
    $$\pi_{f_i,p}\simeq (\omega_{i,p}^\mathrm{unr})^{1/2}\otimes I\left(\tilde\omega_{i,p}\chi_{i} (\omega_{i,p}^\mathrm{unr})^{1/2},\chi_{i}^{-1}(\omega_{i,p}^\mathrm{unr})^{-1/2}\right)$$
    where $\tilde\omega_{i,p}:=\omega_{i,p} \cdot(\omega_{i,p}^\mathrm{unr})^{-1}$ now satisfies $\tilde\omega_{i,p}(p)=1$, we see that the condition of \Cref{thm: main theorem} is once again satisfied.\\
    If $\pi_{f_i.p}$ is a fully-ramified principal-series representation (i.e $f_i$ is not $p$-primitive) which is not a twist of an unramified principal-series representation, then we have $\pi_{f_i,p}\simeq (\omega_{i,p}^\mathrm{unr})^{1/2}\otimes I(\chi_{i,1}\chi_i,\chi_{i,2}\chi_i^{-1})$ with $\chi_{i,1}\neq\chi_{i,2}$ both ramified, $\chi_{i,1}(p)=\chi_{i,2}(p)=1$ and $\chi_i$ unramified. Then we consider $f_i^{p\text{-}\mathrm{prim}}$ which is now $p$-primitive. Thus, again by \cite[Proposition $2.8$]{loeffler2012computation}, we have that $$\pi_{f_i^{p\text{-}\mathrm{prim}},p}\simeq(\omega_{i,p}^\mathrm{unr})^{1/2}\otimes I\left(\theta_p^{(i,p)}\chi_{i,1}\chi_i,\theta_p^{(i,p)}\chi_{i,2}\chi_i^{-1}\right)= I(\sigma_1,\sigma_2)$$
    where one and only one of the $\sigma_i$ is unramified; without loss of generality, say $\sigma_2$ is unramified, and thus it's the unramified character determined by $\sigma_2(p):=a_p(f_i^{p\text{-}\mathrm{prim}})/p^{(k_i-1)/2}$. Firstly,  by strong multiplicity one $\sigma_2(p)\in A$. The fact that $\sigma_2(p)^{-1}\in A$ follows from the same argument as before, since once again $|a_p(f_i^{p\text{-}\mathrm{prim}})/p^{(k_i-1)/2}|=1$. Thus, $\chi_i$ is also contained in $A$ as needed for \Cref{thm: main theorem}.\\
    Finally, if $\pi_{f_i,p}$ is a ramified twist of an unramified principal-series representation, then $\pi_{f_i,p}\simeq (\omega_{i,p}^\mathrm{unr})^{1/2}\otimes I(\chi_{i,1}\chi_i,\chi_{i,1}\chi_i^{-1})$ with $\chi_{i,1}$ ramified, $\chi_{i,1}(p)=1$ and $\chi_i$ unramified. In this case, $f_i^{p\text{-}\mathrm{prim}}$ has no level at $p$ and
    $\pi_{f_i^{p\text{-}\mathrm{prim}},p}$ is actually an unramified principal-series representation $I(\mu_1,\mu_2)$ with $\mu_1(p):=\gamma_i/p^{(k_i-1)/2}$ and $\mu_2(p):=\delta_i /p^{(k_i-1)/2}$ where $\gamma_i,\delta_i$ are the roots of the Hecke polynomial $X^2-a_p(f_i^{p\text{-}\mathrm{prim}})X-\epsilon_{f_i^{p\text{-}\mathrm{prim}}}(p)p^{k_i-1}\in \mathcal{O}_{E_{f_1,f_2}}[X]$. Hence, $\chi_i$ is contained in $A$ as required for \Cref{thm: main theorem}. Taking 
    $$\Phi(X_p:p\in S):=(\sqrt{-1})^{k_1-k_2}\prod_{p\in S}\Phi(\phi_{\mathrm{f},p},g_{1,p}W_{\pi_{f_1},p}^\mathrm{new},g_{2,p}W_{\pi_{f_2},p}^\mathrm{new};X_p)$$
    gives the result by \Cref{thm: main theorem}.
    \end{proof}
 \bibliography{citation} 

@article {biswas2017twisting,
    AUTHOR = {Biswas, Sazzad Ali},
     TITLE = {Twisting formula of epsilon factors},
   JOURNAL = {Proc. Indian Acad. Sci. Math. Sci.},
  FJOURNAL = {Indian Academy of Sciences. Proceedings. Mathematical
              Sciences},
    VOLUME = {127},
      YEAR = {2017},
    NUMBER = {4},
     PAGES = {585--598},
      ISSN = {0253-4142,0973-7685},
   MRCLASS = {11S40 (11F70 11S37 22E50)},
  MRNUMBER = {3692451},
MRREVIEWER = {Jack\ Shotton},
       DOI = {10.1007/s12044-017-0350-7},
       URL = {https://doi.org/10.1007/s12044-017-0350-7},
}

@article {casselman1973some,
    AUTHOR = {Casselman, William},
     TITLE = {On some results of {A}tkin and {L}ehner},
   JOURNAL = {Math. Ann.},
  FJOURNAL = {Mathematische Annalen},
    VOLUME = {201},
      YEAR = {1973},
     PAGES = {301--314},
      ISSN = {0025-5831,1432-1807},
   MRCLASS = {10D15 (22E50)},
  MRNUMBER = {337789},
MRREVIEWER = {T.\ Miyake},
       DOI = {10.1007/BF01428197},
       URL = {https://doi.org/10.1007/BF01428197},
}

@incollection {tate1967fourier,
    AUTHOR = {Tate, J. T.},
     TITLE = {Fourier analysis in number fields, and {H}ecke's
              zeta-functions},
 BOOKTITLE = {Algebraic {N}umber {T}heory ({P}roc. {I}nstructional {C}onf.,
              {B}righton, 1965)},
     PAGES = {305--347},
 PUBLISHER = {Academic Press, London},
      YEAR = {1967},
   MRCLASS = {10.41},
  MRNUMBER = {217026},
}

@article {schmidt2002some,
    AUTHOR = {Schmidt, Ralf},
     TITLE = {Some remarks on local newforms for {$\rm GL(2)$}},
   JOURNAL = {J. Ramanujan Math. Soc.},
  FJOURNAL = {Journal of the Ramanujan Mathematical Society},
    VOLUME = {17},
      YEAR = {2002},
    NUMBER = {2},
     PAGES = {115--147},
      ISSN = {0970-1249,2320-3110},
   MRCLASS = {11F85 (11F33)},
  MRNUMBER = {1913897},
MRREVIEWER = {Andrea\ Mori},
}

@article {Assing_2018,
    AUTHOR = {Assing, Edgar},
     TITLE = {On the size of {$p$}-adic {W}hittaker functions},
   JOURNAL = {Trans. Amer. Math. Soc.},
  FJOURNAL = {Transactions of the American Mathematical Society},
    VOLUME = {372},
      YEAR = {2019},
    NUMBER = {8},
     PAGES = {5287--5340},
      ISSN = {0002-9947,1088-6850},
   MRCLASS = {11F70 (11L40 11S80)},
  MRNUMBER = {4014277},
MRREVIEWER = {Ivan\ Mati\'c},
       DOI = {10.1090/tran/7685},
       URL = {https://doi.org/10.1090/tran/7685},
}

@article {Saha_2015,
    AUTHOR = {Saha, Abhishek},
     TITLE = {Large values of newforms on {${\rm GL}(2)$} with highly
              ramified central character},
   JOURNAL = {Int. Math. Res. Not. IMRN},
  FJOURNAL = {International Mathematics Research Notices. IMRN},
      YEAR = {2016},
    NUMBER = {13},
     PAGES = {4103--4131},
      ISSN = {1073-7928,1687-0247},
   MRCLASS = {11F37},
  MRNUMBER = {3544630},
MRREVIEWER = {Sander\ Zwegers},
       DOI = {10.1093/imrn/rnv259},
       URL = {https://doi.org/10.1093/imrn/rnv259},
}

@article {gelbart1976relation,
    AUTHOR = {Gelbart, Stephen and Jacquet, Herv\'e},
     TITLE = {A relation between automorphic forms on {${\rm GL}(2)$} and
              {${\rm GL}(3)$}},
   JOURNAL = {Proc. Nat. Acad. Sci. U.S.A.},
  FJOURNAL = {Proceedings of the National Academy of Sciences of the United
              States of America},
    VOLUME = {73},
      YEAR = {1976},
    NUMBER = {10},
     PAGES = {3348--3350},
      ISSN = {0027-8424},
   MRCLASS = {12B35 (10D20 12A65)},
  MRNUMBER = {412156},
MRREVIEWER = {Roy\ Fuller},
       DOI = {10.1073/pnas.73.10.3348},
       URL = {https://doi.org/10.1073/pnas.73.10.3348},
}

@phdthesis{Assing_2019,
  author = {Assing, Edgar},
  title = {Local analysis of {W}hittaker new vectors and global applications},
  school = {University of Bristol, Bristol},
  year = {2019},
  type = {Ph{D} {T}hesis}
}

@article{loeffler2021zetaintegralsunramifiedrepresentationsGSp4,
  title={On some zeta-integrals for unramified representations of {$\mathrm{GSp}(4)$}},
  author={Loeffler, David},
  journal={arXiv preprint, arXiv:2106.16076},
  year={2021}
}

@incollection {chen2020primitive,
    AUTHOR = {Chen, Shih-Yu and Hsieh, Ming-Lun},
     TITLE = {On primitive {$p$}-adic {R}ankin-{S}elberg {$L$}-functions},
 BOOKTITLE = {Development of {I}wasawa theory -- the centennial of {K}.
              {I}wasawa's birth},
    SERIES = {Adv. Stud. Pure Math.},
    VOLUME = {86},
     PAGES = {195--242},
 PUBLISHER = {Math. Soc. Japan, Tokyo},
      YEAR = {2020},
      ISBN = {978-4-86497-092-1},
   MRCLASS = {11F67 (11F33)},
  MRNUMBER = {4385083},
}

@article{groutides2024rankinselbergintegralstructureseuler,
  title={On {Rankin-Selberg} integral structures and {Euler} systems for {$\GL(2)\times \GL(2)$}},
  author={Groutides, Alexandros},
  journal={arXiv preprint, arXiv:2407.01377},
  year={2024}
}

@book {automorphic_on_GL_II,
    AUTHOR = {Jacquet, Herv\'e},
     TITLE = {Automorphic forms on {${\rm GL}(2)$}. {P}art {II}},
    SERIES = {Lecture Notes in Mathematics},
    VOLUME = {278},
 PUBLISHER = {Springer-Verlag, Berlin-New York},
      YEAR = {1972},
     PAGES = {xiii+142},
   MRCLASS = {10D15 (22E55)},
  MRNUMBER = {562503},
MRREVIEWER = {Stephen\ Gelbart},
}

@article {jacquet1981euler,
    AUTHOR = {Jacquet, H. and Shalika, J. A.},
     TITLE = {On {E}uler products and the classification of automorphic
              representations. {I}},
   JOURNAL = {Amer. J. Math.},
  FJOURNAL = {American Journal of Mathematics},
    VOLUME = {103},
      YEAR = {1981},
    NUMBER = {3},
     PAGES = {499--558},
      ISSN = {0002-9327,1080-6377},
   MRCLASS = {10D40 (12A67 22E55)},
  MRNUMBER = {618323},
MRREVIEWER = {Freydoon\ Shahidi},
       DOI = {10.2307/2374103},
       URL = {https://doi.org/10.2307/2374103},
}

@book {gelbart1975automorphic,
    AUTHOR = {Gelbart, Stephen S.},
     TITLE = {Automorphic forms on ad\`ele groups},
    SERIES = {Annals of Mathematics Studies},
    NUMBER = {83},
 PUBLISHER = {Princeton University Press, Princeton, NJ; University of Tokyo
              Press, Tokyo},
      YEAR = {1975},
     PAGES = {x+267},
   MRCLASS = {10D05 (10D15 22E55)},
  MRNUMBER = {379375},
}

@article {Loeffler_2021,
    AUTHOR = {Loeffler, David and Skinner, Christopher and Zerbes, Sarah
              Livia},
     TITLE = {An {E}uler system for {$\rm GU(2,1)$}},
   JOURNAL = {Math. Ann.},
  FJOURNAL = {Mathematische Annalen},
    VOLUME = {382},
      YEAR = {2022},
    NUMBER = {3-4},
     PAGES = {1091--1141},
      ISSN = {0025-5831,1432-1807},
   MRCLASS = {11R23 (11F70 14G35)},
  MRNUMBER = {4403219},
MRREVIEWER = {Meng\ Fai\ Lim},
       DOI = {10.1007/s00208-021-02224-4},
       URL = {https://doi.org/10.1007/s00208-021-02224-4},
}

@book {bump_1997,
    AUTHOR = {Bump, Daniel},
     TITLE = {Automorphic forms and representations},
    SERIES = {Cambridge Studies in Advanced Mathematics},
    VOLUME = {55},
 PUBLISHER = {Cambridge University Press, Cambridge},
      YEAR = {1997},
     PAGES = {xiv+574},
      ISBN = {0-521-55098-X},
   MRCLASS = {11F70 (11F41 11R39 22E50 22E55)},
  MRNUMBER = {1431508},
MRREVIEWER = {Solomon\ Friedberg},
       DOI = {10.1017/CBO9780511609572},
       URL = {https://doi.org/10.1017/CBO9780511609572},
}

@article {loeffler2024local,
    AUTHOR = {Loeffler, David},
     TITLE = {On local zeta-integrals for {${\rm GSp}(4)$} and {${\rm
              GSp}(4)\times {\rm GL}(2)$}},
   JOURNAL = {New York J. Math.},
  FJOURNAL = {New York Journal of Mathematics},
    VOLUME = {30},
      YEAR = {2024},
     PAGES = {1--23},
      ISSN = {1076-9803},
   MRCLASS = {22E50 (11F66 11F70)},
  MRNUMBER = {4689117},
MRREVIEWER = {Liyang\ Yang},
}

@article {cesnavicius2022manin,
    AUTHOR = {{\v{C}}esnavi{\v{c}}ius, K{\k{e}}stutis and Neururer, Michael and
              Saha, Abhishek},
     TITLE = {The {M}anin constant and the modular degree},
   JOURNAL = {J. Eur. Math. Soc. (JEMS)},
  FJOURNAL = {Journal of the European Mathematical Society (JEMS)},
    VOLUME = {26},
      YEAR = {2024},
    NUMBER = {2},
     PAGES = {573--637},
      ISSN = {1435-9855,1435-9863},
   MRCLASS = {11G05 (11F70 11F80 11F85 11G18 11L05)},
  MRNUMBER = {4705658},
MRREVIEWER = {John\ T.\ Cullinan},
       DOI = {10.4171/jems/1367},
       URL = {https://doi.org/10.4171/jems/1367},
}

@article {Corbett_2018,
    AUTHOR = {Corbett, Andrew and Saha, Abhishek},
     TITLE = {On the order of vanishing of newforms at cusps},
   JOURNAL = {Math. Res. Lett.},
  FJOURNAL = {Mathematical Research Letters},
    VOLUME = {25},
      YEAR = {2018},
    NUMBER = {6},
     PAGES = {1771--1804},
      ISSN = {1073-2780,1945-001X},
   MRCLASS = {11F03 (11F70 11S15)},
  MRNUMBER = {3934844},
MRREVIEWER = {Yuanqing\ Cai},
       DOI = {10.4310/MRL.2018.v25.n6.a4},
       URL = {https://doi.org/10.4310/MRL.2018.v25.n6.a4},
}

@book {automorphic_on_GL,
    AUTHOR = {Jacquet, H. and Langlands, R. P.},
     TITLE = {Automorphic forms on {${\rm GL}(2)$}},
    SERIES = {Lecture Notes in Mathematics},
    VOLUME = {114},
 PUBLISHER = {Springer-Verlag, Berlin-New York},
      YEAR = {1970},
     PAGES = {vii+548},
   MRCLASS = {10D15 (12A65 12A70 22E55)},
  MRNUMBER = {401654},
MRREVIEWER = {Stephen\ Gelbart},
}

@article {shintani1976explicit,
    AUTHOR = {Shintani, Takuro},
     TITLE = {On an explicit formula for class-{$1$} ``{W}hittaker
              functions'' on {$\GL\sb{n}$} over {$p$}-adic fields},
   JOURNAL = {Proc. Japan Acad.},
  FJOURNAL = {Proceedings of the Japan Academy},
    VOLUME = {52},
      YEAR = {1976},
    NUMBER = {4},
     PAGES = {180--182},
      ISSN = {0021-4280},
   MRCLASS = {22E50 (10D20)},
  MRNUMBER = {407208},
MRREVIEWER = {Stephen\ Gelbart},
       URL = {http://projecteuclid.org/euclid.pja/1195518347},
}

@article {loeffler2021euler,
    AUTHOR = {Loeffler, David and Skinner, Christopher and Zerbes, Sarah
              Livia},
     TITLE = {Euler systems for {${\rm GSp}(4)$}},
   JOURNAL = {J. Eur. Math. Soc. (JEMS)},
  FJOURNAL = {Journal of the European Mathematical Society (JEMS)},
    VOLUME = {24},
      YEAR = {2022},
    NUMBER = {2},
     PAGES = {669--733},
      ISSN = {1435-9855,1435-9863},
   MRCLASS = {11F46 (11F67 11F80 11R23)},
  MRNUMBER = {4382481},
MRREVIEWER = {Meng\ Fai\ Lim},
       DOI = {10.4171/jems/1124},
       URL = {https://doi.org/10.4171/jems/1124},
}

@article {jacquet1983rankin,
    AUTHOR = {Jacquet, H. and Piatetskii-Shapiro, I. I. and Shalika, J. A.},
     TITLE = {Rankin-{S}elberg convolutions},
   JOURNAL = {Amer. J. Math.},
  FJOURNAL = {American Journal of Mathematics},
    VOLUME = {105},
      YEAR = {1983},
    NUMBER = {2},
     PAGES = {367--464},
      ISSN = {0002-9327,1080-6377},
   MRCLASS = {11F67 (11F70 11R39 22E55)},
  MRNUMBER = {701565},
MRREVIEWER = {Freydoon\ Shahidi},
       DOI = {10.2307/2374264},
       URL = {https://doi.org/10.2307/2374264},
}

@article {loeffler2012computation,
    AUTHOR = {Loeffler, David and Weinstein, Jared},
     TITLE = {On the computation of local components of a newform},
   JOURNAL = {Math. Comp.},
  FJOURNAL = {Mathematics of Computation},
    VOLUME = {81},
      YEAR = {2012},
    NUMBER = {278},
     PAGES = {1179--1200},
      ISSN = {0025-5718,1088-6842},
   MRCLASS = {11F70 (11F11)},
  MRNUMBER = {2869056},
MRREVIEWER = {Nathan\ C.\ Ryan},
       DOI = {10.1090/S0025-5718-2011-02530-5},
       URL = {https://doi.org/10.1090/S0025-5718-2011-02530-5},
}

@article{groutides2024integral,
  title={Integral structures in smooth {$\GL(2)$}-representations and zeta integrals},
  author={Groutides, Alexandros},
  journal={arXiv preprint, arXiv:2401.10870},
  year={2024}
}

@article {kurinczuk2017rankin,
    AUTHOR = {Kurinczuk, Robert and Matringe, Nadir},
     TITLE = {Rankin-{S}elberg local factors modulo {$\ell$}},
   JOURNAL = {Selecta Math. (N.S.)},
  FJOURNAL = {Selecta Mathematica. New Series},
    VOLUME = {23},
      YEAR = {2017},
    NUMBER = {1},
     PAGES = {767--811},
      ISSN = {1022-1824,1420-9020},
   MRCLASS = {11F70 (22E50)},
  MRNUMBER = {3595906},
MRREVIEWER = {Geo\ Kam-Fai\ Tam},
       DOI = {10.1007/s00029-016-0258-6},
       URL = {https://doi.org/10.1007/s00029-016-0258-6},
}

@article {prasad1990trilinear,
    AUTHOR = {Prasad, Dipendra},
     TITLE = {Trilinear forms for representations of {${\rm GL}(2)$} and
              local {$\epsilon$}-factors},
   JOURNAL = {Compositio Math.},
  FJOURNAL = {Compositio Mathematica},
    VOLUME = {75},
      YEAR = {1990},
    NUMBER = {1},
     PAGES = {1--46},
      ISSN = {0010-437X,1570-5846},
   MRCLASS = {22E50 (11S37)},
  MRNUMBER = {1059954},
MRREVIEWER = {Joe\ Repka},
       URL = {http://www.numdam.org/item?id=CM_1990__75_1_1_0},
}

@book {gel1969representation,
    AUTHOR = {Gelfand, I. M. and Graev, M. I. and
              Piatetskii-Shapiro, I. I.},
     TITLE = {Representation theory and automorphic functions},
      NOTE = {Translated from Russian by K. A. Hirsch},
 PUBLISHER = {W. B. Saunders Co., Philadelphia, Pa.-London-Toronto, Ont.},
      YEAR = {1969},
     PAGES = {xvi+426},
   MRCLASS = {10.20},
  MRNUMBER = {233772},
}

@article{loeffler2025euleradjointmodularform,
      title={An {E}uler system for the adjoint of a modular form}, 
      author={David Loeffler and Sarah Livia Zerbes},
      journal={arXiv preprint, arXiv:2312.04665},
      year={2025}, 
}
\bibliographystyle{amsalpha}

\address{Mathematics Institute, Zeeman Building, University of Warwick, Coventry CV4 7AL,
England}

\email{Alexandros.Groutides@warwick.ac.uk}

\end{document}